\documentclass[11pt]{article}
\usepackage{array,epsfig}

\usepackage{IEEEtrantools}

\usepackage{caption}
\usepackage{amsmath}
\usepackage{amsfonts}
\usepackage{amssymb}
\usepackage{amsxtra}
\usepackage{amsthm}
\usepackage{mathrsfs}
\usepackage{color}
\usepackage[dvipsnames]{xcolor}
\usepackage{tikz}
\usepackage{mathabx,epsfig}
\usepackage{comment}
\usepackage{enumerate}
\usepackage{mathtools}
\usepackage{ytableau}
\usepackage{hyperref}
\usepackage{stmaryrd}
\usepackage{extarrows}

\hypersetup{
    colorlinks=true,
    linkcolor=blue,
    filecolor=magenta,      
    urlcolor=cyan,
    citecolor=teal
}
\theoremstyle{theorem}

\newtheorem{ex}{Example}

\newtheorem{theorem}{Theorem}[section]
\newtheorem{corollary}[theorem]{Corollary}
\newtheorem{lemma}[theorem]{Lemma}
\newtheorem{prop}[theorem]{Proposition}
\newtheorem{conjecture}[theorem]{Conjecture}

\newtheorem{question}[theorem]{Question}

\theoremstyle{definition}
\newtheorem{definition}[theorem]{Definition}

\newtheorem{remark}[theorem]{Remark}
\newtheorem{notation}[theorem]{Notation}

\newcommand{\lin}[1]{\langle #1 \rangle}

\newcommand{\surj}{\twoheadrightarrow}

\newcommand{\bb}[1]{\mathbb{#1}}
\newcommand{\mc}[1]{\mathcal{#1}}
\newcommand{\Z}{\bb{Z}}
\newcommand{\Q}{\bb{Q}}
\newcommand{\F}{\bb{F}}
\newcommand{\X}{\bb{X}}
\newcommand{\Y}{\bb{Y}}
\newcommand{\T}{\bb{T}}
\newcommand{\cT}{\mathcal{T}}

\newcommand{\del}{\partial}

\newcommand{\flag}{\textcolor{red}}

\newcommand{\sgn}{\mathrm{sgn}}

\newcommand{\Link}{\mathrm{Link}}

\newcommand{\w}{\omega}
\newcommand{\io}{\iota}
\newcommand{\coker}{\mathrm{coker}}
\newcommand{\im}{\mathrm{im}}
\newcommand{\het}{\mathrm{ht}}
\newcommand{\Ab}{\mathrm{Ab}}
\newcommand{\St}{\mathrm{St}}
\newcommand{\Gr}{\mathrm{Gr}}

\newcommand{\llb}{\llbracket}
\newcommand{\rrb}{\rrbracket}
\newcommand{\Sp}{\mathrm{Sp}}
\newcommand{\B}{\mathrm{B}}
\newcommand{\U}{\mathrm{U}}
\newcommand{\BA}{\mathrm{BA}}
\newcommand{\BD}{\mathrm{BD}}
\newcommand{\BDA}{\mathrm{BDA}}
\newcommand{\BAO}{\mathrm{BAO}}
\newcommand{\E}{\mathrm{E}}
\newcommand{\h}{\mathrm{H}}

\newcommand{\C}{\text{C}}
\newcommand{\R}{\text{R}}
\newcommand{\K}{\text{K}}

\newcommand{\SL}{\mathrm{SL}}

\title{The top cohomology of principal congruence subgroups of special linear groups over Euclidean number rings}
\author{Urshita Pal\footnote{urshita@umich.edu; University of Michigan, Department of Mathematics, Ann Arbor MI 48109, USA}}

\begin{document}

\maketitle

\begin{abstract}
    For $R$ a Euclidean number ring, let $\Gamma_n(p)$ be the level-$p$ principal congruence subgroup of $\SL_n(R)$. Borel--Serre showed that the cohomology of $\Gamma_n(p)$ vanishes above a degree $\nu$ that is quadratic in $n$. Let $K$ be the fraction field of $R$, and $\mc{T}_n(K)$ be the Tits building of $\SL_n(K)$. For $R=\Z$, Lee--Szczarba asked when $\h^\nu(\Gamma_n(p))$ is isomorphic to $\widetilde{\h}_{n-2}(\mc{T}_n(K)/\Gamma_n(p))$, which was answered by Miller--Patzt--Putman. We study a generalized version of Lee--Szczarba's question. We prove that for a prime $p$ in a Euclidean number ring $R$ with fraction field $K$, that a natural map $\h^\nu(\Gamma_n(p)) \to \widetilde{\h}_{n-2}(\mc{T}_n(K)/\Gamma_n(p))$ is always surjective, and give a sufficent condition on $p \in R$ that guarantees when this map is an isomorphism. We conjecture that this condition is also necessary to have an isomorphism.
\end{abstract}

\tableofcontents

\section{Introduction}

The cohomology of arithmetic groups plays a fundamental role in algebraic K-theory and number theory. Examples of arithmetic groups include special linear groups over a number ring and their finite-index subgroups.
An important class of finite-index subgroups are \emph{principal congruence subgroups}, i.e. the kernel $\Gamma_{n}(J)$ of the map $\SL_{n}(R) \to \SL_{n}(R/J)$ that reduces coefficients mod $J$, where $J \subset R$ is an ideal such that $R/J$ is finite. In this paper, we study the high degree cohomology of $\Gamma_n(J)$ when $R$ is a Euclidean domain and $J= (p)$ for $p \in R$ a prime. 

\paragraph{Borel-Serre Duality.}
Borel--Serre \cite{borel1973corners} proved that when $R$ is a number ring with fraction field $K$ and $\Gamma$ is a finite-index subgroup of $\SL_n(R)$, then $\Gamma$ is a rational duality group of some dimension $\nu$, which implies that
\begin{align*}
    \h^{\nu-i}(\Gamma; \Q) \cong \h_i(\Gamma; \mathfrak{D}\otimes \Q) \text{ for all } i
\end{align*}
for a $\Gamma$-module $\mathfrak{D}$ called the \emph{dualizing module}. This statement also holds with $\Z$-coefficients if $\Gamma$ is torsion-free.
In particular, we have
\begin{align*}
    \h^\nu(\Gamma; \Q) \cong \h_0(\Gamma; \mathfrak{D} \otimes \Q) \cong (\mathfrak{D}\otimes\Q)_{\Gamma},
\end{align*}
where the subscript indicates that we are taking coinvariants.
The dimension $\nu$ is given by:
\begin{align*}
    \nu = r \binom{n+1}{2} + cn^2 - n - r - c +1
\end{align*}
where
\begin{itemize}
    \item $r$ is the number of embeddings $K \hookrightarrow \R$
\item $c$ is the number of pairs of complex conjugate embeddings $K \hookrightarrow \bb{C}$ that do not factor through $\R$.
\end{itemize}

\paragraph{Steinberg Modules.} In the case of $\SL_n(R)$ for $R$ a number ring, the dualizing module $\mathfrak{D}$ has the following description.
For $K$ a field, let $\mc{T}_n(K)$ be the \emph{Tits building} for $\SL_n(K)$, that is, the simplicial complex whose $k$-simplices are flags
\begin{align*}
    0 \subsetneq V_0 \subsetneq \dots \subsetneq V_k \subsetneq K^n
\end{align*}
The Solomon-Tits theorem \cite{solomon1969steinberg, brown1989buildings} says that $\mc{T}_n(K)$ is homotopy equivalent to a wedge of spheres of dimension $(n-2)$. The \emph{Steinberg module} for $\SL_n(K)$, denoted $\St_n(K)$, is $\widetilde{\h}_{n-2}(\mc{T}_n(K))$.
The action of $\SL_n(K)$ (and its subgroups) on $\mc{T}_n(K)$ descends to an action on $\St_n(K)$. When $K$ is the field of fractions of a number ring, Borel--Serre proved that the dualizing module for a finite-index subgroup $\Gamma \subset \SL_n(R)$ is $\St_n(K)$.

The cohomology groups we are interested in are thus isomorphic to $(\St_n(K)\otimes \Q)_{\Gamma_n(p)}$, where $K$ is the field of fractions of a Euclidean number ring $R$, and $p \in R$ is prime. The $\otimes \Q$ is unnecessary if $\Gamma_n(p) \subset \SL_n(R)$ is torsion-free. 

\paragraph{Congruence subgroups of $\SL_n(\Z)$.} In \cite{miller2021top}, Miller--Patzt--Putman studied the top cohomology of congruence subgroups of $\SL_n(\Z)$, and answered a question of Lee--Szczarba \cite[remark on p. 28]{lee1976homology}, which we now describe.
The quotient map $\mc{T}_n(\Q) \to \mc{T}_n(\Q)/\Gamma_n(p)$ induces a $\Gamma_n(p)$-equivariant map
\begin{align}
\label{eqn:lsmap}
    \St_n(\Q) = \widetilde{\h}_{n-2}(\mc{T}_n(\Q)) \to \widetilde{\h}_{n-2}(\mc{T}_n(\Q)/\Gamma_n(p))
\end{align}

In \cite{lee1976homology}, Lee--Szczarba asked when this map induces an isomorphism on the $\Gamma_n(p)$-coinvariants of $\St_n(\Q)$ for primes $p \in \Z$.

\begin{question}[Lee--Szczarba]
\label{conj:lsiso}
    For a prime $p \in \Z$ and $n \geq 2$, when is the map
    \begin{align*}
        \h^{\binom{n}{2}}(\Gamma_n(p)) \to \widetilde{\h}_{n-2}(\mc{T}_n(\Q)/\Gamma_n(p))
    \end{align*}
    induced by \eqref{eqn:lsmap} an isomorphism?
\end{question}

Lee--Szczarba proved this map is an isomorphism for $p=3$. Their proof also yields an isomorphism for $p=2$ (in that case one has to tensor with $\Q$ because of torsion in $\Gamma_n(2) \subset \SL_n(\Z)$). They used the fact that all units of $\F_2$ and $\F_3$ can be lifted to units of $\Z$. 

In \cite{miller2021top}, Miller--Patzt--Putman gave a complete resolution of Question \ref{conj:lsiso}.

\begin{theorem}[{\cite[Theorem A]{miller2021top}}]
    \label{thm:MPPlsconj}
    For a prime $p \in \Z$ and $n \geq 2$, the map
    \begin{align*}
        \h^{\binom{n}{2}}(\Gamma_n(p)) \to \widetilde{\h}_{n-2}(\mc{T}_n(\Q)/\Gamma_n(p))
    \end{align*}
    induced by \eqref{eqn:lsmap} is a surjection. However, it is an injection if and only if $p \leq 5$. (For $p=2$ one needs to tensor with $\Q$ due to torsion in $\Gamma_n(2)$).
\end{theorem}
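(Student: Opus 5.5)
To prove \cite[Theorem A]{miller2021top} in this setting, I would work with coinvariants throughout. Borel--Serre duality identifies $\h^{\binom{n}{2}}(\Gamma_n(p))$ with $(\St_n(\Q))_{\Gamma_n(p)}$ (rationally when $p=2$). The map \eqref{eqn:lsmap} is $\Gamma_n(p)$-invariant, so it factors through these coinvariants, and the goal is to show the induced map
\[
\Phi\colon (\St_n(\Q))_{\Gamma_n(p)} \to \widetilde{\h}_{n-2}(\mc{T}_n(\Q)/\Gamma_n(p))
\]
is surjective for every $p$ and injective exactly when $p\le 5$.

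\textbf{Step 1: presentations of both sides.} By Ash--Rudolph, $\St_n(\Q)$ is generated by integral apartment classes $[v_1,\dots,v_n]$, where $(v_1,\dots,v_n)$ is a basis of $\Z^n$. Bykovskii's presentation adds three families of relations: the classes are invariant up to sign under permuting and negating the $v_i$, and they satisfy $[v_1,v_2,\dots]=[v_1,v_1+v_2,\dots]+[v_1+v_2,v_2,\dots]$. Taking $\Gamma_n(p)$-coinvariants, each symbol depends only on the images $\bar v_i\in\F_p^n$ up to sign; the resulting tuples form a basis of $\F_p^n$ with determinant $\pm1$. Hence $(\St_n(\Q))_{\Gamma_n(p)}$ is generated by symbols $[\bar v_1,\dots,\bar v_n]$ subject to the reduced Bykovskii relations. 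On the other side, $\mc{T}_n(\Q)/\Gamma_n(p)$ is the complex whose simplices are flags of summands of $\Z^n$ modulo $\Gamma_n(p)$. Since $\Gamma_n(p)$ acts transitively enough on lifts, this is isomorphic to a complex of flags in $\F_p^n$ in which each line carries a \emph{vector up to sign} rather than just a line (the ``augmented'' or ``$\pm$-vector'' building). Its top homology is spherical, so it is generated by apartment-type classes built from $\pm$-bases of $\F_p^n$.

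\textbf{Step 2: surjectivity.} The apartment classes of the quotient complex are precisely the images of the integral apartments $[v_1,\dots,v_n]$. I would prove that these generate $\widetilde{\h}_{n-2}$ of the quotient by induction on $n$, using a link/retraction argument analogous to the proof of Solomon--Tits. An alternative is to show the quotient complex is Cohen--Macaulay and run the Ash--Rudolph style argument there. In either case surjectivity follows, since every $\pm$-basis of determinant $\pm1$ lifts to $\SL_n(\Z)$ by strong approximation. Bases of other determinants must be handled too: scaling one vector by a unit $c\notin\{\pm1\}$ does not lift, but I expect the homology to be generated by determinant-$\pm1$ apartments anyway via the Bykovskii-type relation.

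\textbf{Step 3: injectivity iff $p\le 5$.} This is the main obstacle. For $p\le 5$, every unit of $\F_p$ is $\pm1$, so vectors up to sign coincide with vectors up to units. I would then compare the presentations directly. Both sides are generated by the same symbols, and I would show that every relation in the quotient homology, namely the cycle relations among apartments in the $\pm$-vector building, is a consequence of the reduced Bykovskii relations. This uses a presentation of the top homology of the quotient complex, for instance via a Bykovskii-style presentation proved by the same inductive link argument. For $p>5$, I would exhibit a nonzero element of $\ker\Phi$. The source has the extra relations available in the target when $n=2$ and $\pm$-vectors, but the target carries an extra symmetry, scaling $v_1$ by $c$ and $v_2$ by $c^{-1}$, which the source does not see. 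Concretely, for $n=2$ I would compute both sides. The quotient $\mc{T}_2(\Q)/\Gamma_2(p)$ is the set of cusps of $X(p)$, so its $\widetilde{\h}_0$ has rank $(p^2-1)/2-1$. Meanwhile $\h^1(\Gamma_2(p))$ has rank $2g+(\#\text{cusps})-1$, and the genus $g$ of $X(p)$ is positive exactly when $p\ge7$. For general $n$, I would detect the kernel by stabilization, mapping down to the $n=2$ case through a Levi/parabolic restriction on coinvariants, or by constructing an explicit class on which a suitable functional, such as one built from a modular-symbol pairing, is nonzero. The delicate part is making this detection functorial in $n$.
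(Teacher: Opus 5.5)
\textbf{Step~3 breaks down at $p=5$.} You claim that for $p\le 5$ every unit of $\F_p$ is $\pm1$. This is false for $p=5$: $2,3\in\F_5^\times$, so $\F_5^\times/\{\pm1\}$ has order $2$. That is exactly why Lee--Szczarba's unit-lifting argument stops at $p=3$. So you cannot simply match symbols. The target's natural generators include apartments of $\pm$-bases of determinant $\pm2$, and its relations involve them. The source only sees determinant-$\pm1$ symbols modulo reduced Bykovskii relations.

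The presentation of the target you need is Corollary~\ref{TUCoker}. Its input is that $\BDA_n^{\pm}(\F_p)$ is $(n-1)$-connected for all $n$, and via Theorem~\ref{St_U_Iso} (i.e.\ Theorem~\ref{thm:generallsiso} with $R=\Z$) this yields the isomorphism. For $p=2,3$ that connectivity is Proposition~\ref{BAUconnectivity}, since $\BDA_n^\U(\F)=\BA_n^\U(\F)$ when $\U=\F^\times$. For $p=5$ it is the real content of the case, Proposition~\ref{primeconditions}, which goes as follows:
\begin{enumerate}
\item Push the generators of $\pi_{n-1}(\B_n^\U(\F))$ (initial D-triangle and D-suspend maps) through the retraction of Lemma~\ref{BDretract}.
\item Show the choice of subdivision vertex is irrelevant up to homotopy (Lemma~\ref{choicefreedom}). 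This needs the arithmetic consequences of $|\F^\times/\U|=2$ in Lemmas~\ref{lem:nonsquaresconnected} and~\ref{lem:sumofunits}.
\item Kill both families of generators (Lemmas~\ref{killtriangle} and~\ref{killsuspend}).
\item Start the induction with Lemma~\ref{lem:BDA2conn}.
\end{enumerate}
An unspecified ``inductive link argument'' cannot substitute for this. The same connectivity is false for $p>5$, so any valid argument must use the index-$2$ hypothesis somewhere, and yours never does.

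\textbf{The $p>5$, $n\ge3$ direction is missing.} Your $n=2$ computation is right: the ranks are $2g+c-1$ versus $c-1$, with $g(X(p))=\frac{(p+2)(p-3)(p-5)}{24}>0$ exactly for $p\ge 7$. For $n\ge 3$, however, you give no construction of a restriction to the $n=2$ case compatible with the map to the quotient building, and you flag this yourself as unresolved. That is a missing proof, not a detail. Miller--Patzt--Putman obtain it by using the Bykovskii presentation to express the coinvariants through $\BDA_n^{\pm}(\F_p)$. They then import the failure of $1$-connectivity of $\BDA_2^{\pm}(\F_p)$ to all $n$ with a spectral sequence. This paper does not reprove that direction.

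\textbf{Two smaller points.}
\begin{enumerate}
\item The quotient $\cT_n(\Q)/\Gamma_n(p)$ is $\cT_n^{\pm}(\F_p)$, in which every subspace, not only every line, carries an orientation up to sign (Proposition~\ref{Gammaquotient}).
\item In Step~2 the determinant issue is easy. Coning off a top simplex of determinant $d\notin\{\pm1\}$ at $d^{-1}(\vec{v}_1+\dots+\vec{v}_n)$, as in Lemma~\ref{BDretract}, writes its apartment as a signed sum of $n$ determinant-$\pm1$ apartments. What you still owe is a proof that apartments generate $\widetilde{\h}_{n-2}(\cT_n^{\pm}(\F_p))$ at all. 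The paper gets generation by determinant-$\U$ apartments directly from the map-of-posets spectral sequence for $\BD_n^\U(\F)'\to\T_n^\U(\F)$ (Lemma~\ref{lem:TUSurj}, Corollary~\ref{cor:CPBykAnalogue}).
\end{enumerate}
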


Their proof used the high-connectivity of certain simplicial complexes that they denoted as $\BDA_n^{\pm}(\F_p)$. The isomorphism for $p \leq 5$ in the above theorem came from a better connectivity range for these simplicial complexes for $p \leq 5$, which does not hold for $p >5$. They also used Church--Putman's proof of the Bykovskii presentation for $\St_n(\Q)$ (see \cite{church2017codimension}).

In \cite{kupers2022generalized}, Kupers--Miller--Patzt--Wilson generalized Church--Putman's proof of the Bykovskii presentation for $\St_n(K)$ when $K$ is the fraction field of the Gaussian integers $\Z[\io]$ or the Eisenstein integers $\Z[\w]$. They also posed the following question.

\begin{question}[{\cite[Question 5.5]{kupers2022generalized}}]
    For what primes in the Gaussian or Eisenstein integers does the isomorphism of Theorem \ref{thm:MPPlsconj} hold? That is, for $R = \Z[\io]$ or $\Z[\w]$ with fraction field $K$, for what primes $p \in R$ is the map
    \begin{align*}
        (\St_n(K))_{\Gamma_n(p)} \to \widetilde{\h}_{n-2}(\mc{T}_n(K)/\Gamma_n(p))
    \end{align*}
    an isomorphism?
\end{question}

We give an answer to this question in this paper.

\begin{theorem}
\label{thm:kmpwanswer}
    Let $R$ be a Euclidean number ring with fraction field $K$ and $p \in R$ a prime. Examples of pairs $(R,p)$ for which the map
    \begin{align*}
        (\St_n(K))_{\Gamma_n(p)} \to \widetilde{\h}_{n-2}(\mc{T}_n(K)/\Gamma_n(p))
    \end{align*}
    is an isomorphism for $n \geq 2$ include: $(\Z[\io], 3)$, $(\Z[\io], 2\io+1)$, $(\Z[\w], 4\w+1)$, $(\Z[\w], 4\w+3)$, $(\Z[\w], 3\w+1)$, $(\Z[\w], 3\w+2)$, $(\Z[\sqrt{2}], 5)$, $(\Z[\sqrt{2}], 3+\sqrt{2})$, $(\Z[\zeta_5], 3)$ (and many more). 
    
    Here $\zeta_5$ is a primitive fifth root of unity.
\end{theorem}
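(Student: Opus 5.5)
The plan is to deduce Theorem \ref{thm:kmpwanswer} from a general criterion and then check it example by example. The general result, proved later in the paper, should be the analogue of Theorem \ref{thm:MPPlsconj} for Euclidean number rings: the map $(\St_n(K))_{\Gamma_n(p)} \to \widetilde{\h}_{n-2}(\mc{T}_n(K)/\Gamma_n(p))$ is always surjective, and it is an isomorphism whenever the image of the unit group $R^\times$ under reduction has small index in $(R/p)^\times$. My guess for the precise condition is
\begin{align*}
[(R/p)^\times : \im(R^\times \to (R/p)^\times)] \leq 2 .
\end{align*}
This guess comes from the case $R=\Z$. There the image is $\{\pm 1\}$ and $|\F_p^\times| = p-1$, so the index is at most $2$ exactly when $p \leq 5$. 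That matches Miller--Patzt--Putman's cutoff.

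The proof of the general criterion would follow Miller--Patzt--Putman's strategy. First, use the Bykovskii presentation of $\St_n(K)$ for Euclidean $R$, as generalized by Kupers--Miller--Patzt--Wilson, to write down a presentation of the coinvariants $(\St_n(K))_{\Gamma_n(p)}$. The generators are ``mod-$p$ apartment classes'', namely bases of $(R/p)^n$ up to scaling by the image of $R^\times$. Second, compare this with the quotient $\mc{T}_n(K)/\Gamma_n(p)$, which is the building-like complex of $(R/p)^n$ whose lines are decorated by lifts modulo units. Third, identify $\widetilde{\h}_{n-2}$ of that quotient via the high connectivity of generalized complexes $\BDA_n^{\pm}(R/p)$, where the signs $\pm$ are replaced by the image of $R^\times$. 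The small-index condition should be exactly what gives the improved connectivity range that forces injectivity. I expect this connectivity step to be the main obstacle. I would first try to adapt Miller--Patzt--Putman's link arguments, using that lifts of a residue unit differ by at most two classes.

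With the criterion in hand, Theorem \ref{thm:kmpwanswer} reduces to finite computations of $|(R/p)^\times|$ and of the order of the image of $R^\times$. For example:
\begin{itemize}
\item For $(\Z[\io], 2\io+1)$ we have $R/p \cong \F_5$, and $\{\pm 1, \pm \io\}$ surjects onto $\F_5^\times$.
\item For $(\Z[\io], 3)$ we have $R/p \cong \F_9$, and the four units inject into $\F_9^\times$, so the index is $8/4 = 2$.
\item For $(\Z[\w], 3\w+1)$ and $(\Z[\w], 3\w+2)$ the norms are $7$, and the six units surject onto $\F_7^\times$.
\item For $(\Z[\w], 4\w+1)$ and $(\Z[\w], 4\w+3)$ the norms are $13$, and the image has order $6$, so the index is $12/6 = 2$.
\end{itemize}

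For $\Z[\sqrt{2}]$ the unit group is $\pm(1+\sqrt{2})^{\Z}$. For $p=5$ (inert, so $R/p \cong \F_{25}$) I would compute the multiplicative order of $1+\sqrt{2}$ in $\F_{25}^\times$. For $p = 3+\sqrt{2}$ (norm $7$) I would reduce $1+\sqrt{2}$ to an element of $\F_7$ and compute its order. In each case I would check that together with $-1$ it generates a subgroup of index at most $2$. For $(\Z[\zeta_5],3)$ we have $R/3 \cong \F_{81}$, since $3$ has order $4$ mod $5$. The roots of unity $\pm\zeta_5^k$ give $10$ elements. Adjoining the fundamental unit $1+\zeta_5$, equivalently the golden-ratio unit, the generated subgroup should have order $40$ or $80$, which I would confirm by a direct order computation in $\F_{81}$.
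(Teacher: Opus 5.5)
Your outer reduction matches the paper's: Theorem \ref{thm:kmpwanswer} is deduced from the criterion $|\F^\times/\U|\le 2$, which you guessed exactly (it is the hypothesis of Theorem \ref{thm:generallsiso}), plus arithmetic. Your computations are correct, and the unfinished ones come out as hoped:
$(1+\sqrt2)^3=7+5\sqrt2\equiv 2 \bmod 5$, so the image of $R^\times$ in $\F_{25}^\times$ has order $12$.
Modulo $3+\sqrt2$ one has $1+\sqrt2\equiv 5$, which generates $\F_7^\times$.
For $\phi=\frac{1+\sqrt5}{2}$ one has $\phi^4\equiv -1 \bmod 3$, so together with $\zeta_5$ the image in $\F_{81}^\times$ has order $40$.

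The gap is in your plan for proving the criterion. Its first step invokes a Bykovskii presentation of $\St_n(K)$ for Euclidean $R$. Kupers--Miller--Patzt--Wilson prove this only for $\Z[\io]$ and $\Z[\w]$ (Bykovskii and Church--Putman for $\Z$), and no such presentation is known for $\Z[\sqrt2]$ or $\Z[\zeta_5]$. So your argument does not reach three of the nine listed pairs. The same dependence is hidden in the connectivity step you plan to adapt. Miller--Patzt--Putman's Lemma 2.63, their analogue of Lemma \ref{choicehelper}, uses that $\BA_n^{\pm}(\Z)$ is $(n-1)$-connected, which is the integral input behind the Bykovskii presentation.

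The missing idea is that you never need a presentation of the source, only one of the target.

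\emph{Surjectivity.} You only need that integral apartments generate $\St_n(K)$ (Ash--Rudolph, reproved by Church--Putman, valid for every Euclidean $R$). You also need that $(\C_{n-1}(\B_n^{R^\times}(R)))_{\Gamma_n(p)}\cong\C_{n-1}(\BD_n^\U(\F))$, which follows from surjectivity of $\SL_n(R)\to\SL_n(\F)$. Two corrections to your setup: the mod-$p$ generators are determinant-$\U$ bases, not all bases; and $\cT_n(K)/\Gamma_n(p)\cong\cT_n^\U(\F)$ carries a $\U$-orientation on every summand, not only on lines.

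\emph{Injectivity.} Suppose $\BDA_m^\U(\F)$ is $(m-1)$-connected for all $m$. Then the map-of-posets spectral sequence for $\mathcal{P}(\BDA_n^\U(\F)')\to\T_n^\U(\F)$ presents $\widetilde{\h}_{n-2}(\cT_n^\U(\F))$ as mod-$p$ apartments modulo mod-$p$ Bykovskii relations (Corollary \ref{TUCoker}). Each such relation lifts to a three-term relation among integral apartments: lift the basis to a matrix in $\SL_n(R)$ and lift $\lambda,\nu\in\U$ to units of $R$. That relation holds in $\St_n(K)$ by direct computation, so the kernel dies in the coinvariants.

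\emph{Connectivity.} This must be proved purely over $\F$. It is automatic when $\U=\F^\times$, since then $\BDA_n^\U(\F)=\BA_n^\U(\F)$. In the index-$2$ case the paper replaces the integral input by finite-field facts:
$\U$ is the set of squares;
the graph on $\F^\times\setminus\U$ with $\U$-differences is connected, by a character-sum count;
every non-square is a sum of two elements of $\U$.
These facts make the retraction onto $\BD_n^\U(\F)$ independent of choices up to homotopy, and let it kill the D-triangle and D-suspend generators. A direct discrete Morse argument then shows $\BDA_2^\U(\F)$ is simply connected. Your sketch leaves this step open.
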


In fact, we prove a more general result, which forms the main theorem of this paper. We generalize the high-connectivity result of Miller--Patzt--Putman \cite[Proposition 2.50]{miller2021top}. Our proof also avoids their use of the Bykovskii presentation, which lets us compute the $\Gamma_n(p)$-coinvariants of $\St_n(K)$ for rings such as $\Z[\sqrt{2}]$ and $\Z[\zeta_5]$. Our result is as follows.

\begin{theorem}
    \label{thm:generallsiso}
    Let $R$ be a Euclidean number ring and $p \in R$ a prime. Let $\F = R/(p)$ and let $K$ be the field of fractions of $R$.
    Then the map
    \begin{align}
    \label{eqn:lsgeniso}
        (\St_n(K))_{\Gamma_n(p)} \to \widetilde{\h}_{n-2}(\mc{T}_n(K)/\Gamma_n(p))
    \end{align}
    is always surjective for $n \geq 2$.
    
    Let $\U \subset \F^\times$ be the image of $\R^\times$ under the map $R \to \F$. The map \ref{eqn:lsgeniso} is additionally an isomorphism for $n \geq 2$ if the quotient group $\F^\times /\U$ is trivial or has order 2.
\end{theorem}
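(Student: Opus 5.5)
We follow the strategy of Miller--Patzt--Putman, adapted so that it never uses the Bykovskii presentation; instead we rely only on the Euclidean algorithm in $R$. The work splits into three parts: identify the target $\widetilde{\h}_{n-2}(\mc{T}_n(K)/\Gamma_n(p))$ algebraically; prove surjectivity using generation by apartment classes; prove injectivity when $|\F^\times/\U|\le 2$ using a high-connectivity statement.

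\textbf{Identifying the quotient.} Since $R$ is Euclidean, hence a PID, every subspace $V\subset K^n$ is determined by the saturated summand $V\cap R^n$. Thus $\Gamma_n(p)$-orbits of flags are the same as flags of summands of $R^n$ modulo $\Gamma_n(p)$. Reducing mod $p$ sends a summand of rank $k$ to a subspace of $\F^n$ together with extra data: for a line $\lin{v}$, the vector $\bar v$ is determined up to the image $\U$ of $R^\times$. We first show that $\Gamma_n(p)$ acts transitively on summands (and on flags of summands) with the same reduced data. To do this, we lift elements of $\SL_n(\F)$ to $\SL_n(R)$, using the surjectivity $\SL_n(R)\to\SL_n(\F)$, which holds because $R$ is Euclidean.

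This identifies $\mc{T}_n(K)/\Gamma_n(p)$ with a complex $\mc{T}_n^{\pm}(\F)$ built from flags in $\F^n$ decorated by "$\U$-orientations": for each successive quotient, a choice of determinant modulo $\U$. Following MPP, we then show this complex is $(n-2)$-spherical.

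\textbf{Surjectivity.} $\St_n(K)$ is generated by apartment classes $[v_1,\dots,v_n]$ for bases of $K^n$, and by the Euclidean algorithm we may take them to be integral unimodular bases (Ash--Rudolph). Their images are the apartment classes of $\mc{T}_n^{\pm}(\F)$ for the bases $(\bar v_1,\dots,\bar v_n)$ with orientations. We then show that $\widetilde{\h}_{n-2}(\mc{T}_n^{\pm}(\F))$ is generated by such classes. For this we induct on $n$ via the standard link/retraction argument, as in Solomon--Tits: every $(n-2)$-cycle is homologous to a sum of oriented apartments. This gives surjectivity of \eqref{eqn:lsgeniso} for every $p$.

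\textbf{Injectivity when $|\F^\times/\U|\le 2$.} We construct an inverse map $\widetilde{\h}_{n-2}(\mc{T}_n^{\pm}(\F))\to(\St_n(K))_{\Gamma_n(p)}$ that sends an oriented apartment to the class of any integral lift $[v_1,\dots,v_n]$. Two things must be checked.
\begin{enumerate}[(i)]
\item The class is independent of the lift. Two lifts with the same reductions and the same $\U$-data differ by an element of $\Gamma_n(p)$ times unit rescalings, and the latter do not change apartment classes.
\item The relations among oriented apartments in $\mc{T}_n^{\pm}(\F)$ are generated by relations that also hold in $\St_n(K)$.
\end{enumerate}
For (ii), we present $\widetilde{\h}_{n-2}$ through a complex of "augmented partial bases", analogous to $\BDA_n^{\pm}(\F)$, where the decoration is now valued in $\F^\times/\U$. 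We then prove this complex is $(n-1)$-connected, rather than merely $(n-2)$-connected, when $|\F^\times/\U|\le 2$. That extra degree of connectivity shows the only relations are the lifted ones: the basic relations $[v_1,v_2,\dots]=[v_1,v_1+v_2,\dots]+[v_1+v_2,v_2,\dots]$ and sign and permutation relations, all of which hold in $\St_n(K)$.

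The \textbf{main obstacle} is this connectivity proof. We would try the MPP link argument, processing vertices by a "badness" filtration. The condition $|\F^\times/\U|\le 2$ enters when filling small spheres spanned by vectors $\bar v$ and $c\bar v$ with $c\notin\U$. If $c\equiv -1$ modulo $\U$, or $c\in\U$, the additive relation $v+(-v)$ provides a cone point. For larger quotients no such cone point exists, and this is exactly why the hypothesis is needed.
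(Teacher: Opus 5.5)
Your architecture matches the paper's. You identify $\cT_n(K)/\Gamma_n(p)$ with the $\U$-oriented building $\cT_n^\U(\F)$, get surjectivity from Ash--Rudolph generation of $\St_n(K)$ by unimodular integral apartments, and get injectivity from a presentation of $\widetilde{\h}_{n-2}(\cT_n^\U(\F))$. That presentation is by determinant-$\U$ apartments modulo additive relations with coefficients in $\U$, which lift to relations in $\St_n(K)$; it follows from a map-of-posets spectral sequence once $\BDA_k^\U(\F)$ is $(k-1)$-connected for all $k$.

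The gap is that this connectivity statement is only announced, yet it is the actual content of the theorem in the index-$2$ case. (For $\U=\F^\times$ the complex is $\BA_n^{\U}(\F)$ and MPP's Proposition 2.29 applies.) The mechanism you sketch for it does not work. Since $-1\in\U$ always, ``$c\equiv -1$ modulo $\U$'' just means $c\in\U$, so your dichotomy is empty for $c\notin\U$. Also $\vec v+(-\vec v)=0$ is not a vertex, so there is no such cone point. Your remark that the hypothesis is ``exactly'' what is needed is unproved (the paper only conjectures necessity) and plays no role in the stated implication.

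What is needed, and what the paper does, is the following.
\begin{itemize}
\item Rework MPP's generating set so that $\pi_{n-1}(\B_n^\U(\F))$ is generated by initial D-triangle and D-suspend maps, whose additive relations have coefficients in $\U$. When $m=0$ every homology generator contains a D-triangle or D-suspend factor, so the multi-suspends $\overline{\llb[\vec v],[c\vec v]\rrb}$ with $c\notin\U$ get absorbed.
\item Push these into $\BDA_n^\U(\F)$ via the retraction $\rho$, which subdivides each top simplex of determinant in $\F^\times\setminus\U$ by a new vertex, and show the images die.
\item The index-$2$ hypothesis enters in proving that the choice of subdivision vertex is irrelevant up to homotopy (Lemma \ref{choicefreedom}). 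This uses two facts: a product of two elements of $\F^\times\setminus\U$ lies in $\U$, and the graph on $\F^\times\setminus\U$ with edges ``differ by an element of $\U$'' is connected (a quadratic-character count).
\item With that freedom, one chooses subdivision vertices (splitting into the cases $2\in\U$ and $2\notin\U$) so that every piece is the boundary of an additive simplex or a degenerate sphere.
\item D-suspends are then killed by induction on $n$.
\item The base case, simple connectivity of $\BDA_2^\U(\F)$, needs a separate vertex-by-vertex argument using that every element of $\F^\times\setminus\U$ is a sum of two elements of $\U$.
\end{itemize}
None of this is in your proposal.

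A smaller point concerns surjectivity. Only oriented apartments of bases with determinant in $\U$ have unimodular integral lifts. So your inverse map is defined only on those, and you must show they generate. A Solomon--Tits-type argument gives at best all oriented apartments, so you need an extra step. One option is $(n-2)$-connectivity of $\BD_n^\U(\F)$, as in the paper. Another is the relation rewriting an apartment of determinant $d\notin\U$ as an alternating sum of the $n$ apartments obtained by adjoining $d^{-1}(\vec v_1+\cdots+\vec v_n)$.
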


Combining this result with Borel--Serre duality \cite{borel1973corners} yields the following corollary.

\begin{corollary}\label{cor:topcong}
    Let $R$ be a Euclidean number ring with fraction field $K$, and let $p \in R$ be a prime. Let $F$ be the quotient field $R/(p)$.

    Then for $n \geq 2$, we have a surjection
    \begin{align}
    \label{eqn:corlsiso}
        \h^\nu(\Gamma_n(p); \Q) \twoheadrightarrow \widetilde{\h}_{n-2}(\mc{T}_n(K)/\Gamma_n(p); \Q)
    \end{align}
    where $\nu$ is given by
    \begin{align*}
    \nu = r \binom{n+1}{2} + cn^2 - n - r - c +1
\end{align*}
where
\begin{itemize}
    \item $r$ is the number of embeddings $K \hookrightarrow \R$
\item $c$ is the number of pairs of complex conjugate embeddings $K \hookrightarrow \bb{C}$ that do not factor through $\R$.
\end{itemize}

Let $\U \subset \F^\times$ be the image of $\R^\times$ under the map $R \to \F$.
    Suppose in addition that the quotient group $\F^\times /\U$ is trivial or has order 2.
    Then, the map \eqref{eqn:corlsiso} is in fact an isomorphism for $n \geq 2$. The statement also holds with $\Z$-coefficients if $\Gamma_n(p)$ is torsion-free.

\end{corollary}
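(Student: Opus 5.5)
This corollary is a formal consequence of Theorem~\ref{thm:generallsiso} combined with Borel--Serre duality, so the plan is to chain the two identifications and check that tensoring with $\Q$ behaves well.

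\textbf{Step 1: rational duality.} By Borel--Serre \cite{borel1973corners}, $\Gamma_n(p)$ is a finite-index subgroup of $\SL_n(R)$. It is therefore a rational duality group of dimension $\nu$, with dualizing module $\St_n(K)$. This gives
\begin{align*}
\h^\nu(\Gamma_n(p);\Q) \cong \h_0(\Gamma_n(p);\St_n(K)\otimes\Q) \cong (\St_n(K)\otimes \Q)_{\Gamma_n(p)}.
\end{align*}
Coinvariants are a right exact functor, namely $-\otimes_{\Z[\Gamma_n(p)]}\Z$. Since $\Q$ is flat over $\Z$, we get
\begin{align*}
(\St_n(K)\otimes\Q)_{\Gamma_n(p)} \cong (\St_n(K))_{\Gamma_n(p)}\otimes \Q.
\end{align*}

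\textbf{Step 2: the target.} The quotient $\mc{T}_n(K)/\Gamma_n(p)$ is a CW complex. The universal coefficient theorem together with flatness of $\Q$ gives
\begin{align*}
\widetilde{\h}_{n-2}(\mc{T}_n(K)/\Gamma_n(p);\Q)\cong \widetilde{\h}_{n-2}(\mc{T}_n(K)/\Gamma_n(p))\otimes\Q.
\end{align*}
Under these identifications, the map \eqref{eqn:corlsiso} is the map \eqref{eqn:lsgeniso} tensored with $\Q$. To see this, note that the map \eqref{eqn:lsgeniso} is induced by the $\Gamma_n(p)$-equivariant quotient map $\St_n(K)\to \widetilde{\h}_{n-2}(\mc{T}_n(K)/\Gamma_n(p))$, and the rational version of the map is defined by the same construction.

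\textbf{Step 3: conclusion.} Theorem~\ref{thm:generallsiso} says \eqref{eqn:lsgeniso} is surjective for all $n\ge 2$. It is an isomorphism when $\F^\times/\U$ has order at most $2$. Both properties are preserved by the exact functor $-\otimes\Q$, which gives the surjection and, under the hypothesis, the isomorphism in \eqref{eqn:corlsiso}.

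\textbf{Integral case.} If $\Gamma_n(p)$ is torsion-free, Borel--Serre gives integral duality, so $\h^\nu(\Gamma_n(p))\cong(\St_n(K))_{\Gamma_n(p)}$. The claim then follows directly from Theorem~\ref{thm:generallsiso} without tensoring.

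\textbf{Main point to check.} The only step needing care is the compatibility in Step~2. The duality isomorphism identifies $\h^\nu$ with coinvariants, but the map to $\widetilde{\h}_{n-2}$ of the quotient is only defined on the coinvariant side. So \eqref{eqn:corlsiso} should be understood as the composite of the duality isomorphism with \eqref{eqn:lsgeniso}$\otimes\Q$, exactly as the map in Question~\ref{conj:lsiso} is. With that understanding, the corollary requires no further argument.
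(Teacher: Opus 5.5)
Your proposal is correct and follows the paper's own argument: the paper states this corollary as an immediate consequence of combining Theorem~\ref{thm:generallsiso} with Borel--Serre duality, which identifies $\h^\nu(\Gamma_n(p);\Q)$ with $(\St_n(K)\otimes\Q)_{\Gamma_n(p)}$, and does so integrally when $\Gamma_n(p)$ is torsion-free. Your extra bookkeeping is exactly the routine detail the paper leaves implicit, namely commuting coinvariants and rational homology with $-\otimes\Q$ and noting that surjections and isomorphisms survive this.
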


Theorem \ref{thm:kmpwanswer} follows from Theorem \ref{thm:generallsiso} since in the cases listed in Theorem \ref{thm:kmpwanswer}, it turns out that $\F$ and $\U$ satisfy the condition that $|\F^\times/\U| = 1$ or 2.

In the case when $|\F^\times/\U| = 1$, the prime $p$ is said to be \emph{universal side divisor} of $R$. Universal side divisors are easy to generate in a Euclidean domain $R$ with multiplicative norm: any element $p \in R$ with minimal norm that is not a unit is necessarily a prime, and every element of $R$ will either be a multiple of $p$ or differ from a multiple of $p$ by a unit in $R^\times$. It is not known to the author how to generate examples of pairs $(R,p)$ where $|\F^\times /\U|=2$, but as evidenced by Theorem \ref{thm:kmpwanswer}, many examples exist. We conjecture that the condition that $|\F^\times /\U| = 1$ or $2$ is necessary for the map \ref{eqn:lsgeniso} to be an isomorphism.

\begin{conjecture}
    Let $R$ be a Euclidean number ring and $p \in R$ a prime. Let $\F = R/(p)$ and let $K$ be the field of fractions of $R$. Let $\U \subset \F^\times$ be the image of $\R^\times$ under the map $R \to \F$.
    Then the map
    \begin{align}
        (\St_n(K))_{\Gamma_n(p)} \to \widetilde{\h}_{n-2}(\mc{T}_n(K)/\Gamma_n(p))
    \end{align}
    is an isomorphism for all $n \geq 2$ if and only if $|\F^\times/\U| = 1$ or $2$.
\end{conjecture}

When $p$ is a universal side divisor of $R$, i.e. when $|\F^\times/\U| = 1$, the analogue of Theorem \ref{thm:generallsiso} was established for congruence subgroups of the symplectic group $\Sp_{2n}(R)$ in recent work of the author (\cite[Theorem 1.5]{pal2026projective}). The analogue of the assertion of surjectivity from Theorem \ref{thm:generallsiso} was established for congruence subgroups of $\Sp_{2n}\Z$ in recent work of Capovilla-Searle \cite[Theorem A]{capovilla2026top}.

Recent work of Scalamandre \cite{scalamandre2026high} studies the structure of the cohomology group $\h^\nu(\Gamma_n(p);\mathbb{C})$ as a representation of $\SL_n(R/(p))$. Scalamandre proves that for \emph{any} number ring $R$ and prime ideal $\mathfrak{p} \subset R$, that the Steinberg representation $\St_n(R/\mathfrak{p}) \otimes \mathbb{C}$ of $\SL_n(R/\mathfrak{p})$ appears with multiplicity one in $\h^\nu(\Gamma_n(\mathfrak{p}); \mathbb{C})$ (\cite[Theorem A]{scalamandre2026high}). Furthermore, in the cases when $\h^\nu(\Gamma_n(\mathfrak{p}); \mathbb{C})$ is completely understood (such as in the cases implied by Theorem \ref{thm:generallsiso}), the $\SL_n(R/\mathfrak{p})$-representation $\St_n(R/\mathfrak{p}) \otimes \mathbb{C}$ does not appear in $\h^{\nu-1}(\Gamma_n(\mathfrak{p}); \mathbb{C})$ (\cite[Theorem B]{scalamandre2026high}).


An advantage of the isomorphism of Equation \eqref{eqn:corlsiso} is that it is possible to compute the rank of $\widetilde{\h}_{n-2}(\mc{T}_n(K)/\Gamma_n(p))$. Miller--Patzt--Putman \cite[Theorem C]{miller2021top} gave a recursive formula for the rank, which is summarized below. Though they considered the specific case where $R = \Z$ and $K = \Q$, the same proof works more generally. Their proof proceeded by relating the quotient $\mc{T}_n(K)/\Gamma_n(p)$ to a simplicial complex similar to the Tits building, and used a discrete Morse theory argument to count the number of spheres in its homotopy type.

\begin{prop}[{\cite[Theorem C]{miller2021top}}]
    \label{prop:Urank}
     Let $R$ be a PID and $K$ its field of fractions. Let $p \in R$ be a prime and suppose $\F \coloneq R/(p)$ is finite. Let $\U$ be the image of $R^\times$ under the map $R \to \F$.
     For $n \geq 1$, let $t_n$ be the rank of $\widetilde{\h}_{n-2}(\cT_n(K)/\Gamma_n(p))$. We then have $t_1 = 1$ and 
    \begin{align*}
        t_n = \left((|\F^\times/\U|-1) + (|\F^\times/\U|)|\F|^{n-1}\right)t_{n-1} + |\F/\U|(|\F^\times/\U|-1)\sum_{k=1}^{n-2}|\F|^k |\Gr_k(\F^{n-1})|t_kt_{n-k-1}
    \end{align*}
    for $n \geq 2$.
    Here $\Gr_k(V)$ denotes the set of $k$-dimensional subspaces of an $\F$-vector space $V$.
\end{prop}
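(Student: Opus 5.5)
The plan is to first give a combinatorial model of $\cT_n(K)/\Gamma_n(p)$ in terms of linear algebra over $\F$, and then run a discrete Morse theory argument on that model, following the approach of \cite[Theorem C]{miller2021top}.

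\textbf{Step 1: the model.} Since $R$ is a PID, every subspace $V\subseteq K^n$ of dimension $k$ determines a rank-$k$ direct summand $V\cap R^n$. Choosing a basis $v_1,\dots,v_k$ of this summand, reduction mod $p$ gives
\begin{itemize}
\item[(a)] a $k$-dimensional subspace $\overline{V}\subseteq \F^n$, and
\item[(b)] the class of $\bar v_1\wedge\dots\wedge\bar v_k$ in $(\wedge^k\overline V\setminus 0)/\U$.
\end{itemize}
The class in (b) is well defined because a change of basis multiplies the wedge by a unit of $R$. I will call such a pair $(\overline V,[\omega])$ a \emph{$\U$-oriented subspace}. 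The claim is that the vertices of $\cT_n(K)/\Gamma_n(p)$ are exactly the $\U$-oriented proper nonzero subspaces of $\F^n$. There are two things to prove.
\begin{itemize}
\item[(i)] Surjectivity. Every $\U$-oriented subspace arises: lift a basis of $\overline V$ to a basis of a summand, then rescale one vector by a lift of the needed element of $\F^\times$. This is possible because the map $\SL_n(R)\to\SL_n(\F)$ is surjective for $R$ a PID.
\item[(ii)] Injectivity. Two summands with the same data are $\Gamma_n(p)$-equivalent. Move one onto the other by some $g\in\SL_n(R)$ chosen so that the reductions agree on the bases; then correct by an element of the stabilizer to land in $\Gamma_n(p)$. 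This uses that the orientation pins down the determinant of the block.
\end{itemize}
The same argument, applied to an adapted basis of a flag, shows that a simplex orbit is determined by its vertex orbits. Hence the quotient is the simplicial complex $\cT^{\U}_n(\F)$ of flags of $\U$-oriented subspaces. Its simplices are flags whose underlying subspaces are nested, with the orientations \emph{independent}. The rank count should confirm this: a $k$-dimensional subspace of $\F^n$ has $|\F^\times/\U|$ orientations.

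\textbf{Step 2: Morse theory.} Here I prove that $\cT^{\U}_n(\F)$ is homotopy equivalent to a wedge of $t_n$ spheres of dimension $n-2$, and derive the recursion at the same time.
\begin{itemize}
\item Fix the line $L_0=\langle e_1\rangle$ with a fixed orientation $o_0$, and the coordinate hyperplane $H=\langle e_2,\dots,e_n\rangle$.
\item Define an acyclic matching that pairs a flag with the flag obtained by inserting or removing the vertex $(L_0,o_0)$, or more generally the vertex $(W+L_0,\cdot)$ with an orientation induced canonically from $W$ and $o_0$, whenever this is legal.
\item Identify the unmatched (critical) cells. These should fall into three types:
\begin{itemize}
\item[(1)] flags containing $L_0$ with a \emph{non-chosen} orientation, contributing $(|\F^\times/\U|-1)t_{n-1}$ via the link, which is $\cT^{\U}_{n-1}(\F)$;
\item[(2)] flags whose first term is a line $L\not\subseteq H$, i.e.\ a complement-type choice, contributing $|\F^\times/\U|\,|\F|^{n-1}t_{n-1}$;
\item[(3)] flags that ``jump'' across $L_0$ at a level $k$, namely a $k$-dimensional oriented subspace transverse to $L_0$ followed by its span with $L_0$ under a mismatched orientation.
\end{itemize}
\item For type (3), count the transverse $k$-subspaces, equivalently graphs over $\Gr_k(\F^{n-1})$. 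There are $|\F|^k|\Gr_k(\F^{n-1})|$ of these, times $|\F/\U|$ choices of affine offset/orientation data, times $(|\F^\times/\U|-1)$ mismatched orientations. The links factor as joins of $\cT^{\U}_k$ and $\cT^{\U}_{n-k-1}$, giving $t_kt_{n-k-1}$.
\end{itemize}
All critical cells should lie in dimension $n-2$, after inductively replacing the links by their wedge-of-spheres models. This yields both the homotopy type and the recursion, with the base case $t_1=1$ coming from the empty building, i.e.\ $S^{-1}$.

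\textbf{Main obstacle.} The hard part is Step 2: designing the matching so that it is acyclic and so that the critical cells have exactly the multiplicities in the formula. In particular I need to pin down the factor $|\F/\U|$ precisely, meaning how orientations on a jump $W\subset W+L_0$ interact, and to check that no lower-dimensional critical cells survive. My fallback is to verify the $(n-2)$-sphericity separately, via a link/Cohen--Macaulay argument by induction on $n$. Then $t_n$ equals $(-1)^{n}\tilde\chi$, and I would derive the recursion by an Euler characteristic count over simplices grouped by their relation to $L_0$.
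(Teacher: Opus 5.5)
\textbf{The overall route matches the paper's, but the step that produces the recursion is missing.} Proposition \ref{Gammaquotient} identifies $\cT_n(K)/\Gamma_n(p)$ with $\cT_n^\U(\F)$, and the recursion then comes from the Bestvina-style argument of \cite[Theorem C]{miller2021top}, which the paper cites rather than reproduces. Your Step 1 is essentially fine, with one fix in (i). You must rescale in $\F^n$ first (compensating on a vector outside $\overline V$) and only then lift through $\SL_n(R)\to\SL_n(\F)$. Multiplying a basis vector of a summand of $R^n$ by a lift of $c\notin\U$, which is necessarily a non-unit, gives a non-saturated submodule whose saturation still carries the old orientation.

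The genuine gap is Step 2. You define no matching and prove neither acyclicity nor contractibility of anything. The critical pieces are read off from the shape of the formula, and two of these readings are wrong:
\begin{enumerate}
\item Type (2) is wrong for your own setup. With cone point $(L_0,o_0)$ and $W\mapsto W+L_0$, the construction breaks down at the $q^{n-1}$ hyperplanes $H'$ with $H'\oplus L_0=\F^n$. Each carries any of $m:=|\F^\times/\U|$ orientations and has link $\cT^\U(H')$. It does not break down at lines outside $H$: that set contains $L_0$ itself, and for $n\ge 3$ every other line $L$ is sent to the proper subspace $L+L_0$.
\item The factor printed as $|\F/\U|$ is not ``affine offset'' data; the offsets are already the $q^k$, where $q=|\F|$. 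For $n=3$, $\cT_3^\U(\F)$ is a connected graph with $2m(q^2+q+1)$ vertices and $m^2(q+1)(q^2+q+1)$ edges. Then $t_3=E-V+1$ forces the coefficient to be $m(m-1)$. So that factor must be read as $|\F^\times/\U|$, the number of orientations on the $k$-dimensional subspace $V$.
\end{enumerate}

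\textbf{What closes the gap.} Put $o_0=[e_1]$. Let $Z\subseteq\cT_n^\U(\F)$ be the subcomplex of flags containing none of the following:
\begin{enumerate}
\item a vertex $(L_0,o)$ with $o\neq o_0$;
\item a vertex $(H',o)$ with $H'\oplus L_0=\F^n$;
\item a pair $(V,[\omega])\subsetneq(V\oplus L_0,o')$ with $o'\neq[\omega\wedge e_1]$.
\end{enumerate}
Such a mismatched pair has codimension one, so deleting these comparabilities still leaves a poset, and $Z$ is its order complex. Define $f(V,[\omega])=(V\oplus L_0,[\omega\wedge e_1])$ for $L_0\not\subseteq V$ and $f=\mathrm{id}$ otherwise. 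Then $f$ is monotone on this poset; the deleted comparabilities are exactly the ones where it would fail to be. Since $x\le f(x)\ge(L_0,o_0)$, $Z$ is contractible.

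No flag contains two excluded pieces, and their links lie in $Z$. So $\cT_n^\U(\F)$ is $Z$ with contractible closed stars glued on:
\begin{enumerate}
\item along $\cT^\U(\F^n/L_0)$, for $m-1$ vertices;
\item along $\cT^\U(H')$, for $mq^{n-1}$ vertices;
\item along $\partial e*\cT^\U(V)*\cT^\U(\F^n/(V\oplus L_0))$, for the $m(m-1)q^k|\Gr_k(\F^{n-1})|$ mismatched edges $e$ with $\dim V=k$, $1\le k\le n-2$.
\end{enumerate}
By induction these contribute $t_{n-1}$, $t_{n-1}$, and $t_kt_{n-k-1}$ spheres of dimension $n-2$ respectively. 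This gives sphericity and the recursion at once.

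Your fallback does not get around this. Sphericity is indeed available from Lemma \ref{solomontitsU}. But the natural Euler characteristic count, conditioning on the smallest term of a flag, yields a different recursion that is linear in all earlier $t_j$. Regrouping simplices relative to $L_0$ as you suggest requires $\chi(Z)=1$, which is the same contractibility statement.
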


\begin{remark}
    In particular, Proposition \ref{prop:Urank} above gives a loose lower bound on the rank of $t_n = \widetilde{\h}_{n-2}(\cT_n(K)/\Gamma_n(p))$. In particular, the recursive formula implies that
    \begin{align*}
        t_n \geq |\F|^{\binom{n}{2}}|\F^\times/\U|^{n-1}
    \end{align*}
    for $n \geq 1$ (with strict inequality for $n>1$).
    Indeed, note that equality holds for $n=1$, and for $n>1$ the formula of \ref{prop:Urank} recursively gives
    \begin{align*}
        t_n & > (|\F^\times/\U|)|\F|^{n-1}t_{n-1} \geq (|\F^\times/\U|)|\F|^{n-1} |\F|^{\binom{n-1}{2}}|\F^\times/\U|^{n-2} \\
        & > |\F|^{\binom{n}{2}}|\F^\times/\U|^{n-1}
    \end{align*}

    This is analogous to Paraschivescu's \cite{paraschivescu1997generalization} lower bound on $\h^{\binom{n}{2}}(\Gamma_n(p))$ for $p \in \Z$.
\end{remark}

Miller--Patzt--Putman also used the failure of certain complexes that they call $\BDA^\pm_n(\F_p)$ to be highly acyclic for $p >5$ to produce better lower bounds for $(\St_n(\Q))_{\Gamma_n(p)}$ than the one given by Theorem \ref{thm:MPPlsconj}. In the case of $n=2$, the complexes $\BDA^\pm_2(\F_p)$ are easily seen to not be 1-connected for $p >5$ by relating them to the level-$p$ modular curve, which has genus $\frac{(p+2)(p-3)(p-5)}{24}$. Miller--Patzt--Putman then used a novel spectral sequence argument to import this failure of acyclicity to $\BDA^\pm_n(\F_p)$ for $n >2$ (and $p >5)$. For number rings that are not $\Z$, it is not clear if there is a similar number theoretic interpretation of the analogues of the complexes $\BDA^\pm_2(\F_p)$. But if one can show that they fail to be 1-connected, then one should be able to use Miller--Patzt--Putman's methods more generally to obtain better lower bounds for $\h^\nu(\Gamma_n(p))$, though we expect these bounds to be highly non-optimal. We have not pursued this approach in the present article.


\paragraph{Outline.} The first main ingredient in the proof of Theorem \ref{thm:generallsiso} is a high-connectivity result for certain simplicial complexes, that are generalizations of the complexes $\BDA_n^\pm(\F_p)$ defined in \cite{miller2021top}. This high-connectivity result is proven in Section \ref{sec:simpcplx}. Our arguments are inspired by those in \cite[Section 2]{miller2021top}. One point of difference is in Lemma \ref{choicehelper}, that shows that a certain type of map from a sphere into our simplicial complexes is nullhomotopic. Miller--Patzt--Putman's proof of the corresponding statement relied on a high-connectivity result for $\Z$ that is a key ingredient in Church--Putman's proof of the Bykovskii presentation for $\St_n(\Q)$ in \cite{church2017codimension}. We give a different proof that avoids this assumption. 

The second main ingredient for Theorem \ref{thm:generallsiso} is a spectral sequence argument, that is described in Section \ref{sec:ssargument}. This argument is relatively standard and is similar to the one used by Church--Putman \cite{church2017codimension}.
This is another point where we avoid assuming the Bykovskii presentation for the Steinberg module. To do this we need a cyclicity result about $\St_n(K)$ as an $\SL_n(R)$-module when $R$ is a Euclidean domain with fraction field $K$, which was proved by Ash--Rudolph \cite{ash1979modular}, with a different proof later given by Church--Putman \cite{church2017codimension}, that used high-connectivity arguments on certain simplicial complexes. 

\paragraph{Acknowledgements.} I would like to thank my advisor Jenny Wilson for the unending support, encouragement, and immense generosity with her time. I thank Jeremy Miller and Peter Patzt for helpful conversations, and for feedback on drafts of this paper. I am grateful for the support of the University of Michigan's Rackham one-term dissertation fellowship and the Rackham predoctoral fellowship. I am grateful for travel support from the NSF grant DMS-2142709.


\section{Simplicial complexes associated to free $R$-modules}
\label{sec:simpcplx}

We start by defining several simplicial complexes that we will use in our proofs. Throughout all these definitions, $R$ will denote a commutative ring, and $\U$ a multiplicative subgroup of its group of units $R^\times$ such that $-1 \in \U$. We give the same definitions as Miller--Patzt--Putman in \cite{miller2021top} -- they work with the specific case when $\U = \{\pm 1\}$, but all their definitions easily generalize.

\subsection{Complexes of bases and augmented bases}

\subsubsection{Partial $\U$-bases}

Let $R$ be a commutative ring, and let $R^{\times}$ denote the set of units of $R$. Let $\U$ be a multiplicative subgroup of $R^\times$ such that $-1 \in \U$. We make the following definitions.

\begin{definition}
    A \emph{$\U$-vector} in $R^n$ is a set of the form $\{ c\vec{v} | c \in \U \}$ for a nonzero vector $\vec{v} \in R^n$. Given a non-zero $\vec{v} \in R^n$, we will write $[\vec{v}]$ for the associated $\U$-vector.
\end{definition}

\begin{definition}
    A \emph{partial-basis} for $R^n$ is a set of elements of $R^n$ that is a subset of a free basis for $R^n$.
    A \emph{partial $\U$-basis} for $R^n$ is a set $\{ [\vec{v}_1], \dots , [\vec{v}_k]\}$ of $\U$-vectors such that $\{ \vec{v}_1, \dots, \vec{v}_k\}$ is a partial-basis for $R^n$. This does not depend on the choice of representatives $\vec{v_i}$.
\end{definition}

We now turn these into a simplicial complex as follows. Here and throughout the rest of this paper, we will define simplicial complexes by specifying that their simplices are certain sets.
What we mean by this is that the $k$-simplices are such sets containing $(k + 1)$-elements, and the face relations between simplices are simply inclusions of sets.

\begin{definition}
    The \emph{complex of partial $\U$-bases}, denoted $\B_n^\U(R)$, is the simplicial complex whose simplices are partial $\U$-bases for $R^n.$
\end{definition}

To understand $\B_n^\U(R)$ in an inductive way, we will need to understand the links of its simplices. We thus make the following definition.

\begin{definition}
    Let $\{ \vec{e}_1, \vec{e}_2, \dots, \vec{e}_{m+n} \}$ denote the standard basis for $R^{m+n}$. Define $\B_{n,m}^\U(R) = \Link_{\B_{m+n}^\U(R)}\{[\vec{e}_1], \dots, [\vec{e}_m] \}$.
\end{definition}

Recall that a simplicial complex $X$ is \emph{Cohen-Macaulay} of dimension $r$ if it satisfies the following condition:
\begin{itemize}
    \item X is $r$-dimensional and $(r-1)$-connected.
    \item For all $k$-simplices $\sigma$ of $X$, $\Link_X(\sigma)$ is $(r-k-1)$-dimensional and $(r-k-2)$-connected.
\end{itemize}
Miller--Patzt--Putman \cite{miller2021top} proved the following theorem for $\F$ a field, and $\U \subset \F^\times$ a multiplicative subgroup with $-1 \in \U$. They state their result in the specific case where $\U = \{\pm1\}$, but the same proof adapts to the general case:

\begin{prop}[{\cite[Proposition 2.21]{miller2021top}}]
\label{BUconnectivity}
    For a field $\F$ and $\U \subset \F^\times$ a multiplicative subgroup with $-1 \in \U$, the complex $\B_{n,m}^\U(\F)$ is Cohen-Macaulay of dimension $(n-1)$ for all $n,m \geq 0$.
\end{prop}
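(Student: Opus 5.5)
Over a field every linearly independent set extends to a basis, so a partial $\U$-basis of $\F^{m+n}$ is just a set of $\U$-lines whose representatives are linearly independent. Consequently a simplex of $\B_{n,m}^\U(\F)$ is a set $\{[\vec v_1],\dots,[\vec v_k]\}$ such that $\vec e_1,\dots,\vec e_m,\vec v_1,\dots,\vec v_k$ are linearly independent. This forces $k \le n$, so the complex is $(n-1)$-dimensional.

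\emph{Links reduce to the same family.} For a $(k-1)$-simplex $\sigma=\{[\vec v_1],\dots,[\vec v_k]\}$, choose $g\in \mathrm{GL}_{m+n}(\F)$ fixing $\vec e_1,\dots,\vec e_m$ and sending $\vec v_i\mapsto \vec e_{m+i}$. Since $g$ commutes with the $\U$-scaling, it induces a simplicial automorphism of $\B^\U_{m+n}(\F)$. It therefore identifies $\Link_{\B_{n,m}^\U(\F)}(\sigma)$ with $\B^\U_{n-k,m+k}(\F)$. So the Cohen--Macaulay link condition follows once we show that $\B_{n,m}^\U(\F)$ is $(n-2)$-connected for all $n,m$.

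\emph{Connectivity by induction on $n$.} The cases $n\le 1$ are trivial: for $n=1$ we only need nonemptiness. For the inductive step I would use the standard ``bad simplex''/retraction argument. Fix a decomposition $\F^{m+n}=\F^{m+n-1}\oplus\F\vec e_{m+n}$ and compare $X=\B^\U_{n,m}(\F)$ with the subcomplex $Y$ of simplices lying in the hyperplane $W=\langle \vec e_1,\dots,\vec e_{m+n-1}\rangle$. Then $Y\cong\B^\U_{n-1,m}(\F)$, which is $(n-3)$-connected by induction. The sharper route is as follows. Take a map $f:S^k\to X$ with $k\le n-2$, made simplicial. Call a simplex bad if none of its vectors lies in $W$, i.e.\ every vector has nonzero $\vec e_{m+n}$-coordinate.

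\begin{itemize}
\item Cone off to $[\vec e_{m+n}]$ all vertices whose $\vec e_{m+n}$-coordinate can be normalized. This is where the field structure is used: for each bad vertex $[\vec v]$ we may replace it by $[\vec v - c\vec e_{m+n}]$, which lies in $W$.
\item Remove maximal bad simplices $\sigma$ one at a time. The link condition gives that $f$ restricted to the link lands in $\Link_X(f(\sigma))$ intersected with a complex isomorphic to some $\B^\U_{n',m'}(\F)$, with $n'=n-\dim\sigma-1$. That complex is $(n'-2)$-connected by induction.
\item Use this connectivity to homotope $f$ so that the bad simplex disappears, replacing each of its vertices by the corrected vector projected into $W$.
\item Once $f$ has image in $Y\cup\mathrm{Star}([\vec e_{m+n}])$, deform it into the star of $[\vec e_{m+n}]$, which is contractible.
\end{itemize}

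The dependence on $\U$ only enters through well-definedness. Subtracting multiples of $\vec e_{m+n}$ is compatible with $\U$-scaling because $\U$ acts by scalars, and the hypothesis $-1\in\U$ is not really needed beyond the definitions.

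\emph{Expected obstacle.} The main difficulty is the retraction step: projecting vertices into $W$ must preserve linear independence together with $\vec e_1,\dots,\vec e_m$. If it does not, one must instead choose a different hyperplane complement adapted to the simplex, as in Maazen/van der Kallen style arguments. The cleanest fix I would try first is the following. Use the map $[\vec v]\mapsto[\vec v-\lambda(\vec v)\vec e_{m+n}]$ only on vertices not in $W$, and handle the simplices where independence fails via their links, using the inductive connectivity of the $\B^\U_{n',m'}$ complexes.

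This is exactly Miller--Patzt--Putman's argument for $\U=\{\pm1\}$. At no point does it use that $\U=\{\pm1\}$, only that $\U$ consists of scalars, so it transfers verbatim.
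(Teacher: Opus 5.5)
Your preliminary reductions are sound. The dimension count is correct, and so is the identification of the link of a $(k-1)$-simplex with $\B^{\U}_{n-k,m+k}(\F)$ via an element of $\mathrm{GL}_{m+n}(\F)$ fixing $\vec{e}_1,\dots,\vec{e}_m$. So everything does reduce to $(n-2)$-connectivity. The paper gives no argument of its own here, only the citation to Miller--Patzt--Putman, so your sketch has to stand on its own.

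The gap is in the inductive step. The mechanism you rely on, replacing a vertex $[\vec{v}]\notin W$ by $[\vec{v}-c\vec{e}_{m+n}]$, is not a legitimate move:
\begin{enumerate}
\item $[\vec{e}_{m+n}]$ projects to $0$.
\item For $m\ge 1$, the vertex $[\vec{e}_1+\vec{e}_{m+n}]$ projects to $[\vec{e}_1]$, which is not a vertex of $\B^{\U}_{n,m}(\F)$.
\item For $n\ge 3$, consider the simplex $\{[\vec{e}_{m+n-1}+\vec{e}_{m+n}],[\vec{e}_{m+n-2}],[\vec{e}_{m+n-2}+\vec{e}_{m+n-1}]\}$. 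Only its first vertex lies outside $W$, yet the set becomes linearly dependent after projection.
\end{enumerate}
So your proposed ``cleanest fix'' (project only the vertices outside $W$) still does not give a simplicial map. Nothing in the sketch produces a homotopy from $f$ to a projected map either. Your ``Expected obstacle'' paragraph concedes this is unresolved. Meanwhile, the bullet that actually carries the argument---that the link of a maximal bad simplex maps into a copy of some $\B^{\U}_{n',m'}(\F)$---is asserted without a reason.

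The missing observation is that no vertex ever has to be moved. Keep your definition of bad. Let $\sigma$ be a bad simplex of $S^k$ of maximal dimension, and write $f(\sigma)=\{[\vec{v}_0],\dots,[\vec{v}_d]\}$ with $d\le\dim\sigma$.
\begin{enumerate}
\item By maximality, every vertex of $\Link_{S^k}(\sigma)$ maps into $W$; otherwise adjoining it to $\sigma$ would give a larger bad simplex. So $f$ carries $\Link_{S^k}(\sigma)$ into $L=\Link_X(f(\sigma))\cap Y$.
\item This is where the field is used. Write $c_i$ for the $\vec{e}_{m+n}$-coordinate of $\vec{v}_i$; then $c_0\neq 0$ is invertible. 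Set $\vec{v}_i'=\vec{v}_i-(c_i/c_0)\vec{v}_0\in W$ for $1\le i\le d$. A set of vectors in $W$ is independent together with $\vec{e}_1,\dots,\vec{e}_m,\vec{v}_0,\dots,\vec{v}_d$ if and only if it is independent together with $\vec{e}_1,\dots,\vec{e}_m,\vec{v}_1',\dots,\vec{v}_d'$ inside $W\cong\F^{m+n-1}$.
\item Hence $L\cong\B^{\U}_{n-1-d,m+d}(\F)$, which is $(n-d-3)$-connected by induction. Since $k\le n-2$, we have $n-d-3\ge k-\dim\sigma-1$.
\item Fill $f|_{\Link_{S^k}(\sigma)}$ by a disk $D\to L$, and replace $\sigma*\Link_{S^k}(\sigma)$ by $\partial\sigma*D$. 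Every simplex of $\partial\sigma*D$ not in $\partial\sigma$ contains a vertex of $D$, which maps into $W$, so it is not bad. Thus the number of bad simplices of dimension $\dim\sigma$ drops by one, and none of larger dimension appear.
\item When no bad simplices remain, no vertex maps outside $W$, so $f$ lands in $Y$. Since $Y\subseteq\Link_X([\vec{e}_{m+n}])$, you can cone $f$ off to $[\vec{e}_{m+n}]$.
\end{enumerate}
Note that the connectivity of $Y\cong\B^{\U}_{n-1,m}(\F)$ is never used.
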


\subsubsection{Augmented $\U$-bases}

We now define the augmented version of $\B_{n,m}^\U(R)$.

\begin{definition}
    An \emph{augmented partial $\U$-basis} for $R^n$ is a set $\{[\vec{v}_0], [\vec{v}_1], \dots, [\vec{v}_k]\}$ of $\U$-vectors that can be reordered so that 
    \begin{itemize}
        \item $\{[\vec{v}_1], \dots, [\vec{v}_k]\}$ is a partial $\U$-basis for $R^n$
        \item There exist units $\lambda, \nu \in R^\times$ such that $\vec{v}_0 = \lambda\vec{v}_1 + \nu \vec{v}_2$. The existence of $\lambda$ and $\nu$ is independent of the choice of representatives $\vec{v}_1, \vec{v}_2$.
    \end{itemize}
\end{definition}

We will call the $\U$-vectors $\{ [\vec{v}_0], [\vec{v}_1], [\vec{v}_2]\}$ the \emph{additive core} of $\{[\vec{v}_0], [\vec{v}_1], \dots, [\vec{v}_k]\}$.

A subset of an augmented $\U$-basis is either itself an augmented $\U$-basis (if the subset contains the entire additive core) or is a partial $\U$-basis. We can thus make the following definition:

\begin{definition}
    The \emph{complex of augmented partial $\U$-bases} for $R^n$, denoted $\BA_n^\U$, is the simplicial complex whose simplices consist either of partial $\U$-bases of $R^n$ or of augmented partial $\U$-bases of $R^n$.
\end{definition}

Again, we will need to study the links of simplices in $\BA_{n}^\U(R)$. However, for technical reasons, we will not look at the entire link, but rather the following subcomplex of it:

\begin{definition}
    Let $\sigma = \{[\vec{v}_1], \dots, [\vec{v}_k]\}$ be a simplex of $\BA_n^\U(R)$. The \emph{augmented link} of $\sigma$, denoted $\widehat{\Link}_{\BA_n^\U(R)}(\sigma)$, is the full subcomplex of $\Link_{\BA_n^\U(R)}(\sigma)$ spanned by vertices $[\vec{w}]$ of $\Link_{\BA_n^\U(R)}(\sigma)$ such that $\vec{w} \not\in \langle \vec{v}_1, \dots, \vec{v}_k \rangle$. This definition does not depend on the choice of representatives $\vec{v}_i$ or $\vec{w}$.
\end{definition}

The simplices of $\widehat{\Link}_{\BA_n^\U(R)}(\sigma)$ fall into the following 3 classes:

\begin{definition}
Let $\sigma = \{ [\vec{v}_1], \dots, [\vec{v}_k]\}$ be a simplex of $\BA_n^\U(R)$. Let $\eta$ be a simplex of $\widehat{\Link}_{\BA_n^\U(R)}(\sigma)$. Then one of the following three conditions hold:
\begin{itemize}
    \item $\eta$ is a partial $\U$-basis for $R^n$. In this case we will call $\eta$ a \emph{standard simplex}.
    \item $\eta$ is an augmented partial $\U$-basis for $R^n$, i.e. we can write $\eta = \{ [\vec{w}_0], \dots, [\vec{w}_l]\}$ so that $\vec{w}_0 = \lambda \vec{w}_1 + \nu \vec{w}_2$ for some $\lambda, \nu \in R^\times$. In this case we will call $\eta$ an \emph{internally additive simplex}.
    \item We can write $\eta = \{ [\vec{w}_0], \dots, [\vec{w}_l]\}$ with $\vec{w}_0 = \lambda \vec{w}_1 + \nu \vec{v}_i$ for some $\lambda, \nu \in R^\times$ and $1 \leq i \leq k$. In this case we call $\eta$ an \emph{externally additive simplex}.
\end{itemize}

We will sometimes call a simplex that is either internally or externally additive simply as an \emph{additive simplex}.

\end{definition}

Just as we did for $\B_{n,m}^\U(R)$, we make the following definition.

\begin{definition}
    Let $\{ \vec{e}_1, \vec{e}_2, \dots, \vec{e}_{m+n} \}$ denote the standard basis for $R^{m+n}$. Define $\BA_{n,m}^\U(R) = \widehat{\Link}_{\BA_{m+n}^\U(R)}\{[\vec{e}_1], \dots, [\vec{e}_m] \}$.
\end{definition}

Miller--Patzt--Putman proved the following analogue of Proposition \ref{BUconnectivity} for the complexes $\BA_{n,m}^\U(\F)$. They state their result in the specific case when $\U = \{\pm1\}$, but the same proof adapts to the general setting.

\begin{prop}[{\cite[Proposition 2.29]{miller2021top}}]
    \label{BAUconnectivity}
     For a field $\F$ and $\U \subset \F^\times$ a multiplicative subgroup with $-1 \in \U$, the complex $\BA_{n,m}^\U(\F)$ is Cohen-Macaulay of dimension $n$ for all $n \geq 1$ and $m \geq 0$ with $n+m \geq 2$.
\end{prop}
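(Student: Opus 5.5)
The plan is to adapt the argument of Miller--Patzt--Putman \cite[Proposition 2.29]{miller2021top} to a general subgroup $\U$ containing $-1$. We compare $\BA_{n,m}^\U(\F)$ with its subcomplex $\B_{n,m}^\U(\F)$, which is Cohen--Macaulay of dimension $n-1$ by Proposition \ref{BUconnectivity}. First we check dimensions. A maximal standard simplex of $\BA_{n,m}^\U(\F)$ has $n$ vertices, completing $\vec{e}_1,\dots,\vec{e}_m$ to a basis. A maximal additive simplex has $n+1$ vertices: either an internally additive one $\{[\lambda\vec{w}_1+\nu\vec{w}_2],[\vec{w}_1],\dots,[\vec{w}_n]\}$, or an externally additive one $\{[\lambda\vec{w}_1+\nu\vec{e}_i],[\vec{w}_1],\dots,[\vec{w}_n]\}$. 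The externally additive case needs $m\ge1$, and the internally additive case needs $n\ge2$. The hypothesis $n+m\ge2$ with $n\ge1$ guarantees that at least one of these exists, so the complex is $n$-dimensional.

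Next we describe links. The link of a standard simplex $\sigma$ of dimension $k$, after a change of basis by $\mathrm{GL}$ fixing $\vec{e}_1,\dots,\vec{e}_m$, is a join-type complex. It is built from $\BA^\U_{n-k-1,m+k+1}(\F)$ together with extra externally additive vertices $[\lambda\vec{w}+\nu\vec{v}]$, where $\vec{v}$ is a vertex of $\sigma$. The link of an additive simplex is isomorphic to $\B^\U_{n-k,m+k}(\F)$, since the additive core uses one extra vertex. Working through these identifications, the Cohen--Macaulay link conditions reduce, by induction on $n$, to the connectivity statement for complexes of the same form, together with Proposition \ref{BUconnectivity}. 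We therefore focus on proving that $\BA_{n,m}^\U(\F)$ is $(n-1)$-connected, inducting on $n$ and assuming the result for all smaller $n$ and all $m$.

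For the connectivity we use the standard ``bad simplex'' link argument. Take a map $f\colon S^k\to\BA_{n,m}^\U(\F)$ with $k\le n-1$, made simplicial. We know $\B_{n,m}^\U(\F)$ is $(n-2)$-connected, and the pair $(\BA,\B)$ is obtained by attaching the additive simplices. Every additive simplex has its additive core $\{[\vec{w}_0],[\vec{w}_1],[\vec{w}_2]\}$ (or its externally additive analogue) as a minimal non-standard face. We order these cores and remove them one at a time. The link in the current subcomplex of each core $\tau$ is isomorphic to $\B^\U_{n-2,m+2}(\F)$, or to $\B^\U_{n-1,m+1}(\F)$ in the external case. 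These are $(n-4)$- and $(n-3)$-connected respectively, while $\tau$ has dimension 2 or 1. This yields the relative connectivity $\pi_k(\BA,\B)=0$ for $k\le n-1$, and hence that $\BA$ is $(n-2)$-connected.

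The main obstacle is the top degree: showing $\pi_{n-1}(\BA_{n,m}^\U(\F))=0$, where $\B$ itself has nontrivial $\widetilde{\h}_{n-1}$. For this step we kill each top cycle of $\B$ by coning off, using an additive vertex. Following MPP, we choose a vector $\vec{e}_{m+n}$, the last one, and use a retraction argument. We project vertices along the line spanned by it, modulo $\U$, and use additive simplices to homotope every sphere into the star of a vertex, or into a subcomplex isomorphic to $\BA_{n-1,m+1}^\U$ joined with something contractible. Over a field, each vector $\vec{w}$ has the form $\vec{w}'+c\vec{e}_{m+n}$, and the move $\vec{w}\mapsto\vec{w}'$ is realized by externally additive simplices whenever $c\neq0$. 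This is where we must check that the argument uses only that $-1\in\U$ and that $\F$ is a field, not that $\U=\{\pm1\}$. It matters because the additive condition allows arbitrary units $\lambda,\nu\in\F^\times$, not just $\pm1$, which only enlarges the supply of additive simplices and so helps.
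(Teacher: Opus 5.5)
The gap is in the top degree, $\pi_{n-1}(\BA^\U_{n,m}(\F))=0$, which is the only step with real content. The earlier steps are correct: the dimension count, the identification of the link of an additive $k$-simplex with $\B^\U_{n-k,m+k}(\F)$, and the core-by-core attachment argument. Each internal (resp.\ external) core $\tau$ is glued in along $\partial\tau*\B^\U_{n-2,m+2}(\F)$ (resp.\ $\partial\tau*\B^\U_{n-1,m+1}(\F)$). That subcomplex is $(n-2)$-connected, so $\BA^\U_{n,m}(\F)$ is $(n-2)$-connected and $\pi_{n-1}(\B^\U_{n,m}(\F))\to\pi_{n-1}(\BA^\U_{n,m}(\F))$ is surjective.

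In the link of a standard simplex $\sigma=\{[\vec v_1],\dots,[\vec v_{k+1}]\}$ you misplace the extra vertices, though this is harmless. Vertices $[\lambda\vec w+\nu\vec v_i]$ with $\vec w\notin\langle\vec e_1,\dots,\vec e_m,\sigma\rangle$ already lie in the augmented link $\BA^\U_{n-k-1,m+k+1}(\F)$. The genuinely new vertices are $[\lambda\vec v_i+\nu\vec v_j]$ and $[\lambda\vec v_i+\nu\vec e_j]$, each coned off over a copy of $\B^\U_{n-k-1,m+k+1}(\F)$.

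For the top degree you give an outline rather than an argument, and the mechanism you name fails as stated. Externally additive simplices of $\BA^\U_{n,m}(\F)$ only use relations $\vec w_0=\lambda\vec w_1+\nu\vec e_i$ with $i\le m$. Since $\vec e_{m+n}$ is not one of these base vectors, $[\vec w'+c\vec e_{m+n}]\mapsto[\vec w']$ is not an externally additive move. At best it is an internally additive triangle with $[\vec e_{m+n}]$, available only when $\vec w'\notin\F^m$.

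More seriously, the projection is not simplicial. A top standard simplex $\{[\vec w_1],\dots,[\vec w_n]\}$ typically has all last coordinates nonzero. Then $\vec e_1,\dots,\vec e_m,\vec w_1',\dots,\vec w_n'$ are $m+n$ vectors in $\F^{m+n-1}$, hence dependent, so you cannot move all vertices of a simplex at once. The real work is handling simplices with several vertices off the hyperplane $\F^{m+n-1}$, and showing the resulting links are connected enough to gain the extra degree; the proposal does not address this.

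Nor can you instead kill the top cycles of $\B^\U_{n,m}(\F)$ directly. The only generating set for $\widetilde{\h}_{n-1}(\B^\U_{n,m}(\F))$ available here (initial triangle and external suspend maps) is itself deduced from this proposition, so that route is circular.

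Your closing remark, that the coefficients $\lambda,\nu$ range over all of $\F^\times$, is the right observation. It gives a formal reduction, not just something to check inside a rerun of the proof. Because of it, every choice of $\{\pm1\}$-representatives of the vertices of a simplex of $\BA^\U_{n,m}(\F)$ is a simplex of $\BA^{\pm}_{n,m}(\F)$. Also, two distinct lifts of one $\U$-vector are never adjacent, since they are proportional and no partial or augmented partial basis allows that.

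Hence the forgetful map $\BA^{\pm}_{n,m}(\F)\to\BA^\U_{n,m}(\F)$ is a complete join in the sense of Hatcher--Wahl \cite{hatcher2010stabilization}. Choosing one lift per vertex gives a simplicial section. This exhibits $\BA^\U_{n,m}(\F)$ as a retract of $\BA^{\pm}_{n,m}(\F)$, and the link of each simplex as a retract of the link of its lift, with the same dimensions. Connectivity passes to retracts, so the Cohen--Macaulay property descends directly from the case $\U=\{\pm1\}$ in \cite{miller2021top}. If you want a self-contained proof instead, you must supply an actual top-degree argument.
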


\subsection{Complexes of determinant $\U$ partial $\U$-bases}
\label{sec:augdet}

For our purposes, what we really need are subcomplexes of the complexes of partial $\U$-bases where we impose a determinant condition.

\subsubsection{Determinant-$\U$ partial $\U$-bases}

\begin{definition}
    A partial $\U$-basis $\{[\vec{v_1}], \dots, [\vec{v_k}]\}$ for $R^n$ is a \emph{determinant-$\U$ partial $\U$-basis} if it satisfies the following:
    \begin{itemize}
        \item If $k=n$, then the determinant of the matrix $(\vec{v}_1, \dots, \vec{v}_n)$ whose columns are the $\vec{v_i}$ is in $\U$. This does not depend on the choice of representatives $\vec{v_i}$ or their ordering.
        \item If $k<n$, then no additional condition needs to be satisfied.
    \end{itemize}
\end{definition}

These form a simplicial complex:

\begin{definition}
    The \emph{complex of determinant-$\U$ partial $\U$-bases}, denoted $\BD_n^\U(R)$, is the simplicial complex whose simplices are determinant-$\U$ partial $\U$-bases for $R^n$.
\end{definition}

Just as for $\B_n^\U(R)$, we will need to consider links as well:

\begin{definition}
    Let $\{ \vec{e}_1, \dots, \vec{e}_{m+n}\}$ denote the standard basis for $R^n$. Define $\BD_{n,m}^\U(R) = \Link_{\BD_{m+n}^\U(R)}\{ [\vec{e}_1], \dots, [\vec{e}_m] \}$.
\end{definition}

\subsubsection{Augmented determinant-$\U$ partial $\U$-bases}

We now define the augmented versions of the determinant-$\U$ partial $\U$-bases.

\begin{definition}
    An \emph{augmented determinant-$\U$ partial-$\U$ basis} is a set $\{[\vec{v}_0], \dots, [\vec{v}_k]\}$ of $\U$-vectors that can be reordered so that:
    \begin{itemize}
        \item $\{ [\vec{v}_1], \dots, [\vec{v}_k]\}$ is a determinant-$\U$ partial $\U$-basis for $R^n$, and
        \item There exist $\lambda, \nu \in \U$ such that $\vec{v}_0 = \lambda\vec{v}_1 + \nu \vec{v}_2$.
    \end{itemize}
    We shall call the $\U$-vectors $\{[\vec{v}_0], [\vec{v}_1], [\vec{v}_2]\}$ the \emph{additive core} of $\{[\vec{v}_0], [\vec{v}_1], \dots, [\vec{v}_k]\}$.
\end{definition}

A subset of an augmented determinant-$\U$ partial $\U$-basis is either itself an augmented determinant-$\U$ partial-$\U$ basis (if the subset contains the entire additive core) or a determinant-$\U$ partial $\U$-basis (this uses the fact that $\lambda, \nu$ are in $\U$ rather than in $R^\times$). We can thus make the following definition:

\begin{definition}
    The \emph{complex of augmented determinant-$\U$ partial $\U$-bases} for $R^n$, denoted $\BDA_n^\U(R)$, is the simplicial complex whose simplices are determinant-$\U$ partial $\U$-bases or augmented determinant-$\U$ partial $\U$-bases for $R^n$.
\end{definition}

We now make a series of definitions that are similar to the ones we made for $\BA_n^\U(R)$.

\begin{definition}
    Let $\sigma = \{[\vec{v}_1], \dots, [\vec{v}_k]\}$ be a simplex of $\BDA_n^\U(R)$. The \emph{augmented link} of $\sigma$, denoted $\widehat{\Link}_{\BDA_n^\U(R)}(\sigma)$, is the full subcomplex of $\Link_{\BDA_n^\U(R)}(\sigma)$ spanned by vertices $[\vec{w}]$ of $\Link_{\BDA_n^\U(R)}(\sigma)$ such that $\vec{w} \not\in \langle \vec{v}_1, \dots, \vec{v}_k \rangle$. This definition does not depend on the choice of representatives $\vec{v}_i$ or $\vec{w}$.
\end{definition}

The simplices of $\widehat{\Link}_{\BDA_n^\U(R)}(\sigma)$ fall into the following 3 classes:

\begin{definition}
Let $\sigma = \{ [\vec{v}_1], \dots, [\vec{v}_k]\}$ be a simplex of $\BDA_n^\U(R)$. Let $\eta$ be a simplex of $\widehat{\Link}_{\BDA_n^\U(R)}(\sigma)$. Then one of the following three conditions hold:
\begin{itemize}
    \item $\eta$ is a partial $\U$-basis for $R^n$. In this case we will call $\eta$ a \emph{standard simplex}.
    \item $\eta$ is an augmented partial $\U$-basis for $R^n$, i.e. we can write $\eta = \{ [\vec{w}_0], \dots, [\vec{w}_l]\}$ so that $\vec{w}_0 = \lambda \vec{w}_1 + \nu \vec{w}_2$ for some $\lambda, \nu \in \U$. In this case we will call $\eta$ an \emph{internally additive simplex}.
    \item We can write $\eta = \{ [\vec{w}_0], \dots, [\vec{w}_l]\}$ with $\vec{w}_0 = \lambda \vec{w}_1 + \nu \vec{v}_i$ for some $\lambda, \nu \in \U$ and $1 \leq i \leq k$. In this case we call $\eta$ an \emph{externally additive simplex}.
\end{itemize}

We will sometimes call a simplex that is either internally or externally additive simply an \emph{additive simplex}.

\end{definition}

Just as we did for $\BD_{n,m}^\U(R)$, we make the following definition.

\begin{definition}
    Let $\{ \vec{e}_1, \vec{e}_2, \dots, \vec{e}_{m+n} \}$ denote the standard basis for $R^{m+n}$. Define $\BDA_{n,m}^\U(R) = \widehat{\Link}_{\BDA_{m+n}^\U(R)}\{[\vec{e}_1], \dots, [\vec{e}_m] \}$.
\end{definition}
\subsubsection{The complexes of (un)augmented determinant-$\U$ partial $\U$-bases are highly connected}

In this section we summarize some results from Miller--Patzt--Putman \cite{miller2021top}, whose proofs go through with minor changes in our more general setting:

\begin{lemma}[{\cite[Lemma 2.46]{miller2021top}}]
\label{BDretract}
    Suppose $R=\F$ is a field, and let $\U$ be a multiplicative subgroup of $\F^\times$ such that $-1 \in \U$. Then the complex $\BD_{n,m}^\U(\F)$ is a retract of $\B_{n,m}^\U(\F)$.
\end{lemma}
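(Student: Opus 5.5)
We will construct an explicit simplicial retraction $r \colon \B_{n,m}^\U(\F) \to \BD_{n,m}^\U(\F)$, restricting to the identity on the subcomplex $\BD_{n,m}^\U(\F)$. The only simplices of $\B_{n,m}^\U(\F)$ that are not in $\BD_{n,m}^\U(\F)$ are the top-dimensional ones. These are sets $\{[\vec{v}_1],\dots,[\vec{v}_n]\}$ that, together with $[\vec{e}_1],\dots,[\vec{e}_m]$, form a $\U$-basis of $\F^{m+n}$ whose determinant is not in $\U$. Since the determinant is only well defined up to $\U$, the relevant invariant of such a simplex is its class in $\F^\times/\U$. The retraction therefore has to move vertices so that every image of a top simplex becomes degenerate or acquires a determinant in $\U$.

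The construction I would try first follows the argument of Miller--Patzt--Putman. Fix a reference vertex direction, say $\vec{e}_{m+n}$, and use the decomposition $\F^{m+n} = \langle \vec{e}_1,\dots,\vec{e}_{m+n-1}\rangle \oplus \langle \vec{e}_{m+n}\rangle$. Define $r$ on vertices as follows:
\begin{itemize}
\item a vertex $[\vec{v}]$ whose last coordinate is $0$, or lies in $\U$ after normalizing, is left fixed;
\item otherwise, choose the representative $\vec{v}$ whose last coordinate $c$ is well defined up to $\U$, and send $[\vec{v}]$ to $[\vec{v}']$, where $\vec{v}'$ is obtained by rescaling the last coordinate to a fixed coset representative.
\end{itemize}
A cleaner alternative would be to project all vertices with nonzero last coordinate, $[\vec v]\mapsto[c^{-1}\vec v]$ adjusted so that the last coordinate is $1$. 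Then a top simplex has at most one vertex off the hyperplane modulo the others. I would determine which version makes the determinant work by a direct computation.

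The verification has two steps. First, $r$ must be simplicial and must take every simplex of $\B_{n,m}^\U(\F)$ to a simplex of $\BD_{n,m}^\U(\F)$. For non-top simplices this amounts to checking that the image vectors, together with the $\vec{e}_i$, remain a partial basis, which holds because the modification is a shear or rescaling along a fixed coordinate. For top simplices we need the image either to collapse to a lower-dimensional simplex, which is automatically allowed, or to have determinant in $\U$. Second, $r$ must be the identity on $\BD_{n,m}^\U(\F)$. This is the main obstacle: a vertex map that fixes every vertex is the identity. So $r$ has to move some vertices, and then the images of determinant-$\U$ top simplices may fail to be determinant-$\U$. Since every vertex lies in some determinant-$\U$ top simplex when $n\ge1$, a purely vertexwise definition seems too rigid.

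I therefore expect the intended argument to be homotopy-theoretic rather than a literal simplicial map. One would subdivide, or map into the subcomplex $\BD$ of codimension-one skeleta, or interpret ``retract'' at the level of connectivity. The key fact is that $\B$ and $\BD$ share the same $(n-2)$-skeleton. This already gives that the inclusion is $(n-2)$-connected, which is what is needed downstream. So, if the literal simplicial retraction does not work out, I would fall back on this approach. I would define a retraction on geometric realizations that sends each bad top simplex onto the cone on its boundary inside $\BD$; such a cone exists because the boundary sphere can be filled by a determinant-$\U$ simplex obtained by rescaling one vertex by a unit $u$ that corrects the determinant class. Coherence across shared faces is automatic, since the retraction is the identity on the shared $(n-2)$-skeleton.
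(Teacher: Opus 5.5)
\textbf{Gap: the cone point.} Your fallback plan has the right shape, and it is what the paper does. Since $\B_{n,m}^\U(\F)$ and $\BD_{n,m}^\U(\F)$ share their $(n-2)$-skeleton, you replace each bad top simplex $\sigma=\{[\vec v_1],\dots,[\vec v_n]\}$ by a cone on $\partial\sigma$ inside $\BD_{n,m}^\U(\F)$. Equivalently, you subdivide $\sigma$ at a new vertex $x_\sigma$ and send $x_\sigma$ to a suitable vertex. Coherence across shared faces is then automatic. You were also right to abandon the vertexwise simplicial map.

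However, the step that carries all the content, producing the cone point, is wrong as you state it. A single $(n-1)$-simplex with boundary $\partial\sigma$ must have the same vertex set as $\sigma$, so ``filling the boundary sphere by a determinant-$\U$ simplex'' is impossible. Rescaling by $u\in\U$ does not change the vertex. Rescaling by $u\notin\U$ gives a cone point $[u\vec v_1]$, but then for every face $\tau\subset\partial\sigma$ containing $[\vec v_1]$, the set $\tau\cup\{[u\vec v_1]\}$ contains two proportional vectors and is not a partial basis. The cone point must be a genuinely new vector $\vec w$ such that, for every $i$, the vectors $\vec e_1,\dots,\vec e_m,\vec v_1,\dots,\widehat{\vec v_i},\dots,\vec v_n,\vec w$ form a basis with determinant in $\U$. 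Write $d$ for the determinant of $\sigma$ with chosen representatives, and write $\vec w=\sum_j a_j\vec e_j+\sum_i c_i\vec v_i$. Then that determinant equals $\pm c_i d$, so every $c_i$ must be nonzero with $c_i d\in\U$. The paper takes $\vec w=\tfrac1d(\vec v_1+\dots+\vec v_n)$ and checks that $\det(\vec e_1,\dots,\vec e_m,\vec v_1,\dots,\widehat{\vec v_i},\dots,\vec v_n,\tfrac1d\textstyle\sum_j\vec v_j)=\pm d/d=\pm1\in\U$. This is where $-1\in\U$ is used.

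\textbf{A connectivity version would not suffice.} Your suggestion of interpreting ``retract'' only at the level of connectivity is not enough for what follows. Sharing the $(n-2)$-skeleton only shows that $\pi_{n-2}(\BD_{n,m}^\U(\F))\to\pi_{n-2}(\B_{n,m}^\U(\F))$ is surjective. From the $(n-2)$-connectivity of $\B_{n,m}^\U(\F)$ you would then get only that $\BD_{n,m}^\U(\F)$ is $(n-3)$-connected. Proposition \ref{BDconn} needs $(n-2)$-connectivity, and the actual retraction is what supplies injectivity on $\pi_{n-2}$.
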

Since we will make use of this retraction in Section \ref{retraction}, we include a proof here.
\begin{proof}
    Let $\{ \vec{e}_1, \dots, \vec{e}_{m+n}\}$ denote the standard basis for $\F^{m+n}$. It is enough to say how $\rho$ maps a simplex of $\B_{n,m}^\U(\F)$ that is not in $\BD_{n,m}^\U(\F)$. The only such simplices are $(n-1)$-dimensional simplices $\sigma = \{ [\vec{v}_1], \dots, [\vec{v}_n]\}$ such that the matrix with columns $(\vec{e}_1, \dots, \vec{e_m}, \vec{v}_1, \dots, \vec{v}_n)$ has determinant equal to $d  \in \F^\times \setminus U$.
    Let $S(\sigma)$ be the result of subdividing $\sigma$ with a new vertex $x_{\sigma}$. Thus the top-dimensional simplices of $S(\sigma)$ are of the form
    \begin{align*}
        \{ [\vec{v}_1], \dots, [\widehat{\vec{v}}_i], \dots, [\vec{v}_n], x_{\sigma}\} \hspace{1cm}  (1 \leq i \leq n) 
    \end{align*}

Define
\begin{align*}
    \rho|_{\sigma} : \sigma \cong S(\sigma) \to \BD_{n,m}^\U(\F)
\end{align*}
to be the map that fixes the vertices $[\vec{v}_1], \dots, [\vec{v}_n]$ and takes $x_{\sigma}$ to $\frac{1}{d}(\vec{v}_1 + \dots + \vec{v}_n)$.
The following calculation checks that this extends over the top-dimensional simplices of $S(\sigma)$:

\begin{align*}
    & \det \left( \vec{e_1}, \dots, \vec{e}_m, \vec{v}_1, \dots \widehat{\vec{v}}_i, \dots, \vec{v}_n, \frac{1}{d}(\vec{v}_1 + \dots + \vec{v}_n)\right)  \\
    & = \frac{1}{d}  \det \left( \vec{e_1}, \dots, \vec{e}_m, \vec{v}_1, \dots \widehat{\vec{v}}_i, \dots, \vec{v}_n, (\vec{v}_1 + \dots + \vec{v}_n)\right)  = \pm d/d = \pm 1 \in \U \\
\end{align*}
    
\end{proof}

As a consequence of this we have the following:

\begin{prop}[{\cite[Proposition 2.45]{miller2021top}}]
\label{BDconn}
     For $R = \F$ a field and $\U \subset \F^\times$ a multiplicative subgroup with $-1 \in \U$, the complex $\BD_{n,m}^\U(\F)$ is Cohen-Macaulay of dimension $n-1$ for all $n, m \geq 0$.
\end{prop}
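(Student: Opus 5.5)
We derive Proposition \ref{BDconn} from Proposition \ref{BUconnectivity} by way of the retraction in Lemma \ref{BDretract}. The argument has three parts: dimension, connectivity of the whole complex, and connectivity of links.

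\emph{Dimension.} $\BD_{n,m}^\U(\F)$ is a subcomplex of $\B_{n,m}^\U(\F)$, which has dimension $n-1$. It contains $(n-1)$-simplices, for instance $\{[\vec{e}_{m+1}],\dots,[\vec{e}_{m+n}]\}$, whose determinant is $1\in\U$. So its dimension is exactly $n-1$.

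\emph{Connectivity of the whole complex.} A retract of an $(n-2)$-connected space is $(n-2)$-connected, because each homotopy group of the retract injects into the corresponding homotopy group of the ambient space. Applying this to Lemma \ref{BDretract} and Proposition \ref{BUconnectivity} gives the required $(n-2)$-connectivity of $\BD_{n,m}^\U(\F)$.

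\emph{Links.} Let $\sigma=\{[\vec{v}_1],\dots,[\vec{v}_k]\}$ be a $(k-1)$-simplex of $\BD_{n,m}^\U(\F)$. We aim to identify $\Link_{\BD_{n,m}^\U(\F)}(\sigma)$ with $\BD_{n-k,m+k}^\U(\F)$. Choose $g\in\mathrm{GL}_{m+n}(\F)$ fixing $\vec{e}_1,\dots,\vec{e}_m$ and sending $\vec{e}_{m+i}\mapsto\vec{v}_i$ for $i\le k$. Use the remaining columns to adjust the determinant so that $\det g=1$; this is possible when $k<n$ since at least one free column remains.

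Now check the transfer of structure. Such a $g$ preserves partial bases, and by multiplicativity of the determinant it preserves the condition that a full basis has determinant in $\U$. It therefore gives a simplicial isomorphism $\BD_{m+n}^\U(\F)\to\BD_{m+n}^\U(\F)$ that carries the link of $\{[\vec{e}_1],\dots,[\vec{e}_{m+k}]\}$ onto the link of $\{[\vec{e}_1],\dots,[\vec{e}_m]\}\cup\sigma$. The former is $\BD_{n-k,m+k}^\U(\F)$ by definition, and the latter is $\Link_{\BD_{n,m}^\U(\F)}(\sigma)$. Since $\{\vec{e}_1,\dots,\vec{e}_m\}\cup\sigma$ is itself a partial basis, no extra subtlety arises.

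Applying the dimension and connectivity parts to $\BD_{n-k,m+k}^\U(\F)$ shows that the link has dimension $n-k-1$ and is $(n-k-2)$-connected. For a $(k-1)$-simplex in an $(n-1)$-dimensional complex, these are exactly the Cohen--Macaulay requirements.

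\emph{Edge case.} When $k=n$ the link is empty. This is consistent with dimension $-1$ and $(-2)$-connectivity, which impose no condition, so nothing further is needed.

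\emph{Main obstacle.} The delicate point is the retraction in Lemma \ref{BDretract}. One must check that $\rho$ is well defined and continuous, that is, that the subdivisions $S(\sigma)$ of different bad top simplices agree on shared faces. They do, since the shared faces are lower-dimensional and already lie in $\BD$, where $\rho$ is the identity. One must also check that $x_\sigma$ is a legitimate vertex: $\frac1d(\vec{v}_1+\dots+\vec{v}_n)$ extends $\{\vec{e}_1,\dots,\vec{e}_m\}$ to a partial basis.

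Taking the lemma as given, the remaining care goes into the determinant normalization of $g$ in the link identification. There one has to be explicit that the determinant condition concerns the matrix that includes $\vec{e}_1,\dots,\vec{e}_m$, so that $\det g=1$ transports it exactly.
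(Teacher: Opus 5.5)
Your proposal is correct and follows the paper's route. You get $(n-2)$-connectivity because $\BD_{n,m}^\U(\F)$ is a retract (Lemma \ref{BDretract}) of the $(n-2)$-connected complex $\B_{n,m}^\U(\F)$, and you identify the link of a $(k-1)$-simplex with $\BD_{n-k,m+k}^\U(\F)$; you also supply details the paper leaves implicit, namely the dimension count, the empty link of a top simplex, and the explicit determinant-one change of basis $g$ that realizes the link isomorphism.
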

\begin{proof}
    Combining Lemma \ref{BDretract} with Proposition \ref{BUconnectivity} (which says that $\B_{n,m}^\U(\F)$ is $(n-2)$-connected), we deduce that $\BD_{n,m}^\U(\F)$ is $(n-2)$-connected. Since the link of a $k$-simplex in $\BD_{n,m}^\U(\F)$ is isomorphic to $\BD^\U_{n-k-1,m+k+1}(\F)$, this implies that $\BD_{n,m}^\U(\F)$ is Cohen-Macaulay, as desired.
\end{proof}

We now turn to stating high-connectivity results for the augmented versions $\BDA_{n,m}^\U(\F)$.

\begin{lemma}[{\cite[Lemma 2.49]{miller2021top}}]
\label{BDApiSurj}
 For $R=\F$ a field and $\U \subset \F^\times$ a multiplicative subgroup with $-1 \in \U$, the inclusion map $\BD_{n,m}^\U(\F) \hookrightarrow \BDA_{n,m}^\U(\F)$ induces a surjection on $\pi_k$ for $1 \leq k \leq n-1$ for all $n,m \geq 0$.
\end{lemma}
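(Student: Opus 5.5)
We prove that every map $f\colon S^k \to \BDA_{n,m}^\U(\F)$ with $1 \leq k \leq n-1$ is homotopic to a map landing in $\BD_{n,m}^\U(\F)$. The idea is to remove the additive simplices one at a time by a link argument, using the Cohen--Macaulay property of $\BD$ from Proposition \ref{BDconn}.

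\textbf{Step 1: reduce to simplicial maps.} By simplicial approximation we may take $f$ to be simplicial with respect to some combinatorial triangulation of $S^k$. Call a simplex $\sigma$ of $S^k$ \emph{bad} if $f(\sigma)$ contains an entire additive core, i.e. $f(\sigma)$ is an augmented determinant-$\U$ partial $\U$-basis. Since the additive core of $f(\sigma)$ is the image of a face of $\sigma$, we restrict attention to bad simplices $\sigma$ whose image is exactly an additive core $\{[\vec v_0],[\vec v_1],[\vec v_2]\}$. Among the bad simplices, choose one that is maximal with respect to $\dim \sigma$.

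\textbf{Step 2: control the link of a maximal bad simplex.} Maximality forces $f(\Link_{S^k}(\sigma))$ to avoid creating another additive core together with $f(\sigma)$. More precisely, $f$ maps $\Link_{S^k}(\sigma)$ into the subcomplex of $\Link_{\BDA}(f(\sigma))$ consisting of partial $\U$-bases $\{[\vec w_j]\}$ such that $\{\vec v_1,\vec v_2,\vec w_j,\ldots\}$ is a partial basis. Equivalently, after applying an element of $\SL$ (fixing $\vec e_1,\dots,\vec e_m$) that sends $\vec v_1,\vec v_2$ to $\vec e_{m+1},\vec e_{m+2}$, this subcomplex is a complex isomorphic to $\BD^\U_{n-2,m+2}(\F)$. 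One subtlety is the determinant condition: the top simplices there must have determinant in $\U$ together with $\vec v_1,\vec v_2$. This is compatible because $\lambda,\nu\in\U$ and the determinant with $\vec v_0$ replacing $\vec v_1$ also lies in $\U$.

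\textbf{Step 3: replace the bad simplex.} The link $\Link_{S^k}(\sigma)$ is a sphere of dimension $k-\dim\sigma-1$, with $\dim\sigma\geq 2$. Proposition \ref{BDconn} says that $\BD^\U_{n-2,m+2}(\F)$ is $(n-4)$-connected, and $k-\dim\sigma-1\leq k-3\leq n-4$. Hence $f|_{\Link(\sigma)}$ extends to a map $g$ from a $(k-\dim\sigma)$-ball $B$ with $\partial B = \Link(\sigma)$, landing in this $\BD$-complex.

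We then replace $\mathrm{Star}(\sigma) = \sigma * \Link(\sigma)$ by $\partial\sigma * B$. On this piece the new map is $f|_{\partial\sigma} * g$. It is well defined because $\partial\sigma$ maps to proper subsets of the core, and these join with $g(B)$ to give partial bases, or with $\vec v_0$ give determinant-$\U$ partial bases, by the compatibility noted in Step 2. The two maps agree on the boundary $\partial\sigma * \Link(\sigma)$. Moreover $\sigma * B$ maps into $f(\sigma) * \BD$, which is a simplex star and hence contractible, so the new map is homotopic to $f$.

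\textbf{Step 4: induct.} The new simplices introduced in Step 3 are $\tau * \beta$ with $\tau\subsetneq\sigma$, and none of them contain a full core. So the number of bad simplices of maximal dimension decreases, and induction finishes the argument.

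\textbf{Main obstacle.} The main obstacle is Step 2: verifying that the link really lands in a copy of $\BD_{n-2,m+2}^\U$, and that joins with $\vec v_0$ in place of $\vec v_1$ or $\vec v_2$ remain determinant-$\U$. This is exactly where $\lambda,\nu\in\U$ and $-1\in\U$ are used.
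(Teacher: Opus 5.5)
Your link argument is the right tool, and your treatment of internally additive simplices is essentially correct. However, the proof has a genuine gap for $m\geq 1$: you never deal with \emph{externally additive} simplices.

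\textbf{The missing case.} Take a simplex $\eta=\{[\vec w_0],\ldots,[\vec w_l]\}$ of $\BDA^\U_{n,m}(\F)$ with $\vec w_0=\lambda\vec w_1+\nu\vec e_i$ for some $1\le i\le m$. It is itself a partial $\U$-basis and contains no additive core, so it is not ``bad'' in your sense. Yet it is not a simplex of $\BD^\U_{n,m}(\F)$, since $\eta\cup\{[\vec e_1],\ldots,[\vec e_m]\}$ is linearly dependent. For example, the edge $\{[\vec e_2],[\vec e_2+\vec e_1]\}$ lies in $\BDA^\U_{2,1}(\F)$ but not in $\BD^\U_{2,1}(\F)$. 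A simplicial loop through it (here $k=1=n-1$) is left untouched by your procedure, because no simplex of a triangulated $S^1$ can map onto a three-vertex core. So your conclusion that the modified map lands in $\BD^\U_{n,m}(\F)$ does not follow. For $m=0$ there are no externally additive simplices, so there your argument is complete once the second issue below is fixed.

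\textbf{The repair.} Add a second kind of bad simplex: $\sigma$ with $f(\sigma)$ exactly a pair $\{[\vec w_0],[\vec w_1]\}$ as above. Then choose $\sigma$ of maximal dimension among bad simplices of both kinds. An augmented simplex has a unique additive core, namely its only linearly dependent triple. Together with maximality, this forces $f(\Link\sigma)$ into $\Link_{\BD^\U_{m+n}(\F)}\{[\vec e_1],\ldots,[\vec e_m],[\vec w_1]\}\cong\BD^\U_{n-1,m+1}(\F)$. That complex is $(n-3)$-connected by Proposition~\ref{BDconn}, while $\Link\sigma$ is a sphere of dimension at most $k-2\le n-3$. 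The replacement by $\partial\sigma*B$ is legitimate because swapping $\vec w_1$ for $\vec w_0$ multiplies determinants by $\lambda\in\U$. This is the second place where the bound $k\le n-1$ is sharp.

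\textbf{The Step 4 claim.} Separately, your claim in Step 4 that none of the new simplices $\tau*\beta$ contains a full core is false. Simplicial maps can be degenerate, so if $\dim\sigma\ge 3$ a proper face $\tau\subsetneq\sigma$ may still map onto all of $\{[\vec v_0],[\vec v_1],[\vec v_2]\}$.

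The induction survives if you take as complexity the number of maximal-dimensional simplices whose image is \emph{exactly} a core (or an external pair). In $\partial\sigma*B$, such simplices must have $\beta=\emptyset$ and so were already present as faces of $\sigma$. This holds because every vertex of $g(\beta)$ lies outside $\langle\vec e_1,\ldots,\vec e_m,\vec v_1,\vec v_2\rangle$, and $\{[\vec e_1],\ldots,[\vec e_m]\}\cup f(\sigma)\cup g(\beta)$ has a unique core.

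Similarly, what maximality gives you in Step 2 is that no vertex of $\Link\sigma$ maps into $f(\sigma)$. The absence of any other core is automatic.

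The paper gives no proof of this lemma, deferring to Miller--Patzt--Putman. With these two repairs, your argument is the standard bad-simplex proof.
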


As an immediate consequence of Lemma \ref{BDApiSurj} and Lemma \ref{BDconn}, we have the following:

\begin{prop}[{\cite[Proposition 2.47]{miller2021top}}]
\label{BDAconnectivity}
   For $R=\F$ a field and $U \subset \F^\times$ a multiplicative subgroup with $-1 \in \U$, the complex $\BDA_{n,m}^\U(\F)$ is $(n-2)$-connected.
\end{prop}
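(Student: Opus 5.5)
The plan is to derive the statement directly from Lemma \ref{BDApiSurj} and Proposition \ref{BDconn}, as the text preceding the statement suggests. Write $X=\BD_{n,m}^\U(\F)$ and $Y=\BDA_{n,m}^\U(\F)$, and let $\iota\colon X\hookrightarrow Y$ be the inclusion. We need $\pi_k(Y)=0$ for $0\le k\le n-2$ (for $n\le 0$ the claim is vacuous, or is read as $(-1)$-connected, meaning nonempty). By Proposition \ref{BDconn}, $X$ is $(n-2)$-connected, so $\pi_k(X)=0$ for $k\le n-2$.

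For $1\le k\le n-2$, which lies in the range $1\le k\le n-1$ of Lemma \ref{BDApiSurj}, the map $\iota_*\colon\pi_k(X)\to\pi_k(Y)$ is surjective, and its source is trivial; hence $\pi_k(Y)=0$. Basepoints cause no trouble because we choose a basepoint in $X$, which is a subcomplex of $Y$.

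The remaining cases are $k=0$, together with checking that $Y$ is nonempty when $n\ge 1$. Nonemptiness is clear: for $n\ge1$, $[\vec e_{m+1}]$ is a vertex of $X\subset Y$. For connectivity, I would check directly that $X$ and $Y$ have the same vertex set. A vertex of $Y$ is a $\U$-vector $[\vec w]$ with $\vec w\notin\langle \vec e_1,\dots,\vec e_m\rangle$ such that $\{[\vec e_1],\dots,[\vec e_m],[\vec w]\}$ is a simplex of $\BDA_{m+n}^\U$. A single extra vector is never an additive core together with the $\vec e_i$: the only candidate would be $\vec w=\lambda\vec e_i+\nu\vec e_j$, and that lies in the excluded span. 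So $\{[\vec e_1],\dots,[\vec e_m],[\vec w]\}$ is a determinant-$\U$ partial $\U$-basis, and $[\vec w]$ is a vertex of $X$. (If $n+m=1$ a determinant condition appears, but it applies equally to both complexes.) Since $X$ is connected for $n\ge2$ and every vertex of $Y$ lies in $X$, $Y$ is connected. For $n=1$ we only need nonemptiness.

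I do not expect any real obstacle. The only point needing care is the low-degree bookkeeping, namely that $\pi_0$ lies outside the stated range of Lemma \ref{BDApiSurj}, and the vertex-set comparison above handles it.
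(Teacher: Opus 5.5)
Your proposal is correct and matches the paper, which presents this as an immediate consequence of Lemma \ref{BDApiSurj} and Proposition \ref{BDconn}. Your separate check that $\BD_{n,m}^\U(\F)$ and $\BDA_{n,m}^\U(\F)$ have the same vertex set, which handles $\pi_0$ and lies outside the range $1\le k\le n-1$ of Lemma \ref{BDApiSurj}, makes explicit a low-degree detail the paper leaves implicit. One small slip: the determinant condition on vertices arises when $n=1$, not when $n+m=1$, but as you note it affects both complexes equally.
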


\subsection{Improving the connectivity in some cases}

In this section we show that the connectivity of $\BDA_{n,m}^\U(\F)$ can be improved when $\F$ and $\U$ satisfy some additional conditions. We state our main result and a skeleton of its proof in Section \ref{sec:connskeleton}. The proof is divided amongst subsequent sections.

\subsubsection{Statement and Skeleton of Proof}
\label{sec:connskeleton}

Our result is as follows:

\begin{prop}
\label{primeconditions}
    Suppose $\F$ is a finite field and $\U \subset \F^\times$ a multiplicative subgroup such that $-1 \in \U$, and the quotient $\F^\times / \U$ is isomorphic to the cyclic group $\{\pm 1\}$ of order 2.
    
Then the complex $\BDA_{n}^\U(\F)$ is $(n-1)$-connected for all $n \geq 1$.  
\end{prop}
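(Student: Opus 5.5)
The plan is to mimic the argument of Miller--Patzt--Putman for $\BDA_n^\pm(\F_p)$ with $p\le 5$. We start from Proposition \ref{BDAconnectivity}, which already gives that $\BDA_{n}^\U(\F)$ is $(n-2)$-connected. It therefore suffices to show that every map $S^{n-1}\to \BDA_n^\U(\F)$ is nullhomotopic. We argue by induction on $n$, proving simultaneously the analogous statement for the links $\BDA_{n,m}^\U(\F)$.

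The base case $n=1$ is direct. In $\F^1$ the $\U$-vectors are the cosets of $\U$ in $\F^\times$, so there are exactly two of them, and both are determinant-$\U$ only if the coset lies in $\U$. So $\BD_1^\U$ is a point, and we check that $\BDA_1^\U$ is connected.

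For the inductive step we compare with the complex $\BA_n^\U(\F)$, which is $(n-1)$-connected by Proposition \ref{BAUconnectivity}. Consider $f\colon S^{n-1}\to\BDA_n^\U(\F)\subset \BA_n^\U(\F)$ and extend it to a map $D^n\to \BA_n^\U(\F)$. The task is then to push this disk back into $\BDA_n^\U(\F)$.

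\begin{itemize}
\item The bad simplices of the disk are the top-dimensional standard simplices whose determinant lies in $\F^\times\setminus\U$. These are $(n-1)$-simplices, and since $|\F^\times/\U|=2$ they all lie in the single nontrivial coset $d\U$.
\item The other bad simplices are the additive simplices whose coefficients $\lambda,\nu$ lie in $R^\times\setminus\U$.
\end{itemize}

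For bad standard simplices we use the retraction of Lemma \ref{BDretract}: subdivide each one by a new vertex $x_\sigma$ sent to $\frac1d(\vec v_1+\dots+\vec v_n)$. Because the index is $2$, $d^{-1}$ is a canonical choice up to $\U$. For bad additive simplices $\{[\vec v_0],[\vec v_1],[\vec v_2],\dots\}$ with $\vec v_0=\lambda\vec v_1+\nu\vec v_2$, we rescale: replacing $\vec v_2$ by $\nu\lambda^{-1}\vec v_2$, the coefficient ratio $\nu/\lambda$ lies in $\U$ or in $d\U$. If it lies in $\U$ the simplex is already good up to the determinant condition. If it lies in $d\U$, the index-$2$ hypothesis should let us rewrite the simplex as two good additive simplices glued along a new vertex (something like $\vec v_1+\vec v_2$ together with a determinant correction).

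The key claim is that these local modifications are compatible on shared faces and fix the boundary sphere. This is the role the paper assigns to Lemma \ref{choicehelper}: a map from a sphere that is built from links of bad simplices is nullhomotopic. We expect to prove that via links, where the link of a bad simplex is a join of lower-rank complexes $\BDA_{k,m}^\U$ and $\BD_{k,m}^\U$, and then apply induction together with Proposition \ref{BDconn}.

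The main obstacle is this last step. When we replace a bad simplex, its link in the disk maps into a complex that must be $(n-\dim\sigma-2)$-connected in the determinant-$\U$ world, so that the replacement can be coned off. For additive simplices this needs exactly the extra degree of connectivity being proved, one rank lower, and that is what drives the induction. We would first try to carry out the coning using the $\Z$-free argument of Lemma \ref{choicehelper}. The index-$2$ hypothesis is used to show that the two possible choices of correction vertex differ by an element of $\U$, so that the local choices glue. If that fails, the fallback is a careful discrete-Morse ordering of the bad simplices by determinant class.
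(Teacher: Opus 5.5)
There is a genuine gap. The step you flag as the main obstacle is the entire content of the proposition, and the proposal gives no mechanism for it. Pushing an arbitrary filling $D^n\to\BA_n^\U(\F)$ back into $\BDA_n^\U(\F)$ amounts to showing that specific spheres, built from a bad simplex, its link in the disk and the replacement vertices, are nullhomotopic in $\BDA_n^\U(\F)$. Connectivity of lower-rank links cannot deliver this on its own, because the conclusion is false once the index exceeds $2$: for $\U=\{\pm1\}\subset\F_p^\times$ with $p>5$, already $\BDA_2^\U(\F_p)$ is not simply connected. So the index-$2$ hypothesis must enter through explicit arithmetic, and neither of the two ways you propose to use it works.

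First, you cannot replace $\vec v_2$ by $\nu\lambda^{-1}\vec v_2$ unless $\nu\lambda^{-1}\in\U$, since that changes the vertex. Whether $\lambda,\nu$ both lie in $\U$ is an invariant of the additive simplex, independent of representatives and of how the core is ordered. You give no construction replacing a simplex that fails this by good ones. The paper's only decomposition of this kind (Lemma \ref{absgenvariant}) passes through the vertex $[c\vec v_1]$, which is not adjacent to $[\vec v_1]$. So it holds only at the level of cycles in $\B_n^\U(\F)$, not as a retriangulation inside a disk.

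Second, the admissible subdivision vertices for a bad top simplex $\{[\vec v_1],\dots,[\vec v_n]\}$ are all $[c_1\vec v_1+\dots+c_n\vec v_n]$ with each $c_i$ ranging independently over $\F^\times\setminus\U$. There are $|\U|^{n-1}$ of them, not two choices differing by a unit. That all choices give homotopic maps is Lemma \ref{choicefreedom}. It needs connectivity of the graph on $\F^\times\setminus\U$ with $\U$-differences as edges (Lemma \ref{lem:nonsquaresconnected}, a character-sum count) together with Lemma \ref{choicehelper}.

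You also omit the base case that carries real content. The case $n=1$ is trivial ($\BDA_1^\U(\F)$ is a single vertex), but your scheme says nothing specific at $n=2$. There the only lower-rank input is $0$-connectivity in rank $1$, which is far too weak. The paper proves simple connectivity of $\BDA_2^\U(\F)$ separately (Lemma \ref{lem:BDA2conn}). It adds vertices one at a time in the normal forms $[\lambda\vec e_1+\vec e_2]$, $[a\vec e_1+c\vec e_2]$, $[c\vec e_1]$, and checks each new link is connected using Lemmas \ref{lem:nonsquaresconnected} and \ref{lem:sumofunits}.

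For $n\ge 3$ the paper avoids general disks altogether. By Lemma \ref{BDApiSurj} and the retraction $\rho$, it suffices to kill $\iota\circ\rho$ applied to the initial D-triangle and D-suspend generators of $\pi_{n-1}(\B_n^\U(\F))$ from Lemma \ref{pigenerators}:
\begin{itemize}
\item D-triangle maps die by obstruction theory over $\Delta^2*S^{n-3}$. The only obstructions are the spheres of Lemma \ref{killtrianglehelper}, which are killed by carefully chosen subdivision vertices: $c_2-c_1\in\U$ if $2\notin\U$, and $c_1=-c_2$ if $2\in\U$.
\item D-suspend maps die by Lemma \ref{killsuspend}, the only place the inductive hypothesis $\pi_{n-2}(\BDA_{n-1}^\U(\F))=0$ is used.
\end{itemize}
Nothing about the links $\BDA_{n,m}^\U(\F)$ with $m\ge 1$ is needed, so your simultaneous induction over links only adds an obligation you have not checked.
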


\begin{remark}
    Note that $\BDA_1^\U(\F) = \{*\}$ is a singleton set, so the above proposition is trivially true for $n=1$. Thus the content of the Proposition lies in the case $n \geq 2$.
\end{remark}

\begin{remark}
We will primarily be interested when $\F$ and $\U$ arise in the following way: Let $R$ be a Euclidean domain, and let $\F$ be a field that is a quotient of $R$ by a prime $p \in R$. Let $\U$ be the subgroup of $\F$ formed by the image of $R^\times$ under the map $R \to \F$. We will prove at the end of Section \ref{sec:thm1.4&1.5proof} that the hypothesis of Proposition \ref{primeconditions} is satisfied when the pair $(R,p)$ is among $(\Z[\io], 3)$, $(\Z[\w], 4\w+1)$, $(\Z[\w], 4\w+3)$, $(\Z[\sqrt{2}], 5)$, and $(\Z[\zeta_5], 3)$ (where $\zeta_5$ is a primitive fifth root of unity). Thus the above theorem generalizes a result of Miller--Patzt--Putman \cite[Proposition 2.50]{miller2021top}, which is a special case of Proposition \ref{primeconditions} when $R=\Z$ and $p = 5$.
\end{remark}

\begin{notation}
    Let $X$ be a simplicial complex and let $\Delta^{k-1}$ be a $(k-1)$-simplex.

    \begin{itemize}
        \item Suppose $v_1, v_2, \dots, v_k$ are (not necessarily distinct) vertices of $X$, so that $\{ v_1, \dots, v_k\}$ is a simplex. Define $\llb v_1, \dots, v_k \rrb$ to be the map
        \begin{align*}
            \llb v_1, \dots, v_k \rrb : \Delta^{k-1} \to X
        \end{align*}
        taking the vertices of $\Delta^{k-1}$ to the $v_i$.

        \item Suppose $v_1, \dots, v_k$ are vertices of $X$ such that $\{v_1, \dots, \hat{v}_i, \dots, v_k\}$ is a simplex of $X$ for every $1 \leq i \leq k$.
        Define $\overline{\llb v_1, \dots, v_k \rrb}$ to be the map
        \begin{align*}
            \overline{\llb v_1, \dots, v_k \rrb} : \del \Delta^{k-1} \to X
        \end{align*}
        taking the vertices of $\Delta^{k-1}$ to the $v_i$.

        \item Suppose $Y, Z$ are simplicial complexes, and we have simplicial maps $f: Y \to X$ and $g: Z \to X$ such that for every simplex $\sigma$ of $Y$ and $\eta$ of $Z$, $f(\sigma) * g(\eta)$ is a simplex of $X$. Let $f * g$ denote the natural map $f*g : Y * Z \to X$.
    \end{itemize}
\end{notation}

The following lemma from Miller--Patzt--Putman \cite{miller2021top} gives generators for $\pi_{n-1}(\B_n^{\U}(\F))$. For a finite-dimensional $\F$-vector space $V$, we write $\B^{\U}(V)$ for the complex of partial $\U$-bases of $V$, so $\B^\U(\F^n) = \B_n^\U(\F)$.

\begin{lemma}[{\cite[Lemma 2.53]{miller2021top}}]
\label{pigenerators}
    For $n \geq 3$, the group $\pi_{n-1}(\B_n^{\U}(\F))$ is generated by the following two families of generators:

    \begin{itemize}
        \item The \textbf{initial D-triangle maps.} Let $\sigma = \{ [\vec{v}_0], [\vec{v}_1], [\vec{v}_2]\}$ be an additive simplex of $\BA_n^{\U}(\F)$ with $\vec{v}_0 = \lambda \vec{v}_1 + \nu \vec{v}_2$ for some $\lambda, \nu \in \U$. Let $f : S^{n-3} \to 
        \Link_{\BA_n^\U(\F)}(\sigma)$ be a simplicial map for some triangulation of $S^{n-3}$. Then the associated initial D-triangle map is 
        \begin{align*}
            \overline{\llb [\vec{v}_0], [\vec{v}_1], [\vec{v}_2] \rrb} * f : \del \Delta^2 * S^{n-3} \cong S^{n-1} \to \B_n^\U(\F).
        \end{align*}

        \item The \textbf{initial D-suspend maps.} Let $\vec{v} \in \F^n$ be a non-zero vector, and let $W$ be an $(n-1)$-dimensional subspace of $\F^n$ such that $\F^n = \langle \vec{v} \rangle \oplus W$, and let $\vec{w} \in W$ be non-zero. Let $f: S^{n-2} \to \B^{\U}(W)$ be a simplicial map for some triangulation of $S^{n-2}$. The associated initial D-suspend map is
        \begin{align*}
            \overline{\llb [\vec{v}], [\vec{v}+\vec{w}] \rrb} * f : \del \Delta^1 \cong S^{n-1} \to \B_n^\U(\F).
        \end{align*}
    \end{itemize}
\end{lemma}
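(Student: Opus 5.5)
We will prove Lemma \ref{pigenerators} by induction on $n$, following the link-and-retraction strategy already used for $\B_n^\U(\F)$. The starting input is that $\B_n^\U(\F)$ is $(n-2)$-connected (Proposition \ref{BUconnectivity}), so $\pi_{n-1}$ is its first possibly nonzero homotopy group. By Hurewicz, for $n \geq 3$ it equals $\h_{n-1}$. It therefore suffices to show that every $(n-1)$-cycle is homologous to a sum of images of D-triangle and D-suspend spheres. We may equally argue homotopically: a map $S^{n-1} \to \B_n^\U(\F)$ can be simplicially approximated and then deformed step by step until it becomes a product of the two kinds of generators.

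First we fix a vector $\vec{v} = \vec{e}_n$ and a complement $W = \langle \vec{e}_1, \dots, \vec{e}_{n-1}\rangle$. We then filter $\B_n^\U(\F)$ by how far the vertices are from $W$. Concretely, we write each vector as $\vec{u} = \vec{w} + c\vec{v}$ and take as the \emph{height} the class of $c$. There are three kinds of vertices: those in $W$, those with $c \in \U$ (so $[\vec{u}] = [\vec{v} + \vec{w}]$ after rescaling), and those with $c \notin \U \cup \{0\}$. The full subcomplex on the vertices of $W$ is $\B^\U(W)$. We can also use the star of $[\vec{v}]$, which is the cone over $\Link([\vec{v}]) \cong \B_{n-1,1}^\U(\F)$.

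Given a simplicial sphere $f$, we remove the vertices not in $W$ one at a time using the standard link argument. Take a vertex $x$ mapping to $[\vec{u}]$ with $c \neq 0$. Its link in the sphere maps into $\Link([\vec{u}])$, which is a copy of $\B_{n-1,1}^\U(\F)$. The modification is to replace the star of $x$ with a cone on the link whose apex is a vertex of lower height. The cost of each such replacement is a sphere of the form $\overline{\llb [\vec{u}], [\vec{u}'] \rrb} * g$, with $g$ a sphere in a link. When $\vec{u}' - \vec{u}$ lies in a complement compatible with $W$, this sphere is exactly an initial D-suspend map.

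The main obstacle is that the link of $\{[\vec{u}], [\vec{u}']\}$ is only $(n-4)$-connected, not $(n-3)$-connected. We cannot always choose $\vec{u}'$ with $\{[\vec{u}], [\vec{u}']\}$ a simplex compatible with the whole link. The failure is measured precisely by additive triples: a vertex of the link can be $\lambda\vec{u} + \nu\vec{u}'$, which gives a simplex of $\BA$ but not of $\B$. To handle this we first push the link into the augmented complex $\BA$. We use Proposition \ref{BAUconnectivity}, which says that $\BA_{n-1,1}^\U(\F)$ is Cohen--Macaulay of dimension $n-1$ and hence $(n-2)$-connected, to fill the link there. The additive simplices that appear in the filling are then exactly the places where the sphere differs from the original by $\overline{\llb [\vec{v}_0], [\vec{v}_1], [\vec{v}_2]\rrb} * f'$, with $f'$ in the link of an additive core. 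These are the initial D-triangle maps. Repeating the modification until every vertex lies in $W \cup \{[\vec{v}]\}$ leaves a map that factors through the cone on $\B^\U(W)$, which is contractible. So $f$ is a product of generators of the two types.

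The base case $n = 3$ will be checked directly. In that case $S^{n-3} = S^0$, and the link arguments above apply with $(-1)$-connectivity, meaning only nonemptiness is needed.
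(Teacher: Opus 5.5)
Your proposal has a genuine gap: its central step is asserted rather than proved, and it never confronts the point where a general $\U$ differs from $\U=\F^\times$.

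Start with the vertex-removal scheme. The ``height'' you use is the class of the $\vec v$-coefficient in $(\F^\times/\U)\cup\{0\}$. This set carries no order, so ``an apex of lower height'' has no meaning and there is no descent to run; $\F$ is a field, not a Euclidean ring. Replacing the star of $x$ by a cone with apex $[\vec u']$ requires the link sphere $g\colon S^{n-2}\to\Link([\vec u])\cong\B^\U_{n-1,1}(\F)$ to lie in $\Link([\vec u])\cap\Link([\vec u'])$. The link of the edge $\{[\vec u],[\vec u']\}$ is beside the point, because the two suspension points need not be adjacent; in a D-suspend they never span a simplex together with a top simplex of $f$.

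Moreover, the cost $\overline{\llb[\vec u],[\vec u']\rrb}*g$ is an initial D-suspend map only if $g$ lies in $\B^\U(W')$ for a complement $W'$ of $\langle\vec u\rangle$ with $[\vec u']=[\vec u+\vec q]$ and $\vec q\in W'$. A general $g$ lies in no such $\B^\U(W')$. Since $\B^\U_{n-1,1}(\F)$ is only $(n-3)$-connected, you cannot simply homotope it there, and your sketch supplies no mechanism for doing so.

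The one apex that always works is $[c\vec u]$ with $c\in\F^\times\setminus\U$, because $\Link([c\vec u])=\Link([\vec u])$. But its cost $\overline{\llb[\vec u],[c\vec u]\rrb}*g$ is of neither allowed type. Relatedly, with $g$ held in $\B^\U(W)$, a D-suspend never changes the class in $\F^\times/\U$ of the $\vec v$-coefficient. So vertices $[c\vec v+\vec w]$ with $c\notin\U$ cannot be removed by D-suspend costs, and the endgame ``every vertex lies in $W\cup\{[\vec v]\}$'' is never reached.

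The $\BA$ step has the same defect. Filling $g$ in $\BA^\U_{n-1,1}(\F)$ produces two kinds of additive simplices:
\begin{itemize}
\item internally additive ones, $\vec v_0=\lambda\vec v_1+\nu\vec v_2$ with arbitrary $\lambda,\nu\in\F^\times$, giving initial triangle maps that are D-triangle maps only when $\lambda,\nu\in\U$;
\item externally additive ones, giving external suspends, which you omit entirely.
\end{itemize}
Used correctly, via the long exact sequence of $(\BA^\U_{n,m}(\F),\B^\U_{n,m}(\F))$ and induction over links of additive cores, $\BA$-connectivity leads only to Lemma \ref{absgenerators}. That lemma gives generation by joins of arbitrary triangles and suspends over a splitting $\F^m\oplus A_1\oplus\dots\oplus A_k$.

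The missing idea is the passage to $\U$-coefficients, which is what the paper supplies. Lemma \ref{absgenvariant} rewrites triangles and suspends as sums of D-triangles, D-suspends and multi-suspends $\overline{\llb[\vec v],[c\vec v]\rrb}$ with $c\in\F^\times\setminus\U$, with a D-factor in each generator when $m=0$. After Hurewicz, each join is then regrouped, up to sign, in one of two ways:
\begin{itemize}
\item as an initial D-triangle map, by isolating a D-triangle factor; the other factors lie in the link of its additive core in $\BA^\U_n(\F)$;
\item as an initial D-suspend map, by isolating a D-suspend factor $f_i$ (for instance the last one, if there is no D-triangle), so that the other factors lie in $\B^\U(W)$ with $W=\bigoplus_{j\neq i}A_j$.
\end{itemize}
Nothing in your proposal does either job. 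The announced induction on $n$ and the base case $n=3$ are also never actually used or carried out.
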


The proof will require some modificatons to the one given in \cite{miller2021top}. We postpone the proof to Section \ref{generators}.

To finish the proof of Proposition \ref{primeconditions}, it is enough to prove the following lemmas. 

\begin{lemma}
\label{killtriangle}
Let $\F$ be a finite field and $\U \subset \F^\times$ a multiplicative subgroup such that $-1 \in \U$, and the quotient $\F^\times / \U$ is isomorphic to the cyclic group $\{\pm 1\}$ of order 2.

For some $n \geq 3$, let $g: S^{n-1} \to \B_n^\U(\F)$ be an initial D-triangle map, let $\rho: \B_n^\U(\F) \to \BD_n^\U(\F)$ be the retraction from Lemma \ref{BDretract}, and let $\iota : \BD_n^\U(\F) \to \BDA_n^\U(\F)$ be the inclusion. Then the composite $\iota \circ \rho \circ g : S^{n-1} \to \BDA_n^\U(\F)$ is nullhomotopic.
\end{lemma}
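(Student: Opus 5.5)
The plan is to fill the sphere $\iota\circ\rho\circ g$ explicitly inside $\BDA_n^\U(\F)$, using the additive core $\{[\vec v_0],[\vec v_1],[\vec v_2]\}$ as a cone point wherever the determinant condition allows. I expect the obstruction to live exactly where the determinant leaves $\U$.

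\textbf{Setup.} Write $g=\overline{\llb [\vec v_0],[\vec v_1],[\vec v_2]\rrb}*f$ with $\vec v_0=\lambda\vec v_1+\nu\vec v_2$ and $\lambda,\nu\in\U$. For a top simplex $\tau=\{[\vec w_1],\dots,[\vec w_{n-2}]\}$ in the image of $f$, the set $\{\vec v_1,\vec v_2,\vec w_1,\dots,\vec w_{n-2}\}$ is a basis. Put $d_\tau=\det(\vec v_1,\vec v_2,\vec w_1,\dots)$. Then
\[
\det(\vec v_0,\vec v_1,\vec w)=-\nu d_\tau,\qquad \det(\vec v_0,\vec v_2,\vec w)=\lambda d_\tau .
\]
So all three top simplices of $\partial\Delta^2*\tau$ have determinant in the same coset $d_\tau\U$. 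Since $\F^\times/\U\cong\{\pm1\}$, each $\tau$ is either \emph{good} ($d_\tau\in\U$) or \emph{bad} ($d_\tau\in c\U$ for a fixed $c\notin\U$). This is the only place where the index-$2$ hypothesis enters.

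\textbf{Good simplices.} On good $\tau$, the retraction $\rho$ of Lemma \ref{BDretract} is the identity. Moreover $\{[\vec v_0],[\vec v_1],[\vec v_2]\}\cup\tau$ is an augmented determinant-$\U$ partial $\U$-basis, so $\partial\Delta^2*\tau$ bounds $\Delta^2*\tau$ in $\BDA_n^\U(\F)$.

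\textbf{Reduction when $\tau$ varies.} Since $d_\tau$ depends on $\tau$, I would first homotope $f$ inside the link so that every top simplex becomes good. The link $\Link_{\BA_n^\U(\F)}(\sigma)$ is the complex of partial $\U$-bases of a complement of $\langle\vec v_1,\vec v_2\rangle$. Relative to the fixed pair $\vec v_1,\vec v_2$ it is a copy of $\B_{n-2,2}^\U(\F)$, and it retracts by Lemma \ref{BDretract} onto $\BD_{n-2,2}^\U(\F)$.

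Let $\rho'$ denote that retraction. On a bad $\tau$ it subdivides $\tau$ with the vertex $x_\tau=\frac1{d_\tau}(\vec w_1+\dots+\vec w_{n-2})$. This vertex lies in $\langle\vec w_i\rangle$, so joining with $\partial\Delta^2$ still makes sense, and every top simplex of $\rho'\circ f$ is good. Then $\partial\Delta^2*(\rho'\circ f)$ extends over $\Delta^2*(\rho'\circ f)$ in $\BDA_n^\U(\F)$, as in the good case.

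\textbf{Comparing the two maps.} It remains to show $\iota\circ\rho\circ g\simeq \partial\Delta^2*(\rho'\circ f)$ in $\BDA_n^\U(\F)$. On a bad $\tau$, $\rho$ subdivides each of the three $(n-1)$-simplices $\{\vec v_a,\vec v_b\}\cup\tau$ with vertices such as $\frac1{d}(\vec v_1+\vec v_2+\sum\vec w_i)$ and its analogues using $\vec v_0$. These must be matched with $\partial\Delta^2*S(\tau)$, where $S(\tau)$ is the subdivision with cone point $x_\tau$.

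I would do this locally, simplex by simplex, by exhibiting explicit (augmented) simplices of $\BDA_n^\U(\F)$ that contain the relevant vertices. The tools are the identities $\vec v_0=\lambda\vec v_1+\nu\vec v_2$ with $\lambda,\nu\in\U$, and the fact that two bad factors multiply to a good one, i.e.\ $c^2\in\U$.

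\textbf{Main obstacle.} This local comparison, and its compatibility along faces shared between good and bad $\tau$, is the step I expect to be hardest. Along such faces $\rho$ is the identity, but the filling must glue consistently there.

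What I would try first is to write down the homotopy on the join cell by cell. On each cell, check that the needed vertices, for example $x_\tau$, the $\rho$-vertex $x_{\{\vec v_1,\vec v_2\}\cup\tau}$, and $\vec v_1,\vec v_2$, span either a determinant-$\U$ simplex or an augmented one whose additive core is $\{x,\,x_\tau,\,\vec v_a\}$ with $\U$-coefficients. If a direct match fails, the fallback is to argue inductively on $n$, using the $(n-3)$-connectivity of the links of $\BDA$ from Proposition \ref{BDAconnectivity} to fill the remaining small spheres.
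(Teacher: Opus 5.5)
Your setup is right as far as it goes. The three top simplices of $\partial\Delta^2*\tau$ have determinants in the common coset $d_\tau\U$. Good $\tau$ are filled by augmented simplices. And $\partial\Delta^2*(\rho'\circ f)$ is indeed nullhomotopic in $\BDA_n^\U(\F)$.

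But the step you call the main obstacle is not a gluing technicality: it is the entire content of the lemma, and the proposal leaves it open. Since $\Delta^2*S(\tau)$ is filled in $\BDA_n^\U(\F)$ with boundary $\partial\Delta^2*S(\tau)\cup\Delta^2*\partial\tau$, homotoping $\rho\circ(\partial\Delta^2*\tau)$ to $\partial\Delta^2*S(\tau)$ rel $\partial\Delta^2*\partial\tau$ is equivalent to showing that $\rho\circ\overline{\llb[\vec v_0],[\vec v_1],[\vec v_2],[\vec w_1],\dots,[\vec w_{n-2}]\rrb}$ is nullhomotopic for each bad $\tau$. That is exactly the paper's Lemma \ref{killtrianglehelper}. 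The statement then follows at once by extending over the cells $\Delta^2*\tau$ via obstruction theory, so the detour through $\rho'$ buys nothing. Note also that nothing you actually prove uses $\F^\times/\U\cong\{\pm1\}$, since the coset computation holds for any $\U$. So every use of the hypothesis has to sit in the step you did not carry out.

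Neither of your suggestions for that step works as stated.
\begin{itemize}
\item \textbf{Direct cell-by-cell match.} With the fixed cone points $\frac1d(\cdots)$ of Lemma \ref{BDretract}, the additive cells you propose do not exist. After normalizing $\lambda=\nu=1$, the cone points of the three subdivided faces are, up to $\U$, $x_\tau+d_\tau^{-1}\vec v_0$, $x_\tau+d_\tau^{-1}(2\vec v_1+\vec v_2)$ and $x_\tau+d_\tau^{-1}(\vec v_1+2\vec v_2)$. None of these forms an additive core $\{x,x_\tau,\vec v_a\}$ with $\U$-coefficients. For the last two the triple is not even linearly dependent, since $\mathrm{char}\,\F\neq2$. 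For the first, the coefficient $d_\tau^{-1}$ lies outside $\U$.
\item \textbf{Inductive fallback.} The spheres to be killed are $(n-1)$-spheres in $\BDA_n^\U(\F)$ itself. Proposition \ref{BDAconnectivity} gives only $(n-2)$-connectivity, and the links are less connected still. Killing these spheres is precisely the new input required.
\end{itemize}

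What the paper supplies is the following.
\begin{enumerate}
\item[(i)] Lemma \ref{choicefreedom}: the cone point used by $\rho$ on a bad face may be replaced by any admissible $\sum c_i\vec v_i$, up to homotopy rel boundary. This rests on two facts:
\begin{itemize}
\item the graph on $\F^\times\setminus\U$ with $\U$-differences as edges is connected (Lemma \ref{lem:nonsquaresconnected});
\item $\overline{\llb[\vec v_1],\dots,[\vec v_n],[\vec v_1+\dots+\vec v_n]\rrb}$ is nullhomotopic in $\BDA_n^\U(\F)$ (Lemma \ref{choicehelper}).
\end{itemize}
\item[(ii)] A case split on whether $2\in\U$. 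Choose the cone point $\vec w=\sum c_i\vec v_i$ of $\{[\vec v_1],\dots,[\vec v_n]\}$ with $c_2-c_1\in\U$ if $2\notin\U$, and with $c_1=-c_2$ if $2\in\U$. Cone the sphere from $[\vec w]$ into $n$ spheres, of which $n-2$ bound additive simplices. The remaining two are disposed of by a common cone point $\vec w'$ on two adjacent faces in the first case, and are degenerate in the second.
\end{enumerate}
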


\begin{lemma}
\label{killsuspend}
Let $\F$ be a finite field and $\U \subset \F^\times$ a multiplicative subgroup such that $-1 \in \U$, and the quotient $\F^\times / \U$ is isomorphic to the cyclic group $\{\pm 1\}$ of order 2.

    For some $n\geq 3$, let $g: S^{n-1} \to \B_n^\U(\F)$ be an initial D-suspend map, let $\rho: \B_n^\U(\F) \to \BD_n^\U(\F)$ be the retraction given by Lemma \ref{BDretract}, and let $\iota: \BD_n^\U(\F) \hookrightarrow \BDA_n^\U(\F)$ be the inclusion. Assume that $\pi_{n-2}(\BDA_{n-1}^\U(\F)) = 0$. Then $\iota \circ \rho \circ g: S^{n-1} \to \BDA_n^\U(\F)$ is nullhomotopic.
\end{lemma}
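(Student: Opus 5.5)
The plan is to show that $\iota\circ\rho\circ g$ is homotopic to a suspension $\overline{\llb [\vec{v}], [\vec{v}+\vec{w}] \rrb} * f'$, where $f'$ lands in a copy of $\BDA_{n-1}^\U$ that lies in the link of both suspension points. We then kill $f'$ using the hypothesis $\pi_{n-2}(\BDA_{n-1}^\U(\F))=0$.

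\textbf{Step 1: which simplices $\rho$ changes.} Write $g = \overline{\llb [\vec{v}], [\vec{v}+\vec{w}] \rrb} * f$ with $f : S^{n-2}\to \B^\U(W)$. The only simplices of $g(S^{n-1})$ that $\rho$ moves are top-dimensional ones, of the form $\{[\vec{v}]\}\cup\tau$ or $\{[\vec{v}+\vec{w}]\}\cup\tau$, where $\tau$ is an $(n-2)$-simplex of $f$, i.e.\ a $\U$-basis of $W$. Since $\vec{w}\in W=\langle\tau\rangle$, we have $\det(\vec{v}+\vec{w},\tau)=\det(\vec{v},\tau)$. So a simplex $\tau$ is bad for one cone point exactly when it is bad for the other.

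\textbf{Step 2: a determinant functional on $W$.} Equip $W$ with the linear functional $\det(\vec{v},\cdot)=\det(\vec{v}+\vec{w},\cdot)$ on $(n-1)$-tuples. With respect to it, $W$ with its determinant-$\U$ structure is isomorphic to $\F^{n-1}$ with the standard one, so $\BD^\U(W)\cong\BD_{n-1}^\U(\F)$ and $\BDA^\U(W)\cong \BDA_{n-1}^\U(\F)$. Let $\rho_W$ be the retraction of Lemma \ref{BDretract} for $W$, and set $f'=\rho_W\circ f : S^{n-2}\to \BD^\U(W)$.

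\textbf{Step 3: joining with both cone points.} Every simplex $\eta$ of $\BDA^\U(W)$ joins with $[\vec{v}]$ and with $[\vec{v}+\vec{w}]$ to give a simplex of $\BDA_n^\U(\F)$. If $\eta$ is standard, the determinant condition is exactly the one imposed by the functional. If $\eta$ is internally additive, its additive core stays an additive core after adjoining a vector outside $W$. Hence $\BDA^\U(W)\subset \Link(\vec{v})\cap\Link(\vec{v}+\vec{w})$ inside $\BDA_n^\U(\F)$. By hypothesis $f'$ extends to a disk $D\to\BDA^\U(W)$. Therefore $\overline{\llb [\vec{v}], [\vec{v}+\vec{w}] \rrb} * f'$ extends over $\del\Delta^1 * D$, and the latter is a disk once we cone it off; concretely it is the suspension of a nullhomotopic map, so it is nullhomotopic.

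\textbf{Step 4 (the main obstacle): comparing $\rho\circ g$ with the suspension of $f'$.} It remains to show $\iota\circ\rho\circ g\simeq \overline{\llb [\vec{v}], [\vec{v}+\vec{w}] \rrb} * f'$. On a bad simplex $\{[\vec{v}]\}\cup\tau$ with $\tau=\{[\vec{t}_1],\dots,[\vec{t}_{n-1}]\}$ and $d=\det(\vec{v},\tau)$:
\begin{itemize}
\item $\rho$ cones from the new vertex $x=\frac1d(\vec{v}+\sum\vec{t}_i)$;
\item the suspension of $f'$ uses the cone from $[\vec{v}]$ over the subdivision of $\tau$ at $y=\frac1d\sum\vec{t}_i$.
\end{itemize}
These agree on the boundary of the simplex except for the subdivided face $\tau$ itself.

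I would build the homotopy simplex by simplex, working in the closed star of $[\vec{v}]$ (respectively $[\vec{v}+\vec{w}]$). The relation $\vec{x}=\frac1d\vec{v}+\vec{y}$ means $x$ and $y$ have the same image in the augmented link of $[\vec{v}]$. So I would try to show that sets like $\{[\vec{v}],x,y\}\cup(\tau\setminus[\vec{t}_i])$ span simplices of $\BDA_n^\U$, or at least that the relevant small spheres are coned off inside $\Link([\vec{v}])$. The difficulty is that the coefficient $1/d$ is not in $\U$. Here I expect to use $|\F^\times/\U|=2$: every non-$\U$ scalar is $d$ times an element of $\U$, with one fixed coset. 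So I would first try to rescale the choice of $y$ (for instance $y'=\frac1d\vec{t}_1+\sum_{i\ge2}\vec{t}_i$, or a similar choice) so that all the needed additive relations have coefficients in $\U$.

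If this local comparison fails, the fallback is different: show directly that each cone of $\rho\circ g$ and the corresponding cone of the suspension of $f'$ together form a sphere lying in the star of a single vertex, hence a nullhomotopic sphere, so the two maps differ by nullhomotopic pieces.
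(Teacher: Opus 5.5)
Your Steps 1--3 are correct and coincide with the paper's outer argument. The paper also normalizes to $\vec v=\vec e_n$, $W=\F^{n-1}$, replaces $f$ by $\rho'\circ f$, and kills $\overline{\llb [\vec v],[\vec v+\vec w]\rrb}*(\rho'\circ f)$ using $\pi_{n-2}(\BDA_{n-1}^\U(\F))=0$. However, Step 4 is the entire content of the lemma, and neither mechanism you suggest for it can work.

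\emph{The one-cone comparison in the star of $[\vec v]$ fails.} On the cell $\{[\vec v]\}\cup\tau$, the image of $\rho\circ g$ contains the face $\tau$. Since $\{[\vec v]\}\cup\tau$ is not a simplex, $\tau$ does not lie in the closed star of $[\vec v]$ at all. Moreover, the two maps disagree on $\tau$ itself: one is the identity, the other is the $\rho_W$-subdivision. For the same reason your fallback pieces do not close up into a sphere: the two one-sided cones glue to a disk whose boundary is $\tau\cup\rho_W(\tau)$. The cones over $\tau$ from $[\vec v]$ and from $[\vec v+\vec w]$ must be treated together. The actual obstruction is the $(n-1)$-sphere
\[
\rho\circ\bigl(\overline{\llb [\vec v],[\vec v+\vec w]\rrb}*\overline{\llb [\vec t_1],\dots,[\vec t_{n-1}],[\vec y]\rrb}\bigr).
\]
Lemma \ref{killsuspendhelper2} reduces the comparison to exactly this sphere.

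\emph{Your proposed local simplices do not exist.} Every admissible subdivision vertex has all of its coefficients in $\F^\times\setminus\U$ (see the remark after Lemma \ref{BDAretract}). If $\vec v,\vec x,\vec y$ are coplanar, then $\vec x=a_0\vec v+\beta\vec y$ with $a_0\notin\U$ and $\beta\in\U$. Each way of writing one of the three as a combination of the other two then has a coefficient outside $\U$. If they are not coplanar, the unique linear relation among the $n+1$ vectors involves more than three of them. Either way, $\{[\vec v],[\vec x],[\vec y]\}\cup(\tau\setminus[\vec t_i])$ is never a simplex of $\BDA_n^\U(\F)$. Your rescaled $\vec y'=\frac1d\vec t_1+\sum_{i\ge 2}\vec t_i$ is also not admissible: replacing $\vec t_i$ ($i\ge 2$) by $\vec y'$ gives determinant $\pm d\notin\U$.

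\emph{The missing idea} is how to kill the two-sided sphere above; this is where the index-two hypothesis genuinely enters. The paper does this in Lemma \ref{killsuspendhelper1}:
\begin{enumerate}
\item Lemma \ref{lem:additivegeneration} reduces to the case $\vec w=\vec t_1$, after reindexing and rescaling by $\U$.
\item The sphere is cut into $n$ spheres. Of these, $n-1$ have the form $\rho\circ\overline{\llb[\vec a+\vec b],[\vec a],[\vec b],\dots\rrb}$ and are killed by Lemma \ref{killtrianglehelper}. This uses the extension of $\rho$ to $\BAO_n^\U(\F)$ from Lemma \ref{BDAretract}.
\item The remaining sphere becomes degenerate once Lemma \ref{choicefreedom} is used to take $\vec y$ itself as the subdivision vertex of the bad face $\{[\vec v],[\vec v+\vec t_1],[\vec t_2],\dots,[\vec t_{n-1}]\}$.
\end{enumerate}
Lemma \ref{killtrianglehelper} rests in turn on two things. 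One is the choice-independence of $\rho$ (Lemmas \ref{choicefreedom} and \ref{choicehelper}), which uses connectivity of the graph on $\F^\times\setminus\U$ from Lemma \ref{lem:nonsquaresconnected}. The other is a case split on whether $2\in\U$. Nothing in your plan replaces this chain, so as written the proposal proves only the easy half of the lemma.
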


\begin{lemma}
    \label{lem:BDA2conn}
    Let $\F$ be a finite field and $\U \subset \F^\times$ a multiplicative subgroup such that $-1 \in \U$, and the quotient $\F^\times / \U$ is isomorphic to the cyclic group $\{\pm 1\}$ of order 2. Then $\BDA_2^\U(\F)$ is simply connected.
\end{lemma}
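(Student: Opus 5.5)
By Lemma \ref{BDApiSurj} (with $n=2$, $m=0$, $k=1$), the inclusion $\BD_2^\U(\F)\hookrightarrow\BDA_2^\U(\F)$ is surjective on $\pi_1$. Since $\BD_2^\U(\F)$ is a graph, it therefore suffices to show that every closed edge-path in $\BD_2^\U(\F)$ is nullhomotopic in $\BDA_2^\U(\F)$. Connectivity is already given by Proposition \ref{BDAconnectivity}.

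Recall the structure of $\BDA_2^\U(\F)$:
\begin{itemize}
\item vertices are $\U$-vectors $[\vec v]$ with $\vec v\neq 0$;
\item $[\vec v],[\vec w]$ span an edge iff $\det(\vec v,\vec w)\in\U$;
\item a triangle is $\{[\vec v_0],[\vec v_1],[\vec v_2]\}$ with $\vec v_0=\lambda\vec v_1+\nu\vec v_2$, $\lambda,\nu\in\U$, and $\det(\vec v_1,\vec v_2)\in\U$.
\end{itemize}
Fix $d\in\F^\times\setminus\U$, so that $\F^\times=\U\sqcup d\U$. The group $G=\{g\in\mathrm{GL}_2(\F):\det g\in\U\}$ acts on the complex and is transitive on edges.

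\textbf{Step 1: the link of $[\vec e_1]$ is connected.} Every vertex adjacent to $[\vec e_1]$ has a unique representative $(a,1)$ with $a\in\F$. A direct computation shows that $\{[\vec e_1],[(a,1)],[(b,1)]\}$ is a triangle iff $b-a\in\U$. Hence the link of $[\vec e_1]$ is the Cayley graph of $(\F,+)$ with generating set $\U$.

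This graph is connected. The additive span of $\U$ is closed under multiplication by $\U$ and contains $1$, so it is a subfield of $\F$ containing $\U$. A subfield with at least $(|\F|-1)/2+1$ elements must be all of $\F$. The exceptional small case $|\F|=3$ is excluded, since there $\U=\{\pm1\}=\F^\times$.

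By edge-transitivity, the same description holds for the link of every vertex.

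\textbf{Step 2: reduce to short cycles.} Partition the vertices according to $\det(\vec e_1,\vec v)\in\{0\}\cup\U\cup d\U$:
\begin{itemize}
\item class $0$: the two vertices $[\vec e_1]$ and $[d\vec e_1]$;
\item class $\U$: the star of $[\vec e_1]$;
\item class $d\U$: the remaining vertices, e.g.\ $[(a,d)]$.
\end{itemize}
The graph $\BD_2^\U(\F)$ has very small diameter: any two vertices are joined by a path of length at most $2$ or $3$, via an explicit common neighbour obtained by solving two determinant conditions. Using this, every edge-loop decomposes, by the usual coning-off argument to a basepoint, into cycles of bounded length (at most $5$ or $6$).

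So it remains to fill cycles of length $3$, $4$, $5$ up to the $G$-action. Using $G$, normalise two consecutive vertices to $[\vec e_1],[\vec e_2]$. Each short cycle is then parametrised by a handful of field elements, and each parameter is either in $\U$ or in $d\U$.

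\textbf{Step 3 (main obstacle): fill the normalised short cycles.} For each normalised short cycle I would exhibit an explicit filling by additive triangles, using Step 1 to move within links. Concretely:
\begin{itemize}
\item A path $[\vec x],[\vec v],[\vec y]$ in the link of a vertex $[\vec v]$ can be slid across triangles through $[\vec v]$.
\item A $4$-cycle $[\vec e_1],[\vec e_2],[\vec w],[\vec z]$ should be filled by adding the vertex $[\vec e_1+\vec e_2]$ or $[\vec e_1-\vec e_2]$ and checking the determinants.
\end{itemize}
Whenever an intermediate vector fails to have determinant in $\U$, its determinant lies in $d\U$. Since $d^2\in\U$, multiplying by $d$, or using a second triangle, then returns it to $\U$. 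This is exactly where the hypothesis $|\F^\times/\U|=2$ enters.

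For index $>2$ this case analysis fails, consistent with the genus of the modular curve for $p>5$. I expect the bookkeeping of these cases, especially the $5$-cycles mixing class-$\U$ and class-$d\U$ vertices, to be the hardest part. I would first attempt them by direct computation in coordinates, with representatives $(a,1)$ and $(a,d)$.
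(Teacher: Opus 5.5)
\textbf{There is a genuine gap: Step 3, which carries the entire content of the lemma, is only announced, never carried out.}

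Nothing you establish in Steps 1--2 depends essentially on $|\F^\times/\U|=2$. Take $\F=\F_7$ and $\U=\{\pm1\}$, which has index $3$. Every vertex link is still a connected graph (a $7$-cycle), and the $1$-skeleton still has diameter $3$. Yet $\BDA_2^\U(\F_7)$ is not simply connected; this is the modular-curve obstruction recalled in the introduction. So local connectivity plus a reduction to short cycles cannot finish the job. The index-$2$ hypothesis must be used to fill specific cycles, and your heuristics for that do not work as stated:
\begin{itemize}
\item A vertex $[\vec e_1\pm\vec e_2]$ need not be a cone point for a normalised $4$-cycle. In $\F_{13}$ with $\U$ the squares, $[\vec e_1],[\vec e_2],[(1,6)],[(4,1)]$ is a $4$-cycle. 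But $\det(\vec e_1+\vec e_2,(1,6))=5$ and $\det(\vec e_1-\vec e_2,(1,6))=7$ are both non-squares, so neither vertex is adjacent to $[(1,6)]$.
\item Replacing a vector $\vec x$ by $d\vec x$ gives a different vertex and multiplies \emph{every} determinant involving it by $d$. It therefore cannot repair one bad determinant without spoiling the good ones.
\item Your own coning bound allows cycles of length $6$ through $[d\vec e_1]$, which the list in Step 3 omits.
\end{itemize}

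\textbf{What is missing is the arithmetic input.} The paper avoids cycle-filling altogether. It builds $\BDA_2^\U(\F)$ one vertex at a time, in this order: the star of $[\vec e_1]$, then the vertices $[\lambda\vec e_1+\vec e_2]$, then $[c\vec e_2]$, then the $[a\vec e_1+c\vec e_2]$ one by one, and finally $[c\vec e_1]$. It applies Lemma \ref{lem:discretemorse} at each stage. So it only has to check that the link of each new vertex \emph{inside the subcomplex built so far} is connected. These partial links are not the full vertex links of your Step 1, and their connectivity is exactly where index $2$ is used. Two lemmas supply this:
\begin{itemize}
\item Lemma \ref{lem:nonsquaresconnected}: the graph on $\F^\times\setminus\U$, joining elements that differ by an element of $\U$, is connected. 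This is shown by a character-sum count proving the graph is $k$-regular on $2k$ vertices.
\item Lemma \ref{lem:sumofunits}: every element of $\F^\times\setminus\U$ is a sum of two elements of $\U$. This attaches each earlier vertex $[b\vec e_1+c\vec e_2]$ to the rest of the link of $[a\vec e_1+c\vec e_2]$.
\end{itemize}
Completing your route would need facts of this kind to fill the $4$-, $5$- and $6$-cycles. As it stands, the proposal does not prove the lemma.
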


Here is an outline of the remainder of this section. In \ref{generators} we prove Lemma \ref{pigenerators}. In \ref{sec:F/UImplications}, deduce some consequences of the restriction of $\F^\times / \U \cong \{\pm 1\}$ that we will frequently use later on. In \ref{retraction}, we prove some preliminary results about the retraction given by Lemma \ref{BDretract}. In \ref{triangles} and \ref{suspends}, we prove Lemma \ref{killtriangle} and \ref{killsuspend}, respectively. In Section \ref{sec:BDA2}, we prove Lemma \ref{lem:BDA2conn}.

\subsubsection{A generating set for $\pi_{n-1}(\B_{n,m}^\U(\F))$}\label{generators}

Let $\F$ be a field, and $\U$ a multiplicative subgroup of $\F^\times$ containing $-1$. This section proves Lemma \ref{pigenerators}, which identifies generators for $\pi_{n-1}(\B_{n,m}^\U(\F))$.
We start by summarizing some results by Miller--Patzt--Putman \cite[Section 2.3.2]{miller2021top}. Their statements apply to the specific case when $\U = \{\pm1\}$, but the proofs for the general case are essentially the same as theirs. The main idea of the proofs is to include $\B_{n,m}^\U(\F)$ into $\BA_{n,m}^\U(\F)$, which by Proposition \ref{BAUconnectivity} is $(n-1)$-connected.
Their results are phrased in terms of homology groups rather than homotopy groups, which allows us to avoid worrying about basepoints. They later use the Hurewicz theorem to translate this into information about homotopy groups.
Throughout this section, our convention is that $S^{-1}$ is the empty set.

\begin{lemma}[{\cite[Lemma 2.57]{miller2021top}}]
    Let $\F$ be a field, and $\U$ a multiplicative subgroup of $\F^\times$ with $-1 \in \U$. Let $n \geq 1$ and $m \geq 0$ be such that $n+m \geq 2$. Then the group $\widetilde{\h}_{n-1}(\B_{n,m}^\U(\F))$ is generated by the images of the fundamental classes under the following two families of maps.

    \begin{itemize}
        \item The \textbf{initial triangle maps}, which require $n \geq 2$. Let $\sigma = \{ [\vec{v}_0], [\vec{v}_1], [\vec{v}_2]\}$ be a 2-dimensional internally additive simplex of $\BA_{n,m}^\U(\F)$, so $\vec{v}_0 = \lambda \vec{v}_1 + \nu \vec{v}_2$ for some $\lambda, \nu \in \F^\times$. Let $f: S^{n-3} \to \Link_{\BA_{n,m}^\U(\F)}(\sigma)$ be a simplicial map for some triangulation of $S^{n-3}$. The associated initial triangle map is then
        \begin{align*}
            \overline{\llb [\vec{v}_0], [\vec{v}_1], [\vec{v}_2] \rrb} * f : \del \Delta^2 * S^{n-3} \cong S^{n-1} \to \B_{n,m}^\U(\F)
        \end{align*}

        \item The \textbf{initial external suspend maps}, which require $m \geq 1$. Let $\sigma = \{[\vec{v}_0], [\vec{v}_1]\}$ be a 1-dimensional externally additive simplex of $\BA_{n,m}^\U(\F)$. Let $f: S^{n-2} \to \Link_{\BA_{n,m}^\U(\F)}(\sigma)$ be a simplicial map for some triangulation of $S^{n-2}$. The associated initial external suspend map is then
        \begin{align*}
            \overline{\llb [\vec{v}_0], [\vec{v}_1] \rrb} * f : \del \Delta^1 * S^{n-2} \cong S^{n-1} \to \B_{n,m}^\U(\F). 
        \end{align*}
    \end{itemize}
\end{lemma}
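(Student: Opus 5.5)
Since $\BA_{n,m}^\U(\F)$ is $(n-1)$-connected (Proposition \ref{BAUconnectivity}), I will prove the lemma by comparing it with its subcomplex $\B_{n,m}^\U(\F)$. The two complexes have the same vertices. The simplices of $\BA_{n,m}^\U(\F)$ that are not in $\B_{n,m}^\U(\F)$ are exactly the additive simplices, and each of these contains a minimal additive face. That face is either a 2-dimensional internally additive simplex $\{[\vec{v}_0],[\vec{v}_1],[\vec{v}_2]\}$ or a 1-dimensional externally additive simplex $\{[\vec{v}_0],[\vec{v}_1]\}$ with $\vec{v}_0=\lambda\vec{v}_1+\nu\vec{e}_i$.

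First I check that two distinct minimal additive faces never lie in a common simplex. Any simplex of $\BA_{n,m}^\U(\F)$ has at most one additive core. The augmented-link condition $\vec{w}\notin\langle\vec{e}_1,\dots,\vec{e}_m\rangle$ rules out degenerate overlaps between the internal and external cases. So $\BA_{n,m}^\U(\F)$ is obtained from $\B_{n,m}^\U(\F)$ by attaching, for each minimal additive face $\sigma$, the subcomplex $\sigma * \Link_{\BA_{n,m}^\U(\F)}(\sigma)$ along $\partial\sigma * \Link_{\BA_{n,m}^\U(\F)}(\sigma)$. Here the link consists only of standard simplices.

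Next I consider the long exact sequence of the pair $(\BA_{n,m}^\U(\F),\B_{n,m}^\U(\F))$ in degree $n$. Because $\widetilde{\h}_{n-1}(\BA_{n,m}^\U(\F))=0$, the boundary map
\[
\partial:\h_n(\BA_{n,m}^\U(\F),\B_{n,m}^\U(\F))\to\widetilde{\h}_{n-1}(\B_{n,m}^\U(\F))
\]
is surjective. By excision the relative group splits as a direct sum over the minimal additive faces $\sigma$:
\[
\h_n(\sigma*L_\sigma,\partial\sigma*L_\sigma)\cong\widetilde{\h}_{n-\dim\sigma-1}(L_\sigma),
\]
where $L_\sigma=\Link_{\BA_{n,m}^\U(\F)}(\sigma)$. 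This gives $\widetilde{\h}_{n-3}(L_\sigma)$ for triangles and $\widetilde{\h}_{n-2}(L_\sigma)$ for edges. Under this identification, $\partial$ sends a class $[f]$ to the image of the fundamental class under $\overline{\llb\sigma\rrb}*f$, which is precisely an initial triangle map or an initial external suspend map. For each $\sigma$ the summand is therefore spanned by such classes, and surjectivity of $\partial$ gives the claimed generation.

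It remains to realize each homology class of $L_\sigma$ in the relevant degree by a map from a triangulated sphere. This is automatic if the class is spherical. I plan to use the fact that $L_\sigma$ is isomorphic to a complex of the form $\B_{n',m'}^\U(\F)$ of dimension $n'-1$ equal to the relevant degree. For an edge $\{[\vec{v}_0],[\vec{v}_1]\}$, the link is the set of partial bases complementing $\vec{v}_1,\vec{e}_1,\dots,\vec{e}_m$ within $\langle\vec{e}_1,\dots,\vec{e}_m\rangle$-avoiding vectors. For a triangle, it complements $\vec{v}_1,\vec{v}_2$. By Proposition \ref{BUconnectivity}, $L_\sigma$ is Cohen--Macaulay, so it is $(d-1)$-connected where $d$ is the degree in question. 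Hurewicz then makes every top homology class spherical; for $d\le 1$ the cases are handled directly, using $S^{-1}=\emptyset$ and taking $S^0$ for pairs of points. The step I expect to be the main obstacle is this identification of $L_\sigma$ with a $\B_{n',m'}^\U(\F)$, together with the precise accounting of which vertices the augmented link excludes. The external case should give $\B_{n-1,m+1}^\U$ after a change of basis fixing the $\vec{e}_j$, and the triangle case should give $\B_{n-2,m+2}^\U$.
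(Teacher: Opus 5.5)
Your proposal is correct and is essentially the argument the paper points to (it defers to Miller--Patzt--Putman): include $\B_{n,m}^\U(\F)$ into the $(n-1)$-connected complex $\BA_{n,m}^\U(\F)$, split the relative chains over the minimal additive simplices, whose links are $\B_{n-2,m+2}^\U(\F)$ and $\B_{n-1,m+1}^\U(\F)$, and use the Cohen--Macaulayness of these links together with Hurewicz to make the link classes spherical. The one point to tighten is your justification that each additive simplex has a unique minimal additive face: this holds because the linear relations among representatives of an augmented partial basis form a one-dimensional space, whereas the augmented-link condition is what rules out additive cores containing two of the $\vec{e}_i$ (these would otherwise give $0$-dimensional additive faces).
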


\begin{lemma}[{\cite[Lemma 2.58]{miller2021top}}]
\label{absgenerators}
    Let $\F$ be a field, and $\U$ a multiplicative subgroup of $\F^\times$ with $-1 \in \U$. Let $n \geq 1$ and $m \geq 0$ be such that $n+m \geq 2$. Then the group $\widetilde{\h}_{n-1}(\B_{n,m}^\U(\F))$ is generated by the images of the fundamental classes under maps of the form 
    \begin{align*}
        f_1 * \dots * f_k : \del \Delta^{r_1} * \dots * \del \Delta^{r_k} \cong S^{n-1} \to \B_{n,m}^\U(\F),
    \end{align*}
    where the $f_i$ are as follows. There exists a decomposition $\F^{n+m} = \F^m \oplus A_1 \oplus \dots \oplus A_k$, and for $1 \leq i \leq k$ the map $f_i$ falls into one of the following two classes:
    \begin{itemize}
        \item A \textbf{triangle}. There exists a 2-dimensional internally additive simplex $\{[\vec{v}_0], [\vec{v}_1], [\vec{v}_2]\}$ of $\BA_{n,m}^\U(\F)$ such that 
        \begin{align*}
            f_i = \overline{\llb [\vec{v}_0], [\vec{v}_1], [\vec{v}_2]\rrb} : \del \Delta^2 \to \B_{n,m}^\U(\F)
        \end{align*}
        and such that $A_i = \langle \vec{v}_0, \vec{v}_1, \vec{v}_2 \rangle$. Note that $A_i$ is 2-dimensional.

        \item A \textbf{suspend}. There exist nonzero vectors $\vec{v} \in A_i$ and $\vec{w} \in \F^m \oplus A_1 \oplus \dots \oplus A_{i-1}$ and some $\lambda \in \F^\times$ such that
        \begin{align*}
            f_i = \overline{\llb [\vec{v}], [\lambda \vec{v} + \vec{w}] \rrb} : \del \Delta^1 \to \B_{n,m}^\U(\F)
        \end{align*}
        and such that $A_i = \langle \vec{v} \rangle$. Note that $A_i$ is 1-dimensional.
    \end{itemize}
\end{lemma}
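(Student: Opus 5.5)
We prove Lemma \ref{absgenerators} by induction on $n$, starting from the previous lemma (\cite[Lemma 2.57]{miller2021top}). That lemma says $\widetilde{\h}_{n-1}(\B_{n,m}^\U(\F))$ is generated by initial triangle maps and initial external suspend maps. It therefore suffices to show that each such map is homologous to a sum of joins of the stated form.

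\textbf{Initial triangle maps.} Take $\overline{\llb [\vec{v}_0], [\vec{v}_1], [\vec{v}_2] \rrb} * f$, where $f : S^{n-3} \to \Link_{\BA_{n,m}^\U(\F)}(\sigma)$.
\begin{itemize}
\item Since $f$ lands in the link of the additive simplex $\sigma$, every simplex in the image of $f$, together with $\vec{v}_1,\vec{v}_2$ and $\vec{e}_1,\dots,\vec{e}_m$, forms a partial basis.
\item Hence $f$ is a map into $\B_{n-2,m+2}^\U(\F)$, after applying an element of $\mathrm{GL}_{n+m}(\F)$ fixing $\vec{e}_1,\dots,\vec{e}_m$ and sending $\vec{v}_1,\vec{v}_2$ to $\vec{e}_{m+1},\vec{e}_{m+2}$. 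Here the ambient $\F^m$ is enlarged to $\F^m\oplus A_1$ with $A_1=\langle \vec{v}_1,\vec{v}_2\rangle$.
\item By induction (valid since $n-2<n$ and the constraint $n+m\ge 2$ is preserved), the class $f_*[S^{n-3}]$ is a sum of joins $g_2*\cdots*g_k$ adapted to decompositions $\F^m\oplus A_1\oplus A_2\oplus\cdots\oplus A_k$. In these, suspends may use $\vec{w}\in\F^m\oplus A_1\oplus\cdots$.
\item Joining with the triangle $f_1=\overline{\llb [\vec{v}_0], [\vec{v}_1], [\vec{v}_2]\rrb}$ gives the required form. The join operation is bilinear on the relevant chains, since each simplex of $g_2*\cdots*g_k$ together with any edge of the triangle is a partial basis.
\end{itemize}
When $n-2=0$ the sphere $S^{-1}$ is empty and the triangle alone suffices.

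\textbf{Initial external suspend maps.} Take $\sigma=\{[\vec{v}_0],[\vec{v}_1]\}$ externally additive, with $\vec{v}_0=\lambda\vec{v}_1+\nu\vec{e}_i$. Rescaling representatives, we may write $\vec{v}_0=\lambda'\vec{v}_1+\vec{w}$ with $\vec{w}\in\F^m$.
\begin{itemize}
\item The map $f$ lands in a link, so it is a map into $\B_{n-1,m+1}^\U(\F)$ after identifying $\F^m\oplus\langle\vec{v}_1\rangle$ with $\F^{m+1}$.
\item Set $A_1=\langle\vec{v}_1\rangle$, and apply induction to $f$.
\item The first factor $\overline{\llb[\vec{v}_1],[\lambda'\vec{v}_1+\vec{w}]\rrb}$ is then a suspend in the sense of the statement, with $\vec{w}\in\F^m$.
\end{itemize}

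\textbf{Main obstacle.} The delicate point is that the induction must be run on the relative complexes $\B_{n,m}^\U$ with all $m$ simultaneously, and that ``suspend'' must allow $\vec{w}$ in the enlarged base $\F^m\oplus A_1\oplus\cdots\oplus A_{i-1}$. This requires checking that the identification of the link with $\B_{n-r,m+r}^\U(\F)$ transports the generating families correctly. In particular, the $\U$-classes of vectors must match, and one has to track which vectors are genuinely in the earlier summands.

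We also need care with degenerate cases, such as $n=1$ and small links. We also need to check that the link of $\sigma$ in $\BA_{n,m}^\U(\F)$ contains no additive simplices, so that $f$ genuinely maps into the unaugmented complex $\B$. If the link could contain additive simplices, I would first homotope $f$ into the $\B$-part, using the Cohen--Macaulay property of Proposition \ref{BAUconnectivity}, before applying induction.
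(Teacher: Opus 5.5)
Your proof is correct and matches the argument of Miller--Patzt--Putman that the paper cites without reproducing: strong induction on $n$ from the preceding lemma (their Lemma 2.57), using that the link of an internally additive $2$-simplex in $\BA_{n,m}^\U(\F)$ is $\B_{n-2,m+2}^\U(\F)$ and the link of an externally additive $1$-simplex is $\B_{n-1,m+1}^\U(\F)$, together with the fact that joining with $\partial\sigma$ respects homology classes. The worry in your last paragraph does not arise: the linear relations among an augmented partial basis form a one-dimensional space supported exactly on its additive core, so once $\sigma$ contains the core, every link simplex together with $\vec{e}_1,\dots,\vec{e}_m$ and the vertices of $\sigma$ other than $[\vec{v}_0]$ is already a partial basis, and no homotopy into the unaugmented part is needed.
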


To prove Lemma \ref{pigenerators}, we shall need a variant of Lemma \ref{absgenerators}. This follows essentially the same way as \cite[Lemma 2.59]{miller2021top}, but since our more general setting requires some modifications to both the statement and its proof, we give a full proof here. 

\begin{lemma}
    \label{absgenvariant}
    Let $n \geq 1$ and $m \geq 0$ be such that $n + m \geq 2$. 
    Let $\F$ be a field and $\U$ a multiplicative subgroup of $\F^\times$ with $-1 \in \U$. 
    Then the group $\widetilde{\h}_{n-1}(\B_{n,m}^\U(\F))$ is generated by the images of the fundamental classes under maps of the form 
    \begin{align*}
        f_1 * \dots * f_k : \del \Delta^{r_1} * \dots * \del \Delta^{r_k} \cong S^{n-1} \to \B_{n,m}^\U(\F),
    \end{align*}
    where the $f_i$ are as follows. There exists a decomposition $\F^{n+m} = \F^m \oplus A_1 \oplus \dots \oplus A_k$, and for $1 \leq i \leq k$ the map $f_i$ falls into one of the following three classes:

    \begin{itemize}
        \item A \textbf{D-triangle}. There is a 2-dimensional internally additive simplex $\{[\vec{v}_0], [\vec{v}_1], [\vec{v}_2]\}$ of $\BA_{n,m}^\U(\F)$ with $\vec{v}_0 = \lambda \vec{v}_1 + \nu \vec{v}_2$ for some $\lambda, \nu \in \U$ such that
         \begin{align*}
            f_i = \overline{\llb [\vec{v}_0], [\vec{v}_1], [\vec{v}_2]\rrb} : \del \Delta^2 \to \B_{n,m}^\U(\F)
        \end{align*}
        and such that $A_i = \langle \vec{v}_0, \vec{v}_1, \vec{v}_2 \rangle$. Note that $A_i$ is 2-dimensional.

        \item A \textbf{D-suspend}.  There exist nonzero vectors $\vec{v} \in A_i$ and $\vec{w} \in \F^m \oplus A_1 \oplus \dots \oplus A_{i-1}$ such that
        \begin{align*}
            f_i = \overline{\llb [\vec{v}], [\vec{v} + \vec{w}] \rrb} : \del \Delta^1 \to \B_{n,m}^\U(\F)
        \end{align*}
        and such that $A_i = \langle \vec{v} \rangle$. Note that $A_i$ is 1-dimensional.

        \item A \textbf{multi-suspend}. There is a nonzero vector $\vec{v} \in A_i$ and some $c \in \F^\times \setminus \U$ such that 
        \begin{align*}
            f_i = \overline{\llb [\vec{v}], [\vec{cv}] \rrb} : \del \Delta^1 \to \B_{n,m}^\U(\F)
        \end{align*}
        and such that $A_i = \langle \vec{v} \rangle$. Note that $A_i$ is 1-dimensional.
    \end{itemize}
    Moreover, if $m=0$ then atleast one of the $f_i$ is either a D-triangle or a D-suspend.
\end{lemma}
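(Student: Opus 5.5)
The plan is to start from Lemma \ref{absgenerators} and rewrite each generator there as a sum of generators of the three new kinds, by changing representatives of $\U$-vectors and using relations in $\widetilde{\h}_{n-1}$. Each generator from Lemma \ref{absgenerators} is a join $f_1 * \dots * f_k$ of triangles and suspends subordinate to a decomposition $\F^{n+m} = \F^m \oplus A_1 \oplus \dots \oplus A_k$. Since the fundamental class of a join is the join of the fundamental classes, I can work one factor at a time. I replace a single factor $f_i$ by a sum or difference of cycles of the allowed types, keep the other factors fixed, and check that each resulting join is still a simplicial map into $\B_{n,m}^\U(\F)$. That check holds because every vertex of every new cycle for the factor $i$ lies in $A_i$, or in $A_i$ shifted by the earlier summands, exactly as for the original $f_i$, so the partial-basis condition on joined simplices is preserved.

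\emph{Triangles.} Take a triangle $\overline{\llb [\vec{v}_0],[\vec{v}_1],[\vec{v}_2]\rrb}$ with $\vec{v}_0 = \lambda\vec{v}_1+\nu\vec{v}_2$, where $\lambda,\nu\in\F^\times$. Since $[\vec{v}_1] = [\lambda \vec{v}_1]$ only if $\lambda\in\U$, I cannot simply rescale representatives. Instead I take $\vec{v}_1' = \lambda\vec{v}_1$ and $\vec{v}_2'=\nu\vec{v}_2$, so that $\vec{v}_0 = \vec{v}_1'+\vec{v}_2'$ and the triangle $\overline{\llb [\vec{v}_0],[\vec{v}_1'],[\vec{v}_2']\rrb}$ is a D-triangle. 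The difference between the original cycle and this one should be expressible in the $2$-dimensional space $A_i$ through cycles of the form $\overline{\llb[\vec{v}_1],[\lambda\vec{v}_1]\rrb} * \overline{\llb \ldots\rrb}$. Concretely, in the complex $\B^\U(A_i)$, which is $0$-connected of dimension $1$, the difference of the two triangles is a $1$-cycle passing through $[\vec{v}_1],[\lambda\vec{v}_1],[\vec{v}_2],[\nu\vec{v}_2]$. I plan to decompose it as a sum of squares $\overline{\llb[\vec{u}],[c\vec{u}]\rrb} * \overline{\llb[\vec{w}],[c'\vec{w}]\rrb}$, which are joins of two multi-suspends (or are trivial when $c\in\U$), plus D-triangles. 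Each such square is a join of multi-suspends with $A_i$ split into two lines, which is allowed.

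\emph{Suspends.} Take a suspend $\overline{\llb[\vec{v}],[\lambda\vec{v}+\vec{w}]\rrb}$. Writing $\lambda\vec{v}+\vec{w} = \lambda(\vec{v}+\lambda^{-1}\vec{w})$, this becomes the D-suspend $\overline{\llb[\vec{v}],[\vec{v}+\lambda^{-1}\vec{w}]\rrb}$ when $\lambda\in\U$. In general the $0$-cycle $[\lambda\vec{v}+\vec{w}]-[\vec{v}]$ equals
\[
\bigl([\lambda\vec{v}+\vec{w}]-[\lambda\vec{v}]\bigr)+\bigl([\lambda\vec{v}]-[\vec{v}]\bigr).
\]
The first term is a D-suspend based at $\lambda\vec{v}$ (with $A_i=\langle\lambda\vec{v}\rangle$). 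The second term is a multi-suspend, or is zero when $\lambda\in\U$. The join with the other factors then splits linearly.

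\emph{The condition for $m=0$, the main obstacle.} When $m=0$, the first factor cannot be a suspend, since $\vec{w}$ would have to lie in the zero space, so $f_1$ was a triangle. The worry is that the decomposition above could produce a summand consisting purely of multi-suspends. I plan to show that the fundamental class of a join consisting only of multi-suspends with $m=0$ is a boundary, or equivalently that such a map is nullhomologous. It lands in the full subcomplex on the vertices $[c\vec{v}]$ for a fixed basis $\vec{v}_1,\dots,\vec{v}_n$, and I would cone it off using a vertex $[\vec{v}_1+\vec{v}_2]$-type argument, or rewrite it via the triangle relation as a combination of terms containing a D-triangle. The cleaner route is to arrange the triangle decomposition in the first step so that a D-triangle always survives. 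I expect this bookkeeping, ensuring that the square relation in $\B^\U(A_i)$ is correct and that a D-factor remains when $m=0$, to be the hardest step.
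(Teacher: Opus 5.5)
Your suspend step is the same as the paper's, and your factor-by-factor bookkeeping for joins is fine. The gap is the triangle step. You leave it as a plan, and the form you commit to there cannot work. You want to write the difference between the triangle and the D-triangle $\overline{\llb[\vec v_0],[\lambda\vec v_1],[\nu\vec v_2]\rrb}$ as a combination of D-triangles and joins of two multi-suspends, but such cycles do not span in general.

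Here is a counterexample. Take $\F=\F_5$, $\U=\{\pm1\}$, $n=2$, $m=0$, so that $\widetilde{\h}_1(\B_2^\U(\F_5))$ is just the cycle group of a graph. Let $\chi\colon\F^\times\to\F^\times/\U\cong\Z/2$, span each line $L$ by $\vec u_L=(1,0)$ or $\vec u_L=(t,1)$, and set $\theta_{LM}=\chi(\det(\vec u_L,\vec u_M))$. Give the edge joining $[\alpha\vec u_L]$ and $[\beta\vec u_M]$ the value $\theta_{LM}\cdot(\chi(\alpha)+\chi(\beta))\in\Z/2$.
\begin{itemize}
\item Around a join of two multi-suspends, $\theta_{LM}$ is constant and $\chi(\alpha)+\chi(\beta)$ alternates, so the sum is $0$.
\item A triangle is a D-triangle exactly when $\chi\circ\det$ takes a single value $c$ on all three edges. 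Since $\chi(\det(\alpha\vec u_L,\beta\vec u_M))=\chi(\alpha)+\chi(\beta)+\theta_{LM}$, summing over the three edges gives $c=\Theta$, the sum of the three $\theta_{LM}$. So the cochain sums to $\Theta(c+1)=\Theta(\Theta+1)=0$ on every D-triangle.
\item It sums to $1$ on $\overline{\llb[(4,2)],[(0,1)],[(1,0)]\rrb}$, where $(4,2)=2(0,1)-(1,0)$.
\end{itemize}
So that triangle is not a combination of the cycles you allow.

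The missing idea is to use the vertex $[\vec v_0]$ itself as the far end of a D-suspend, after splitting $A_i=\langle\vec v_1\rangle\oplus\langle\vec v_2\rangle$. If $\vec v_0=c\vec v_1+u\vec v_2$ with $c\notin\U$ and $u\in\U$, then
\[
\overline{\llb[\vec v_0],[\vec v_1],[\vec v_2]\rrb}-\overline{\llb[\vec v_0],[c\vec v_1],[\vec v_2]\rrb}=\pm\,\overline{\llb[\vec v_1],[c\vec v_1]\rrb}*\overline{\llb[\vec v_2],[\vec v_0]\rrb}.
\]
The second term on the left is a D-triangle. Since $[\vec v_0]=[\vec v_2+u^{-1}c\vec v_1]$, the right side is a multi-suspend joined with a D-suspend.

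If both coefficients lie outside $\U$, write $\nu=\mu\lambda$. Replacing $[\vec v_0]=[\lambda(\vec v_1+\mu\vec v_2)]$ by $[\vec v_1+\mu\vec v_2]$ costs the join $\overline{\llb[\vec v_1+\mu\vec v_2],[\vec v_0]\rrb}*\overline{\llb[\vec v_2],[\vec v_1]\rrb}$, whose second factor is a suspend for the splitting $\langle\vec v_1+\mu\vec v_2\rangle\oplus\langle\vec v_2\rangle$. What remains is a triangle with a coefficient equal to $1$, which the previous case handles.

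In this decomposition every term produced from a triangle has a D-triangle or D-suspend factor. In the doubly non-unit case this uses $\mu\in\U$, which is automatic when $|\F^\times/\U|=2$. This is how the paper obtains the $m=0$ clause with no further work, and it is essentially the ``cleaner route'' you mention at the end.

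Your other plan for $m=0$ is false. You propose to show that a join consisting only of multi-suspends is a boundary, for instance by coning it off. But when $m=0$ such a join is an embedded cross-polytope, a nonzero $(n-1)$-cycle in the $(n-1)$-dimensional complex $\B_n^\U(\F)$. So it is never a boundary.
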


\begin{figure}
    \begin{tikzpicture}
        \draw (0,0) -- (2,3) -- (4,0) -- (0,0);
        \draw (0,0) -- (2,1) -- (4,0);
        \node [below] at (0,0) {$\vec{v}_1$};
        \node [below] at (4,0) {$\vec{v}_2$};
        \node [above] at (2,3) {$c_1\vec{v}_1 + \mu c_1 \vec{v}_2$};
        \node [above] at (2,1) {$\vec{v}_1 + \mu \vec{v}_2$};
        \draw (6,0) -- (8,3) -- (10,0) -- (6,0);
        \draw (8,3) -- (8,1) -- (10,0);
        \node [below] at (6,0) {$\vec{v}_1$};
        \node [below] at (10,0) {$\vec{v}_2$};
        \node [above] at (8,3) {$c\vec{v}_1 + u\vec{v}_2$};
        \node [left] at (8,1) {$c\vec{v}_1$};
        \draw[fill] (13,0) circle [radius=0.1];
        \draw[fill] (13,3) circle [radius=0.1];
        \draw[fill] (14,1.5) circle [radius=0.1];
        \node [right] at (13,0) {$\vec{v}$};
        \node [right] at (13,3) {$\vec{v}+\vec{w}$};
        \node [right] at (14,1.5) {$\lambda \vec{v}$};
    \end{tikzpicture}
    \caption{Decomposing triangles/suspends into sums of D-triangles, D-suspends, and multi-suspends. To avoid clutter, we write $\vec{v}$ instead of $[\vec{v}]$.}
    \label{fig:pigenerators}
\end{figure}

\begin{proof}
    Lemma \ref{absgenerators} says that $\widetilde{\h}_{n-1}(\B_{n,m}^\U(\F))$ is generated by maps $f_1* \dots * f_k$ where each $f_i$ is either a triangle or a suspend. To express this in terms of our new generators, it is enough to show how to write triangles and suspends as sums of D-triangles, D-suspends, and multi-suspends.

    We start with suspends. Consider a suspend $\overline{\llb [\vec{v}], [\lambda \vec{v} + \vec{w}] \rrb}$. If $\lambda \in \U$, then our suspend is already a D-suspend, since we have $\overline{\llb [\vec{v}], [\lambda \vec{v} + \vec{w}] \rrb}$ = $\overline{\llb [\vec{v'}], [\vec{v'} + \vec{w}] \rrb}$, where $\vec{v'} = \lambda \vec{v}$.
    If $\lambda \in \F^\times \setminus \U$, then as in Figure \ref{fig:pigenerators}, we can write
    \begin{align*}
        \overline{\llb [\vec{v}], [\lambda \vec{v} + \vec{w}] \rrb} = \overline{\llb [\vec{v}], [\lambda \vec{v}] \rrb} + \overline{\llb [\lambda \vec{v}], [\lambda \vec{v} + \vec{w}]\rrb}
    \end{align*}
    This is the sum of a multi-suspend and a D-suspend.

    Now we turn to triangles. Consider a triangle $\overline{\llb [\vec{v}_0], [\vec{v}_1], [\vec{v}_2] \rrb}$. By definition, $\{[\vec{v}_0], [\vec{v}_1], [\vec{v}_2]\}$ is an internally additive simplex of $\BA_{n,m}^\U(\F)$. We thus have $\vec{v}_0 = \lambda \vec{v}_1 + \nu \vec{v}_2$ for some $\lambda, \nu \in \F^\times$.
    We now have three cases:
    \begin{itemize}
        \item If $\lambda, \nu \in \U$, then our triangle is already a D-triangle. 

 \item Suppose now that one of $\lambda, \nu$ is in $\U$ and the other is in $\F^\times \setminus \U$. Swapping them if necessary, we can assume that $\lambda = c \in \F^\times \setminus \U$ and $\nu = u \in \U$. As in Figure \ref{fig:pigenerators}, we can write
        \begin{align*}
           \overline{\llb [\lambda \vec{v}_1 + \nu \vec{v}_2], [\vec{v}_1], [\vec{v}_2]\rrb} & =   \overline{\llb [c\vec{v}_1 + u\vec{v}_2], [\vec{v}_1], [\vec{v}_2]\rrb} \\
           & = \overline{\llb [c\vec{v}_1 + u\vec{v}_2], [c\vec{v}_1], [\vec{v}_2] \rrb} - \overline{\llb [\vec{v}_1], [c\vec{v}_1] \rrb} * \overline{[\vec{v}_2], [u\vec{v}_2 + c\vec{v}_1]} 
        \end{align*}
        This is the sum of a D-triangle and the join of a multi-suspend and a D-suspend.

        \item Suppose $\lambda, \nu \in \F^\times \setminus \U$. Write $\lambda = c_1, \nu = c_2$, and let $c_2 = \mu c_1$. Then as in Figure \ref{fig:pigenerators} we can write
        \begin{IEEEeqnarray*}{l}
            \hspace*{-2em}\overline{\llb [\lambda \vec{v}_1 + \nu \vec{v}_2], [\vec{v}_1], [\vec{v}_2] \rrb} \\
            \quad \text{}= \overline{\llb [c_1 \vec{v}_1 + \mu c_1 \vec{v}_2], [\vec{v}_1], [\vec{v}_2] \rrb} \\
            \quad \text{} = \overline{\llb [\vec{v}_1 + \mu\vec{v}_2], [\vec{v}_1], [\vec{v}_2]\rrb} + \overline{\llb [\vec{v}_1 + \mu\vec{v}_2], [c_1(\vec{v}_1 + \mu\vec{v}_2)]\rrb} * \overline{\llb[\vec{v}_2], [\vec{v}_1]\rrb} \\
            \quad \text{} = \overline{\llb [\vec{v}_1 + \mu\vec{v}_2], [\vec{v}_1], [\vec{v}_2]\rrb} + \overline{\llb [\vec{v}_1 + \mu\vec{v}_2], [c_1(\vec{v}_1 + \mu\vec{v}_2)]\rrb} * \overline{\llb[\vec{v}_2], [-\mu\vec{v}_2 + (\vec{v}_1 + \mu\vec{v}_2)]\rrb}
        \end{IEEEeqnarray*}

This is the sum of a triangle, and the join of a multi-suspend with a suspend. Note that the triangle falls into one of the previous two cases we have already considered. Since we have dealt with those cases, and also with the case of a suspend, it follows that we can write the above sum entirely in terms of D-triangles, D-suspends, and multi-suspends.
       
    \end{itemize}

 All that remains to prove is the final claim of the lemma: if $m=0$, then in our generators we can require atleast one of the $f_i$ to either be a D-triangle or a D-suspend. For this, observe that the condition $m=0$ ensures that in the generators $f_1* \dots * f_k$ given by Lemma \ref{absgenerators}, the term $f_1$ must be a triangle (there is no way to choose a $\vec{w}$ as in the definition of a suspend for it). When we expand out the triangle $f_1$ as above, every term that appears has either a D-triangle or a D-suspend. The lemma follows.
\end{proof}

Using Lemma \ref{absgenvariant} we can prove Lemma \ref{pigenerators}. The proof proceeds exactly the same way as \cite[Proof of Lemma 2.53]{miller2021top}, so we omit it here.

\subsubsection{Preliminary Results} 
\label{sec:F/UImplications}

Before proving Lemma \ref{killtriangle} and \ref{killsuspend}, we first deduce some structural consequences on $\F$ from the condition of $\F^\times / \U \cong \{\pm 1\}$ from Proposition \ref{primeconditions} that we will need in our proofs. Even though $\U$ is a multiplicative subgroup of $\F^\times$, the condition that $\F^\times /\U \cong \{\pm 1\}$ is very restrictive, and has implications for the additive structure of $\F$ with respect to $\U$ as well. 

\begin{lemma}
    \label{lem:squares&nonsquares}
    Let $\F$ be a finite field, and $\U \subset \F^\times$ a multiplicative subgroup such that the quotient group $\F/\U \cong \{\pm 1\}$ is of order 2. Then $\U = \{a^2 | a \in \F^\times\}$.
\end{lemma}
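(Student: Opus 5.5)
The plan is to use the structure of $\F^\times$ as a finite cyclic group. (The statement writes $\F/\U$, but this is clearly meant as $\F^\times/\U$, as in Proposition \ref{primeconditions}.) Let $q = |\F|$, so $\F^\times$ is cyclic of order $q-1$. The hypothesis $|\F^\times/\U| = 2$ means $\U$ is a subgroup of index $2$ in $\F^\times$, so $q-1$ is even and $q$ is odd.

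First, every element of the quotient $\F^\times/\U$ has order dividing $2$. Hence for every $a \in \F^\times$ the class of $a^2$ is trivial, i.e.\ $a^2 \in \U$, which gives
\begin{align*}
    S \coloneq \{a^2 \mid a \in \F^\times\} \subseteq \U.
\end{align*}

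Second, I will compare orders. The squaring map $\F^\times \to \F^\times$ is a homomorphism with kernel $\{\pm 1\}$. Since $q$ is odd, $1 \neq -1$, so this kernel has order exactly $2$, and therefore $|S| = (q-1)/2$. On the other hand $|\U| = (q-1)/2$ because $\U$ has index $2$. Together with $S \subseteq \U$, this forces $S = \U$.

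Alternatively, one can argue that a cyclic group has a unique subgroup of each index dividing its order, and $S$ is a subgroup of index $2$.

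There is no real obstacle here. The only point needing care is the characteristic: we must know $q$ is odd so that the kernel of squaring has order $2$ rather than $1$, and this follows directly from $2 \mid q-1$.
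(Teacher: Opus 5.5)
Your proof is correct and follows the paper's argument. Both show that squares lie in $\U$ because the quotient has order $2$, and then compare cardinalities with $|\U| = |\F^\times|/2$ to force equality. The only cosmetic difference is that the paper uses just the lower bound $|\{a^2 \mid a \in \F^\times\}| \geq |\F^\times|/2$ (squaring is at most two-to-one), which makes your parity check that $q$ is odd unnecessary.
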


\begin{proof}
    The condition that $\F/\U \cong \{\pm 1\}$ implies that the squares of all elements in $\F^\times$ lie in $\U$. From the equation $a^2-b^2 = (a-b)(a+b)$, we see that $b^2 = a^2$ if and only if $b = \pm a$. This implies that the set $\{ a^2 | a \in \F^\times\}$ has size atleast $\frac{|\F^\times|}{2}$. On the other hand, $\F^\times/\U \cong \{ \pm 1\}$ implies that $|\U| = \frac{|\F^\times|}{2}$. Thus we get $\U = \{ a^2 | a \in \F^\times\}$.
\end{proof}

\begin{lemma}
    \label{lem:char2}
Let $\F$ be a finite field, and let $\U \subset \F^\times$ be a multiplicative subgroup such that the quotient group $\F/\U \cong \{\pm 1\}$ has order 2. Then $\mathrm{char}(\F) \neq 2$.
\end{lemma}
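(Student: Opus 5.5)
The plan is a counting argument on the unit group of a finite field of characteristic $2$. Suppose for contradiction that $\mathrm{char}(\F) = 2$. Then $|\F| = 2^k$ for some $k \geq 1$, so $\F^\times$ is a cyclic group of odd order $2^k - 1$. By hypothesis $\F^\times/\U$ has order $2$, so $2$ divides $|\F^\times| = |\U|\cdot|\F^\times/\U|$, which is impossible for an odd number. Hence $\mathrm{char}(\F) \neq 2$.

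An alternative argument, which also explains the phenomenon, uses the Frobenius map. In characteristic $2$ the squaring map $a \mapsto a^2$ is an injective field homomorphism of the finite field $\F$, hence bijective. Every element of $\F^\times$ is then a square, so by Lemma \ref{lem:squares&nonsquares} we would get $\U = \F^\times$. This contradicts $|\F^\times/\U| = 2$.

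Either route suffices. I would present the Lagrange-theorem parity argument as the main proof, since it needs nothing beyond $|\F^\times| = 2^k - 1$. There is no real obstacle here. The only point needing care is that the hypothesis, written as $\F/\U$, is to be read as $\F^\times/\U$, as in the rest of the paper.
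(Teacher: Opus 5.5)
Your proof is correct, and your main argument differs from the paper's. The paper uses essentially your second route: in characteristic $2$ the identity $a^2-b^2=(a-b)^2$ makes squaring injective, so every element of $\F^\times$ is a square. This contradicts Lemma~\ref{lem:squares&nonsquares}, which identifies $\U$ with the set of squares, a subgroup of index $2$.

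Your Lagrange argument is shorter and self-contained. It needs only $|\F|=2^k$ and the fact that $|\F^\times| = |\U|\cdot|\F^\times/\U|$, and it does not depend on Lemma~\ref{lem:squares&nonsquares}. The paper's route keeps the proof inside the squares/non-squares framework it relies on in the nearby lemmas.

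The two arguments express the same underlying fact. On a finite abelian group, squaring is a bijection exactly when the group has odd order, and then there is no index-$2$ subgroup. Both routes genuinely use finiteness: for example, $\F_2(t)^\times$ has an index-$2$ subgroup, namely the kernel of the parity of the $t$-adic valuation. Your reading of the hypothesis $\F/\U$ as $\F^\times/\U$ is the intended one.
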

\begin{proof}
    If $\F$ had characteristic 2, then the equation $a^2 - b^2 = (a-b)^2$ would hold in $\F$, which would imply that $a^2 = b^2 \iff a=b$. This would then imply that $\F^\times = \{ a^2 | a \in \F^\times\}$, which would contradict Lemma \ref{lem:squares&nonsquares}.
\end{proof}

\begin{lemma}
\label{lem:additivegeneration}
    Let $\F$ be a finite field, and let $\U \subset \F^\times$ be a multiplicative subgroup such that the quotient group $\F/\U \cong \{\pm 1\}$ has order 2. Then $\F$ is additively generated by $\U$, i.e. every element of $\F$ can be written as the sum of a finite number of elements of $\U$.
\end{lemma}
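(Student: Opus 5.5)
The plan is to show that the additive subgroup $S \subseteq \F$ generated by $\U$ is all of $\F$. By Lemma \ref{lem:squares&nonsquares}, $\U$ is exactly the set of nonzero squares. By Lemma \ref{lem:char2}, $\F$ has odd characteristic $\ell$, with $|\F| = q = \ell^r$.

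First I will check that $S$ is a subring of $\F$. It is closed under addition by definition. It contains $1 = 1^2$, hence also $0$ and all of $\F_\ell$. It is closed under multiplication because the product of two sums of squares expands into a sum of products of squares, and each such product is again a square. A finite subring of a field is an integral domain, hence a field. Therefore $S$ is a subfield of $\F$ containing every square.

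It then remains to show that the only subfield of $\F$ containing all nonzero squares is $\F$ itself. The subfield $S$ has order $\ell^s$ with $s \mid r$, and it contains $\U$, which has $(q-1)/2$ elements. If $S \neq \F$, then $s \leq r/2$, so $|S| \leq \sqrt{q}$. But we need $|S| - 1 \geq (q-1)/2$. The inequality $\sqrt{q} - 1 \geq (q-1)/2$ rearranges to $(\sqrt{q}-1)^2 \leq 0$, which forces $q = 1$, a contradiction. Hence $S = \F$, so every element of $\F$ is a finite sum of elements of $\U$.

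As an alternative route, one could use the fact that $-1 \in \U$ (assumed in the surrounding setup) and appeal to the classical result that every element of a finite field is a sum of two squares. However, the subfield counting argument above does not need that hypothesis and is self-contained. The only real point of care is the closure of $S$ under multiplication, which is what turns $S$ into a subfield.
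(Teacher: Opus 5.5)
Your proof is correct, but it takes a different route from the paper.

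The paper builds the graph on $\F$ in which two elements are adjacent when they differ by an element of $\U$. Every vertex has degree (at least) $|\U|$, so each component has at least $|\U|+1=(|\F|+1)/2$ vertices, and the graph must be connected. You instead show that the additive span $S$ of $\U$ is a subring, hence a subfield, and exclude proper subfields via $|S|\le\sqrt{q}<(q+1)/2$.

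At bottom these are the same count. The components of the paper's graph are exactly the cosets $\lambda+S$, and the paper's bound is just $|S|\ge(|\F|+1)/2$. Seen this way, your multiplicative-closure step and the appeal to the subfield lattice are unnecessary. $S$ is an additive subgroup containing the $(q+1)/2>q/2$ elements of $\U\cup\{0\}$, so Lagrange's theorem already forces $S=\F$. For the same reason you need neither the identification of $\U$ with the nonzero squares nor Lemma \ref{lem:char2}; closure of $\U$ under products is simply the fact that $\U$ is a subgroup.

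You should also state explicitly why the subgroup generated by $\U$ consists of finite \emph{sums} of elements of $\U$: in characteristic $\ell$ one has $-u=(\ell-1)u$.

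Your formulation has the small advantage of visibly not using $-1\in\U$. The paper's exact $|\U|$-regularity tacitly uses it, though without it the graph is complete and the conclusion still holds. The paper's graph formulation, on the other hand, is chosen because the same component-counting template is reused in Lemma \ref{lem:nonsquaresconnected}. There the vertex set $\F^\times\setminus\U$ is not a group, so no subgroup or subfield argument is available and the degree computation is genuinely needed.
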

\begin{proof}
The lemma can be rephrased as follows: construct a graph $G$ whose vertices correspond to elements of $\F$, and draw an edge between two vertices if their corresponding elements in $\F$ differ by an element of $\U$. Then we want to show that $G$ is connected. 
Let $|\U| = l$, Then $\F^\times /\U \cong \{\pm 1\}$ implies that $|\F^\times| = 2l$, and so $|\F| = 2l+1$. Note that $G$ is an $l$-regular graph, since for a vertex corresponding to an element $\lambda \in \F$, its neighbors are precisely the $l$ distinct vertices corresponding to $\lambda + u$ as $u$ varies in $\U$. So each connected component of $G$ must have at least $l+1$ vertices, as it must contain all the neighbors of a given vertex in the component. If $G$ had $\geq 2$ connected components, this would imply that $G$ had at least $2(l+1) = 2l+2$ vertices. But $G$ has exactly $|\F| = 2l+1$ vertices. So $G$ must be connected.
\end{proof}

\begin{figure}
    \centering
    \begin{tikzpicture}[scale=1.5]

        \draw[thick] (-2.9,0) -- (2.9,0);
        \draw (-2.03,0.5) -- (2.03,0.5); \draw (-2.03,-0.5) -- (2.03,-0.5);
        \draw (-1.74,1) -- (1.74,1); \draw (-1.74,-1) -- (1.74,-1);
        \draw (-1.45,1.5) -- (1.45,1.5); \draw (-1.45,-1.5) -- (1.45,-1.5);
        \draw (-1.16,2) -- (1.16,2); \draw (-1.16,-2) -- (1.16,-2);
        
        \draw[thick] (-1.45,-2.5) -- (-1.16, -2) -- (-0.87,-1.5) -- (-0.58,-1)--(-0.29,-0.5)--(0,0) -- (0.29,0.5) -- (0.58, 1) -- (0.87, 1.5) -- (1.16, 2) -- (1.45, 2.5);
       
        \draw (-0.58, -2) -- (-0.29,-1.5) -- (0,-1)--(0.29,-0.5)--(0.58,0) -- (0.87,0.5) -- (1.16, 1) -- (1.45, 1.5);

        \draw (0, -2) -- (0.29,-1.5) -- (0.58,-1)--(0.87,-0.5)--(1.16,0) -- (1.45,0.5) -- (1.74, 1) ;

        \draw (0.58, -2) -- (0.87,-1.5) -- (1.16,-1)--(1.45,-0.5)--(1.74,0) -- (2.03,0.5) ;

        \draw (1.16, -2) -- (1.45,-1.5) -- (1.74,-1)--(2.03,-0.5)--(2.32,0) ;

        \draw (-1.45,-1.5) -- (-1.16,-1)--(-0.87,-0.5)--(-0.58,0) -- (-0.29,0.5) -- (0, 1) -- (0.29, 1.5) -- (0.58, 2) ;

        \draw (-1.74,-1)--(-1.45,-0.5)--(-1.16,0) -- (-0.87,0.5) -- (-0.58, 1) -- (-0.29, 1.5) -- (0, 2) ;

        \draw (-2.03,-0.5)--(-1.74,0) -- (-1.45,0.5) -- (-1.16, 1) -- (-0.87, 1.5) -- (-0.58, 2) ;

        \draw (-2.32,0) -- (-2.03,0.5) -- (-1.74, 1) -- (-1.45, 1.5) -- (-1.16, 2);

        \draw[thick] (-1.45, 2.5) -- (-1.16, 2) -- (-0.87, 1.5) -- (-0.58, 1)--(-0.29, 0.5) -- (0,0) -- (0.29,-0.5) -- (0.58, -1) -- (0.87, -1.5) -- (1.16, -2) -- (1.45, -2.5);
    
        \draw  (-0.58, 2) -- (-0.29, 1.5) -- (0, 1)--(0.29, 0.5) -- (0.58,0) -- (0.87,-0.5) -- (1.16, -1) -- (1.45, -1.5) ;

        \draw  (0, 2) -- (0.29, 1.5) -- (0.58, 1)--(0.87, 0.5)--(1.16, 0) -- (1.45, -0.5) -- (1.74, -1) ;

        \draw (0.58, 2) -- (0.87, 1.5) -- (1.16, 1)--(1.45, 0.5)--(1.74,0) -- (2.03, -0.5) ;

        \draw (1.16,  2) -- (1.45, 1.5) -- (1.74, 1)--(2.03, 0.5)--(2.32,0) ;

        \draw (-1.45, 1.5) -- (-1.16, 1)--(-0.87, 0.5)--(-0.58,0) -- (-0.29,-0.5) -- (0, -1) -- (0.29, -1.5) -- (0.58, -2);

        \draw (-1.74, 1)--(-1.45, 0.5)--(-1.16,0) -- (-0.87,-0.5) -- (-0.58, -1) -- (-0.29, -1.5) -- (0, -2) ;

        \draw (-2.03, 0.5)--(-1.74,0) -- (-1.45, -0.5) -- (-1.16, -1) -- (-0.87, -1.5) -- (-0.58, -2) ;

        \draw (-2.32,0) -- (-2.03, -0.5) -- (-1.74, -1) -- (-1.45, -1.5) -- (-1.16, -2);



    \draw[fill] (0,0) circle [radius=0.07]; 
    
    \draw[fill=red,red] (-0.58,2) circle [radius=0.065]; \node[above] at (-0.58,2) {$\textcolor{red}{4\w+1}$}; 

    \draw[fill=red,red] (-2.03,0.5) circle [radius=0.065];
    \draw[fill=red,red] (-1.45,- 1.5) circle [radius=0.065];
    \draw[fill=red,red] (0.58, -2) circle [radius=0.065];
    \draw[fill=red,red] (2.03, -0.5) circle [radius=0.065];
    \draw[fill=red,red] (1.45, 1.5) circle [radius=0.065];


    \draw[fill=blue,blue] (-0.29, 0.5) circle [radius=0.05]; 
    \node[above right] at (-0.29, 0.5) {$\textcolor{blue}{\w}$};

    \draw[fill=blue,blue] (-0.58, 0) circle [radius=0.05]; 
    \draw[fill=blue,blue] (0.58, 0) circle [radius=0.05]; 
    \draw[fill=blue,blue] (0.29, -0.5) circle [radius=0.05]; 
    \draw[fill=blue,blue] (-0.29, -0.5) circle [radius=0.05]; 
    \node[below left] at (-0.29, -0.5) {$\textcolor{blue}{\w^2}$};
    \draw[fill=blue,blue] (0.29, 0.5) circle [radius=0.05]; 

    \draw[fill=blue,blue] (-1.16, 2) circle [radius=0.05];
    \draw[fill=blue,blue] (0, 2) circle [radius=0.05];
    \draw[fill=blue,blue] (-0.29, 1.5) circle [radius=0.05];
    \draw[fill=blue,blue] (-0.87, 1.5) circle [radius=0.05];

    \draw[fill=blue,blue] (-1.45, 0.5) circle [radius=0.05]; 
    \draw[fill=blue,blue] (-1.74, 0) circle [radius=0.05]; 
    \draw[fill=blue,blue] (-2.32, 0) circle [radius=0.05]; 
    \draw[fill=blue,blue] (-1.74, 1) circle [radius=0.05]; 

    \draw[fill=blue,blue] (-0.87, -1.5) circle [radius=0.05]; 
    \draw[fill=blue,blue] (-1.16, -2) circle [radius=0.05]; 
    \draw[fill=blue,blue] (-1.16, -1) circle [radius=0.05]; 
    \draw[fill=blue,blue] (-1.74, -1) circle [radius=0.05]; 

    \draw[fill=blue,blue] (0, -2) circle [radius=0.05]; 
    \draw[fill=blue,blue] (1.16, -2) circle [radius=0.05]; 
    \draw[fill=blue,blue] (0.87, -1.5) circle [radius=0.05]; 
    \draw[fill=blue,blue] (0.29, -1.5) circle [radius=0.05]; 

    \draw[fill=blue,blue] (1.74, -1) circle [radius=0.05]; 
    \draw[fill=blue,blue] (2.32, 0) circle [radius=0.05]; 
    \draw[fill=blue,blue] (1.45, -0.5) circle [radius=0.05]; 
    \draw[fill=blue,blue] (1.74, 0) circle [radius=0.05]; 

    \draw[fill=blue,blue] (1.74, 1) circle [radius=0.05]; 
    \draw[fill=blue,blue] (1.16, 1) circle [radius=0.05]; 
    \draw[fill=blue,blue] (0.87, 1.5) circle [radius=0.05]; 
    \draw[fill=blue,blue] (1.16, 2) circle [radius=0.05]; 


    \draw[fill=orange,orange] (2.03, 0.5) circle [radius=0.08]; \draw[fill=orange,orange] (1.45, 0.5) circle [radius=0.08]; \draw[fill=orange,orange] (0.87, 0.5) circle [radius=0.08]; \draw[fill=orange,orange] (0.58, 1) circle [radius=0.08]; \draw[fill=orange,orange] (0, 1) circle [radius=0.08]; \draw[fill=orange,orange] (-0.58, 1) circle [radius=0.08]; \draw[fill=orange,orange] (-1.16, 1) circle [radius=0.08]; \draw[fill=orange,orange] (-1.45, 1.5) circle [radius=0.08]; 
    \draw[fill=orange,orange] (0.29, 1.5) circle [radius=0.08]; \draw[fill=orange,orange] (0.58, 2) circle [radius=0.08]; 
    \draw[fill=orange,orange] (-0.87, 0.5) circle [radius=0.08]; \draw[fill=orange,orange] (-1.16, 0) circle [radius=0.08]; \draw[fill=orange,orange] (-1.45, -0.5) circle [radius=0.08];
    \draw[fill=orange,orange] (-2.03, -0.5) circle [radius=0.08]; \draw[fill=orange,orange] (-0.87, -0.5) circle [radius=0.08];
    \draw[fill=orange,orange] (-0.58, -1) circle [radius=0.08]; \draw[fill=orange,orange] (0, -1) circle [radius=0.08]; \draw[fill=orange,orange] (0.58, -1) circle [radius=0.08]; \draw[fill=orange,orange] (1.16, -1) circle [radius=0.08];
    \draw[fill=orange,orange] (-0.58, -2) circle [radius=0.08];
    \draw[fill=orange,orange] (-0.29, -1.5) circle [radius=0.08];
    \draw[fill=orange,orange] (1.45, -1.5) circle [radius=0.08];
    \draw[fill=orange,orange] (0.87, -0.5) circle [radius=0.08];
    \draw[fill=orange,orange] (1.16, 0) circle [radius=0.08];

    \draw[very thick, orange] (1.45, -1.5) -- (1.16, -1) -- (0.58, -1) -- (0, -1) -- (-0.58, -1);
    \draw[very thick, orange] (-2.03, -0.5) -- (-1.45, -0.5) -- (-0.87, -0.5);
    \draw[very thick, orange] (-1.16, 1) -- (-0.58, 1) -- (0, 1) -- (0.58, 1);
    \draw[very thick, orange] (0.87, 0.5) -- (1.45, 0.5) -- (2.03, 0.5);

    \draw[very thick, orange] (-1.45, 1.5) -- (-0.87, 0.5);
    \draw[very thick, orange] (-1.45, -0.5) -- (-0.58, 1); 

    \draw[very thick, orange] (-1.16, 0) -- (-0.87, -0.5) -- (-0.58, -1) -- (-0.29, -1.5) -- (-0.58, -2) -- (0, -1);

    \draw[very thick, orange] (1.16, -1) -- (0.87, -0.5) -- (0.58, -1) -- (1.45, 0.5);
    \draw[very thick, orange] (1.16, 0) -- (0.29, 1.5);
    \draw[very thick, orange] (0.58, 2) -- (0, 1);
    
    \end{tikzpicture}
    \caption{An illustration of Lemma \ref{lem:nonsquaresconnected} when $\F = \Z[\w]/(4\w+1)$ and $\U$ is the image of $\{\pm1, \pm \w, \pm \w^2\}$ in $\F$. \textcolor{red}{Red} points indicate multiples of $4\w+1$ by units in $\{\pm1, \pm\w, \pm\w^2\}$, and \textcolor{blue}{blue} points indicate elements that are $\pm1, \pm \w$ or $\pm\w^2$ mod $4\w+1$. \textcolor{orange}{Orange} points indicate elements that map to $\F^\times \setminus \U$, with orange edges indicating those pairs that differ by an element of $\U$.}
    \label{fig:eisenstein}
\end{figure}

\begin{lemma}
    \label{lem:nonsquaresconnected}
    Let $\F$ be a finite field and $\U \subset \F^\times$ a multiplicative subgroup such that $-1 \in \U$, and the quotient group $\F/\U \cong \{\pm 1\}$ has order 2. Then for any two elements $\lambda, \nu \in \F^\times \setminus \U$, there exist $u_1, \dots, u_k \in \U$ such that $\lambda = \nu + u_1 + \dots + u_k$ and so that $\nu + u_1 + \dots + u_i \in \F^\times \setminus \U$ for all $1 \leq i \leq k$.
\end{lemma}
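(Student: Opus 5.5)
The plan is to study the graph $G'$ whose vertex set is $\F^\times\setminus\U$, with an edge between $\lambda$ and $\mu$ whenever $\mu-\lambda\in\U$. The statement of Lemma \ref{lem:nonsquaresconnected} is exactly that $G'$ is connected: a path $\nu=\nu_0,\nu_1,\dots,\nu_k=\lambda$ in $G'$ gives $u_i=\nu_i-\nu_{i-1}\in\U$ with all partial sums in $\F^\times\setminus\U$. This is the same reformulation as in the proof of Lemma \ref{lem:additivegeneration}. The argument there does not apply directly, since the path must avoid $0$ and $\U$. I will therefore combine a symmetry argument with a degree count.

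Write $|\U|=l$, so that $|\F|=q=2l+1$. By Lemmas \ref{lem:squares&nonsquares} and \ref{lem:char2}, $\U$ is the set of nonzero squares and $q$ is odd. Since $-1\in\U$, we get $q\equiv 1 \pmod 4$, so $l$ is even.

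\textbf{Symmetry.} Multiplication by any $u\in\U$ preserves $\F^\times\setminus\U$, and it preserves adjacency because $u\mu-u\lambda=u(\mu-\lambda)\in\U$. So $\U$ acts on $G'$ by graph automorphisms. This action is transitive on vertices, since $\F^\times\setminus\U$ is a single coset of $\U$. Hence $\U$ permutes the connected components transitively. Consequently all components have the same size, namely $l/[\U:H]$, where $H\subseteq\U$ is the stabilizer of a component. Moreover all vertices have the same degree $d$.

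\textbf{Degree count.} The key computation is $d=l/2$. For a fixed nonsquare $\lambda$, $d$ is the number of $u\in\U$ with $\lambda+u\notin\U\cup\{0\}$. Note that $\lambda+u\neq 0$ automatically, because $-u\in\U$. Dividing by $\lambda$, $d$ equals the number of nonsquares $x$ with $1+x$ a nonsquare, which is a cyclotomic number of order $2$. I would compute it with the quadratic character $\chi$, using the standard identity $\sum_{x\in\F}\chi(x)\chi(1+x)=-1$, together with $\sum_x\chi(x)=\sum_x\chi(1+x)=0$ and $\chi(-1)=1$. Expanding
\[
\sum_{x\neq 0,-1}\tfrac{(1-\chi(x))(1-\chi(1+x))}{4}
\]
should give $(q-1)/4=l/2$. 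As a sanity check, for $q=5$ (where $l=2$) this gives $d=1$, which is correct since $2$ and $3$ are the nonsquares and they differ by $1$.

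\textbf{Conclusion.} Suppose $G'$ were disconnected. Then $[\U:H]\geq 2$, so each component has at most $l/2$ vertices. But any vertex together with its $d=l/2$ neighbours forms a set of $l/2+1$ vertices lying in a single component, which is a contradiction. Hence $G'$ is connected, and the lemma follows.

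The main obstacle is the exact degree count. I need to verify the character-sum identity, which comes from substituting $x\mapsto 1/x$ to reduce to $\sum_{y\neq 0}\chi(1+y)=-1$, and then carry out the bookkeeping of the excluded values $x=0,-1$ carefully. If the count came out slightly smaller than $l/2$, the size bound would fail. In that case I would fall back on showing directly that $H$ contains the ratios $\mu/\lambda$ of adjacent vertices, but I expect the exact value $l/2$ to hold.
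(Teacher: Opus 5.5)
Your proposal is correct and is essentially the paper's proof: the same graph on $\F^\times\setminus\U$, the same quadratic-character count showing every vertex has degree $|\U|/2$, and the same component-size contradiction. Your symmetry step is not needed, since regularity alone puts at least $|\U|/2+1$ of the $|\U|$ vertices in each component, so two components cannot fit. One harmless slip: dividing $\lambda+u$ by $\lambda$ gives the condition that $1+x$ is a \emph{square}, which is the count the paper does. Your condition ``$1+x$ a nonsquare'' instead comes from dividing by $u$, and both counts equal $(q-1)/4$ when $q\equiv 1 \pmod 4$, so $d=l/2$ is right either way.
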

\begin{proof}
    Similar to Lemma \ref{lem:additivegeneration}, we can rephrase the desired result as follows: construct a graph $G$ whose vertices correspond to the elements of $\F^\times \setminus \U$. Connect two vertices of $G$ by an edge if the corresponding elements of $\F^\times \setminus \U$ differ by an element of $\U$. Then we want to show that $G$ is connected. See Figures \ref{fig:eisenstein} and \ref{fig:gaussian} for an illustration of this property.

By Lemma \ref{lem:char2}, we have $1 \neq -1$ in $\F$, and since $\{\pm 1\}$ is a multiplicative subgroup of $\U$, we have $|\U| = 2k$ for some $k$. The condition that $\F^\times / \U \cong \{\pm 1\}$ implies that $|\F^\times| = 4k$, and thus $|\F^\times \setminus \U| = 2k$. We will show that $G$ is a $k$-regular graph.

Let $\lambda \in \F^\times \setminus \U$. We want to count the cardinality of the set $N_\lambda = \{ u | u \in \U, \lambda + u \in \F^\times \setminus \U\}$. Since $\F^\times /\U \cong \{\pm 1\}$, a given $u \in \U$ can be written as $u = \lambda\nu$ for a unique $\nu \in \F^\times \setminus \U$. Thus $\lambda + u = \lambda(1+\nu)$. So the condition that $\lambda + u \in \F^\times \setminus \U$, along with the fact that $\lambda \in \F^\times \setminus \U$, implies that we must have $1 + \nu \in \U$.
Thus we want to count the set $N = \{ \nu | \nu \in \F^\times \setminus \U, 1 + \nu \in \U\}$. Note that this description is independent of $\lambda$, which shows that $G$ is a regular graph.

Let $\chi$ be the group homomorphism given by the composition $\F^\times \to \F^\times /\U \xrightarrow{\cong} \{\pm 1\}$. Thus $\chi(U) = \{1\}$ and $\chi(\F^\times \setminus \U) = \{-1\}$. Thus we have
\begin{align*}
    1 + \chi(a) = \begin{cases}
        & 2 \quad \text{ if }a \in \U \\
        & 0 \quad \text{ otherwise}
    \end{cases}
\end{align*}
Similarly,
\begin{align*}
    1 - \chi(a) = \begin{cases}
        & 2 \quad \text{ if }a \in \F^\times \setminus \U \\
        & 0 \quad \text{ otherwise}
    \end{cases}
\end{align*}
Thus the cardinality of $N$ is the same as the sum
\begin{align*}
    \sum_{\nu \neq 0,-1} \left( \frac{1-\chi(\nu)}{2} \right) \cdot \left( \frac{1+\chi(1+\nu)}{2} \right)
\end{align*}
which is equal to
\begin{align}
\label{eqn:nonsqsum}
    \sum_{\nu \neq 0, -1} \left( \frac{1 - \chi(\nu) + \chi(1+\nu) - \chi(\nu)\chi(1+\nu)}{4} \right)
\end{align}

Now, since $\chi(a) = 1 \iff a \in \U$ and $\chi(a) = -1$ otherwise, and because $|\U| = \frac{|\F^\times|}{2}$, we have
\begin{align*}
    \sum_{a \in \F^\times} \chi(a) = 0
\end{align*}

Thus we have:
\begin{align*}
    \sum_{\nu \neq 0, -1} \chi(\nu) & = \sum_{a \in \F^\times} \chi(a) - \chi(-1) \\
    & = 0 - 1 = -1,
\end{align*}

\begin{align*}
    \sum_{\nu \neq 0, -1} \chi(1+\nu) & = \sum_{a \neq 1, 0} \chi(a) \\
    & = \sum_{a \in \F^\times} \chi(a) - \chi(1) \\
    & = 0 - 1 = -1,
\end{align*}
and
\begin{align*}
    \sum_{\nu \neq 0, -1} \chi(\nu)\chi(1+\nu) & = \sum_{\nu \neq 0, -1} \chi(\nu)\chi(\nu)\chi(\nu^{-1}+1) \\
    & = \sum_{\nu \neq 0, -1} \chi(\nu^{-1}+1) \\
    & = \sum_{a \neq 1, 0} \chi(a) \\
    & = 0 - 1 = -1
\end{align*}

Plugging all these values into \eqref{eqn:nonsqsum}, we get that the degree of each vertex in $G$ is:
\begin{align*}
     & = \sum_{\nu \neq 0, -1} \left( \frac{1 - \chi(\nu) + \chi(1+\nu) - \chi(\nu)\chi(1+\nu)}{4} \right) \\
     & = \frac{(|\F|-2)-(-1) + (-1) - (-1)}{4} \\
     & = \frac{(4k-1) - (-1)}{4} = \frac{4k}{4} = k
\end{align*}

Thus $G$ is a $k$-regular graph with $|\F^\times \setminus \U| = 2k$ vertices. $k$-regularity implies that any connected component of $G$ must contain at least $k+1$ vertices. So if $G$ had $\geq 2$ components, it would have $\geq 2(k+1)$ vertices, a contradiction. Thus $G$ must be connected.
\end{proof}

\begin{figure}
    \centering
    \begin{tikzpicture}

    \draw (0,0) -- (3,0); \draw (0, 1) -- (3,1); \draw (0,2) -- (3,2); \draw (0,3) -- (3,3);

    \draw (0,0) -- (0, 3); \draw (1,0) -- (1,3); \draw (2,0) -- (2,3); \draw (3,0) -- (3,3);

    \draw[fill=red, red] (0,0) circle [radius=0.09]; \draw[fill=red, red] (3,0) circle [radius=0.09]; \draw[fill=red, red] (0,3) circle [radius=0.09]; \draw[fill=red, red] (3,3) circle [radius=0.09];

    \draw[fill=blue, blue] (0,1) circle [radius=0.08]; \draw[fill=blue, blue] (0,2) circle [radius=0.08]; \draw[fill=blue, blue] (1,3) circle [radius=0.08]; \draw[fill=blue, blue] (2,3) circle [radius=0.08]; \draw[fill=blue, blue] (3,1) circle [radius=0.08]; \draw[fill=blue, blue] (3,2) circle [radius=0.08]; \draw[fill=blue, blue] (1,0) circle [radius=0.08]; \draw[fill=blue, blue] (2,0) circle [radius=0.08];

    \draw[fill=orange, orange] (1,1) circle [radius=0.12]; \draw[fill=orange, orange] (1,2) circle [radius=0.12]; \draw[fill=orange, orange] (2,1) circle [radius=0.12]; \draw[fill=orange, orange] (2,2) circle [radius=0.12];

    \draw[ultra thick, orange] (1,1) -- (1,2) -- (2,2) -- (2,1) -- (1,1);

    \node[below] at (1,0) {$\textcolor{blue}{1}$};
    \node[left] at (0,1) {$\textcolor{blue}{\io}$};

    \node[font=\small][below left] at (1,1) {$\textcolor{orange}{1+\io}$};
    \node[font=\small][above left] at (1,2) {$\textcolor{orange}{1+2\io}$};
    \node[font=\small][above right] at (2,2) {$\textcolor{orange}{2+2\io}$};
    \node[font=\small][below right] at (2,1) {$\textcolor{orange}{2+\io}$};

    \node[below right] at (3,0) {$\textcolor{red}{3}$};
    \node[below left] at (0,0) {$\textcolor{red}{0}$};
    \node[above right] at (3,3) {$\textcolor{red}{3+3\io}$};
    \node[above left] at (0,3) {$\textcolor{red}{3\io}$};
        
    \end{tikzpicture}
    \caption{An illustration of Lemma \ref{lem:nonsquaresconnected} when $\F = \Z[\io]/(3)$ and $\U$ is the image of $\{\pm1, \pm \io\}$ in $\F$. \textcolor{red}{Red} points indicate multiples of $3$ and \textcolor{blue}{blue} points indicate elements that are $\pm1$ or $\pm\io$ mod $3$. \textcolor{orange}{Orange} points indicate elements that map to $\F^\times \setminus \U$, with orange edges indicating those pairs that differ by an element of $\U$.}
    \label{fig:gaussian}
\end{figure}

\begin{lemma}
    \label{lem:sumofunits}
    Let $\F$ be a finite field, and let $\U \subset \F^\times$ be a multiplicative subgroup such that the quotient group $\F/\U \cong \{\pm 1\}$ has order 2. Then every element $c \in \F^\times \setminus \U$ can be written as $c = u_1 + u_2$ for some $u_1, u_2 \in \U$.
\end{lemma}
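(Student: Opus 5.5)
Fix $c \in \F^\times \setminus \U$. The plan is to count the solutions of $c = u_1 + u_2$ with $u_1,u_2 \in \U$ using the character $\chi : \F^\times \to \F^\times/\U \cong \{\pm 1\}$, exactly as in the proof of Lemma \ref{lem:nonsquaresconnected}. Dividing by $c$, it suffices to find $u_1 \in \U$ with $c - u_1 \in \U$. Write $u_1 = c\nu$; then $\nu \in \F^\times \setminus \U$, and the condition becomes $c(1-\nu) \in \U$, i.e.\ $1-\nu \in \F^\times\setminus \U$. So we need some $\nu$ with $\chi(\nu) = -1$ and $\chi(1-\nu) = -1$.

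The number of such $\nu$ is
\begin{align*}
M = \sum_{\nu \neq 0,1} \frac{(1-\chi(\nu))(1-\chi(1-\nu))}{4}.
\end{align*}
We evaluate it with the same character sums as before. By Lemma \ref{lem:char2}, $|\F| = q$ is odd, and $\sum_{a\in\F^\times}\chi(a)=0$ because $|\U| = |\F^\times|/2$. This gives $\sum_{\nu\neq 0,1}\chi(\nu) = -1$ and $\sum_{\nu\neq0,1}\chi(1-\nu) = -1$.

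For the cross term we use the substitution from Lemma \ref{lem:nonsquaresconnected}:
\begin{align*}
\sum_{\nu \neq 0,1}\chi(\nu)\chi(1-\nu) = \sum_{\nu\neq 0,1}\chi(\nu^{-1}-1) = \sum_{a \neq 0,-1}\chi(a) = -\chi(-1).
\end{align*}
Hence
\begin{align*}
M = \frac{(q-2) + 1 + 1 + \chi(-1)}{4} = \frac{q+\chi(-1)}{4}.
\end{align*}
Since $q \ge 3$, we get $M \geq (3-1)/4 > 0$. So a valid $\nu$ exists, and the lemma follows by taking $u_1 = c\nu$ and $u_2 = c - u_1$. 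This computation does not even need $-1\in\U$; if $-1 \in \U$, it gives $M=(q+1)/4$.

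The only points needing care are the bookkeeping of the excluded values $\nu = 0,1$ in the sums and the reindexing $a = \nu^{-1}-1$, which is a bijection from $\F\setminus\{0,1\}$ to $\F\setminus\{0,-1\}$. As a sanity check, the smallest case $q=3$ forces $\U=\{1\}$ and $c=-1=1+1$, which is consistent with the count.
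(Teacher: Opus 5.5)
Your argument is correct, but there is a sign slip in the final assembly. In the expansion of $(1-\chi(\nu))(1-\chi(1-\nu))$ the cross term $\chi(\nu)\chi(1-\nu)$ enters with a plus sign. You correctly found its sum to be $-\chi(-1)$, so the total is $(q-2)+1+1-\chi(-1)$. Hence $M=(q-\chi(-1))/4$, not $(q+\chi(-1))/4$.

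Your formula cannot be a count. At $q=3$ it gives $1/2$, so your sanity check refutes it rather than confirming it; the corrected formula gives $M=1$, matching $-1=1+1$. When $-1\in\U$ the count is $(q-1)/4$; for example $M=1$ for $\F_5$ with $\U=\{\pm1\}$, where your formula gives $3/2$. Since $(q-\chi(-1))/4\ge (q-1)/4>0$, the conclusion $M>0$ is unaffected.

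Your route is genuinely different from the paper's. The paper argues qualitatively. If no element of $\F^\times\setminus\U$ were a sum of two elements of $\U$, then $\U\cup\{0\}$ would be closed under addition, contradicting Lemma \ref{lem:additivegeneration}. Having found one such element $c_0$, the paper reaches every other $d\in\F^\times\setminus\U$ by writing $d=uc_0$ with $u\in\U$ and scaling the representation.

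You instead count solutions directly with the Jacobi-type character sum from Lemma \ref{lem:nonsquaresconnected}. Your substitution $u_1=c\nu$ absorbs the paper's scaling step, which is why your count does not depend on $c$. The paper's proof avoids character sums and needs only the qualitative fact of additive generation. Yours does not rely on Lemma \ref{lem:additivegeneration} and is quantitative: it gives the exact number $(q-\chi(-1))/4$ of ordered pairs $(u_1,u_2)\in\U^2$ with $u_1+u_2=c$.
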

\begin{proof}
Suppose first that there is no element of $\F^\times \setminus \U$ that can written as $u_1 + u_2$ for some $u_1, u_2 \in \U$. For two subsets $S, T \subset \F$, let $S + T \coloneq \{s+t | s \in S, t \in T\}$. Let $V \coloneq \U \cup \{0\}$. Our assumption implies that $V + V \subset V$. We can use this to inductively show that 
\begin{align*}
   \LaTeXunderbrace{V + \dots + V}_{n \text{ times}} = V + \LaTeXunderbrace{V + \dots + V}_{n-1 \text{ times}} \subset V + V \subset V 
\end{align*}
for all $n \geq 1$. But this contradicts Lemma \ref{lem:additivegeneration}.

Thus there exists some $c \in \F^\times \setminus \U$ such that $c = u_1 + u_2$ for some $u_1, u_2 \in \U$. Now, the condition that $\F^\times/\U \cong \{\pm 1\}$ implies that any $d \in \F^\times \setminus \U$ can be written uniquely as $d = uc$ for some $u \in \U$, which gives us $d = uu_1 + uu_2$. Thus every element of $\F^\times \setminus \U$ can be written as a sum of two elements of $\U$.
\end{proof}

\subsubsection{The Retraction}\label{retraction}
For later use, we will extend the retraction $\rho: \B_n^\U(\F) \to \BD_n^{\U}(\F)$ from Lemma \ref{BDretract} to the following larger complex.

\begin{definition}
    Let $\BAO_n^\U(\F)$ denote the subcomplex of $\BA_n^\U(\F)$ whose simplices consist of the simplices of $\BDA_n^\U(\F)$ along with all the standard simplices of $\B_n^\U(\F)$.
\end{definition}

\begin{lemma}
    \label{BDAretract}
    There is a retraction $\rho: \BAO_n^\U(\F) \to \BDA_n^\U(\F)$ that extends the retraction from $\B_n^\U(\F) \to \BD_n^{\U}(\F)$.
\end{lemma}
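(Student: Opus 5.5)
We will define $\rho$ on $\BAO_n^\U(\F)$ simplex by simplex, as in the proof of Lemma \ref{BDretract}. On the subcomplex $\BDA_n^\U(\F)$, $\rho$ is the identity. On the standard simplices of $\B_n^\U(\F)$, $\rho$ is the retraction from Lemma \ref{BDretract}. This agrees with the identity on $\BD_n^\U(\F) = \BDA_n^\U(\F) \cap \B_n^\U(\F)$, since that retraction fixes $\BD_n^\U(\F)$ pointwise.

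The first step is to classify the simplices of $\BAO_n^\U(\F)$ that lie in neither piece. By definition, every simplex of $\BAO_n^\U(\F)$ is either a simplex of $\BDA_n^\U(\F)$ or a standard simplex of $\B_n^\U(\F)$. Any face of a standard simplex is standard, and any face of a simplex of $\BDA_n^\U(\F)$ lies in $\BDA_n^\U(\F)$. So $\BAO_n^\U(\F)$ is literally the union of the two subcomplexes $\BDA_n^\U(\F)$ and $\B_n^\U(\F)$, and their intersection is $\BD_n^\U(\F)$.

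To verify this, I will check two things:
\begin{itemize}
\item A simplex of $\B_n^\U(\F)$ that also lies in $\BDA_n^\U(\F)$ is a partial $\U$-basis. Among the simplices of $\BDA_n^\U(\F)$, those that are partial $\U$-bases are exactly the determinant-$\U$ ones, i.e.\ the simplices of $\BD_n^\U(\F)$.
\item An augmented simplex of $\BDA_n^\U(\F)$ is not a partial basis, because its additive core $\{[\vec{v}_0],[\vec{v}_1],[\vec{v}_2]\}$ is linearly dependent. So it cannot be a simplex of $\B_n^\U(\F)$.
\end{itemize}

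Once this is established, the map $\rho$ is obtained by gluing two maps, $\mathrm{id}$ on $\BDA_n^\U(\F)$ and the retraction on $\B_n^\U(\F)$, which agree on the intersection $\BD_n^\U(\F)$. The gluing lemma for a union of subcomplexes gives a continuous map. Its image lies in $\BD_n^\U(\F) \cup \BDA_n^\U(\F) = \BDA_n^\U(\F)$, it restricts to the identity on $\BDA_n^\U(\F)$, and it extends the retraction of Lemma \ref{BDretract} by construction.

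The main point needing care is the claim that no simplex of $\BAO_n^\U(\F)$ mixes an additive core with a top-dimensional standard simplex of non-$\U$ determinant. Such a mixed simplex would need a subdivision compatible with both definitions. It cannot occur, because the definition allows only the two listed types of simplices, and the closure under faces noted above shows no other simplices arise. I will also check that the subdivision $S(\sigma)$ used in Lemma \ref{BDretract} touches only top-dimensional standard simplices whose determinant is not in $\U$. This holds because those simplices are exactly the standard simplices not in $\BD_n^\U(\F)$.
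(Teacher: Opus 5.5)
Your proposal is correct and is essentially the paper's argument. The paper notes that the only simplices of $\BAO_n^\U(\F)$ outside $\BDA_n^\U(\F)$ are the top-dimensional standard simplices with determinant in $\F^\times\setminus\U$, and it subdivides these exactly as in Lemma \ref{BDretract}; your gluing of the identity on $\BDA_n^\U(\F)$ with the retraction on $\B_n^\U(\F)$ along $\BD_n^\U(\F)$ is the same construction. The only cosmetic difference is that the paper allows any admissible cone-point vector $\vec{w}$, a freedom it uses later via Lemma \ref{choicefreedom}, whereas you fix the one from Lemma \ref{BDretract}, which suffices for the statement.
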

\begin{proof}
    The only simplices of $\BAO_n^\U(\F)$ that are not in $\BDA_n^\U(\F)$ are of the form $\sigma = \{ [\vec{v}_1], \dots, [\vec{v_n}]\}$ with $\det(\vec{v}_1, \dots, \vec{v}_n) \in \F^\times \setminus \U$. As before, we let $S(\sigma)$ denote the result of subdividing $\sigma$ with a new vertex $x_{\sigma}$, and $\rho$ is defined by setting $\rho(x_{\sigma}) = [\vec{w}]$ and extending linearly, where $\vec{w} \in \F^n$ is chosen so that 
\begin{align*}
    \det(\vec{v_1}, \dots, \widehat{\vec{v}}_i, \dots, \vec{v}_n, \vec{w}) \in \U \hspace{1cm} 1 \leq i \leq n 
\end{align*}
\end{proof}

Note that in the above proof, the possible choices for $\vec{w}$ are of the form 

\begin{align*}
    \vec{w} = c_1 \vec{v}_1 + \dots + c_n \vec{v}_n
\end{align*}
where the $c_i \in \F^\times \setminus \U$ are such that $c_i \cdot \det(\vec{v_1}, \dots, \widehat{\vec{v}}_i, \dots, \vec{v}_n) \in \U$ for all $1 \leq i \leq n$.

Thus defining $\rho$ requires making choices for the $c_i$ for all such simplices $\sigma$.

However, the following lemma implies that all such choices results in homotopic $\rho$:

\begin{lemma}
\label{choicefreedom}
Let $\F$ be a finite field and $\U \subset \F^\times$ a multiplicative subgroup such that $-1 \in \U$, and the quotient $\F^\times / \U$ is isomorphic to the cyclic group $\{\pm 1\}$ of order 2.

    For some $n \geq 2$, let $\vec{v}_1, \dots, \vec{v}_n \in \F^n$ be such that $\det(\vec{v}_1, \dots, \vec{v}_n) \in \F^\times \setminus \U$.
    Let $\vec{w}_1, \vec{w}_2$ be such that 
\begin{align*}
    \det(\vec{v}_1, \dots, \widehat{\vec{v}}_i, \dots, \vec{v}_n, \vec{w}_j) \in \U \hspace{1cm} \text{for all } 1\leq i \leq n, 1 \leq j \leq 2
\end{align*}

Then the maps 
\begin{align*}
    \llb [\vec{w}_1] \rrb * \overline{\llb [\vec{v}_1], \dots, [\vec{v}_n] \rrb} : \Delta^0 * \del \Delta^{n-1} \to \BDA_n^\U(\F)
\end{align*}
and 
    \begin{align*}
    \llb [\vec{w}_2] \rrb * \overline{\llb [\vec{v}_1], \dots, [\vec{v}_n] \rrb} : \Delta^0 * \del \Delta^{n-1} \to \BDA_n^\U(\F)
\end{align*}

are homotopic rel $\del(\Delta^0 * \del \Delta^{n-1}) \cong \del \Delta^{n-1}$.
\end{lemma}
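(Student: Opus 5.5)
The plan is to reduce to \emph{elementary moves} between apexes, and then to realize each elementary move by an explicit null-homotopy assembled from augmented simplices.

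\emph{Setup and choice of path.} Write $d=\det(\vec v_1,\dots,\vec v_n)\in\F^\times\setminus\U$. Each $\vec w_j$ can be written as $\vec w_j=\sum_i c^{(j)}_i\vec v_i$. The hypothesis says that $c^{(j)}_i\cdot(\pm d)\in\U$ for every $i$. Since $\F^\times/\U\cong\{\pm1\}$, this is equivalent to $c^{(j)}_i\in\F^\times\setminus\U$ for all $i$. Conversely, any vector all of whose $\vec v$-coordinates are non-squares is an admissible apex. By Lemma \ref{lem:nonsquaresconnected}, I can pass from $c^{(1)}_i$ to $c^{(2)}_i$ by successively adding elements of $\U$ while staying inside $\F^\times\setminus\U$. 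I will do this one coordinate at a time. This gives a chain of admissible apexes $\vec w_1=\vec x_0,\vec x_1,\dots,\vec x_N=\vec w_2$ in which consecutive terms satisfy $\vec x_{t+1}=\vec x_t+u\vec v_k$ for some $k$ and some $u\in\U$. Homotopy rel boundary is transitive, so it suffices to treat a single move $\vec w'=\vec w+u\vec v_k$.

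\emph{The faces containing $\vec v_k$.} Let $F_j=\{[\vec v_l]:l\neq j\}$; the top simplices of the cone $[\vec w]*\del\Delta^{n-1}$ are the simplices $\{[\vec w]\}\cup F_j$. For $j\neq k$, the set
\[
\{[\vec w],[\vec w'],[\vec v_k]\}\cup\{[\vec v_l]:l\neq j,k\}
\]
is an augmented determinant-$\U$ partial $\U$-basis. Indeed, its underlying basis is $\{\vec w\}\cup F_j$, whose determinant is $c_j\cdot(\pm d)\in\U$. Its additive core is $\vec w'=\vec w+u\vec v_k$ with coefficients $1,u\in\U$. Each such simplex contains both $\{[\vec w]\}\cup F_j$ and $\{[\vec w']\}\cup F_j$. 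So on the part $[\vec v_k]*\del F_k$ of $\del\Delta^{n-1}$, the two cones are straight-line homotopic rel boundary inside these simplices. More precisely, the homotopy runs through $\{[\vec w],[\vec w'],[\vec v_k]\}*\del F_k$, and all of its simplices lie in $\BDA_n^\U(\F)$.

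\emph{The remaining face $F_k$ (main obstacle).} The problem is the single face $F_k$. The set $\{[\vec w],[\vec w']\}\cup F_k$ is not a simplex, because $\vec v_k\notin F_k$ and so there is no additive core. Equivalently, I must show that the $(n-1)$-sphere
\[
\Sigma=\{[\vec w],[\vec w']\}*\del\Delta^{n-1}
\]
is null-homotopic. By the previous paragraph, $\Sigma$ is homotopic to the sphere obtained by gluing the ball $[\vec w]*F_k\cup[\vec w']*F_k$ to the rest of the filling along $[\vec v_k]*\del F_k$. My first attempt is to factor the move $\vec w\to\vec w+u\vec v_k$ differently. By Lemma \ref{lem:sumofunits}, write $c_k$-changes through intermediate apexes of the form $\vec w+u\vec v_k+u'\vec v_l$, or through $\lambda\vec w$ with $\lambda\in\U$, so that at each stage the problematic face is a different $F_l$. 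The aim is that the obstruction cancels, or that it is absorbed by externally additive simplices $\{[\vec x],[\vec x+u''\vec v_l]\}\cup F_k$ with $l\neq k$, which \emph{are} simplices of $\BDA_n^\U(\F)$. If this does not close up, I would fall back on a connectivity input for the link of $F_k$, using Proposition \ref{BDAconnectivity} applied to a suitable $\BDA_{1,n-1}^\U(\F)$-type link together with an induction on $n$. In that case the base case $n=2$ would be checked by hand, using the explicit structure of $\F^\times\setminus\U$ from Lemmas \ref{lem:squares&nonsquares} and \ref{lem:sumofunits}.
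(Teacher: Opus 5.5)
You reduce to a single move $\vec w\mapsto\vec w'=\vec w+u\vec v_k$ via Lemma \ref{lem:nonsquaresconnected}, and you handle the faces $F_j$ with $j\neq k$ using the augmented simplices $\{[\vec w],[\vec w']\}\cup F_j$. Both steps are correct and match the paper.

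\textbf{The gap.} The proposal stops at the only step with real content: showing that the sphere $\overline{\llb [\vec v_1],\dots,\widehat{[\vec v_k]},\dots,[\vec v_n],[\vec w],[\vec w+u\vec v_k]\rrb}$ is nullhomotopic in $\BDA_n^\U(\F)$. What you give there is a list of things to try, not an argument, and neither option works as stated.
\begin{itemize}
\item Factoring the move differently does not help by itself. Every elementary move $\vec x\mapsto\vec x+u\vec v_l$ leaves an obstruction sphere of exactly this shape at $F_l$, and you have no independent way to make such spheres cancel.
\item The fallback gives nothing. Proposition \ref{BDAconnectivity} applied to $\BDA_{1,n-1}^\U(\F)$ only yields $(-1)$-connectivity, i.e.\ nonemptiness.
\item Even a path from $[\vec w]$ to $[\vec w']$ in the link of $F_k$ would only replace your sphere by the join of a loop with $\del F_k$. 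That sphere would still have to be filled.
\end{itemize}

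\textbf{The two missing ingredients.}
\begin{enumerate}
\item \emph{A change of representative for $[\vec w']$.} Write $\vec w=\sum_l c_l\vec v_l$ and $c_k+u=u'c_k$ with $u'\in\U$. Then $(1-u')c_k=-u\in\U$, which forces $1-u'\in\F^\times\setminus\U$. Hence $[\vec w+u\vec v_k]=[u'^{-1}(\vec w+u\vec v_k)]=[\vec w+\sum_{l\neq k}(u'^{-1}-1)c_l\vec v_l]$, and every coefficient $(u'^{-1}-1)c_l$ lies in $\U$. So the obstruction is the sphere $\overline{\llb [\vec b_1],\dots,[\vec b_n],[u_1\vec b_1+\dots+u_n\vec b_n]\rrb}$ for the determinant-$\U$ basis $\{\vec v_l\}_{l\neq k}\cup\{\vec w\}$, with all $u_i\in\U$.
\item \emph{Such spheres are nullhomotopic in $\BDA_n^\U(\F)$} (Lemma \ref{choicehelper}). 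The paper proves this by induction on $n$. After normalizing $u_i=1$, cone the sphere off at $[\vec b_1+\dots+\vec b_{n-1}]$ inside the full simplex on all $\U$-vectors. Only two simplices of this cone fail to lie in $\BDA_n^\U(\F)$. They bound a disk whose boundary is $\overline{\llb[\vec b_n],[\vec b_1+\dots+\vec b_n]\rrb}*\overline{\llb[\vec b_1],\dots,[\vec b_{n-1}],[\vec b_1+\dots+\vec b_{n-1}]\rrb}$. This is a suspension of a sphere that is null in $\BDA_{n-1}^\U(\F)$ by induction.
\end{enumerate}
Neither ingredient appears in your proposal. Your remark about passing through $\lambda\vec w$ gestures at the first, but $[\lambda\vec w]=[\vec w]$ for $\lambda\in\U$. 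The useful rescaling is of $\vec w'$, and its whole point is the coefficient computation above.
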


\begin{figure}
\centering
\begin{tikzpicture}
    \draw [thick] (2,0) -- (0,2) -- (2,4) -- (4,2) -- (2,0) -- (2.5,1.5) -- (2,4);
        \draw [thick] (0,2) -- (2.5, 1.5) -- (4,2);
        \draw [dotted, thick] (0,2) -- (4,2) ;
        \draw[dashed, thick] (0,2) -- (1.5, 2.5) -- (4,2);
        \draw[dashed] (1.5,2.5) -- (2.5,1.5);
        \draw[dashed] (1.5,2.5) -- (2,4);

    \node[above] at (2,4) {$\vec{v}_4$};
    \node[below] at (2,0) {$\vec{v}_1 + \vec{v}_2 + \vec{v}_3 + \vec{v}_4$};
    \node[left] at (0,2) {$\vec{v}_1 + \vec{v}_2 + \vec{v}_3$};
    \node[right] at (4,2) {$\vec{v}_2$};
    \node[below left] at (2.5,1.5) {$\vec{v}_1$};
    \node[above left] at (1.5,2.5) {$\vec{v}_3$};
        
\end{tikzpicture}
\caption{The disk bounded by the two simplices in the proof of Lemma \ref{choicehelper}, in the case $n=4$.}
\label{fig:bykalternative}
\end{figure}

The proof for this will require the following Lemma.
\begin{lemma}
\label{choicehelper}
Let $\F$ be a field, and $\U \subset \F^\times$ a multiplicative subgroup.
    For some $n \geq 2$, let $\{ [\vec{v}_1], \dots, [\vec{v}_n]\}$ be an $(n-1)$-simplex in $\BD_n^\U(\F)$, and let $u_1, \dots, u_n$ be units in $\U$. Then the map
    \begin{align*}
        \overline{\llb [\vec{v}_1], \dots, [\vec{v}_n], [u_1\vec{v}_1 + \dots + u_n\vec{v}_n] \rrb} : \del \Delta^n \cong S^{n-1} \to \BD_n^\U(\F)
    \end{align*}
    is nullhomotopic in $\BDA_n^\U(\F)$.
\end{lemma}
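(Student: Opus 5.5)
The plan is to induct on $n$, peeling off one summand of $\vec w \coloneq u_1\vec v_1+\dots+u_n\vec v_n$ at a time, as the picture in Figure~\ref{fig:bykalternative} suggests. Set $\vec s_k \coloneq u_1\vec v_1+\dots+u_k\vec v_k$, so $\vec s_1=u_1\vec v_1$ (hence $[\vec s_1]=[\vec v_1]$) and $\vec s_n=\vec w$.

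The first step is a determinant observation that supplies enough simplices of $\BDA_n^\U(\F)$. For any $i\le k$, replacing $\vec v_i$ by $\vec s_k$ in $(\vec v_1,\dots,\vec v_n)$ multiplies the determinant by $u_i\in\U$. Hence $\{[\vec v_j]: j\neq i\}\cup\{[\vec s_k]\}$ is again a determinant-$\U$ basis. Since $\vec s_{k+1}=\vec s_k+u_{k+1}\vec v_{k+1}$ with both coefficients in $\U$, the set
\[
\{[\vec v_j] : j\le k,\ j\neq i\}\cup\{[\vec s_k],[\vec s_{k+1}]\}\cup\{[\vec v_j]: j>k\}
\]
is an augmented determinant-$\U$ simplex with additive core $\{[\vec s_k],[\vec v_{k+1}],[\vec s_{k+1}]\}$. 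These are the $n$-dimensional pieces from which the disk will be assembled.

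The base case $n=2$ is immediate: $\{[\vec v_1],[\vec v_2],[u_1\vec v_1+u_2\vec v_2]\}$ is itself an augmented simplex, so the boundary of the triangle is nullhomotopic. For the inductive step, consider the $n+2$ vertices $[\vec v_1],\dots,[\vec v_n],[\vec s_{n-1}],[\vec w]$ and the boundary of the formal $(n+1)$-simplex on them. Its faces are as follows.
\begin{itemize}
\item The face omitting $[\vec s_{n-1}]$ is the sphere we want to kill.
\item The face omitting $[\vec w]$ is the sphere $\overline{\llb [\vec v_1],\dots,[\vec v_n],[\vec s_{n-1}]\rrb}$.
\item For $i\le n-1$, the face omitting $[\vec v_i]$ is a genuine augmented simplex by the first step, with $k=n-1$, core $\{[\vec s_{n-1}],[\vec v_n],[\vec w]\}$ and $\vec v_i$ omitted. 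It is therefore contractible.
\item The face omitting $[\vec v_n]$ contains $\{[\vec v_1],\dots,[\vec v_{n-1}],[\vec s_{n-1}]\}$, which is not a simplex. This face has to be handled separately.
\end{itemize}
So, up to the problematic faces, the original sphere is homotopic to $\overline{\llb [\vec v_1],\dots,[\vec v_n],[\vec s_{n-1}]\rrb}$. This latter sphere is essentially the join of $[\vec v_n]$ with the $(n-1)$-vertex configuration $\overline{\llb [\vec v_1],\dots,[\vec v_{n-1}],[\vec s_{n-1}]\rrb}$, glued along the non-simplex face. That configuration is exactly the statement of the lemma in dimension $n-1$, living in $\Link_{\BDA_n^\U(\F)}([\vec v_n])$, which contains a copy of $\BDA_{n-1}^\U$ of the complement $\langle\vec v_1,\dots,\vec v_{n-1}\rangle$; the determinant condition is preserved because $\vec v_n$ is fixed. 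By induction, $\overline{\llb [\vec v_1],\dots,[\vec v_{n-1}],[\vec s_{n-1}]\rrb}$ bounds a disk $D$ inside that link. Coning $D$ off with $[\vec v_n]$ gives a disk filling the relevant part.

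To make this rigorous, I would not use the formal $(n+1)$-simplex at all. Instead I would build an explicit triangulated $n$-disk whose boundary is $\partial\Delta^n$ on $\{[\vec v_1],\dots,[\vec v_n],[\vec w]\}$. It is the union of $[\vec v_n]*D'$ and the augmented simplices $\{[\vec v_j]:j\le n-1, j\neq i\}\cup\{[\vec s_{n-1}],[\vec v_n],[\vec w]\}$, where $D'$ is the inductive disk, and the gluing is checked face by face. This recursively unwinds to the disk in Figure~\ref{fig:bykalternative}. The main obstacle is exactly this bookkeeping: verifying that the pieces glue along matching faces, so that the only free boundary is the original sphere. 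In particular, the missing face $\{[\vec v_1],\dots,[\vec v_{n-1}],[\vec s_{n-1}]\}$ must get absorbed, being covered on one side by the cone $[\vec v_n]*D'$ and on the other by the augmented simplices. If the direct gluing is awkward, I would fall back to working with simplicial chains for $n\ge 3$, where Hurewicz applies because of Proposition~\ref{BDAconnectivity}. Then the computation reduces to the signed face relation above, with the inductive hypothesis used for the face omitting $[\vec v_n]$.
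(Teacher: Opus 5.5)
\textbf{The gap is in the gluing step, which you yourself single out as the main obstacle.} Your overall plan matches the paper's: cone the sphere from the apex $[\vec s_{n-1}]$, observe that the faces omitting $[\vec v_i]$ for $i\le n-1$ are augmented simplices, and feed $\overline{\llb [\vec v_1],\dots,[\vec v_{n-1}],[\vec s_{n-1}]\rrb}$ into the $(n-1)$-dimensional case. But as you describe it, the disk does not have the original sphere as its boundary.

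There are \emph{two} bad faces, namely $\{[\vec v_1],\dots,[\vec v_n],[\vec s_{n-1}]\}$ and $\{[\vec v_1],\dots,[\vec v_{n-1}],[\vec s_{n-1}],[\vec w]\}$. Both contain the non-simplex $\{[\vec v_1],\dots,[\vec v_{n-1}],[\vec s_{n-1}]\}$. Your heuristic (``up to the problematic faces, the original sphere is homotopic to $\overline{\llb [\vec v_1],\dots,[\vec v_n],[\vec s_{n-1}]\rrb}$'') silently drops the second one, and your explicit disk never replaces it.

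Concretely, the union of your augmented simplices is $\overline{\llb [\vec v_1],\dots,[\vec v_{n-1}]\rrb}*\llb [\vec s_{n-1}],[\vec v_n],[\vec w]\rrb$. Its boundary $\overline{\llb [\vec v_1],\dots,[\vec v_{n-1}]\rrb}*\overline{\llb [\vec s_{n-1}],[\vec v_n],[\vec w]\rrb}$ has three parts:
\begin{enumerate}
\item faces of the original sphere;
\item $\overline{\llb [\vec v_1],\dots,[\vec v_{n-1}]\rrb}*\llb [\vec s_{n-1}],[\vec v_n]\rrb$;
\item $\overline{\llb [\vec v_1],\dots,[\vec v_{n-1}]\rrb}*\llb [\vec s_{n-1}],[\vec w]\rrb$.
\end{enumerate}
None of this touches $D'$. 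Gluing on $[\vec v_n]*D'$ cancels part 2. It leaves $D'$ and part 3 as free boundary, and the face $\{[\vec v_1],\dots,[\vec v_{n-1}],[\vec w]\}$ of the original sphere never appears. So your claim that the missing face is covered ``on the other side by the augmented simplices'' is false.

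\textbf{The missing piece is $[\vec w]*D'$.} Its boundary is exactly $D'$, together with part 3 and the face $\{[\vec v_1],\dots,[\vec v_{n-1}],[\vec w]\}$. It lies in $\BDA_n^\U(\F)$ because $\vec w-\vec v_n=\vec s_{n-1}\in\langle\vec v_1,\dots,\vec v_{n-1}\rangle$. Hence $\det(\vec x_1,\dots,\vec x_{n-1},\vec w)=\det(\vec x_1,\dots,\vec x_{n-1},\vec v_n)$ for vectors $\vec x_j$ in that span. So $[\vec w]$, just like $[\vec v_n]$, joins to every simplex of the copy of $\BDA_{n-1}^\U$ on $\langle\vec v_1,\dots,\vec v_{n-1}\rangle$. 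The correct filling is therefore
\[
\bigl(\overline{\llb [\vec v_1],\dots,[\vec v_{n-1}]\rrb}*\llb [\vec s_{n-1}],[\vec v_n],[\vec w]\rrb\bigr)\cup\bigl(D'*\overline{\llb[\vec v_n],[\vec w]\rrb}\bigr).
\]
In other words, the inductive disk must be \emph{suspended} with suspension points $[\vec v_n]$ and $[\vec w]$, not coned from $[\vec v_n]$ alone.

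With this fix your argument becomes exactly the paper's. There, the two bad simplices of the cone form a disk with boundary $\overline{\llb [\vec v_1],\dots,[\vec v_{n-1}],[\vec s_{n-1}]\rrb}*\overline{\llb[\vec v_n],[\vec w]\rrb}$. That sphere is then killed by suspending the inductive nullhomotopy. Your chain-level fallback needs the same correction: the inductive filling must be inserted into both bad faces, not only into the one omitting $[\vec v_n]$.
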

\begin{proof}
Since we have the freedom to multiply the $\vec{v_i}$ with units in $\U$, it is enough to prove this in the case where $u_1 = \dots = u_n = 1$.
We will induct on $n$. Note that if $n=2$, then our map is the boundary of an additive simplex of $\BDA_2^\U(\F)$, and thus nullhomotopic. So assume $n \geq 3$.

Define $\K_n^\U(\F)$ to be the simplicial complex whose vertices are $\U$-vectors of $\F^n$, and simplices are all finite subsets of $\U$-vectors. We will think of $\BDA_n^\U(\F)$ as a subcomplex of $\K_n^\U(\F)$.
Consider the map
\begin{align*}
   f= \overline{\llb [\vec{v}_1], \dots, [\vec{v}_n], [\vec{v}_1 + \dots + \vec{v}_n] \rrb} * \llb [\vec{v}_1 + \dots + \vec{v}_{n-1} \rrb] : \del \Delta^n * \Delta^0 \cong D^n \to \K_n^\U(\F)
\end{align*}
The restriction of $f$ to $\del D^n \cong S^{n-1}$ is precisely the map we are considering, so this gives us a nullhomotopy of our map in the complex $\K_n^\U(\F)$. Note also that $f$ is a bijective map from $\del \Delta^n * \Delta^0 \cong D^n$ onto its image. Note that for $n=3$, the image of $f$ in fact lies in $\BDA_n^\U(\F)$, and so $f$ gives a nullhomotopy of our map.

For $n \geq 4$, the only simplices of $\K_n^\U(\F)$ that are in the image of $f$ but are not in $\BDA_n^\U(\F)$ are
\begin{align*}
    \{ [\vec{v}_1], \dots, [\vec{v}_n], [\vec{v}_1+ \dots + \vec{v}_{n-1}]\} \hspace{0.25cm} \text{ and } \hspace{0.25cm} \{[\vec{v}_1], \dots, [\vec{v}_{n-1}], [\vec{v}_1 + \dots + \vec{v}_n], [\vec{v}_1 + \dots + \vec{v}_{n-1}]\}
\end{align*}
Together, these bound an $n$-dimensional disk $D$ in the image of $f$ (see Figure \ref{fig:bykalternative}), whose boundary is
\begin{align*}
 \overline{\llb [\vec{v}_1], \dots, [\vec{v}_{n-1}], [\vec{v}_1+ \dots + \vec{v}_{n-1}]\rrb} *  \overline{\llb [\vec{v}_n], [\vec{v}_1+ \dots + \vec{v}_n] \rrb}
\end{align*}

This tells us that the map $f|_{D^n \setminus f^{-1}(\text{int}(D))}$ gives us a homotopy between the maps
\begin{align*}
    \overline{\llb [\vec{v}_1], \dots, [\vec{v}_n], [\vec{v}_1 + \dots + \vec{v}_n]\rrb} : \del \Delta^n \cong S^{n-1} \to \BDA_n^\U(\F)
\end{align*}
and
\begin{align*}
    \overline{\llb [\vec{v}_n], [\vec{v}_1+ \dots + \vec{v}_n] \rrb} * \overline{\llb [\vec{v}_1], \dots, [\vec{v}_{n-1}], [\vec{v}_1+ \dots + \vec{v}_{n-1}]\rrb} : \del \Delta^1 * \del \Delta^{n-1} \cong S^{n-1} \to \BDA_n^\U(\F)
\end{align*}
So to finish our proof, it is enough to show that the latter map is nullhomotopic.

We can inductively assume that the map
\begin{align*}
    \overline{\llb [\vec{v}_1], \dots, [\vec{v}_{n-1}], [\vec{v}_1+ \dots + \vec{v}_{n-1}]\rrb} : \del \Delta^{n-1} \cong S^{n-2} \to \BDA_{n-1}^\U(\F)
\end{align*}
is nullhomotopic. Let $H : D^{n-1} \to \BDA_{n-1}^\U(\F)$ be a nullhomotopy of this map. We can assume that $H$ homotopes this map to the constant map from $S^{n-1}$ to a vertex $[\vec{v}]$ of $\BDA_{n-1}^\U(\F)$.
Note that we have a natural inclusion $\io: \BDA_{n-1}^\U(\F) \hookrightarrow \BDA_n^\U(\F)$ induced by the inclusion $\langle \vec{v}_1, \dots , \vec{v}_{n-1}\rangle \hookrightarrow \langle \vec{v}_1, \dots, \vec{v}_n \rangle$.
Then the map
\begin{align*}
    (\io \circ H) * \overline{\llb [\vec{v}_n], [\vec{v}_1+ \dots + \vec{v}_n]\rrb} : D^{n-1} * \del \Delta^1 \cong D^n \to \BDA_n^\U(\F)
\end{align*}
gives us a homotopy from this map to the map
\begin{align*}
    g*\overline{\llb [\vec{v}_n], [\vec{v}_1+ \dots + \vec{v}_n]\rrb} : \del \Delta^{n-1}*\del \Delta^1 \to \BDA_n^\U(\F)
\end{align*}
that maps $\del \Delta^{n-1}$ to the vertex $[\vec{v}]$. Now homotoping the vertices $[\vec{v}_n]$ and $[\vec{v}_1+ \dots + \vec{v}_n]$ to $[\vec{v}]$ along the edges joining them gives us the desired nullhomotopy. 
\end{proof}

\begin{remark}
Miller--Patzt--Putman \cite[Lemma 2.63]{miller2021top} give a simpler proof of this, in the specific case where $\F$ is a quotient of $\Z$ by a prime, and $\U = \{\pm1\}$. Their proof uses the fact that $\BA_n^{\pm}(\Z)$ is $(n-1)$-connected. 
We give an independent proof to avoid having to impose additional assumptions, and to emphasise that this lemma holds for all $\F$ and $\U$.
\end{remark}

\begin{figure}
\centering
    \begin{tikzpicture}[scale=0.8]
        \draw [thick] (2,0) -- (0,2) -- (2,4) -- (4,2) -- (2,0) -- (2.5,1.5) -- (2,4);
        \draw [thick] (0,2) -- (2.5, 1.5) -- (4,2);
        \draw [dashed] (0,2) -- (4,2) ;

        \node[below] at (2,0) {$\vec{w}_1$};
        \node[left] at (0,2) {$\vec{v}_1$};
        \node[above] at (2,4) {$\vec{w}_1 + \vec{v}_3$};
        \node[above] at (4,2) {$\vec{v}_2$};
        \node[below left] at (2.5,1.5) {$\vec{v}_3$};

        \draw [->, very thick] (4.5,2) -- (5.5,2) ;

        \draw [thick] (8.5,1.5) --(8,0) --(6,2) --(8,4) --(8.5,1.5) --(6,2) ;
        \draw [dashed] (8,4) -- (8,0) ;

        \node[below] at (8,0) {$\vec{w}_1$};
        \node[above] at (6,2) {$\vec{v}_1$};
        \node[above] at (8,4) {$\vec{w}_1 + \vec{v}_1$};

        \draw [thick] (11.25,0) -- (9.25,2) -- (11.25,4) -- (13.25,2) -- (11.25,0) -- (11.25,4);
        \draw [dashed] (9.25,2) -- (13.25,2) ;

        \node[above] at (9.25,2) {$\vec{v}_1$};
        \node[above] at (11.25,4) {$\vec{w}_1 + \vec{v}_3$};
        \node[below] at (11.25,0) {$\vec{w}_1$};
        \node[above] at (13.25,2) {$\vec{v}_2$};

        \draw[thick] (14,0)--(14,4)--(16,2)--(14,0)--(14.5,1.5)--(14,4);
        \draw[thick] (14.5,1.5) -- (16,2);

        \node[above] at (14,4) {$\vec{w}_1 + \vec{v}_3$};
        \node[below] at (14,0) {$\vec{w}_1$};
        \node[above] at (16,2) {$\vec{v}_2$};
        
    \end{tikzpicture}
    \caption{The sphere in the proof of Lemma \ref{choicefreedom} in the case $n=3$, along with the result of breaking it into $n=3$ spheres.}
\label{fig:choicefreedom}
\end{figure}

\begin{proof}[Proof of Lemma \ref{choicefreedom}]
    Write
    \begin{align*}
        \vec{w}_1 = c_1 \vec{v}_1 + \dots + c_n \vec{v}_n ,& &  \vec{w}_2 = d_1 \vec{v}_1 + \dots + d_n \vec{v}_n
    \end{align*}
    with $c_i, d_i \in \F^\times \setminus \U$ for all $1 \leq i \leq n$. It is enough to deal with the case where all but one pair of the $c_i, d_i$ are equal. Thus reordering the $\vec{v}_i$ if necessary, we can assume $c_i = d_i$ for $1 \leq i \leq n-1$ and $c_n \neq d_n$. By Lemma \ref{lem:nonsquaresconnected}, it is further enough to deal with the case where $d_n = c_n + u$ for some $u \in \U$. Thus we have $\vec{w}_2 = \vec{w}_1 + u\vec{v}_n$.

    Our goal is equivalent to proving that 
    \begin{align*}
        \overline{\llb [\vec{w}_1], [\vec{w}_1 + u\vec{v}_n] \rrb} * \overline{\llb [\vec{v}_1], \dots, [\vec{v}_n] \rrb} : \del \Delta^1 * \del \Delta^{n-1} \cong S^{n-1} \to \BD_n^\U(\F)
    \end{align*}
    is nullhomotopic in $\BDA_n^\U(\F)$. As shown in Figure \ref{fig:choicefreedom}, as an element of $\pi_{n-1}(\BDA_n^\U(\F))$, this sphere is the sum of the following $n$ spheres: 

    \begin{align*}
        \overline{\llb [\vec{v}_1], \dots, [\widehat{\vec{v}}_i], \dots, [\vec{v}_n], [\vec{w_1}], [\vec{w}_1 + u \vec{v_n}] \rrb} \hspace{1cm} (1 \leq i \leq n)
    \end{align*}

For $1 \leq i \leq n-1$, these are the boundaries of additive simplices, and thus are trivially nullhomotopic in $\BDA_n^\U(\F)$.
So we need only deal with the $i=n$ case. Let $u' \in \U$ be a unit so that $d_n = u'c_n$, and for each $1 \leq i \leq n-1$, let $c'_i \in \F^\times \setminus \U$ be such that $c_i = u'c'_i$.
We then have
\begin{align*}
    [\vec{w}_1 + u\vec{v}_n] & = [c_1\vec{v}_1 + \dots + c_{n-1}\vec{v}_{n-1} + d_n \vec{v}_n] = [c'_1\vec{v}_1 + \dots + c'_{n-1}\vec{v}_{n-1} + c_n \vec{v}_n] \\
    & = [\vec{w}_1 + (c'_1-c_1)\vec{v_1} + \dots + (c'_{n-1} -c_{n-1})\vec{v}_{n-1}] \\
    & = [\vec{w}_1 + (1-u')c'_1\vec{v_1} + \dots + (1-u')c'_{n-1}\vec{v}_{n-1}]\\
\end{align*}

Note that we have 
\begin{align*}
    d_n = c_n + u = u'c_n
\end{align*}
which implies 
\begin{align*}
    (1-u')c_n = -u \in \U
\end{align*}
Since we are assuming $\F^\times/\U \cong \{\pm 1\}$, this implies that 
\begin{align*}
    1-u' \in \F^\times \setminus \U.
\end{align*}
Thus 
\begin{align*}
    (1-u')c'_i \in \U \hspace{0.5cm} \text{ for all } 1 \leq i \leq n-1
\end{align*}

This implies that the sphere in the $i=n$ case, which is
\begin{IEEEeqnarray*}{l}
    \hspace*{-3em}\overline{\llb [\vec{v}_1], \dots, [{\vec{v}}_{n-1}], [\vec{w_1}], [\vec{w}_1 + u \vec{v_n}] \rrb} \\
    \quad \text{} = \overline{\llb [\vec{v}_1], \dots, [{\vec{v}}_{n-1}], [\vec{w_1}], [\vec{w}_1 + (1-u')c'_1\vec{v_1} + \dots + (1-u')c'_{n-1}\vec{v}_{n-1}] \rrb} 
\end{IEEEeqnarray*}

is nullhomotopic by Lemma \ref{choicehelper}. The result now follows.
\end{proof}

\subsubsection{Killing initial D-triangle maps}\label{triangles}

In this section we will show that the images of initial D-triangle maps in $\BDA_n^\U(\F)$ are nullhomotopic.

\begin{figure}
    \centering
    \includegraphics[width=0.85\linewidth]{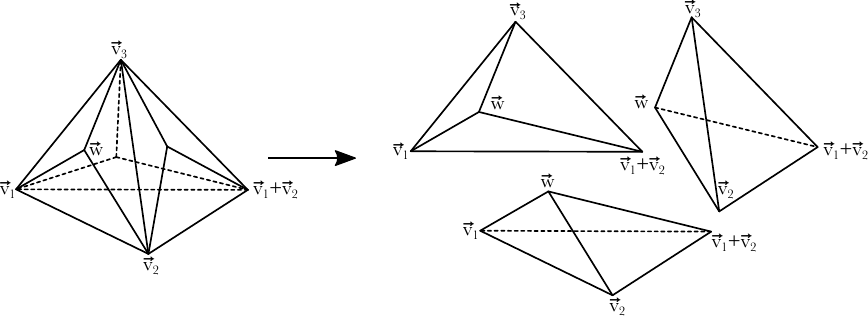}
    \caption{On the left is the sphere $\rho \circ f$ in the proof of Lemma \ref{killtrianglehelper} for $n=3$ with its three subdivided faces. On the right is the $n=3$ spheres it can be cut into (with the required subdivisions omitted to improve readability).}
    \label{fig:killtrianglehelper1}
\end{figure}

To prove Lemma \ref{killtriangle}, we will need the following lemma.

\begin{lemma}
\label{killtrianglehelper}
Let $\F$ be a finite field and $\U \subset \F^\times$ a multiplicative subgroup such that $-1 \in \U$, and the quotient $\F^\times / \U$ is isomorphic to the cyclic group $\{\pm 1\}$ of order 2.

    For some $n \geq 2$, let $\rho: \BAO_n^\U(\F) \to \BDA_n^\U(\F)$ be the retraction constructed in \ref{BDAretract}. Let $\{ \vec{v}_1, \dots, \vec{v}_n\}$ be a basis of $\F^n$. Then 
    \begin{align*}
        \rho \circ \overline{\llb [\vec{v}_1 + \vec{v}_2], [\vec{v}_1], \dots, [\vec{v}_n] \rrb} : \del \Delta^n \to \BDA_n^\U(\F)
    \end{align*}
    is nullhomotopic.
\end{lemma}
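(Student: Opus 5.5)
The plan is to reduce to the case where the determinant is bad, and then cut the subdivided sphere into pieces that are either boundaries of (augmented) simplices of $\BDA_n^\U(\F)$ or spheres handled by Lemma \ref{choicehelper}.

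\textbf{Which faces are bad.} Set $d=\det(\vec{v}_1,\dots,\vec{v}_n)$. The faces of $\del\Delta^n$ on the vertices $[\vec{v}_1+\vec{v}_2],[\vec{v}_1],\dots,[\vec{v}_n]$ are of two kinds.
\begin{itemize}
\item Omitting $[\vec{v}_i]$ for some $i\geq 3$ gives an augmented partial $\U$-basis with additive core $\{[\vec{v}_1+\vec{v}_2],[\vec{v}_1],[\vec{v}_2]\}$. Its underlying partial basis has fewer than $n$ elements, so it lies in $\BDA_n^\U(\F)$ with no determinant condition.
\item The three faces omitting $[\vec{v}_1+\vec{v}_2]$, $[\vec{v}_1]$ or $[\vec{v}_2]$ are full bases. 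Their determinants are $d$, $\pm d$ and $\pm d$.
\end{itemize}
If $d\in\U$, the map is already the boundary of an augmented simplex of $\BDA_n^\U(\F)$, on which $\rho$ is the identity, so it is nullhomotopic. The substantive case is therefore $d\in\F^\times\setminus\U$. Here $\rho$ cones off each of the three bad faces $\sigma_0,\sigma_1,\sigma_2$ from new vertices $[\vec{w}_0],[\vec{w}_1],[\vec{w}_2]$. By Lemma \ref{choicefreedom}, the homotopy class of $\rho\circ\overline{\llb\cdots\rrb}$ does not depend on these choices, rel the rest of the sphere, so we may pick the $\vec{w}_j$ conveniently.

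\textbf{Choosing the cone points.} I would write every $\vec{w}_j$ in the basis $\vec{v}_1,\dots,\vec{v}_n$. The admissible choices are as follows.
\begin{itemize}
\item For $\sigma_0$: all coefficients of $\vec{w}_0$ lie in the coset $d^{-1}\U=\F^\times\setminus\U$.
\item For $\sigma_1=\{\vec{v}_1+\vec{v}_2,\vec{v}_2,\dots\}$: writing $\vec{w}=a_1\vec{v}_1+\dots+a_n\vec{v}_n$, we need $a_1$, $a_2-a_1$ and $a_3,\dots,a_n$ to be non-units of $\U$.
\item For $\sigma_2$: symmetrically, we need $a_2$, $a_1-a_2$ and $a_3,\dots,a_n$ non-units of $\U$.
\end{itemize}
One vector serving all three faces would need $a_1,a_2,a_1-a_2\in\F^\times\setminus\U$, that is, $1=u+u'$ with $u,u'\in\U$. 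A character-sum count as in Lemma \ref{lem:nonsquaresconnected} gives $k-1$ such solutions, where $|\U|=2k$. So this works when $k\geq 2$ but fails for $\F=\F_5$. I would therefore not insist on a common cone point. Instead, take $\vec{w}_0=c(\vec{v}_1+\dots+\vec{v}_n)$ with $c\in\F^\times\setminus\U$. Then take $\vec{w}_1,\vec{w}_2$ differing from $\vec{w}_0$ only in the first two coordinates, by amounts in $\U$ times basis vectors, arranged using Lemma \ref{lem:sumofunits} and Lemma \ref{lem:nonsquaresconnected}.

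\textbf{Decomposing the sphere.} The subdivided sphere is the union of the three cones $[\vec{w}_j]*\del\sigma_j$ and the augmented faces. In $\pi_{n-1}$, as in Figure \ref{fig:killtrianglehelper1}, I would express it as a sum of $n$ spheres, obtained by joining adjacent cone points along edges $[\vec{w}_i],[\vec{w}_j]$; these edges are simplices of $\BDA_n^\U(\F)$ by the choices above. Each resulting sphere should be one of two kinds. Either it is the boundary of an additive simplex of $\BDA_n^\U(\F)$, such as $\{[\vec{w}_0],[\vec{w}_0+u\vec{v}_i],\dots\}$, or it has the shape $\overline{\llb[\vec{x}_1],\dots,[\vec{x}_n],[u_1\vec{x}_1+\dots+u_n\vec{x}_n]\rrb}$ for a determinant-$\U$ basis $\{\vec{x}_i\}$. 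The latter is nullhomotopic by Lemma \ref{choicehelper}; to put the cone vector in that form, use the fact that $c\cdot(\text{non-unit})\in\U$, exactly as in the proof of Lemma \ref{choicefreedom}.

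\textbf{Main obstacle.} The main obstacle is the bookkeeping in this decomposition: choosing the $\vec{w}_j$ so that every interface simplex actually lies in $\BDA_n^\U(\F)$, and verifying that each piece is of one of the two good types. This is hardest in the small case $|\U|=2$, where the common-cone-point shortcut is unavailable. There I would first try inserting an intermediate cone point via Lemma \ref{lem:nonsquaresconnected}, then check $n=2,3$ by hand before doing the general case.
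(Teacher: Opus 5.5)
\textbf{Gap: the case $\F=\F_5$, $\U=\{\pm1\}$ is not proved.}

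What you have is correct as far as it goes:
\begin{itemize}
\item the reduction to $d\in\F^\times\setminus\U$;
\item the description of admissible cone points for $\sigma_0,\sigma_1,\sigma_2$;
\item the count of $k-1$ solutions of $1=u+u'$.
\end{itemize}
Your common-cone-point observation in fact proves the lemma outright when $|\U|\geq 4$. If $[\vec w]$ is admissible for all three faces, then for each augmented face $\tau_i$ ($i\geq 3$) the set $\{[\vec w]\}\cup\tau_i$ is an $n$-simplex of $\BDA_n^\U(\F)$, since its basis part has determinant $\pm a_i d\in\U$. So after re-choosing cone points via Lemma \ref{choicefreedom}, the whole subdivided sphere lies in the closed star of $[\vec w]$. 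You should state this. In that range it is cleaner than the paper's argument; the paper's case $2\in\U$ is exactly your common cone point with $a_2=-a_1$.

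The remaining case $|\U|=2$, i.e.\ $\F=\F_5$ and $\U=\{\pm1\}$, is precisely the Miller--Patzt--Putman case the lemma is meant to generalize. You only propose to experiment there, and the construction you sketch cannot work as stated:
\begin{itemize}
\item With $\vec w_0=c(\vec v_1+\dots+\vec v_n)$ you have $a_1=a_2$. Changing one of the first two coordinates by a unit gives $a_2-a_1\in\U$, which is never admissible for $\sigma_1$ or $\sigma_2$ (in any field).
\item Changing both, $\vec w_1=\vec w_0+u\vec v_1+u'\vec v_2$ with $u\neq u'$, makes the unique linear relation among $\vec w_0,\vec w_1,\vec v_2,\dots,\vec v_n$ have at least four nonzero terms when $n\geq 3$. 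So $\{[\vec w_0],[\vec w_1]\}\cup(\sigma_0\cap\sigma_1)$ is not a simplex.
\end{itemize}
The interface simplices your decomposition would need to homotope across therefore do not exist. That the edges $\{[\vec w_i],[\vec w_j]\}$ lie in $\BDA_n^\U(\F)$ is not the relevant condition.

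\textbf{The missing idea} (this is how the paper proceeds) is to reduce from three bad faces to two before looking for a common cone point.
\begin{enumerate}
\item Choose the cone point $\vec w=c_1\vec v_1+\dots+c_n\vec v_n$ of $\sigma_0$ with $c_2-c_1=u\in\U$, which is possible by Lemma \ref{lem:nonsquaresconnected}.
\item Cone the whole sphere from $[\vec w]$. The pieces coming from the augmented faces are boundaries of additive simplices, and the piece coming from $\sigma_0$ is degenerate.
\item Only the two spheres on $\{[\vec w],[\vec v_1+\vec v_2],[\vec v_1],\dots,\widehat{[\vec v_i]},\dots,[\vec v_n]\}$, $i=1,2$, remain, and each has exactly two bad faces.
\item For $i=2$ these faces are $\{[\vec v_1+\vec v_2],[\vec v_1],[\vec v_3],\dots,[\vec v_n]\}$ and $\{[\vec w],[\vec v_1+\vec v_2],[\vec v_3],\dots,[\vec v_n]\}$. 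When $2\notin\U$ (automatic for $\F_5$), the single vector $\vec w'=\vec w-u\vec v_1$ is admissible for both.
\item Replace the two subdivisions by one subdivision of their common face. Because $\vec w=\vec w'+u\vec v_1$, every resulting piece is the boundary of an additive simplex.
\end{enumerate}
The case $i=1$ is symmetric, using $\vec w+u\vec v_2$.
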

\begin{proof}
    Set $f = \overline{\llb [\vec{v}_1 + \vec{v}_2], [\vec{v}_1], \dots, [\vec{v}_n] \rrb}$. If $\det(\vec{v}_1, \dots, \vec{v_n}) \in \U$, then $\rho \circ f = f$ and the image of $f$ is the boundary of an additive simplex in $\BDA_n^\U(\F)$, so the map is trivially nullhomotopic. So assume that $\det(\vec{v}_1, \dots, \vec{v}_n) \in \F^\times \setminus \U$.
    In the image of $\rho \circ f$, exactly 3 faces of the image of $f$ are subdivided, namely the images of 
    \begin{align*}
        \{ [\vec{v}_1], [\vec{v}_2], [\vec{v}_3], \dots , [\vec{v}_n]\},&& \{ [\vec{v}_1+\vec{v}_2], [\vec{v}_1], [\vec{v}_3],\dots , [\vec{v}_n]\}, &&
         \{ [\vec{v}_1+\vec{v}_2], [\vec{v}_2], [\vec{v}_3], \dots , [\vec{v}_n]\}
    \end{align*}
By Lemma \ref{choicefreedom}, we can choose the $\U$-vector we use for each subdivision arbitrarily. Lemma \ref{lem:char2} says that $\mathrm{char}(\F) \neq 2$, so we must have $2 \in \F^\times$. To make our choices for subdivision, we break the proof into two cases, depending on if $2 \in \U$ or if $2 \in \F^\times \setminus \U$.
\begin{enumerate}

\item \textbf{Case 1: $2 \in \F^\times \setminus \U$}

For the subdivision of the face $\{ [\vec{v}_1], [\vec{v}_2], \dots , [\vec{v}_n]\}$, we will use $[\vec{w}]$ with 
\begin{align*}
    \vec{w} = c_1 \vec{v}_1 + \dots + c_n \vec{v}_n
\end{align*}
for $\{ [\vec{v}_1], [\vec{v}_2], [\vec{v}_3], \dots , [\vec{v}_n]\}$, where 
\begin{align*}
    c_3, \dots, c_n \in \F^\times \setminus \U
\end{align*} are chosen arbitrarily, and $c_1, c_2 \in \F^\times \setminus \U$ are chosen so that 
\begin{align*}
    c_2 -c_1 = u \in \U
\end{align*}
(which is possible by Lemma \ref{lem:nonsquaresconnected}).
We shall leave the choice of $\U$-vector for the other two subdivided faces unspecified for the moment.

As shown in Figure \ref{fig:killtrianglehelper1}, in $\pi_{n-1}(\BDA_n^\U(\F))$ the sphere $\rho \circ f$ is the sum of the $n$ spheres $\rho \circ f_i$ with 
\begin{align*}
    f_i = \overline{\llb [\vec{w}], [\vec{v}_1 + \vec{v}_2], [\vec{v}_1], \dots, [\widehat{\vec{v}}_i], \dots, [\vec{v}_n] \rrb} \hspace{0.5cm} (1 \leq i \leq n)
\end{align*}

For $3 \leq i \leq n$, we have $\rho \circ f_i = f_i$, and the image of $f_i$ is the boundary of an augmented simplex in $\BDA_n^\U(\F)$, so that $f_i$ is trivially nullhomotopic. Thus we need only show that $\rho \circ f_i$ is nullhomotopic for $i=1, 2$. The proofs in both cases are similar. We describe the $i=2$ case and leave the $i=1$ case to the reader.

When forming $\rho \circ f_2$ for 
\begin{align*}
    f_2 = \overline{\llb [\vec{w}], [\vec{v}_1 + \vec{v}_2], [\vec{v}_1], [\vec{v}_3], \dots, [\vec{v}_n] \rrb}
\end{align*}

only two faces are subdivided, namely the images of
\begin{align*}
    \{[\vec{v}_1 + \vec{v}_2], [\vec{v}_1], [\vec{v}_3], \dots, [\vec{v}_n]\} \hspace{0.5cm} \text{ and } \hspace{0.5cm} \{[\vec{w}], [\vec{v}_1 + \vec{v}_2], [\vec{v}_3], \dots, [\vec{v}_n]\}
\end{align*}

We shall use the freedom we have of picking the vector we use for each subdivision. The key observation here is that we can use the same vector for both subdivisions, namely $[\vec{w'}]$ with
\begin{align*}
    \vec{w'} = (c_1-u)\vec{v}_1 + c_2\vec{v}_2 + c_3\vec{v}_3 + \dots + c_n \vec{v}_n.
\end{align*}

For the face $\{ [\vec{v}_1 + \vec{v}_2], [\vec{v}_1], [\vec{v}_3], \dots, [\vec{v}_n]\}$ this follows from the fact that 
\begin{align*}
    (c_1-u)\vec{v}_1 + c_2\vec{v}_2 + c_3\vec{v}_3 + \dots + c_n \vec{v}_n & = ((c_2-u)-u)\vec{v}_1 + c_2\vec{v}_2 + c_3\vec{v}_3 + \dots + c_n \vec{v}_n\\
    & = c_2(\vec{v}_1 + \vec{v}_2) - 2u\vec{v_1} + c_3 \vec{v}_3 + \dots + c_n \vec{v_n}
\end{align*}

(Note that since $-u \in \U$ and $2 \in \F^\times \setminus \U$, this implies that $-2u \in \F^\times \setminus \U$).

For $\{[\vec{w}], [\vec{v}_1 + \vec{v}_2], [\vec{v}_3], \dots, [\vec{v}_n]\}$, we have the following:
\begin{IEEEeqnarray*}{l}
    \hspace*{-3em} (c_1-u)\vec{v}_1 + c_2\vec{v}_2 + c_3\vec{v}_3 + \dots + c_n \vec{v}_n \\
    \quad \quad \text{} = (c_1-(c_2-c_1))\vec{v}_1 + (2c_2-c_2)\vec{v}_2 + (2c_3-c_3)\vec{v}_3 + \dots + (2c_n-c_n)\vec{v}_n\\
     \quad \quad \text{} = (2c_1-c_2)\vec{v}_1 + (2c_2-c_2)\vec{v}_2 + (2c_3-c_3)\vec{v}_3 + \dots + (2c_n-c_n)\vec{v}_n\\
     \quad \quad \text{} = 2(c_1\vec{v}_1 + c_2\vec{v}_2 + \dots + c_n\vec{v}_n) - c_2(\vec{v}_1 + \vec{v}_2) - c_3\vec{v}_3 - \dots - c_n\vec{v}_n\\
     \quad \quad \text{} = 2\vec{w} - c_2(\vec{v}_1 + \vec{v}_2) - c_3\vec{v}_3 - \dots - c_n\vec{v}_n 
\end{IEEEeqnarray*}
Thus we can use $\vec{w'}$ for the subdivision of both faces.
The two faces 
\begin{align}
\label{facescase2}
\{[\vec{v}_1 + \vec{v}_2], [\vec{v}_1], [\vec{v}_3], \dots, [\vec{v}_n]\}  \text{ and } \{[\vec{w}], [\vec{v}_1 + \vec{v}_2], [\vec{v}_3], \dots, [\vec{v}_n]\}
\end{align}
meet in a common $(n-2)$-dimensional simplex
\begin{align*}
    \eta = \{[\vec{v}_1 + \vec{v}_2], [\vec{v}_3], \dots, [\vec{v}_n]\}
\end{align*}
    As in Figure \ref{fig:killtrianglehelper2}, we can homotope $\rho \circ f_2$ so as to replace the two subdivisions of the faces \eqref{facescase2} with a single subdivision of the face $\eta$ by $[\vec{w'}]$. The result in $\pi_{n-1}(\BDA_n^\U(\F))$ is the sum of the $(n-1)$ different spheres
    \begin{align*}
        \overline{\llb [\vec{w}], [\vec{v}_1], [\vec{w'}], [\vec{v}_3], \dots, [\vec{v}_n] \rrb}
    \end{align*}
    and 
    \begin{align*}
        \overline{\llb [\vec{w}], [\vec{v}_1], [\vec{w'}], [\vec{v}_1+\vec{v}_2], [\vec{v}_3], \dots, [\widehat{\vec{v}}_i], \dots, [\vec{v}_n] \rrb}  \hspace{0.25cm}  (3 \leq i \leq n)
    \end{align*}
    These correspond to all the ways of replacing a vertex of $\eta$ with $[\vec{w'}]$ and then adding the vertices $[\vec{w}]$ and $[\vec{v}_1]$ that do not appear in $\eta$. Since $\vec{w} = \vec{w'} + u\vec{v}_1$, these are all the boundaries of additive simplices in $\BDA_n^\U(\F)$, and are thus nullhomotopic.

\item \textbf{Case 2: $2 \in \U$}

For the subdivision of the face $\{ [\vec{v}_1], [\vec{v}_2], \dots , [\vec{v}_n]\}$, we will use $[\vec{w}]$ with 
\begin{align*}
    \vec{w} = c_1 \vec{v}_1 + \dots + c_n \vec{v}_n
\end{align*}
for $\{ [\vec{v}_1], [\vec{v}_2], [\vec{v}_3], \dots , [\vec{v}_n]\}$, where 
\begin{align*}
    c_2, c_3, \dots, c_n \in \F^\times \setminus \U
\end{align*} are chosen arbitrarily, and where
\begin{align*}
    c_1 = -c_2 
\end{align*}
We shall leave the choice of $\U$-vector for the other two subdivided faces unspecified for the moment.

By Lemma \ref{lem:char2}, we have $\mathrm{char}(\F) \neq 2$, which implies that $c_2 \neq -c_2$. This implies that $\{ [\vec{w}], [\vec{v}_1+\vec{v}_2], [\vec{v}_3], \dots, [\vec{v}_n]\}$ is a simplex of $\BAO_n^\U(\F)$. In particular, the maps
\begin{align*}
    f_i = \overline{\llb [\vec{w}], [\vec{v}_1 + \vec{v}_2], [\vec{v}_1], \dots, [\widehat{\vec{v}}_i], \dots, [\vec{v}_n] \rrb} : \del \Delta^n \to \BAO_n^\U(\F) 
\end{align*}
are well-defined for all $1 \leq i \leq n$.

Similar to Case 1, in $\pi_{n-1}(\BDA_n^\U(\F))$ the sphere $\rho \circ f$ is the sum of the $n$ spheres $\rho \circ f_i$ as shown in Figure \ref{fig:killtrianglehelper1}, with 
\begin{align*}
    f_i = \overline{\llb [\vec{w}], [\vec{v}_1 + \vec{v}_2], [\vec{v}_1], \dots, [\widehat{\vec{v}}_i], \dots, [\vec{v}_n] \rrb} \hspace{0.5cm} (1 \leq i \leq n)
\end{align*}

For $3 \leq i \leq n$, we have $\rho \circ f_i = f_i$, and the image of $f_i$ is the boundary of an augmented simplex in $\BDA_n^\U(\F)$, so that $f_i$ is trivially nullhomotopic. Thus we need only show that $\rho \circ f_i$ is nullhomotopic for $i=1, 2$. The proofs in both cases are similar. We describe the $i=2$ case and leave the $i=1$ case to the reader.   

Only one simplex in the image of $f_2$ gets subdivided under the map $\rho \circ f_2$, namely 
\begin{align*}
   \{ [\vec{v}_1 + \vec{v}_2], [\vec{v}_1], [\vec{v}_3], \dots, [\vec{v}_n]\} 
\end{align*}
    
The key now is that we can pick $\vec{w}$ for this subdivision. This is because:
\begin{align*}
    \vec{w} & = -c_2\vec{v}_1 + c_2\vec{v}_2 + c_3\vec{v}_3 + \dots + c_n\vec{v}_n \\
    & = c_2(\vec{v}_1 + \vec{v}_2) -2c_2\vec{v}_1 + c_3\vec{v}_3 + \dots + c_n\vec{v}_n 
\end{align*}
(Since $2 \in \U$ and $-c_2 \in \F^\times \setminus \U$, this implies $-2c_2 \in \F^\times \setminus \U$).

This implies that the image of $\rho \circ f_2$ in $\BDA_n^\U(\F)$ is the degenerate sphere
\begin{align*}
    \overline{\llb [\vec{w}], [\vec{w}] \rrb} * \overline{\llb [\vec{v}_1+\vec{v}_2],[\vec{v}_1], [\vec{v}_3], \dots, [\vec{v}_n] \rrb}
\end{align*}
which is trivially nullhomotopic.

\end{enumerate}
\end{proof}

\begin{figure}
    \centering
    \begin{tikzpicture}[scale=1.1]
        \draw[thick] (0,0) -- (1.8,2) -- (4.5,0) -- (2.4, -1.5) -- (0,0);
        \draw[thick] (1.8,2) -- (2.4, -1.5); \draw[dashed] (0,0) -- (4.5,0);

        \draw[thick] (0,0) -- (1.3,0.4) -- (1.8,2); \draw[thick] (1.3, 0.4) -- (2.4, -1.5);

        \draw[thick] (1.8,2) -- (2.9, 0.4) -- (4.5,0); \draw[thick] (2.9, 0.4) -- (2.4, -1.5);

        \node[left] at (0,0) {$\vec{w}$}; \node[below] at (2.4, -1.5) {$\vec{v}_3$};
        \node[right] at (4.5,0) {$\vec{v}_1$}; \node[above] at (1.8, 2) {$\vec{v}_1 + \vec{v}_2$};

        \node[right] at (1.3,0.4) {$\vec{w}'$}; \node[left] at (2.9, 0.4) {$\vec{w}'$};

        \draw[->, ultra thick] (5,1) -- (6.5,1);

        \draw[thick] (7,0) -- (8.8,2) -- (11.5,0) -- (9.4, -1.5) -- (7,0);
        \draw[thick] (8.8,2) -- (9.4, -1.5); \draw[dashed] (7,0) -- (11.5,0);
        
        \draw[thick] (7,0) -- (9, 0.75) -- (11.5,0);

        \node[left] at (7,0) {$\vec{w}$}; \node[below] at (9.4, -1.5) {$\vec{v}_3$};
        \node[right] at (11.5,0) {$\vec{v}_1$}; \node[above] at (8.8, 2) {$\vec{v}_1 + \vec{v}_2$};

        \node[above right] at (9, 0.75) {$\vec{w}'$};
        
    \end{tikzpicture}
    \caption{On the left is the sphere appearing in the case $i=2$ in the proof of Lemma \ref{killtrianglehelper} for $n=3$. On the right is the result of homotoping it into the union of two tetrahedra.}
    \label{fig:killtrianglehelper2}
\end{figure}

\begin{proof}[Proof of Lemma \ref{killtriangle}]
For some $n \geq 3$, let $g: S^{n-1} \to \B_n^\U(\F)$ be an initial D-triangle map, let $\rho: \B_n^\U(\F) \to \BD_n^\U(\F)$ be the retraction given by Lemma \ref{BDretract}, and let $\iota: \BD_n^\U(\F) \hookrightarrow \BDA_n^\U(\F)$ be the inclusion. We must prove that $\iota \circ \rho \circ g : S^{n-1} \to \BDA_n^\U(\F)$ is nullhomotopic.

By definition, the initial D-triangle map $g$ is of the following form: let $\sigma = \{ [\vec{v}_0], [\vec{v}_1], [\vec{v}_2]\}$ be a 2-dimensional additive simplex of $\BA_n^\U(\F)$ with $\vec{v}_0 = \lambda \vec{v}_1 + \nu \vec{v}_2$. By multiplying $\vec{v}_1, \vec{v}_2$ with units in $\U$ is necessary, we may assume that $\lambda = \nu = 1$. Let $f: S^{n-3} \to \Link_{\BA_n^\U(\F)}(\sigma)$ be a simplicial map for some triangulation of $S^{n-3}$. We then have
\begin{align*}
    g: \overline{\llb [\vec{v}_0], [\vec{v}_1], [\vec{v}_2] \rrb} * f = \overline{\llb [\vec{v}_1 + \vec{v}_2], [\vec{v}_1], [\vec{v}_2] \rrb} * f : \del \Delta^2 * S^{n-3} \cong S^{n-1} \to \B_n^\U(\F).
\end{align*}
Our goal is to show that 
\begin{align*}
    \iota \circ \rho \left( \overline{\llb [\vec{v}_1 + \vec{v}_2], [\vec{v}_1], [\vec{v}_2] \rrb} * f \right) : \del \Delta^2 * S^{n-3} \to \BDA_n^\U(\F)
\end{align*}
is nullhomotopic.

It is enought to show that this map extends over $\Delta^2 * S^{n-3}$. The only simplices of $\Delta^2 * S^{n-3}$ whose images under this map are not simplices of $\BDA_n^\U(\F)$ are of the form $\Delta^2 * \tau$ where $\tau$ maps to a simplex $\{ [\vec{v}_3], \dots, [\vec{v}_n]\}$ such that $\det(\vec{v}_1, \dots, \vec{v_n}) \in \F^\times \setminus \U$. By obstruction theory, it is enough to show that $\del(\Delta^2 * \tau)$ is mapped to an $(n-1)$-sphere that is nullhomotopic. Since the restriction of our map to $\del(\Delta^2 * \tau)$ is precisely
\begin{align*}
    \overline{\llb [\vec{v}_1 + \vec{v}_2], [\vec{v}_1], [\vec{v}_2], [\vec{v}_3], \dots, [\vec{v}_n] \rrb} 
\end{align*}
this follows immediately from Lemma \ref{killtrianglehelper}.
\end{proof}

\subsubsection{Killing initial D-suspend maps}\label{suspends}

In this section we prove Lemma \ref{killsuspend}, i.e. we show that the images of initial D-suspend maps in $\BDA_n^\U(\F)$ are nullhomotopic.

The proof will require two lemmas.

\begin{figure}
    \begin{tikzpicture}[xscale=0.8]
        \draw [thick] (0,0) -- (-2,2) -- (0,4) -- (2,2) -- (0,0) -- (0.5,1.5) -- (0,4);
        \draw [thick] (-2,2) -- (0.5, 1.5) -- (2,2);
        \draw [dashed] (-2,2) -- (2,2) ;

        \node[below] at (0,0) {$\vec{v}_3+\vec{v}_1$};
        \node[left] at (-2,2) {$\vec{v}_1$};
        \node[above] at (0,4) {$\vec{v}_2$};
        \node[above right] at (2,2) {$c_1\vec{v}_1 + c_2\vec{v}_2$};
        \node[below left] at (0.5,1.5) {$\vec{v}_3$};

        \draw [->, very thick] (4.5,2) -- (5.5,2) ;

        \draw [thick] (8.5,1.5) --(8,0) --(6,2) --(8,4) --(8.5,1.5) --(6,2) ;
        \draw [dashed] (8,4) -- (8,0) ;

        \node[below] at (8,0) {$\vec{v}_3 + \vec{v}_1$};
        \node[above] at (6,2) {$\vec{v}_1$};
        \node[above] at (8,4) {$\vec{v}_3$};

        \draw [thick] (11.25,0) -- (9.25,2) -- (11.25,4) -- (13.25,2) -- (11.25,0) -- (11.25,4);
        \draw [dashed] (9.25,2) -- (13.25,2) ;

        \node[above] at (9.25,2) {$\vec{v}_1$};
        \node[above] at (11.25,4) {$\vec{v}_3$};
        \node[below] at (11.25,0) {$\vec{v}_3 + \vec{v}_1$};

        \draw[thick] (14,0)--(14,4)--(16,2)--(14,0)--(14.5,1.5)--(14,4);
        \draw[thick] (14.5,1.5) -- (16,2);

        \node[above] at (14,4) {$\vec{v}_3$};
        \node[below] at (14,0) {$\vec{v}_3 + \vec{v}_1$};
        \node[below right] at (14.5,1.5) {$\vec{v}_2$};
        
    \end{tikzpicture}
    \caption{The sphere in the proof of Lemma \ref{killsuspendhelper1} in the case $n=3$, along with the result of breaking it into $n=3$ spheres.}
    \label{fig:killsuspend1}
\end{figure}

\begin{lemma}
\label{killsuspendhelper1}
Let $\F$ be a finite field and $\U \subset \F^\times$ a multiplicative subgroup such that $-1 \in \U$, and the quotient $\F^\times / \U$ is isomorphic to the cyclic group $\{\pm 1\}$ of order 2.

For some $n \geq 2$, let $\rho: \BAO_n^\U(\F) \to \BDA_n^\U(\F)$ be the retraction constructed in \ref{BDAretract}. Let $\vec{v}_1, \dots, \vec{v}_n$ be a basis for $\F^n$ such that $\det(\vec{v}_1, \dots, \vec{v}_n) \in \F^\times \setminus \U$. Pick some $c_1, \dots, c_{n-1} \in \F^\times \setminus \U$ and some $\vec{v} \in \langle \vec{v}_1, \dots, \vec{v}_{n-1} \rangle \subset \F^n$. Then the maps

    \begin{align*}
        \rho \circ \left( \llb [\vec{v}_n] \rrb * \overline{\llb [\vec{v}_1], \dots, [\vec{v}_{n-1}], [c_1\vec{v}_1 + \dots + c_{n-1}\vec{v}_{n-1}] \rrb} \right) : \Delta^0 * \del \Delta^{n-1} \to \BD_n^\U(\F)
    \end{align*}
    and
    \begin{align*}
        \rho \circ \left( \llb [\vec{v}_n + \vec{v}] \rrb * \overline{\llb [\vec{v}_1], \dots, [\vec{v}_{n-1}], [c_1\vec{v}_1 + \dots + c_{n-1}\vec{v}_{n-1}] \rrb} \right) : \Delta^0 * \del \Delta^{n-1} \to \BD_n^\U(\F)
    \end{align*}
    are homotopic in $\BDA_n^\U(\F)$ through maps fixing $\del(\Delta^0 * \del \Delta^{n-1}) = \del \Delta^{n-1}$.
\end{lemma}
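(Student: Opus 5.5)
The plan is to follow the pattern of the proof of Lemma \ref{choicefreedom}: reduce to an elementary move, glue the two maps into a single sphere, and cut that sphere into pieces that are handled by earlier lemmas. Set $\vec{w}_0 = c_1\vec{v}_1 + \dots + c_{n-1}\vec{v}_{n-1}$.

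\emph{Reduction to an elementary move.} Write $\vec{v} = a_1\vec{v}_1 + \dots + a_{n-1}\vec{v}_{n-1}$. By Lemma \ref{lem:additivegeneration}, each $a_i$ is a finite sum of elements of $\U$. Homotopy rel boundary is transitive, so it suffices to treat $\vec{v} = u\vec{v}_i$ with $u \in \U$. We then replace the representative $\vec{v}_n$ by $\vec{v}_n + (\text{partial sum})$, which is allowed because the determinant and the hypotheses are unchanged under adding multiples of $\vec{v}_1,\dots,\vec{v}_{n-1}$ to $\vec{v}_n$. After reordering, we may take $i = 1$. Replacing the representative $\vec{v}_1$ by $u\vec{v}_1$ and $c_1$ by $c_1u^{-1}$, which still lies in $\F^\times\setminus\U$, we may further assume $\vec{v} = \vec{v}_1$.

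\emph{Gluing and decomposition.} The two cones agree on $\del\Delta^{n-1}$, so the claim is equivalent to the nullhomotopy in $\BDA_n^\U(\F)$ of the sphere
\[
\rho \circ \left( \overline{\llb [\vec{v}_n], [\vec{v}_n + \vec{v}_1] \rrb} * \overline{\llb [\vec{v}_1], \dots, [\vec{v}_{n-1}], [\vec{w}_0] \rrb} \right).
\]
As in Figure \ref{fig:choicefreedom}, in $\pi_{n-1}$ this sphere is the sum of the $n$ spheres obtained by deleting one vertex of $\{[\vec{v}_1],\dots,[\vec{v}_{n-1}],[\vec{w}_0]\}$ and adjoining $[\vec{v}_n]$ and $[\vec{v}_n+\vec{v}_1]$. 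We treat the three kinds of pieces separately.

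\begin{itemize}
\item \emph{Deleting $[\vec{w}_0]$.} The piece is $\rho\circ\overline{\llb [\vec{v}_n+\vec{v}_1],[\vec{v}_n],[\vec{v}_1],\dots,[\vec{v}_{n-1}] \rrb}$. This is nullhomotopic by Lemma \ref{killtrianglehelper} after relabelling the basis.
\item \emph{Deleting $[\vec{v}_i]$ with $2 \le i \le n-1$.} Every relevant basis contains $\vec{w}_0$ in place of $\vec{v}_i$, and has determinant $c_i\det(\vec{v}_1,\dots,\vec{v}_n) \in \U$. So $\rho$ acts trivially, and the piece is the boundary of an additive simplex with core $\{[\vec{v}_n+\vec{v}_1],[\vec{v}_n],[\vec{v}_1]\}$. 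Hence it is nullhomotopic.
\item \emph{Deleting $[\vec{v}_1]$.} The piece is $\rho\circ\overline{\llb [\vec{v}_2],\dots,[\vec{v}_{n-1}],[\vec{w}_0],[\vec{v}_n],[\vec{v}_n+\vec{v}_1] \rrb}$.
\end{itemize}

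\emph{The main obstacle.} I expect the last piece to be the hard step. In the basis $\vec{v}_2,\dots,\vec{v}_{n-1},\vec{w}_0,\vec{v}_n$, the vector $\vec{v}_n+\vec{v}_1$ has coefficients $-c_i/c_1 \in \U$, $c_1^{-1} \notin \U$, and $1$. So Lemma \ref{choicehelper} does not apply directly. Moreover, the face $\{[\vec{v}_2],\dots,[\vec{v}_{n-1}],[\vec{v}_n],[\vec{v}_n+\vec{v}_1]\}$ has non-unit determinant and is subdivided by $\rho$.

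I would first use the freedom granted by Lemma \ref{choicefreedom} to choose the subdivision vector for that face. The aim, imitating both cases of Lemma \ref{killtrianglehelper}, is a vector such as $\vec{w}_0 + c\,\vec{v}_n$ with $c \in \F^\times\setminus\U$, so that after subdivision every resulting simplex shares an additive core with $[\vec{w}_0]$, or the sphere degenerates. I would split into the cases $2\in\U$ and $2\notin\U$ if needed, using Lemma \ref{lem:nonsquaresconnected} to adjust coefficients by units. Any remaining $n$-vertex boundary sphere of the shape $\overline{\llb [\vec{x}_1],\dots,[\vec{x}_n],[\sum u_i\vec{x}_i] \rrb}$ with unit-determinant basis would be killed by Lemma \ref{choicehelper}.
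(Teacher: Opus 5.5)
Your reduction to $\vec{v}=\vec{v}_1$, the gluing into one sphere, the decomposition into $n$ pieces, and your handling of the first $n-1$ pieces all match the paper. For $2\le i\le n-1$ you observe directly that the piece bounds an additive simplex of $\BDA_n^\U(\F)$, where the paper cites Lemma \ref{killtrianglehelper}; both are fine. The gap is the last piece, obtained by deleting $[\vec{v}_1]$. You correctly flag it as the crux, and you correctly identify the subdivided face $F=\{[\vec{v}_n],[\vec{v}_n+\vec{v}_1],[\vec{v}_2],\dots,[\vec{v}_{n-1}]\}$. But what follows is a list of things to try: a vertex $\vec{w}_0+c\vec{v}_n$ with $c\in\F^\times\setminus\U$, a case split on $2$, and Lemma \ref{choicehelper}. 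None of these is shown to close the argument, so that piece is not proved.

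The missing idea is that the subdivision vertex for $F$ can be taken to be $[\vec{w}_0]$ itself. In the basis of $F$ we have
\[
\vec{w}_0=(-c_1)\vec{v}_n+c_1(\vec{v}_n+\vec{v}_1)+c_2\vec{v}_2+\dots+c_{n-1}\vec{v}_{n-1}.
\]
Every coefficient lies in $\F^\times\setminus\U$ (using $-1\in\U$), which is exactly the condition for a legal subdivision vertex, and Lemma \ref{choicefreedom} lets you switch to this choice. This is your family with $c=0$, the one value your restriction $c\in\F^\times\setminus\U$ excluded.

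With this choice the cone point of $F$ coincides with the opposite vertex $[\vec{w}_0]$. Each of the other $n$ faces has determinant $\pm c_j\det(\vec{v}_1,\dots,\vec{v}_n)\in\U$ for some $j$, so none of them is subdivided. The piece therefore becomes the degenerate sphere
\[
\overline{\llb [\vec{w}_0],[\vec{w}_0]\rrb}*\overline{\llb [\vec{v}_n],[\vec{v}_n+\vec{v}_1],[\vec{v}_2],\dots,[\vec{v}_{n-1}]\rrb}.
\]
It factors through the cone $[\vec{w}_0]*\partial F$, which lies in $\BDA_n^\U(\F)$, and is hence nullhomotopic. No case split on $2$ and no appeal to Lemma \ref{choicehelper} is needed.
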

\begin{proof}
    By Lemma \ref{lem:additivegeneration}, since $\F$ is additively generated by the units $\U$, it is enough to deal with the case when $\vec{v} = u \vec{v}_i$ for some $u \in \U$ and $1 \leq i \leq n-1$; the general case can then be deduced via a sequence of these homotopies. We may assume WLOG that $i=1$. Also, by multiplying $v_1$ with a unit in $\U$ if necessary, we can further assume that $u=1$, so that $\vec{v} = \vec{v}_1$.

    Our goal is equivalent to showing that the map
    \begin{align*}
        \rho \circ \left( \overline{\llb [\vec{v}_n], [\vec{v}_n + \vec{v}_1] \rrb} *  \overline{\llb [\vec{v}_1], \dots, [\vec{v}_{n-1}], [c_1\vec{v}_1 + \dots + c_{n-1}\vec{v}_{n-1}] \rrb}\right) : \del \Delta^1 * \del \Delta^{n-1} \to \BD_n^\U(\F)
    \end{align*}
    is nullhomotopic in $\BDA_n^\U(\F)$. As shown in Figure \ref{fig:killsuspend1}, as an element of $\pi_{n-1}(\BDA_n^\U(\F))$ this is the sum of $n$ spheres.
    The first of these is the sphere
    \begin{align*}
        \rho \circ \overline{\llb [\vec{v_n}], [\vec{v}_n + \vec{v}_1], [\vec{v}_1], \dots, [\vec{v}_{n-1}] \rrb} : \del \Delta^n \to \BDA_n^\U(\F)
    \end{align*}
    which is nullhomotopic by Lemma \ref{killtrianglehelper}.

    The other $(n-1)$ spheres are
    \begin{align*}
        \rho \circ \overline{\llb [\vec{v_n}], [\vec{v}_n + \vec{v}_1], [\vec{v}_1], \dots, [\widehat{\vec{v}}_i], \dots, [\vec{v}_{n-1}], [c_1\vec{v}_1+ \dots + c_{n-1}\vec{v}_{n-1}] \rrb} : \del \Delta^n \to \BDA_n^\U(\F)
    \end{align*}
    for $1 \leq i \leq n-1$.

    For $2 \leq i \leq n-1$, these spheres are nullhomotopic by Lemma \ref{killtrianglehelper}.

    For $i=1$, observe that exactly one face of this sphere gets subdivided by $\rho$, namely
    \begin{align*}
        \llb [\vec{v}_n], [\vec{v}_n + \vec{v}_1], [\vec{v}_2], \dots, [\vec{v}_{n-1}] \rrb
    \end{align*}
    By Lemma \ref{choicefreedom}, we can arbitrarily choose the vertex we use in this subdivision. Thus we can use $[\vec{w}]$ with 
    \begin{align*}
        \vec{w} & = -c_1\vec{v}_n + c_1(\vec{v}_n + \vec{v}_1) + c_2\vec{v}_2 + \dots + c_{n-1}\vec{v}_{n-1} \\
        & = c_1\vec{v}_1 + c_2\vec{v}_2 + \dots + c_{n-1}\vec{v}_{n-1}.
    \end{align*}
    Then our sphere in this case is the degenerate sphere
    \begin{align*}
        \overline{\llb [c_1\vec{v}_1 + c_2\vec{v}_2 + \dots + c_{n-1}\vec{v}_{n-1}], [c_1\vec{v}_1 + c_2\vec{v}_2 + \dots + c_{n-1}\vec{v}_{n-1}] \rrb} * \overline{\llb [\vec{v}_n], [\vec{v}_n + \vec{v}_1], [\vec{v}_2], \dots, [\vec{v}_{n-1}] \rrb},
    \end{align*}
    which is trivially nullhomotopic.
\end{proof}

\begin{figure}
\centering
\begin{tikzpicture}[scale=0.8]
    \draw [thick] (2,0) -- (0,2) -- (2,4) -- (4,2) -- (2,0) ;
    \draw[thick] (2,0) -- (2,1) -- (0,2) -- (2,3) -- (2,4) ;
    \draw[thick] (2,3) -- (4,2) -- (2,1) ;
    \draw[thick] (0,2) -- (4,2) ;

    \node[left] at (0,2) {$\vec{v}_1$};
    \node[below] at (2,0) {$\vec{e}_3 + \vec{u}$};
    \node[right] at (4,2) {$\vec{v}_2$};
    \node[above] at (2,4) {$\vec{e}_3$};

    \draw [->, very thick] (5,2) -- (7,2);

    \draw [thick] (10,0) -- (8,2) -- (10,4) -- (12,2) -- (10,0) ;
    \draw[thick] (8,2) -- (12,2);
    \draw[thick] (10,0) -- (10,4);

    \node[left] at (8,2) {$\vec{v}_1$};
    \node[below] at (10,0) {$\vec{e}_3 + \vec{u}$};
    \node[right] at (12,2) {$\vec{v}_2$};
    \node[above] at (10,4) {$\vec{e}_3$};

    \draw [->] (10.1,2.1) -- (12,3.5);
    \node[above] at (12,3.5) {$c_1\vec{v}_1 + c_2\vec{v}_2$};
\end{tikzpicture}
\caption{The homotopy we are trying to achieve in Lemma \ref{killsuspendhelper2} for $n=3$.}
\label{fig:killsuspend2}
\end{figure}

\begin{lemma}
\label{killsuspendhelper2}
Let $\F$ be a finite field and $\U \subset \F^\times$ a multiplicative subgroup such that $-1 \in \U$, and the quotient $\F^\times / \U$ is isomorphic to the cyclic group $\{\pm 1\}$ of order 2.

For some $n \geq 3$, let $\rho: \BAO_n^\U(\F) \to \BDA_n^\U(\F)$ and $\rho' : \BAO_{n-1}^\U(\F) \to \BDA_{n-1}^\U(\F)$ be the retractions constructed in \ref{BDAretract}. Let $\{ \vec{e}_1, \dots, \vec{e}_n\}$ be the standard basis for $\F^n$, let $\{\vec{v}_1, \dots \vec{v}_{n-1}\}$ be some basis for $\F^{n-1} \subset \F^n$, and let $\vec{v} \in \F^{n-1}$. Then the maps
    \begin{align*}
        \rho \circ \left( \overline{\llb [\vec{e}_n], [\vec{e}_n + \vec{v}] \rrb} * \llb [\vec{v}_1], \dots, [\vec{v}_{n-1}] \rrb \right) : \del \Delta^1 * \Delta^{n-1} \to \BDA_n^\U(\F)
    \end{align*}
    and 
    \begin{align*}
        \overline{\llb [\vec{e}_n], [\vec{e}_n + \vec{v}] \rrb} * \left( \rho' \circ  \llb [\vec{v}_1], \dots, [\vec{v}_{n-1}] \rrb \right) : \del \Delta^1 * \Delta^{n-1} \to \BDA_n^\U(\F)
    \end{align*}
    are homotopic through maps fixing $\del(\del \Delta^1 * \Delta^{n-1}) = \del \Delta^1 * \del \Delta^{n-1}$.
\end{lemma}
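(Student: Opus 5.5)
The plan is to compare the two maps one top-dimensional simplex of $\del \Delta^1 * \Delta^{n-1}$ at a time, using the freedom from Lemma \ref{choicefreedom} to choose subdivision vertices compatibly.

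\textbf{Reduction.} First I would dispose of the trivial case. The top simplices of $\del \Delta^1 * \Delta^{n-1}$ map to $\{[\vec{e}_n],[\vec{v}_1],\dots,[\vec{v}_{n-1}]\}$ and $\{[\vec{e}_n+\vec{v}],[\vec{v}_1],\dots,[\vec{v}_{n-1}]\}$. Both have determinant $\pm d$, where $d=\det(\vec{v}_1,\dots,\vec{v}_{n-1})$ is computed in $\F^{n-1}$. If $d\in\U$, neither $\rho$ nor $\rho'$ subdivides anything, so the two maps coincide. Assume therefore $d\in\F^\times\setminus\U$. Then $\rho'$ subdivides $\Delta^{n-1}$ by a vertex $[\vec{w}]$ with $\vec{w}=c_1\vec{v}_1+\dots+c_{n-1}\vec{v}_{n-1}$ and $c_i\in\F^\times\setminus\U$. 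The second map is then the join of $\overline{\llb [\vec{e}_n],[\vec{e}_n+\vec{v}]\rrb}$ with $\llb[\vec{w}]\rrb*\overline{\llb[\vec{v}_1],\dots,[\vec{v}_{n-1}]\rrb}$. All of its simplices are determinant-$\U$, since replacing one $\vec{v}_i$ by $\vec{w}$ multiplies the determinant by $c_i$.

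\textbf{Choosing the subdivisions for $\rho$.} By Lemma \ref{choicefreedom}, up to homotopy rel the relevant boundaries, I may pick the subdividing vertices for $\rho$ freely. I would take
\begin{align*}
\vec{w}_1=\vec{w}+c\,\vec{e}_n \quad\text{and}\quad \vec{w}_2=\vec{w}+c(\vec{e}_n+\vec{v}),
\end{align*}
with one fixed $c\in\F^\times\setminus\U$. This choice is legitimate: in the bases $\{\vec{v}_1,\dots,\vec{v}_{n-1},\vec{e}_n\}$ and $\{\vec{v}_1,\dots,\vec{v}_{n-1},\vec{e}_n+\vec{v}\}$ the coefficients are $c_1,\dots,c_{n-1},c$, all non-units. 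Thus the first map is $\llb[\vec{w}_1]\rrb*\del\{[\vec{e}_n],[\vec{v}_i]\}$ on one half and $\llb[\vec{w}_2]\rrb*\del\{[\vec{e}_n+\vec{v}],[\vec{v}_i]\}$ on the other. The two halves share the middle face $\Delta^{n-1}$, which is left unsubdivided.

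\textbf{Comparing the maps.} Gluing the two maps along $\del\Delta^1*\del\Delta^{n-1}$ gives an $(n-1)$-sphere in $\BDA_n^\U(\F)$, and I must show it is nullhomotopic. I would insert the disk $\llb[\vec{w}]\rrb*\overline{\llb[\vec{v}_1],\dots,[\vec{v}_{n-1}]\rrb}$ along the middle face, as in Figures \ref{fig:choicefreedom} and \ref{fig:killsuspend1}. This splits the glued sphere into an $\vec{e}_n$-part and an $(\vec{e}_n+\vec{v})$-part, which by symmetry can be treated in the same way.

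Consider the $\vec{e}_n$-part. It is built from $\vec{w}_1$, $\vec{e}_n$, $\vec{w}$ and the $\vec{v}_i$. I would break it further into spheres of the form $\overline{\llb\cdots\rrb}$ on $n+1$ vertices, each obtained by dropping one $\vec{v}_i$. For these the key observation is $[\vec{w}_1]=[c^{-1}\vec{w}+\vec{e}_n]$, where $c^{-1}\vec{w}=\sum (c^{-1}c_i)\vec{v}_i$ has coefficients in $\U$.
\begin{itemize}
\item For the spheres with a $\vec{v}_i$ replaced by $\vec{w}$, I expect them to be boundaries of additive simplices, or degenerate spheres in which the same vertex appears twice, as in the proofs of Lemmas \ref{killtrianglehelper} and \ref{killsuspendhelper1}.
\item The remaining sphere is of the type handled by Lemma \ref{choicehelper}, after rescaling the basis by units. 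Alternatively, if the determinant condition obstructs this, I would apply Lemma \ref{killsuspendhelper1} directly, which moves $\vec{e}_n$ to $\vec{e}_n+\vec{v}$ in exactly this configuration.
\end{itemize}

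\textbf{Main obstacle.} I expect the hardest step to be the bookkeeping in the comparison. One has to verify that every simplex appearing in the decomposition is either determinant-$\U$ or genuinely additive with coefficients in $\U$. In particular, the single coefficient $c\notin\U$ on $\vec{e}_n$ must be absorbed by rescaling, so that the pieces land in the cases of Lemma \ref{choicehelper} or \ref{killtrianglehelper}. If that fails, I would first try the other natural choice of subdivision vertex, with $c$ replaced by an element differing from it by a unit (Lemma \ref{lem:nonsquaresconnected}), to make the offending face additive.
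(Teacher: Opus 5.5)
Your reduction, your determinant check for $\rho'$, and your verification that $\vec w_1,\vec w_2$ are admissible subdivision vertices are all fine. You are also right that it suffices to show the glued $(n-1)$-sphere is null. The missing idea is that this glued sphere is \emph{literally} $\rho\circ\bigl(\overline{\llb [\vec e_n],[\vec e_n+\vec v]\rrb} * \overline{\llb [\vec v_1],\dots,[\vec v_{n-1}],[\vec w]\rrb}\bigr)$. The simplices of this suspension that contain $[\vec w]$ have determinant $\pm c_i d\in\U$, so $\rho$ fixes them, and together they form the second map. The two simplices that contain all the $[\vec v_i]$ are exactly the ones $\rho$ subdivides, and they form the first map. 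Apply Lemma \ref{killsuspendhelper1} with $\vec v_n=\vec e_n$; its hypotheses hold, since $\det(\vec v_1,\dots,\vec v_{n-1},\vec e_n)=d\notin\U$ and $\vec v\in\langle \vec v_1,\dots,\vec v_{n-1}\rangle$. It says precisely that this sphere is nullhomotopic. That is the paper's whole proof, and it needs no special choice of subdivision vertices. You mention Lemma \ref{killsuspendhelper1} only as a fallback for a sub-sphere of the $\vec e_n$-part, where it does not apply, because that part never involves $[\vec e_n+\vec v]$.

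The main line you propose instead breaks down at three points.

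\textbf{The two parts are disks, not spheres.} Cutting along $[\vec w]*\overline{\llb [\vec v_1],\dots,[\vec v_{n-1}]\rrb}$ and the middle face produces two disks bounded by the equator $\overline{\llb [\vec v_1],\dots,[\vec v_{n-1}],[\vec w]\rrb}$. They cannot be killed separately ``by symmetry'' without an explicit filling of that equator in $\BDA_n^\U(\F)$. The obvious filling, the simplex $\{[\vec v_1],\dots,[\vec v_{n-1}],[\vec w]\}$, is not in the complex because the $c_i\notin\U$. Comparing the two disks rel the equator is exactly what Lemma \ref{killsuspendhelper1} does.

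\textbf{Your ``key observation'' is false.} We have $c^{-1}\vec w+\vec e_n=c^{-1}\vec w_1$ with $c^{-1}\notin\U$, so it is a different $\U$-vector from $[\vec w_1]$. In fact it has all coefficients in $\U$, so it is not even an admissible subdivision vertex.

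\textbf{The small spheres do not lie in $\BDA_n^\U(\F)$.} The face $\{[\vec w_1],[\vec w]\}\cup\{[\vec v_j]\}_{j\neq i}$ is a basis of determinant $\pm c\,c_i\,d$. This is a product of three elements of $\F^\times\setminus\U$, hence not in $\U$, and the face is not additive. Likewise your ``remaining sphere'' contains the face $\{[\vec e_n],[\vec v_1],\dots,[\vec v_{n-1}]\}$, of determinant $d\notin\U$. Lemma \ref{choicehelper} needs a determinant-$\U$ basis and coefficients in $\U$, and rescaling by units cannot produce either, so that lemma is out of reach here.
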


\begin{proof}
    If $\det(\vec{v}_1, \dots, \vec{v}_{n-1}) \in \U$, then both of these maps are equal, so assume that this determinant is in $\F^\times \setminus \U$. Note that $\rho'$ subdivides the image of $\llb [\vec{v}_1], \dots, [\vec{v}_{n-1}] \rrb$, and we can choose the vector for this subdivision to be $[\vec{w}]$ with
    \begin{align*}
        \vec{w} = c_1\vec{v}_1 + \dots + c_{n-1}\vec{v}_{n-1}
    \end{align*}
    for some $c_1, \dots, c_{n-1} \in \F^\times \setminus \U$. See Figure \ref{fig:killsuspend2} for the homotopy we are trying to achieve.

    The key observation now is that this is really a disguised form of Lemma \ref{killsuspendhelper1}.
    Indeed, the images of 
    \begin{align*}
        \rho \circ \left( \overline{\llb [\vec{e}_n], [\vec{e}_n + \vec{v}] \rrb} * \llb [\vec{v}_1], \dots, [\vec{v}_{n-1}] \rrb \right) : \del \Delta^1 * \Delta^{n-1} \to \BDA_n^\U(\F)
    \end{align*}
    and 
    \begin{align*}
        & \overline{\llb [\vec{e}_n], [\vec{e}_n + \vec{v}] \rrb} * \left( \rho' \circ  \llb [\vec{v}_1], \dots, [\vec{v}_{n-1}] \rrb \right) \\
        & = \hspace*{-1.5em} \sum_{1 \leq i \leq n-1} \hspace*{-1em} \overline{\llb [\vec{e}_n], [\vec{e}_n + \vec{v}] \rrb} *  \llb [\vec{v}_1], \dots, [\widehat{\vec{v}}_i], \dots, [\vec{v}_{n-1}], [c_1\vec{v}_1 + \dots + c_{n-1}\vec{v}_{n-1}] \rrb : \del \Delta^1 * \Delta^{n-1} \to \BDA_n^\U(\F)
    \end{align*}

    together are exactly the image of the map

    \begin{align*}
        \rho \circ \left( \overline{\llb [\vec{e}_n], [\vec{e}_n + \vec{v}] \rrb} * \overline{\llb [\vec{v}_1], \dots, [\vec{v}_{n-1}], [c_1\vec{v}_1 + \dots + c_{n-1}\vec{v}_{n-1}] \rrb} \right) : \del \Delta^1 * \Delta^{n-1} \to \BDA_n^\U(\F)
    \end{align*}
    which Lemma \ref{killsuspendhelper1} says is nullhomotopic.
\end{proof}

\begin{proof}[Proof of Lemma \ref{killsuspend}]
    For some $n \geq 3$, let $g: S^{n-1} \to \B_n^\U(\F)$ be an initial D-suspend map, let $\rho: \B_n^\U(\F) \to \BD_n^\U(\F)$ be the retraction given by Lemma \ref{BDretract}, and let $\iota : \BD_n^\U(\F) \hookrightarrow \BDA_n^\U(\F)$ be the inclusion. Assume that $\pi_{n-2}(\BDA_{n-1}^\U(\F)) = 0$. We must prove that $\iota \circ \rho \circ g : S^{n-1} \to \BDA_n^\U(\F)$ is nullhomotopic.

    By definition, the initial D-suspend map $g$ is of the following form. Let $\vec{v} \in \F^n$ be a non-zero vector, let $W \subset \F^n$ be an $(n-1)$-dimensional subspace such that $\F^n = \langle \vec{v} \rangle \oplus W$, and let $\vec{w} \in W$ be non-zero. Let $f: S^{n-2} \to \B^\U(W)$ be a simplicial map for some triangulation of $S^{n-2}$. We then have
    \begin{align*}
        g = \overline{\llb [\vec{v}], [\vec{v} + \vec{w}] \rrb} * f : \del \Delta^1 * S^{n-2} \cong S^{n-1} \to \B_n^\U(\F)
    \end{align*}
    Let $\{\vec{e}_1, \dots, \vec{e}_n\}$ be the standard basis for $\F^n$. Changing coordinates with an element of $\SL_n(\F)$, we can assume that $\vec{v} = \vec{e}_n$ and $W = \F^{n-1}$. Our map $f$ thus lands in $\B_{n-1}^\U(\F)$, and our goal is to prove the map
    \begin{align*}
        \iota \circ \rho \circ \left( \overline{\llb [\vec{e}_n], [\vec{e}_n + \vec{w}] \rrb} * f \right) : \del \Delta^1 * S^{n-2} \to \BDA_n^\U(\F)
    \end{align*}
    is nullhomotopic.

    Let $\rho': \BAO_{n-1}^\U(\F) \to \BDA_{n-1}^\U(\F)$ be the retraction constructed in \ref{BDAretract}. Applying Lemma \ref{killsuspendhelper2} to $S^0 * \sigma$ for each $(n-2)$-simplex $\sigma$ of $S^{n-2}$, we see that our map is homotopic to 
    \begin{equation}\label{rho'}
        \overline{\llb [\vec{e}_n], [\vec{e}_n + \vec{w}] \rrb} * (\rho' \circ f) : \del \Delta^1 * S^{n-2} \to \BDA_n^\U(\F).
    \end{equation}

Since $\BDA_{n-1}^\U(\F)$ is $(n-2)$-connected, the map $\rho' \circ f$ is nullhomotopic in $\BDA_{n-1}^\U(\F)$. Since the suspension of $\BDA_{n-1}^\U(\F)$ with suspension points $[\vec{e}_n]$ and $[\vec{e}_n + \vec{w}]$ lies in $\BDA_n^\U(\F)$, we conclude that \eqref{rho'} is nullhomotopic, as desired.
\end{proof}

\subsubsection{Simple connectivity of $\BDA_2^\U(\F)$}
\label{sec:BDA2}

In this section we prove Lemma \ref{lem:BDA2conn}, i.e. we show that $\BDA_2^\U(\F)$ is 1-connected. To do this we will use a simple special case of a technique that is sometimes called discrete Morse theory (see, for example, \cite{bestvina2008pl} or \cite[Theorem 3.9]{miller2025rank}).

\begin{lemma}
    \label{lem:discretemorse}
    Let $X$ be a simplicial complex and $Y$ a full subcomplex of $X$ such that there is exactly one vertex $v$ of $X$ that is not in $Y$. Suppose $Y$ is simply connected and that $\Link_X(v)$ is a path-connected subcomplex of $Y$. Then $X$ is simply connected.
\end{lemma}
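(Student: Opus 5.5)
The plan is to use van Kampen's theorem, together with the fact that a simply connected space stays simply connected after gluing in a cone along a path-connected subspace. Let $S = \Link_X(v) \subseteq Y$. Because $Y$ is a full subcomplex and $v$ is the only vertex of $X$ not in $Y$, every simplex of $X$ not contained in $Y$ contains $v$. Such a simplex has the form $v * \tau$, where $\tau$ is either empty or a simplex of $S$. Hence the closed star of $v$ is the simplicial cone $v * S$, and we get the decomposition
\begin{align*}
    X = Y \cup_S (v * S), \qquad Y \cap (v * S) = S .
\end{align*}
The intersection is exactly $S$ because any simplex of $v * S$ not containing $v$ is a simplex of $S$, and every simplex of $S$ lies in $Y$ by hypothesis.

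Next we thicken the two pieces so that van Kampen applies to open sets. Take $A$ to be $Y$ together with an open collar of $S$ inside $v * S$; concretely, $A = X \setminus \{v\}$, which deformation retracts onto $Y$ by radially pushing each simplex $v * \tau$ away from $v$ onto $\tau$. Take $B$ to be the open star of $v$ union a small open neighborhood of $S$; this is homotopy equivalent to the cone $v * S$, which is contractible. The intersection $A \cap B$ deformation retracts onto $S$, which is path-connected by hypothesis and nonempty. (Nonemptiness holds provided $X$ is connected to $v$ at all; the hypothesis "path-connected" includes nonemptiness in the standard convention.)

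We then apply van Kampen with a basepoint in $S$. This gives $\pi_1(X) \cong \pi_1(Y) *_{\pi_1(S)} \pi_1(v*S)$. Since $\pi_1(Y) = 1$ and $\pi_1(v * S) = 1$, the amalgam is trivial. $X$ is path-connected because $Y$ is path-connected and $v$ is joined by an edge to a vertex of $S \subseteq Y$. So $X$ is simply connected.

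The only point needing care is verifying that $Y \cap \operatorname{St}(v) = S$, which uses fullness of $Y$ together with $\Link_X(v) \subseteq Y$, and keeping the nonempty-link caveat in view. The rest is the standard van Kampen argument.
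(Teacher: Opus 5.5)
Your argument is correct. The paper gives no proof of this lemma; it cites it as the simplest case of discrete (PL) Morse theory. The content of that citation is that $X$ is obtained from $Y$ by attaching the cone $v * \Link_X(v)$ along $\Link_X(v)$. Hence $X$ inherits $k$-connectivity from $Y$ whenever the link is $(k-1)$-connected.

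You prove exactly this special case by hand. Fullness of $Y$ gives $X = Y \cup_S (v*S)$ with $Y \cap (v*S) = S$. Van Kampen with $A = X \setminus \{v\} \simeq Y$ then finishes the argument.

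Your route is self-contained and elementary. You are right to flag that ``path-connected'' must include nonemptiness of the link; otherwise $v$ is isolated and $X$ is disconnected. The Morse-theoretic citation instead buys the statement in all degrees, which is what makes it reusable, e.g.\ in Bestvina-style proofs of Solomon--Tits.

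One simplification: take $B$ to be just the open star of $v$. This is an open cone on $|S|$, hence contractible, and it meets $A$ in a copy of $|S| \times (0,1)$. The extra open neighborhood of $S$ is then unnecessary, which also spares you justifying that it deformation retracts onto $S$.
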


We will also need the following lemma.

\begin{lemma}
\label{lem:filledtriangles}
    Let $\F$ be a field, and $\U$ a (multiplicative) subgroup of $\F^\times$. All triangles in $\BDA_2^\U(\F)$ are filled in, i.e. if $[\vec{v}_0], [\vec{v}_1], [\vec{v}_2]$ are vertices of $\BDA_2^\U(\F)$ such that $\{[\vec{v}_i], [\vec{v}_j]\}$ is an edge of $\BDA_2^\U(\F)$ for every $0 \leq i < j \leq 2$, then $\{[\vec{v}_0], [\vec{v}_1], [\vec{v}_2]\}$ is a 2-simplex in $\BDA_2^\U(\F)$.
\end{lemma}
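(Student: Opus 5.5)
The plan is a direct case analysis in $\F^2$ based on how many of the three vertices are linearly independent. First I would record the structure of $\BDA_2^\U(\F)$. Vertices are $\U$-vectors $[\vec{v}]$ with $\vec{v}\neq 0$; the partial-basis condition on a single vector is automatic over a field. Edges are pairs $\{[\vec{v}_i],[\vec{v}_j]\}$ with $\det(\vec{v}_i,\vec{v}_j)\in\U$, since a two-element partial basis of $\F^2$ is a full basis and so must be determinant-$\U$. In particular, the two vectors of an edge are linearly independent. A $2$-simplex cannot be a partial basis, so it must be an augmented determinant-$\U$ partial basis: a set $\{[\vec{w}_0],[\vec{w}_1],[\vec{w}_2]\}$ with $\det(\vec{w}_1,\vec{w}_2)\in\U$ and $\vec{w}_0=\lambda\vec{w}_1+\nu\vec{w}_2$ for some $\lambda,\nu\in\U$.

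Now suppose the three pairs $\{[\vec{v}_i],[\vec{v}_j]\}$ are edges. Then $\vec{v}_1,\vec{v}_2$ form a basis of $\F^2$ with $\det(\vec{v}_1,\vec{v}_2)=d\in\U$. Write $\vec{v}_0=\lambda\vec{v}_1+\nu\vec{v}_2$ with $\lambda,\nu\in\F$. Next, compute the other two determinants:
\begin{align*}
\det(\vec{v}_0,\vec{v}_2)=\lambda d, \qquad \det(\vec{v}_1,\vec{v}_0)=\nu d .
\end{align*}
Both of these lie in $\U$ because the corresponding pairs are edges. Since $d\in\U$ and $\U$ is a multiplicative group, this gives $\lambda,\nu\in\U$; in particular both are nonzero. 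Hence $\vec{v}_0=\lambda\vec{v}_1+\nu\vec{v}_2$ with $\lambda,\nu\in\U$ and $\{[\vec{v}_1],[\vec{v}_2]\}$ a determinant-$\U$ basis. By definition, $\{[\vec{v}_0],[\vec{v}_1],[\vec{v}_2]\}$ is therefore an augmented determinant-$\U$ partial $\U$-basis, i.e.\ a $2$-simplex of $\BDA_2^\U(\F)$.

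I expect no real obstacle here. The two points needing care are, first, checking that the edges of $\BDA_2^\U(\F)$ are exactly the determinant-$\U$ pairs: the definitions allow augmented simplices, but an edge has only two vertices and so cannot contain an additive core. Second, the sign conventions for the determinants do not matter, because $-1\in\U$ is assumed throughout the section. The argument is also independent of representatives, since rescaling any $\vec{v}_i$ by an element of $\U$ changes $\lambda$, $\nu$ and $d$ only by factors in $\U$.
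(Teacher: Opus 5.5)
Your proposal is correct and follows the paper's proof: both write $\vec{v}_0=\lambda\vec{v}_1+\nu\vec{v}_2$ and read off $\lambda,\nu\in\U$ from the two edge conditions involving $\vec{v}_0$. The only difference is cosmetic. The paper first normalizes $\det(\vec{v}_1,\vec{v}_2)=1$ and moves to $\vec{e}_1,\vec{e}_2$ via $\SL_2(\F)$, whereas you compute $\det(\vec{v}_0,\vec{v}_2)=\lambda d$ and $\det(\vec{v}_1,\vec{v}_0)=\nu d$ directly.
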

\begin{proof}
    The condition that $\{[\vec{v}_1], [\vec{v}_2]\}$ is an edge in $\BDA_2^\U(F)$ implies that 
    \begin{align*}
      \det(\vec{v}_1, \vec{v}_2) \in \U  
    \end{align*}
    Since we are working with $\U$-vectors, we can multiply $\vec{v}_1$ by a unit in $\U$ if necessary and assume that
    \begin{align*}
        \det(\vec{v}_1, \vec{v}_2) = 1
    \end{align*}
    Note that $\SL_2(\F)$ acts via simplicial isomorphisms on $\BDA_2^\U(\F)$. Thus we can act by an element of $\SL_2(\F)$ that sends $\vec{v}_i \mapsto \vec{e}_i$ for $i=1,2$.
    So now assume that $\vec{v}_1 = \vec{e}_1$ and $\vec{v}_2 = \vec{e}_2$.
    We have 
    \begin{align*}
        \vec{v}_0 = \lambda\vec{v}_1 + \nu\vec{v}_2
    \end{align*} 
    for some unique $\lambda, \nu \in \F$.

    Since $\{[\vec{v}_0], [\vec{v}_i]\}$ is an edge of $\BDA_2^\U(\F)$ for $i=1,2$, we have
    \begin{align*}
        \det(\vec{v}_0, \vec{v}_i) \in \U  \hspace{0.25cm} (1 \leq i \leq 2)
    \end{align*}

 which is equivalent to saying that 
    \begin{align*}
        \lambda, \nu \in \U
    \end{align*}
    Thus $\{[\vec{v}_0], [\vec{v}_1], [\vec{v}_2]\}$ is an additive 2-simplex of $\BDA_2^\U(\F)$.
\end{proof}

We are now ready to prove Lemma \ref{lem:BDA2conn}.

\begin{proof}[Proof of Lemma \ref{lem:BDA2conn}]
    Recall the statement that we want to prove: Let $\F$ be a finite field and $\U \subset \F^\times$ a multiplicative subgroup such that $-1 \in \U$ and the quotient group $\F^\times/\U \cong \{\pm 1\}$ has order 2. Then $\BDA^\U_2(\F)$ is simply connected.

    Pick an arbitrary element $c \in \F^\times \setminus \U$. Since $\F^\times / \U \cong \{\pm 1\}$, for any $\U$-vector of $\F^2$ we can pick a unique representative $\lambda \vec{e}_1 + \nu \vec{e}_2$ where one of the following three cases hold:
    \begin{itemize}
        \item $\nu = 0$ and $\lambda = 1$ or $c$
        \item $\nu = 1$ and $\lambda \in \F$
        \item $\nu = c$ and $\lambda \in \F$
    \end{itemize}

    We will build up the complex $\BDA_2^\U(\F)$ in a finite number of steps, at each stage using Lemma \ref{lem:discretemorse} to conclude that the simplicial complex is simply connected.

    \begin{enumerate}
        \item Let $Y_0'$ be the star of the vertex $[\vec{e}_1]$ in $\BDA_2^\U(\F)$, i.e. $Y_0'$ consists of all those simplices $\sigma$ so that $\sigma \cup \{[\vec{e}_1]\}$ is also a simplex of $\BDA_2^\U(\F)$. Note that $Y_0'$ is contractible.

        Let us note what the link of $[\vec{e}_1]$ looks like. Any $\U$-vector $[\vec{v}]$ that is joined to $[\vec{e}_1]$ by an edge can be uniquely written as $[\lambda \vec{e}_1 + \vec{e}_2]$ for $\lambda \in \F$. Two such vertices $[\lambda\vec{e}_1+\vec{e}_2]$ and $[\lambda'\vec{e}_1+\vec{e}_2]$ are joined by an edge if and only if $\lambda - \lambda' \in \U$. In this case, Lemma \ref{lem:filledtriangles} says that the edge between $[\lambda\vec{e}_1+\vec{e}_2]$ and $[\lambda'\vec{e}_1+\vec{e}_2]$ also lies in the link of $[\vec{e}_1]$. Thus the link of $[\vec{e}_1]$ is the graph whose vertices correspond to elements of $\F$ and two vertices are joined by an edge if they differ by an element of $\U$.

        \item Let $Y_0$ be the full subcomplex of $\BDA_2^\U(\F)$ spanned by the vertices $[\vec{e}_1]$ and $[\lambda\vec{e}_1 + \vec{e}_2]$ for $\lambda \in \F$. The only simplices that are in $Y_0$ but not in $Y_0'$ (if any) are 2-simplices whose boundary edges are all present in $\Link_{\BDA_2^\U(\F)}([\vec{e}_1])$. We deduce using Lemma \ref{lem:discretemorse} that $Y_0$ is simply connected: For each simplex $\{v_0, v_1, v_2\}$ in $Y_0$ that is not in $Y_0'$, subdivide it with a new vertex $v$ and apply Lemma \ref{lem:discretemorse} with $Y=Y_0$. The link of $v$ precisely consists of the three edges $\{v_i, v_j\}$ for $0 \leq i < j \leq 2\}$, and is path-connected.

        \item Let $Y_1$ be the full subcomplex of $\BDA_2^\U(\F)$ spanned by the vertices of $Y_0$ as well as the vertex $[c\vec{e}_2]$. A vertex $[\lambda\vec{e}_1 + \vec{e}_2]$ is in the link of $[c\vec{e}_2]$ if and only if $c\lambda \in \U$, i.e. if $\lambda \in \F^\times \setminus \U$. As discussed in Step 1, two such vertices are connected by an edge if and only if they differ by an element of $\U$, in which case the edge would also lie in $\Link_{Y_1}([c\vec{e}_2])$ by Lemma \ref{lem:filledtriangles}. So $\Link_{Y_1}([c\vec{e}_2])$ is a graph whose vertices correspond to elements of $\F^\times \setminus \U$, and where two vertices are joined by an edge if they differ by an element of $\U$. This graph is path-connected by Lemma \ref{lem:nonsquaresconnected}, and so $Y_1$ is simply connected by Lemma \ref{lem:discretemorse}.

        \item We will now add in the remaining vertices of the form $[a\vec{e}_1+c\vec{e}_2]$ with $a \in \F$ one at a time. That is, suppose we already have a full subcomplex $Y_{k-1}$ of $\BDA_2^\U(\F)$ that is simply connected. Then pick a vertex $[a\vec{e}_1+c\vec{e}_2]$ that is not in $Y_{k-1}$ (and if no such vertex exists, then move on to Step 5). Let $Y_k$ be the full subcomplex of $\BDA_2^\U(\F)$ on the vertices of $Y_{k-1}$ and the vertex $[a\vec{e}_1 + c\vec{e}_2]$. We want to show that $\Link_{Y_k}([a\vec{e}_1+c\vec{e}_2])$ is path-connected. Let us first look at the full subcomplex of $\Link_{Y_k}([a\vec{e}_1+c\vec{e}_2])$ spanned by vertices of the form $[\lambda\vec{e}_1 + \vec{e}_2]$. Call this complex $Z_k$.

        The condition that 
        \begin{align*}
            \det(\lambda\vec{e}_1+\vec{e}_2, a\vec{e}_1+c\vec{e}_2) \in \U
        \end{align*}
        holds if and only if $c\lambda - a \in \U$, or equivalently,
        \begin{align*}
            \lambda \in \{c^{-1}u + c^{-1}a | u \in \U\}
        \end{align*}
        Since $\F^\times /\U \cong \{\pm 1\}$ and $c \in \F^\times \setminus \U$, we have $c^{-1}u \in \F^\times \setminus \U$. Thus the above condition is equivalent to saying that
        \begin{align*}
            \lambda \in \{ \nu + c^{-1}a | \nu \in \F^\times \setminus \U\}
        \end{align*}
As argued in Step 1, two such vertices $[\lambda\vec{e}_1 + \vec{e}_2]$ are joined by an edge if and only if the corresponding $\lambda$'s differ by an element of $\U$ (in which case the edge also lies in $\Link_{Y_k}([a\vec{e}_1+c\vec{e}_2])$ by Lemma \ref{lem:filledtriangles}). Thus the full subcomplex $Z_k$ of $\Link_{Y_k}([a\vec{e}_1+c\vec{e}_2])$ spanned by vertices of the form $[\lambda\vec{e}_1 + \vec{e}_2]$ is a graph with vertex set $\{ \lambda = \nu + c^{-1}a | \nu \in \F^\times \setminus \U\}$, with two vertices joined by an edge if they differ by an element of $\U$. This is naturally isomorphic to $\Link_{Y_1}([c\vec{e}_2])$ (the isomorphism being just to subtract $c^{-1}a$ from $\lambda$), and so is path-connected.

Now, $\Link_{Y_k}([a\vec{e}_1+c\vec{e}_2])$ may also contain vertices of the form $[b\vec{e}_1+c\vec{e}_2]$. We'll show that for any such vertex, we can pick a vertex of $Z_k$ that is connected to it by an edge in $\Link_{Y_k}([a\vec{e}_1+c\vec{e}_2])$. Since $Z_k$ is path-connected, this will imply that $\Link_{Y_k}([a\vec{e}_1+c\vec{e}_2])$ is path-connected.

The condition that $[a\vec{e}_1+c\vec{e}_2]$ and $[b\vec{e}_1+c\vec{e}_2]$ are connected by an edge translates to saying that 
\begin{align*}
    \det(a\vec{e}_1+c\vec{e}_2, b\vec{e}_1+c\vec{e}_2) \in \U,
\end{align*}
or equivalently, that
\begin{align*}
    c(a-b) \in \U
\end{align*}
      Since $\F^\times/\U \cong \{\pm 1\}$ and $c \in \F^\times \setminus \U$, this holds of and only if $a-b \in \F^\times \setminus \U$.

      By Lemma \ref{lem:sumofunits}, we can write 
      \begin{align*}
          a-b = u_1 + u_2
      \end{align*}
      for some $u_1, u_2 \in \U$.

      Now pick $\lambda$ so that 
      \begin{align*}
          a - c\lambda = u_1
      \end{align*}

      This implies that
      \begin{align*}
          b - c\lambda = -u_2
      \end{align*}

      The above two equations imply that the vertex $[\lambda\vec{e}_1 + \vec{e}_2]$ is connected to both $[a\vec{e}_1 + c\vec{e}_2]$ and $[b\vec{e}_1 + c\vec{e}_2]$ by an edge in $\BDA_2^\U(\F)$, which is exactly what we wanted.

      Keep repeating this step until we've added all vertices of the form $[a\vec{e}_1 + c\vec{e}_2]$. Call this subcomplex $Y$. Thus $Y$ is the full subcomplex of $\BDA_2^\U(\F)$ on all vertices except for $[c\vec{e}_1]$, and is simply connected.

      \item Finally, we add in the vertex $[c\vec{e}_1]$. Its link consists of all vertices of the form $[a\vec{e}_1 + c\vec{e}_2]$ for $a \in \F$. As argued in Step 4, two such vertices are joined by an edge if and only if they differ by an element of $\F^\times \setminus \U$, in which case the edge would also like in the link of $[c\vec{e}_1]$ by Lemma \ref{lem:filledtriangles}. Thus the link of $[c\vec{e}_1]$ is a regular graph on $|\F|$ vertices, with each vertex having degree $|\F^\times \setminus \U| = \frac{|\F|-1}{2}$. Using an argument similar to the one in the proof of Lemma \ref{lem:additivegeneration}, we conclude that this is a connected graph.
    \end{enumerate}
This concludes the proof that $\BDA_2^\U(\F)$ is simply connected.
\end{proof}

\section{The Lee--Szczarba conjecture}
\label{sec:ssargument}

\subsection{Preliminaries}

\subsubsection{The map-of-posets spectral sequence}

In this section, we review some results about the homology of posets with coefficients in a functor and about the map-of-posets spectral sequence. Much of this is due to Quillen \cite{quillen2006higher} and Charney \cite{charney1987generalization}. We begin with some definitions concerning posets.

\begin{definition}
    Let $\X$ be a poset and $x \in \X$. We say $x$ has height $m$ and write $\het(x) = m$ if $m$ is the largest integer such that there exists a chain
    \begin{align*}
        x_0 < \dots < x_m = x \hspace{0.25cm} \text{ with } x_i \in \X \text{ for all } 0 \leq i \leq m.
    \end{align*}
    We write $\X_{>x}$ for the subposet of $\X$ consisting of elements strictly larger than $x$. For a map $f: \Y \to \X$ of posets, we write $f_{\leq x}$ for the subposet of $\Y$ consisting of all $y \in \Y$ such that $f(y) \leq x$.
\end{definition}

A poset $\X$ can be viewed as a category with an object for each $x \in \X$ and a single morphism from $x\in \X$ to $x' \in \X$ precisely when $x \leq x'$. Letting $\Ab$ denote the category of abelian groups, we now recall the definition of the homology of a poset with coefficients in a functor $F: \X \to \Ab$.

\begin{definition}
    Let $\X$ be a poset and let $F: \X \to \Ab$ be a functor. Define $C_{\bullet}(\X; F)$ to be the following chain complex. For $k \geq 0$, we set
    \begin{align*}
        C_k(\X; F) = \bigoplus_{x_0 < \dots < x_k} F(x_0)
    \end{align*}
    where the $x_i$ are understood to be elements of $\X$. The differential $\del: C_k(\X; F) \to C_{k-1}(\X; F)$ is defined to be $\sum_{i=0}^k (-1)^i\del_i$, where $\del_i: C_k(\X; F) \to C_{k-1}(\X; F)$ is as follows: 
    \begin{itemize}
        \item For $0 < i \leq k$, the map $\del_i$ takes the $x_0 < \dots < x_k$ summand of $C_k(\X; F)$ to the $x_0 < \dots < \widehat{x_i}< \dots < x_k$ summand of $C_{k-1}(\X; F)$ via the identity map $F(x_0) \to F(x_0)$.
        \item The map $\del_0$ takes the $x_0 < \dots < x_k$ summand of $C_k(\X; F)$ to the $x_1 < \dots < x_k$ summand of $C_{k-1}(\X; F)$ via the induced map $F(x_0) \to F(x_1)$.
    \end{itemize}
    We define $\h_k(\X; F) = \h_k(C_{\bullet}(\X; F)$.
\end{definition}

\begin{ex}
    Fix a poset $\X$. For a commutative ring $R$, we write $\underline{R}$ for the constant functor on $\X$ with value $R$. We then have $\h_k(\X; \underline{R}) \cong \h_k(|\X|; R)$, where $|\X|$ is the geometric realization of $\X$. We will often simply write this as $\h_k(\X; R)$.
\end{ex}

These homology groups can be very difficult to calculate. One case where there is an easy formula is where the functor $F$ is \emph{supported on elements of height $m$}, i.e. where $F(x) = 0$ for all $x \in \X$ with $\het(x) \neq m$. We then have the following lemma. See e.g. \cite{miller2020non} for a proof.

\begin{lemma}
\label{posetheightm}
    Let $\X$ be a poset and let $F: \X \to \Ab$ be a functor that is supported on elements of height $m$. Then
    \begin{align*}
        \h_k(\X; F) \cong \bigoplus_{\het(x)=m} \widetilde{\h}_{k-1}(|\X_{>x}|; F(x))
    \end{align*}
    where the coefficients $F(x)$ are simply regarded as an abelian group.
\end{lemma}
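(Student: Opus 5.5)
The plan is to compute the chain complex $C_\bullet(\X;F)$ directly. Because $F$ vanishes off height $m$, only chains $x_0<\dots<x_k$ with $\het(x_0)=m$ contribute. So
\begin{align*}
C_k(\X;F)=\bigoplus_{\het(x)=m}\ \bigoplus_{x<x_1<\dots<x_k} F(x).
\end{align*}
For a fixed $x$ of height $m$, the inner sum is the group of $(k-1)$-chains of the order complex of $\X_{>x}$ tensored with $F(x)$, where $k=0$ corresponds to the empty chain, i.e.\ the augmentation degree. This gives a degreewise identification
\begin{align*}
C_k(\X;F)\cong\bigoplus_{\het(x)=m}\widetilde{C}_{k-1}(|\X_{>x}|)\otimes F(x).
\end{align*}
Here $\widetilde{C}_\bullet$ is the augmented simplicial chain complex of the order complex, with $\widetilde C_{-1}=\Z$.

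Next I will check that the differential respects this decomposition and agrees with the augmented simplicial differential, up to sign.
\begin{itemize}
\item For $i>0$, the face $\partial_i$ deletes $x_i$. This keeps $x_0=x$ fixed and acts by the identity on $F(x)$, so it is exactly the $(i-1)$-st face map in $\widetilde C_\bullet(|\X_{>x}|)$, with sign $(-1)^i=-(-1)^{i-1}$.
\item For $k=1$, the chain $x<x_1$ maps under $\partial_1$ to the empty chain, which is the augmentation. This is correct since $\widetilde{C}_0\to\widetilde C_{-1}$ sends each vertex to $1$.
\item The face $\partial_0$ sends the summand to $x_1<\dots<x_k$ via $F(x)\to F(x_1)$. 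But $x_1>x$ forces $\het(x_1)>m$, so $F(x_1)=0$ and this term vanishes. This is the key point where the height-$m$ support is used.
\end{itemize}

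Thus $C_\bullet(\X;F)$ is isomorphic, as a chain complex, to $\bigoplus_x \widetilde C_{\bullet-1}(|\X_{>x}|)\otimes F(x)$ with differential negated. Negating the differential does not change homology, so
\begin{align*}
\h_k(\X;F)\cong\bigoplus_{\het(x)=m}\h_{k-1}\bigl(\widetilde C_\bullet(|\X_{>x}|)\otimes F(x)\bigr)=\bigoplus_{\het(x)=m}\widetilde\h_{k-1}(|\X_{>x}|;F(x)).
\end{align*}

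There is no real obstacle. The only care needed is the bookkeeping at $k=0$, including the case where $\X_{>x}$ is empty: then $\widetilde\h_{-1}(\emptyset;F(x))=F(x)$, which matches the $x$-summand of $C_0$ being a cycle that is not a boundary. Finally, the tensor product with the free complex $\widetilde C_\bullet$ computes homology with coefficients in $F(x)$ by definition.
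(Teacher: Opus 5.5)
Your proof is correct. The paper gives no proof of its own; it simply cites \cite{miller2020non}. Your chain-level argument is the standard one: $\partial_0$ vanishes because $F(x_1)=0$ for $x_1>x$, so $C_\bullet(\X;F)$ splits as $\bigoplus_{\het(x)=m}\widetilde{C}_{\bullet-1}(|\X_{>x}|)\otimes F(x)$, with the differential negated. You also track the sign and the augmentation degree correctly, including the case $\X_{>x}=\emptyset$.
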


The reason we will need to consider the homology of a poset with coefficients in a functor is due to the following spectral sequence. See \cite{miller2020non} for a proof.

\begin{theorem}[Map-of-posets spectral sequence]
\label{posetss}
    Let $f: \Y \to \X$ be a map of posets. Then there is a homologically graded spectral sequence
    \begin{align*}
        \E_{kh}^2 = \h_k(\X; [x \mapsto \h_h(f_{\leq x})]) \implies \h_{k+h}(\Y).
    \end{align*}
\end{theorem}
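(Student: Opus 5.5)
The plan is to build the spectral sequence directly from a filtration of $\Y$ by the map $f$, following the standard construction used by Quillen and Charney. Let $\Delta \Y$ and $\Delta \X$ denote the order complexes. I would form the double complex
\begin{align*}
    C_{k,h} = \bigoplus_{x_0 < \dots < x_k} C_h(f_{\leq x_0}),
\end{align*}
where $C_h(f_{\leq x_0})$ is the simplicial chain group of the order complex of the subposet $f_{\leq x_0} \subset \Y$. The first differential is $\sum (-1)^i \del_i$ as in the definition of $C_\bullet(\X;F)$, using the inclusions $f_{\leq x_0} \subset f_{\leq x_1}$ for $\del_0$. The second differential is the internal simplicial boundary. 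These commute up to the usual sign, since the inclusions are chain maps.

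The double complex has two spectral sequences. Taking homology in the $h$-direction first gives $\E^1_{kh} = \bigoplus_{x_0<\dots<x_k} \h_h(f_{\leq x_0}) = C_k(\X; [x\mapsto \h_h(f_{\leq x})])$. Its $d^1$ is exactly the differential of the poset chain complex with coefficients in this functor. Hence $\E^2_{kh} = \h_k(\X;[x \mapsto \h_h(f_{\leq x})])$, which is the asserted $\E^2$ page.

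It remains to identify the total homology with $\h_*(\Y)$, and this is the main step. For that I would filter the other way. Fix an $h$-simplex $y_0<\dots<y_h$ of $\Delta\Y$. It lies in $f_{\leq x_0}$ exactly when $f(y_h) \leq x_0$. Its contribution, as $k$ varies, is therefore the chain complex $C_\bullet(\X_{\geq f(y_h)}; \Z)$, up to regrading. The poset $\X_{\geq f(y_h)}$ has a minimum, so it is contractible and its homology is $\Z$ in degree $0$. Thus taking $k$-homology first yields $\E^1 = C_h(\Y)$ concentrated in $k=0$, with $d^1$ the simplicial boundary. This second spectral sequence collapses, so the total homology equals $\h_*(\Y)$.

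The convergence claim then follows from the first spectral sequence. Both spectral sequences live in the first quadrant, since $k,h\ge 0$, so they converge to the same total homology. The only points needing care are signs and checking that the $k$-direction complex for a fixed simplex is literally the chain complex of $\X_{\ge f(y_h)}$ with constant coefficients.
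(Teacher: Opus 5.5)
Your proposal is correct and is the standard construction of this spectral sequence. Taking vertical homology first in $\bigoplus_{x_0<\dots<x_k} C_h(f_{\leq x_0})$ gives the stated $\E^2$ page. Taking horizontal homology first collapses onto $C_\bullet(\Y)$, because each $h$-simplex $y_0<\dots<y_h$ contributes the chains of the cone $\X_{\geq f(y_h)}$.

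The paper gives no proof of its own and cites \cite{miller2020non} for this result, so there is no in-paper argument to compare against.
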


\begin{remark}
    Following Miller--Patzt--Putman \cite{miller2021top}, we use the nonstandard indices $(k,h)$ since for us $p$ is always a prime, so we cannot use $(p,q)$, and $n$ is always a dimension, so we cannot use $(n,m)$.
\end{remark}

\subsubsection{The quotient of the Tits building by a congruence subgroup}

We will need a concrete description of the quotient of the Tits building by a congruence subgroup.

\begin{definition}[Tits building]
    Let $R$ be a commutative ring and let $V$ be a finite-rank free $R$=module. Define $\T(V)$ to be the poset of proper nonzero direct summands of $V$, ordered by inclusion. Also, let $\cT(V)$ denote the geometric realization of $\T(V)$, viewed as a simplicial complex. For $n \geq 1$, we will write $\T_n(R) = \T(R^n)$ and $\cT_n(R) = \cT(R^n)$.
\end{definition}

\begin{theorem}[Solomon--Tits, \cite{solomon1969steinberg}, \cite{brown1989buildings}] \label{thm:soltit}
    Let $K$ be a field. The Tits building $\mc{T}_n(K)$ is homotopy equivalent to a wedge of spheres of dimension $n-2$. The reduced homology group $\St_n(K) = \widetilde{\h}_{n-2}(\mc{T}_n(K))$ is generated by certain cycles called \emph{apartment classes}, which are indexed by bases of $K^n$.
    For a basis $\vec{v}_1, \vec{v}_2, \dots, \vec{v}_{n}$ of $K^n$, the corresponding \emph{apartment class} is the homology cycle determined by the chain
\begin{align*}
    \sum_{\sigma \in \Sigma_{n}} (-1)^{\sgn(\sigma)} [\lin{\vec{v}_{\sigma(1)}}, \lin{\vec{v}_{\sigma(1)}, \vec{v}_{\sigma(2)}}, \dots, \lin{\vec{v}_{\sigma(1)}, \dots, \vec{v}_{\sigma(n-1)}}]
\end{align*}
where $\Sigma_n$ denotes the Weyl group of permutations of the set $\{ 1, 2, \dots, n\}$.
\end{theorem}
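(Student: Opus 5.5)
The plan is the classical Quillen-style induction on $n$: build $\cT(V)$ from a contractible subcomplex by attaching cones, and read off the apartment generators from the same decomposition. For $n=1$, $\T(K^1)$ is empty, which is $S^{-1}$, and $\St_1(K)=\widetilde{\h}_{-1}(\emptyset)=\Z$ is generated by the empty apartment. For $n=2$, $\cT_2(K)$ is the discrete set of lines, hence a wedge of $0$-spheres. In that case $\widetilde{\h}_0$ is spanned by the differences $\lin{\vec{v}_1}-\lin{\vec{v}_2}$, which are exactly the apartment classes. So assume $n\geq 3$ and that the statement holds for all smaller dimensions.

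Fix a line $L\subset V=K^n$. Let $\X_0\subset\T(V)$ be the subposet of those $W$ with $W+L\neq V$. On $\X_0$ we have $W\leq W+L\geq L$, with $W+L\in\X_0$, so Quillen's poset homotopy lemma gives the conical contraction: $\mathrm{id}\simeq(W\mapsto W+L)\simeq\mathrm{const}_L$. Hence $|\X_0|$ is contractible.

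The elements not in $\X_0$ are the $W$ with $W+L=V$. These are precisely the hyperplanes $W$ not containing $L$, i.e. the complements of $L$. They are maximal in $\T(V)$ and pairwise incomparable. Everything strictly below such a $W$ is a proper nonzero subspace $U\subsetneq W$, and any such $U$ lies in $\X_0$ because $\dim(U+L)<n$. So $\Link(W)\cap\X_0=\T(W)\cong\T_{n-1}(K)$. Therefore $\cT(V)$ is obtained from the contractible $|\X_0|$ by attaching, for each complement $W$, the cone $W*\cT(W)$ along $\cT(W)$. By induction each $\cT(W)$ is a wedge of $(n-3)$-spheres. Collapsing $|\X_0|$ then shows $\cT(V)$ is homotopy equivalent to a wedge, over all complements $W$, of suspensions of wedges of $(n-3)$-spheres, which is a wedge of $(n-2)$-spheres.

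For the apartment statement, the long exact sequence of the pair $(|\X|,|\X_0|)$, together with excision, gives an isomorphism
\begin{align*}
    \St_n(K)\xrightarrow{\cong}\bigoplus_{W}\widetilde{\h}_{n-2}(W*\cT(W),\cT(W))\cong\bigoplus_{W}\St(W).
\end{align*}
In degree $n-2$ this is the connecting map onto $\bigoplus_W \widetilde{\h}_{n-3}(\cT(W))$; it is injective because $\widetilde{\h}_{n-2}(|\X_0|)=0$, and surjective because $\widetilde{\h}_{n-3}(|\X_0|)=0$.

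Now take a basis $\vec{w}_1,\dots,\vec{w}_{n-1}$ of a complement $W$ and a vector $\vec{l}$ spanning $L$, and split the apartment chain of the basis $(\vec{w}_1,\dots,\vec{w}_{n-1},\vec{l})$ according to where $\vec{l}$ appears in the permutation. The terms with $\vec{l}$ last are exactly the flags of the $W$-apartment followed by $W$, i.e. (up to sign) the cone on the $W$-apartment. Every other term contains $L$ in its flag, hence lies in $\X_0$. So this apartment class maps to $\pm$ the $W$-apartment class in the $W$-summand and to $0$ in every other summand. By induction the $W$-apartments generate $\St(W)$, so these apartments generate $\St_n(K)$.

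The main obstacle is this last identification: one must verify carefully that the boundary of the non-$\X_0$ part of the chain is precisely the apartment chain of $W$, with consistent signs, so that the relative class is correct. The homotopy-type argument itself is routine once the conical contraction of $\X_0$ is in place.
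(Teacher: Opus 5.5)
Your argument is correct. The paper gives no proof of Theorem~\ref{thm:soltit}; it simply cites Solomon and Brown, so the comparison is with those sources.

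\textbf{How the routes differ.} You run Quillen's conical-contraction induction for the type-$A$ building, which is close in spirit to Bestvina's discrete-Morse proof \cite{bestvina2008pl} that the paper mentions in its computational section. The cited proofs are building-theoretic. Brown fixes a chamber $C$ of an arbitrary spherical building and shows that removing the open chambers opposite $C$ leaves a contractible complex. He concludes that the building is a wedge of spheres indexed by the chambers opposite $C$, with the apartments through $C$ giving a basis of the top homology. Solomon works with finite groups with a $BN$-pair and identifies the top homology with the Steinberg representation.

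\textbf{What each route buys.} The building-theoretic arguments cover spherical buildings of every type, at the cost of the building axioms, opposition and retractions. Yours needs only the linear algebra of subspaces and Quillen's order-homotopy lemma. It also produces the explicit recursion $\St_n(K)\cong\bigoplus_W \St(W)$ over complements $W$ of $L$. Iterating it, with the auxiliary line in each complement chosen compatibly with a fixed full flag, recovers exactly the basis of apartments through that flag, of rank $q^{\binom{n}{2}}$ over $\F_q$.

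\textbf{Two small remarks.} First, the step you call the main obstacle is immediate. The terms with $\vec{l}$ last form exactly the cone $A_W * W$ on the $W$-apartment chain $A_W$. Every other term has its top hyperplane containing $L$, so it lies in $\X_0$. Then $\partial(A_W * W)=\pm A_W$ because $A_W$ is a reduced cycle. Moreover, $C_*(\X)/C_*(\X_0)$ already splits as $\bigoplus_W C_*(W*\cT(W))/C_*(\cT(W))$ at the chain level, so having infinitely many complements causes no difficulty.

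Second, you use without comment that the apartment chain is a cycle at all. This holds because deleting the $j$-th term from the flags indexed by $\sigma$ and by $\sigma\circ(j,j+1)$ gives the same face with opposite signs, so the boundary terms cancel in pairs.
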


\begin{lemma}
    Suppose $R$ is a PID, and $K$ its field of fractions. For $n \geq 1$, we have $\T_n(R) \cong \T_n(K)$.
\end{lemma}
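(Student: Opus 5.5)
The plan is to exhibit an explicit, inclusion-preserving bijection between the two posets, with the summand map going one way and a saturation map going the other. Define
\begin{align*}
    \Phi : \T_n(R) \to \T_n(K), \qquad \Phi(W) = K \otimes_R W = KW \subset K^n,
\end{align*}
and
\begin{align*}
    \Psi : \T_n(K) \to \T_n(R), \qquad \Psi(V) = V \cap R^n .
\end{align*}
Both maps clearly preserve inclusions. What remains is to check that each lands in the right poset and that they are mutually inverse.

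First, $\Phi$ is well defined. A proper nonzero direct summand $W$ of $R^n$ is free of some rank $k$ with $0<k<n$, because $R$ is a PID. A complement $W'$ is free of rank $n-k$. Tensoring with $K$ gives $K^n = KW \oplus KW'$ with $\dim_K KW = k$, so $KW$ is a proper nonzero subspace.

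Second, $\Psi$ is well defined, and this is the main point. Let $V \subset K^n$ be a proper nonzero subspace, and set $W = V \cap R^n$. The quotient $R^n/W$ embeds in $K^n/V$, which is a $K$-vector space and hence torsion-free. So $R^n/W$ is a finitely generated torsion-free $R$-module. Over a PID such a module is free, so the projection $R^n \to R^n/W$ splits, and $W$ is a direct summand. Moreover $KW = V$: any $v \in V$ can be multiplied by a common denominator $d \in R\setminus\{0\}$ so that $dv \in R^n \cap V = W$. In particular $W$ is nonzero, and $W$ is proper because $\operatorname{rank} W = \dim V < n$. This already shows $\Phi\circ\Psi = \mathrm{id}$.

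Third, $\Psi\circ\Phi = \mathrm{id}$. Let $W$ be a summand and write $R^n = W \oplus W'$. Take $x \in KW \cap R^n$ and decompose $x = w + w'$ with $w \in W$, $w' \in W'$. Then $w' = x - w$ lies in $KW$. But it also lies in $W' \subset KW'$, and $KW \cap KW' = 0$. Hence $w' = 0$, so $x \in W$. Therefore $KW \cap R^n = W$.

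The only step requiring care is the second one, the saturation argument. It uses exactly that finitely generated torsion-free modules over a PID are free. Everything else is formal, and the resulting order isomorphism of posets also identifies the geometric realizations.
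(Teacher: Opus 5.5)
Your proposal is correct and uses the same bijection as the paper, namely $V \mapsto V \cap R^n$ and $W \mapsto W \otimes_R K$. The paper asserts this bijection in one line; you additionally verify that $V \cap R^n$ is a direct summand (a finitely generated torsion-free module over a PID is free), that $KW \cap R^n = W$ (using a complement of $W$), and that both maps preserve inclusion.
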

\begin{proof}
    This follows from the fact that there is a bijection between subspaces of $K^n$ and direct summands of $R^n$ taking a subspace $V \subset K^n$ to $V \cap R^n$ and a direct summand $W \subset R^n$ to $W \otimes_R K$.
\end{proof}

We now decorate our buildings by appropriate versions of orientations.

\begin{definition}[$\U$-orientation]
    Let $R$ be a commutative ring and let $\U$ be a multiplicative subgroup of $R^\times$ such that $-1 \in \U$. Let $V$ be a rank-$d$ free $R$-module, so $\bigwedge^dV \cong R^1$. An \emph{orientation} on $V$ is an element $\w \in \bigwedge^dV$ that generates it as an $R$-module. The group $R^\times$ of units acts transitively on the set of orientations on $V$ by scalar multiplication. A $\U$-orientation on $V$ is a $\U$-vector $[\w]$ such that $\w$ is an orientation on $V$.
\end{definition}

\begin{remark}
    Note that if $\U = R^\times$, then there is a unique $\U$-orientation on V.
\end{remark}

\begin{definition}[$\U$-oriented Tits building]
    Let $R$ be a commutative ring, $\U$ a multiplicative subgroup of $R^\times$ with $-1 \in \U$, and let $V$ be a finite-rank free $R$-module. Define $\T^\U(V)$ to be the poset of proper nonzero direct summands of $V$ equipped with a $\U$-orientation. The poset structure is simply inclusion; the $\U$-orientations play no role in it. Let $\cT^\U(V)$ denote the geometric realization of $\T^\U(V)$, viewed as a simplicial complex. Finally, let $\T_n^\U(R) = \T^\U(R^n)$ and $\cT_n^\U(R) = \cT^\U(R^n)$.
    We call $\cT_n^\U(R)$ the \emph{$\U$-oriented Tits building}.
\end{definition}

\begin{remark}
    We have $\cT_n^\U(R) = \cT_n(R)$ if and only if $R^\times = \U$.
\end{remark}

The following is the analogue of the Solomon--Tits theorem \cite{solomon1969steinberg, brown1989buildings} for $\U$-oriented Tits buildings.

\begin{lemma}
\label{solomontitsU}
    For any field $\F$ and a multiplicative subgroup $\U$ of $\F^\times$ such that $-1 \in \U$, the complex $\cT_n^\U(\F)$ is Cohen-Macaulay of dimension $(n-2)$, for all $n \geq 1$.
\end{lemma}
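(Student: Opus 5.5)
The plan is to compare $\cT_n^\U(\F)$ with the ordinary Tits building $\cT_n(\F)$ via the forgetful map $\pi: \T_n^\U(\F) \to \T_n(\F)$ that discards the orientation. We will first establish dimension and connectivity, and then handle links.

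\textbf{Dimension and links.} A chain $V_0 \subsetneq \dots \subsetneq V_k$ of proper nonzero subspaces has $k \le n-2$, and top-dimensional chains exist. So $\cT_n^\U(\F)$ is $(n-2)$-dimensional. For the link of a $k$-simplex $\{(V_0,[\w_0]) < \dots < (V_k,[\w_k])\}$, note that its elements are oriented subspaces either strictly below $V_0$, strictly between consecutive $V_i$, or strictly above $V_k$. Hence the link is a join
\[
\cT^\U(V_0) * \cT^\U(V_1/V_0)' * \dots * \cT^\U(\F^n/V_k)',
\]
where the primed factors are posets of intermediate subspaces $V_i \subsetneq W \subsetneq V_{i+1}$ with $\U$-orientations on $W$. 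These are isomorphic to $\T^\U(V_{i+1}/V_i)$: fixing an orientation on $V_i$, a $\U$-orientation on $W$ corresponds bijectively to one on $W/V_i$ via $\w \mapsto \w_i \wedge \tilde\w$. A join of complexes that are Cohen--Macaulay of dimensions $d_j$ is Cohen--Macaulay of dimension $\sum (d_j+1)-1$. Therefore, by induction on $n$, it suffices to prove that $\cT_n^\U(\F)$ is $(n-3)$-connected for every $n$. The cases $n \le 2$ are trivial, since $S^{-1}$ is the empty set and only non-emptiness or nothing is required.

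\textbf{Connectivity.} We apply Quillen's fiber lemma, or the map-of-posets spectral sequence (Theorem \ref{posetss}), to $\pi$. For $V \in \T_n(\F)$, the fiber $\pi_{\ge V}$, consisting of oriented $W$ with $W \supseteq V$, deformation-retracts, but the orientation on $V$ itself is not determined, which creates a problem. A cleaner route is to use $\pi_{\le V}$. This consists of all oriented subspaces $(W,[\w])$ with $W \subseteq V$, including the $|\F^\times/\U|$ choices of orientation on $V$ itself. It is the union of cones over $\cT^\U(V)$, one with each such orientation as apex. It is therefore a suspension-type wedge of $|\F^\times/\U|$ cones on $\cT^\U(V)$, which by induction is $(\dim V-3)$-connected. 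This union is $(\dim V-2)$-connected, i.e. $(\het(V)-1)$-connected. The standard fiber criterion states that if $\X=\T_n(\F)$ is $(n-3)$-connected (Solomon--Tits) and each $\pi_{\le V}$ is $(\het(V)-1)$-connected, while each $\X_{>V}$ is $(n-3-\het(V)-1)$-connected (again Solomon--Tits for $\T(\F^n/V)$), then $\Y$ is $(n-3)$-connected. This is exactly the setup of Theorem \ref{posetss} together with Lemma \ref{posetheightm}. In the spectral sequence, $E^2_{kh}$ with $h>0$ vanishes in the needed range, because $\tilde\h_h(\pi_{\le V})=0$ for $h\le \het(V)-1$. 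The remaining terms are governed by $\tilde\h_*(\X_{>V})$, which vanish below degree $n-2-\het(V)-1$. Hence $E^2_{kh}=0$ for $k+h\le n-3$, except for $E^2_{00}$ contributing $\h_0$. Hurewicz, combined with simple connectivity for $n\ge 4$ obtained from the same argument on $\pi_1$ via Quillen's Theorem A variant, upgrades the homological vanishing to homotopical connectivity.

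\textbf{Main obstacle.} The delicate step is the bookkeeping in the spectral sequence. Specifically, one must verify that the height-graded vanishing ranges really kill everything in total degree $\le n-3$, and pass from homology to homotopy. To make this rigorous, I would instead cite the Quillen fiber lemma in the form ``$f:\Y\to\X$ with $\X$ $d$-connected, $f_{\le x}$ $(\het x -1)$-connected, and $\X_{>x}$ $(d-\het x-1)$-connected, implies $\Y$ $d$-connected.'' I would then check the three hypotheses as above.
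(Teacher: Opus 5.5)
Your proof is correct, but it takes a genuinely different route from the paper.

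\textbf{The paper's route.} Forgetting orientations makes $\cT_n^\U(\F)$ a complete join complex over $\cT_n(\F)$ in the sense of Hatcher--Wahl: the preimage of a chain $V_0\subsetneq\cdots\subsetneq V_k$ is the join of the discrete sets of $\U$-orientations on the $V_i$. Hatcher--Wahl's Proposition~3.5 says a complete join complex over a Cohen--Macaulay complex is Cohen--Macaulay of the same dimension, and Solomon--Tits finishes.

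\textbf{Your route.} You induct on $n$ using the building structure.
\begin{itemize}
\item Links split as joins of smaller oriented buildings. Your identification of the interval factors with $\T^\U(V_{i+1}/V_i)$ via $\w\mapsto\w_i\wedge\tilde{\w}$ is fine, since the order only sees the underlying subspaces.
\item Connectivity comes from Quillen's fibre theorem applied to the forgetful map. You use that $\pi_{\le V}\cong\cT^\U(V)*\{|\F^\times/\U|\ \text{points}\}$ is $(\het(V)-1)$-connected, and that $(\T_n(\F))_{>V}\cong\T(\F^n/V)$ is $(n-4-\het(V))$-connected.
\end{itemize}
The criterion you quote is a correct, homotopical form of Quillen's theorem. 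So no separate Hurewicz or $\pi_1$ argument is needed.

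\textbf{Comparison.} The paper's route is shorter and works for any complete join complex over any Cohen--Macaulay base. Yours is self-contained modulo the fibre theorem and Solomon--Tits. It is also the same kind of computation the paper later does with the map-of-posets spectral sequence.

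\textbf{If you write out the spectral sequence instead of citing the fibre theorem,} two points in your sketch need care.
\begin{itemize}
\item For a line $V$ the fibre $\pi_{\le V}$ is $|\F^\times/\U|$ points, so the bottom row is not $\h_k(\T_n(\F))$. You must split off the subfunctor $V\mapsto\widetilde{\h}_0(\pi_{\le V})$, which is supported on height $0$. Its contribution $\widetilde{\h}_{k-1}(\cT_{n-1}(\F);\widetilde{\h}_0(\pi_{\le V}))$ vanishes for $k\le n-3$.
\item In rows $h\ge 1$, the vanishing you state only gives support on heights $\le h$, so Lemma~\ref{posetheightm} does not apply directly. Use the inductive Cohen--Macaulay property of $\cT^\U(V)$: it concentrates $\widetilde{\h}_*(\pi_{\le V})$ in degree $\het(V)$, so the functor is supported on height exactly $h$, as in the proof of Lemma~\ref{TUBykovskii}.
\end{itemize}
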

\begin{proof}
    The Solomon--Tits theorem \cite{solomon1969steinberg, brown1989buildings} implies that $\cT(\F)$ is Cohen-Macaulay of dimension $(n-2)$. The complex $\cT_n^\U(\F)$ is a complete join complex over $\cT_n(\F)$ in the sense of Hatcher--Wahl \cite[Definition 3.2]{hatcher2010stabilization}, and so the lemma follows from \cite[Proposition 3.5]{hatcher2010stabilization}.
\end{proof}

Now we come to the main result of this section. It can be proved using the same proof in \cite[Proposition 3.16]{miller2021top}.

\begin{prop}[{\cite[Proposition 3.16]{miller2021top}}]
\label{Gammaquotient}
    Suppose $R$ is  a PID, and $p \in R$ a prime. Let $\F$ denote the field obtained by quotienting $R$ by the ideal generated by $p$, and let $\U$ denote the subgroup of $F^\times$ formed by the image of $R^\times$ under the map $R \surj \F$. Let $\Gamma_n(p)$ denote the principal level-$p$ congruence subgroup of $\SL_n(R)$. Then for $n \geq 1$, we have $\cT_n(R)/\Gamma_n(p) \cong \cT_n^\U(\F)$.
\end{prop}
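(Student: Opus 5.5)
The plan is to build an explicit, $\Gamma_n(p)$-invariant map $\T_n(R) \to \T_n^\U(\F)$ and show that it induces a bijection on orbits. This bijection must respect the poset structure, with compatible chains on both sides.

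\emph{Step 1: the reduction map.} Let $W \subset R^n$ be a rank-$d$ direct summand. Choose a basis $w_1,\dots,w_d$ of $W$ and set $\overline{W} = W \otimes_R \F$. Because $W$ is a summand, $\overline{W}\subset \F^n$ is a $d$-dimensional subspace. A change of basis alters $w_1\wedge\dots\wedge w_d$ by an element of $R^\times$, so its reduction gives a well-defined $\U$-orientation $[\bar w_1\wedge\dots\wedge \bar w_d]$ on $\overline{W}$. This defines an order-preserving map $\pi:\T_n(R)\to\T_n^\U(\F)$. It is $\Gamma_n(p)$-invariant, since $\gamma\in\Gamma_n(p)$ acts trivially mod $p$. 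Hence $\pi$ descends to a simplicial map $\cT_n(R)/\Gamma_n(p)\to\cT_n^\U(\F)$.

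\emph{Step 2: surjectivity on flags.} Take a flag $0\subsetneq V_0\subsetneq\dots\subsetneq V_k\subsetneq\F^n$ with $\U$-orientations. Choose a basis $\bar b_1,\dots,\bar b_n$ of $\F^n$ adapted to the flag, scaled so that the wedge of the first $\dim V_i$ vectors represents the orientation of $V_i$. This scaling is done inductively by rescaling one new basis vector at each stage, and the last vector is rescaled so that the full determinant is $1$. The resulting matrix lies in $\SL_n(\F)$. Since $R$ is a PID, $\SL_n(R)\to\SL_n(\F)$ is surjective (strong approximation; for Euclidean $R$, use elementary matrices). Lift the matrix to $g\in\SL_n(R)$. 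The flag of summands spanned by initial segments of the columns of $g$ then maps to the given oriented flag.

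\emph{Step 3: injectivity on orbits of flags.} This is the main point. Suppose two flags of summands $W_\bullet, W'_\bullet$ (of the same type) have the same image. Choose adapted bases of $R^n$ for each, and let $g,g'\in\SL_n(R)$ be the corresponding matrices; the last vector can be adjusted by a unit to get determinant $1$. Then $\bar g^{-1}\bar g'$ lies in the parabolic $P(\F)$ stabilizing the standard flag, and its diagonal blocks have determinants in $\U$, because the oriented images agree. It suffices to show that $\bar g^{-1}\bar g'$ lifts to an element $h$ of the parabolic $P(R)\cap\SL_n(R)$. Then $g' h^{-1} g^{-1}\in\Gamma_n(p)$ and it carries $W_\bullet$ to $W'_\bullet$.

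Lifting such an $h$ is where the hypothesis on $\U$ enters. Write the block determinants as $\bar u_i\in\U$ and choose lifts $u_i\in R^\times$, adjusting one of them by $\pm$ so that their product is $1$. The element $\bar g^{-1}\bar g'$ then factors as a block-diagonal matrix $\mathrm{diag}(\bar u_i,1,\dots)$ times an element whose diagonal blocks lie in $\SL$. The $\SL$ blocks lift by surjectivity of $\SL_{d}(R)\to\SL_d(\F)$, and the unipotent part lifts entrywise.

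\emph{Step 4: assembly.} Since $\pi$ preserves flag type, Steps 2 and 3 give a bijection between $\Gamma_n(p)$-orbits of simplices and simplices of $\cT_n^\U(\F)$, compatible with faces. The quotient $\cT_n(R)/\Gamma_n(p)$ is a genuine simplicial complex: $\Gamma_n(p)$ stabilizes a flag only by fixing it pointwise, since the action preserves dimension. Therefore the induced map is a simplicial isomorphism.

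I expect Step 3 to be the main obstacle, in particular checking that the orientation data exactly captures the obstruction given by the block determinants in $\F^\times/\U$.
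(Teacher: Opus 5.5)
Your proposal is correct and follows essentially the same route as the paper. The paper gives no proof of its own here; it cites Miller--Patzt--Putman, whose argument runs along the same lines: a $\Gamma_n(p)$-invariant reduction map, lifting through $\SL_n(R)\to\SL_n(\F)$, and the observation that the $\U$-orientations record exactly the block determinants of the parabolic modulo $\U$.

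Two small fixes are needed.

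\textbf{The unit adjustment in Step 3.} Adjusting one lift by $\pm$ only works when $\pm 1$ are the only units congruent to $1$ mod $p$. That holds for $\Z$, but not in general. For example, in $\Z[\sqrt{2}]$ with $p=5$ we have $(1+\sqrt{2})^{12}\equiv 1$. Instead, lift $\bar u_1,\dots,\bar u_{m-1}$ arbitrarily and set $u_m=(u_1\cdots u_{m-1})^{-1}$. This still lifts $\bar u_m$, because the block determinants multiply to $\det(\bar g^{-1}\bar g')=1$.

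\textbf{The surjectivity of $\SL_n(R)\to\SL_n(\F)$.} This needs no strong approximation. $\SL_n(\F)$ is generated by elementary matrices, and these lift to $\SL_n(R)$ for any commutative ring $R$. The same fact handles the $\SL_{d_i}$ diagonal blocks.
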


\subsection{The proof of Theorems \ref{thm:kmpwanswer} and \ref{thm:generallsiso}}
\label{sec:generalisoproof}

\subsubsection{Relating augmented partial bases to the Steinberg module}
\label{sec:thm1.4&1.5proof}

\begin{definition}
    Let $R$ be a commutative ring, and $\U$ a multiplicative subgroup of $R^\times$ such that $-1 \in \U$. Define $\BD_n^\U(R)'$ to be the subcomplex of $\BD_n^\U(R)$ consisting of simplices $\{[\vec{v}_0], \dots, [\vec{v}_k]\}$ such that the $R$-span of the $\vec{v}_i$ is a proper submodule of $R^n$, and $\BDA_n^\U(R)'$ to be the subcomplex of $\BDA_n^\U(R)$ consisting of simplices whose $R$-span of the $\vec{v}_i$ is a proper submodule of $R^n$. 
\end{definition}

In \cite{church2017codimension}, Church--Putman gave an alternate proof of a theorem of Ash--Rudolph \cite{ash1979modular} that gives a generating set for the Steinberg module over the field of fractions of a Euclidean domain. In the course of the proof, they established the following result.

\begin{lemma}[{\cite{church2017codimension}}]
\label{CPBykovskii}
    Suppose $R$ is a Euclidean domain and $K$ is its field of fractions. Let $\U = R^\times$. Then for all $n\geq 1$, the map
    \begin{align*}
        \C_{n-1}(\B_n^\U(R)) \to \St_n(K)
    \end{align*}
    that maps a basis element corresponding to an $(n-1)$-dimensional simplex of $\B_n^\U(R)$ to the apartment determined by its vertices, is a surjection.
\end{lemma}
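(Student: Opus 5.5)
We have to prove Lemma \ref{CPBykovskii}: for a Euclidean domain $R$ with fraction field $K$, the apartment classes of $R$-bases generate $\St_n(K)$. The plan is to follow the Ash--Rudolph / Church--Putman strategy. By Theorem \ref{thm:soltit}, $\St_n(K)$ is generated by apartment classes of arbitrary $K$-bases. Clearing denominators, and using that $\T_n(R)\cong\T_n(K)$ through saturation, each such class is the apartment class of an $R$-linearly independent tuple $\vec{v}_1,\dots,\vec{v}_n\in R^n$. So it suffices to show that every such apartment lies in the span of apartments of genuine $R$-bases. We argue by induction on the complexity $|\det(\vec{v}_1,\dots,\vec{v}_n)|$, measured through the Euclidean norm $N$ of the determinant. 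Since the apartment class depends only on the lines $K\vec{v}_i$, we may replace each $\vec{v}_i$ by a primitive vector without increasing $N(\det)$. The base case $N(\det)=N(1)$ means the determinant is a unit, so the tuple is a basis of $R^n$, hence a simplex of $\B_n^\U(R)$ with $\U=R^\times$.

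The key relation is the standard apartment relation: for any nonzero $\vec{w}\in K^n$, the alternating sum over $i$ of the apartment classes of $(\vec{v}_1,\dots,\vec{v}_{i-1},\vec{w},\vec{v}_{i+1},\dots,\vec{v}_n)$ equals the apartment class of $(\vec{v}_1,\dots,\vec{v}_n)$. Terms where $\vec{w}$ lies in a coordinate hyperplane of the $\vec{v}$'s are degenerate and vanish. This follows because the $n+1$ vectors span a cone, or equivalently because $\St_n(K)$ is the top homology and these chains form the boundary of a join.

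Now take the reduction step with $d=\det(\vec{v}_1,\dots,\vec{v}_n)$, $N(d)>N(1)$. Work in the lattice $L=\langle\vec{v}_1,\dots,\vec{v}_n\rangle\subsetneq R^n$. Choose $\vec{u}\in R^n\setminus L$ and write $\vec{u}=\sum q_i\vec{v}_i$ with $q_i\in K$, $q_i=a_i/d$. Using the Euclidean algorithm, subtract elements of $L$ to obtain $\vec{w}=\sum r_i\vec{v}_i\in R^n$ with each $r_i=a_i'/d$ and $N(a_i')<N(d)$, that is, $|r_i|<1$ in the norm sense. Then $\det$ of the tuple with $\vec{v}_i$ replaced by $\vec{w}$ equals $r_i d=a_i'$, which has strictly smaller norm, or is $0$ and gives a degenerate term. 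By induction every term on the left of the relation lies in the span of $R$-basis apartments, hence so does the original apartment.

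The main obstacle is ensuring the choice of $\vec{u}$ gives a $\vec{w}$ with all $r_i$ reduced simultaneously and $\vec{w}\neq0$, i.e. $\vec{w}\notin L$. We handle this by picking $\vec{u}$ to be a nonzero coset representative. Then some $a_i$ is not divisible by $d$, so the resulting $\vec{w}$ is nonzero. Each coordinate is reduced independently modulo $d$ by Euclidean division of $a_i$ by $d$, which needs the norm to be multiplicative, or at least $N(a_i')<N(d)$ to imply $N(\det)$ decreases. When $N$ is not multiplicative, we instead induct on the ideal index $[R^n:L]=|R/dR|$, using that $[R^n:L_i]$ equals $|R/a_i'R|$. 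To make this index genuinely smaller, we choose $a_i'$ as a remainder and then adjust by a generator of $(a_i,d)$. I would first try the multiplicative-norm version, which covers the number rings used in the paper.
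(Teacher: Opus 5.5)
Your argument is correct. It is essentially Ash--Rudolph's original proof, which is a different route from the one the paper relies on. The paper gives no proof of this lemma and imports it from Church--Putman.

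\textbf{The paper's route.} Church--Putman factor the map as
$\h_{n-1}(\B_n^\U(R),\B_n^\U(R)')\twoheadrightarrow\widetilde{\h}_{n-2}(\B_n^\U(R)')\twoheadrightarrow\St_n(K)$, where $\B_n^\U(R)'$ is the subcomplex of non-spanning simplices. The first arrow is onto because the complex of partial bases of $R^n$ is $(n-2)$-connected for Euclidean $R$ (Maazen). The second is onto by the map-of-posets spectral sequence over the Tits building. This is exactly the template the paper copies in Lemma \ref{lem:TUSurj}.

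\textbf{Your route.} You bypass the connectivity theorem by a direct descent. You combine the cone relation with Euclidean division by $d$ of the Cramer numerators $a_i=\det(\vec{v}_1,\dots,\vec{u},\dots,\vec{v}_n)$. The cone relation is just the image of $\partial\{[\vec{w}],[\vec{v}_1],\dots,[\vec{v}_n]\}$ in $\C_n(\Delta,\Delta')$, where $\Delta$ is the full simplex on the lines of $K^n$ and $\Delta'$ is its subcomplex of non-spanning sets.

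\textbf{Comparison.} Your route is elementary, self-contained and algorithmic. It delivers everything Theorem \ref{St_U_Iso} needs from this lemma, since only surjectivity is used there. The Church--Putman packaging buys a uniform framework in which replacing $\B$ by an augmented complex upgrades generators to a presentation. That is why the paper organizes its mod-$p$ arguments that way.

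\textbf{Clean-ups.}
\begin{enumerate}
\item With $\vec{w}$ in the $i$-th slot the relation carries no signs. For $n=2$: $\langle\vec{v}_1\rangle-\langle\vec{v}_2\rangle=(\langle\vec{w}\rangle-\langle\vec{v}_2\rangle)+(\langle\vec{v}_1\rangle-\langle\vec{w}\rangle)$. Alternating signs appear only if $\vec{w}$ is moved to the front. Either way this is harmless for a spanning argument.
\item The worry in your final paragraph is unfounded. The new determinant is literally $a_i'$, so $N$ drops by the defining property of a Euclidean function, and multiplicativity is never used. Discard the proposed fallback of inducting on $|R/dR|$: that index can be infinite, e.g.\ for $R=k[x]$ with $k$ infinite.
\item The passage to primitive vectors and the base case ``$N(\det)=N(1)$'' tacitly assume $N(a)\le N(ab)$. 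Drop both. Instead, run strong induction on $N(\det)$ over all integral tuples with nonzero determinant, terminating when $d\in R^\times$. The argument then covers every Euclidean domain, as the lemma requires.
\end{enumerate}
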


Using arguments similar to Church--Putman \cite{church2017codimension}, we can prove the analogue of Lemma \ref{CPBykovskii} for the complexes of determinant-$\U$ partial bases:

\begin{lemma}
    \label{lem:TUSurj}
    Let $\F$ be a field, and let $\U$ be a multiplicative subgroup of $\F^\times$ such that $-1 \in \U$.
    For $n \geq 2$, we have surjections
    \begin{align*}
        \h_{n-1}(\BD_n^\U(\F), \BD_n^\U(\F)')  \overset{\del}{\twoheadrightarrow} \widetilde{\h}_{n-2}(\BD_n^\U(\F)') \overset{\Phi_*}{\twoheadrightarrow} \widetilde{\h}_{n-2}(\cT_n^\U(\F))
    \end{align*}
    where $\del$ and $\Phi$ are as follows:
    \begin{itemize}
        \item $\del$ is the boundary map in the long exact sequence of a pair in reduced homology.
        \item $\Phi: \mathcal{P}(\BD_n^\U(\F)') \to \T^\U_n(\F)$ is the poset map taking a simplex $\{[\vec{v}_0], \dots, [\vec{v}_k]\}$ of $\BD_n^\U(\F)'$ to the $\F$-span of the $\vec{v}_i$.
    \end{itemize}

\end{lemma}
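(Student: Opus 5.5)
The proof splits into the two surjections; the first is formal and the second carries the content.

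\textbf{Surjectivity of $\del$.} By the long exact sequence of the pair $(\BD_n^\U(\F), \BD_n^\U(\F)')$ in reduced homology, $\del$ is surjective once $\widetilde{\h}_{n-2}(\BD_n^\U(\F)) = 0$. This vanishing is exactly the $(n-2)$-connectivity of $\BD_n^\U(\F) = \BD_{n,0}^\U(\F)$, given by Proposition \ref{BDconn}. For $n=2$ we instead use path-connectivity together with reduced $\widetilde{\h}_0$, which comes from the same proposition.

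\textbf{Surjectivity of $\Phi_*$: setup.} We apply the map-of-posets spectral sequence (Theorem \ref{posetss}) to
\[
\Phi:\mathcal{P}(\BD_n^\U(\F)')\to \T_n^\U(\F).
\]
To make $\Phi$ well defined we must decide which $\U$-orientation to attach to a span. On a simplex spanning a subspace $V$ of dimension $k$ that contains a basis $\vec{v}_0,\dots,\vec{v}_{k-1}$ of $V$, the natural choice is the class $[\vec{v}_0\wedge\dots\wedge\vec{v}_{k-1}]$. However, a simplex may have more than one such basis, so this need not be well defined. I therefore plan to follow Church--Putman and restrict to the subcomplex where it is: either simplices whose vertices are linearly independent, or more simply the case where every simplex of $\BD_n^\U(\F)'$ is a partial basis of a proper summand. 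Every simplex of $\BD_n^\U(\F)$ is a partial basis, so each simplex of $\BD_n^\U(\F)'$ is a partial basis of a proper subspace $V$, and $\Phi(\sigma)=(V,[\wedge\sigma])$ is well defined. (The determinant condition is vacuous off top simplices, but the orientation class still depends on $\sigma$.)

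\textbf{Surjectivity of $\Phi_*$: fibers and the spectral sequence.} The fiber $\Phi_{\le (V,[\omega])}$ is the poset of simplices that are partial bases of subspaces $W\subseteq V$ whose induced orientation is compatible with $[\omega]$ when $W=V$. I claim this fiber is identified with a copy of $\BD^\U(V)$, determinant measured against $\omega$, together with its lower skeleta; by Proposition \ref{BDconn} it is then $(\dim V-2)$-connected. On the $E^2$ page, the row $h=0$ is $\h_k(\T_n^\U(\F);\underline{\Z})$, while the higher rows are functors supported in controlled heights. Lemma \ref{posetheightm} expresses those rows through links $\T^\U_{>x}$, which are Cohen--Macaulay by Lemma \ref{solomontitsU}. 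A degree count should then show that every term $E^2_{k,h}$ with $h\ge1$ and $k+h=n-2$ either vanishes or cannot hit $E_{n-2,0}$. Consequently the edge map $\h_{n-2}(\BD_n^\U(\F)')\to E^2_{n-2,0}=\widetilde{\h}_{n-2}(\cT_n^\U(\F))$ is surjective. It is surjective because $E^\infty_{n-2,0}$ is a quotient of $E^2_{n-2,0}$ only by images of differentials, and these land from columns $k>n-2$ where $\T_n^\U(\F)$ has dimension $n-2$; hence $E^\infty_{n-2,0}=E^2_{n-2,0}$, and the edge map onto it is surjective.

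\textbf{Alternative for $\Phi_*$.} A more concrete argument avoids the spectral sequence. $\widetilde{\h}_{n-2}(\cT_n^\U(\F))$ is generated by $\U$-oriented apartment classes, in analogy with Theorem \ref{thm:soltit}, and indexed by bases with compatible orientations. Each such class is the image of the boundary of an $(n-1)$-simplex of $\BD_n^\U(\F)$ coming from a determinant-$\U$ basis; any basis can be rescaled by a scalar to have determinant in $\U$ without changing the flags. Arguing this way proves both surjections at once.

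The main obstacle is the orientation bookkeeping: verifying that every generator of $\widetilde{\h}_{n-2}(\cT_n^\U(\F))$ is an oriented apartment realized by a determinant-$\U$ basis. If the Hatcher--Wahl join structure invoked in Lemma \ref{solomontitsU} makes this delicate, I would fall back on the spectral sequence argument described above.
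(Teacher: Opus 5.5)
Your handling of $\del$ is correct, and so is your identification of the fiber $\Phi_{\le(V,[\omega])}$ with $\BD^\U(V)$, determinant measured against $\omega$; both agree with the paper. The step that is supposed to give surjectivity of $\Phi_*$, however, is justified incorrectly.

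\textbf{The direction of the differentials.} In this homologically graded spectral sequence, $d^r$ has bidegree $(-r,r-1)$.
\begin{itemize}
\item Nothing ever enters the bottom row. The would-be sources $\E^r_{n-2+r,1-r}$ lie in negative rows, regardless of the dimension of $\T_n^\U(\F)$.
\item Instead, $\E_{n-2,0}$ \emph{emits} differentials $d^r\colon \E^r_{n-2,0}\to \E^r_{n-2-r,r-1}$. So $\E^\infty_{n-2,0}$ is the \emph{subgroup} of $\E^2_{n-2,0}$ cut out by their kernels, not a quotient of it.
\item The edge map $\h_{n-2}(\BD_n^\U(\F)')\to \E^\infty_{n-2,0}\subseteq \E^2_{n-2,0}$ is therefore onto $\E^2_{n-2,0}$ exactly when these outgoing differentials vanish. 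Their targets lie on the $(n-3)$-diagonal in rows $h=r-1\ge 1$.
\end{itemize}
Your degree count is aimed at the $(n-2)$-diagonal, whose terms can never interact with $\E_{n-2,0}$ through a differential. Your ``columns $k>n-2$'' argument only rules out differentials that are zero for trivial reasons. As written, the proof never excludes the one actual obstruction.

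\textbf{The missing computation (this is what the paper does).} $\BD^\U_{\dim V}(\F)$ is $(\dim V-2)$-connected by Proposition \ref{BDconn}, \emph{and} it has dimension $\dim V-1=\het(V)$. Hence $\widetilde{\h}_h(\Phi_{\le V})$ is concentrated in degree $h=\het(V)$. You cite only the connectivity; the dimension bound is what gives concentration. It follows that for $h\ge 1$ the coefficient functor is supported on elements of height $h$. Lemma \ref{posetheightm}, together with the Cohen--Macaulayness of $(\T_n^\U(\F))_{>V}\cong \T^\U_{n-1-\het(V)}(\F)$ (Lemma \ref{solomontitsU}), then gives $\E^2_{k,h}=0$ for $h\ge1$ unless $k+h=n-2$. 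In particular the targets of all differentials out of $\E_{n-2,0}$ vanish, and $\E^\infty_{n-2,0}=\E^2_{n-2,0}$.

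\textbf{Your alternative route.} It does not close the gap. Generation of $\widetilde{\h}_{n-2}(\cT_n^\U(\F))$ by oriented apartment classes is not supplied by Theorem \ref{thm:soltit} or by Lemma \ref{solomontitsU}, which gives only Cohen--Macaulayness. Worse, it is essentially equivalent to what you are proving. $\widetilde{\h}_{n-2}(\BD_n^\U(\F)')$ is generated by boundaries of top simplices, and $\Phi_*$ sends these to oriented apartments. So $\Phi_*$ is surjective if and only if those apartments generate.

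Separately, rescaling a basis does change the $\U$-orientation labels on the flags. The determinant condition is irrelevant here anyway: the boundary of any top simplex of $\B_n^\U(\F)$ already lies in $\BD_n^\U(\F)'$.
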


\begin{proof}
The long exact sequence in homology for the pair $(\BD_n^\U(\F), \BD_n^\U(\F)')$ includes the segment
    \begin{align*}
        \h_{n-1}(\BD_n^\U(\F), \BD_n^\U(\F)') \xrightarrow{\del} \widetilde{\h}_{n-2}(\BD_n^\U(\F)') \to \widetilde{\h}_{n-2}(\BD_n^\U(\F))
    \end{align*}
    and so the fact that $\del$ is a surjection follows immediately from the fact that $\BD_n^\U(\F)$ is $(n-2)$-connected (Proposition \ref{BDconn}).

    To prove that $\Phi_*$ is an isomorphism, we will proceed by studying the map-of-posets spectral sequence (Theorem \ref{posetss}) of the poset map $\Phi: \mathcal{P}(\BD_n^\U(\F)') \to \T_n^\U(\F)$. This takes the form
\begin{align*}
    \E_{kh}^2 = \h_k(\T_n^\U(\F); [V \mapsto \h_h(\Phi_{\leq V})]) \implies \h_{k+h}(\mathcal{P}(\BD_n^\U(\F)')
\end{align*}
We will use Lemma \ref{posetheightm} to compute the groups on the $\E^2$ page. This will require the following two facts. Consider $V \in \T_n^\U(\F)$.
\begin{itemize}
    \item We have
    \begin{align*}
        \Phi_{\leq V} \cong \BD_{\dim(V)}^\U(\F) = \BD_{\het(V)+1}^\U(\F).
    \end{align*}
    Our assumption says that this is $(\het(V)-1)$-connected. Since it has dimension equal to $\het(V)$, we conclude that
    \begin{align}
    \label{surjhsupport1}
        \widetilde{\h}_h(|\Phi_{\leq V}|) = 0 \hspace{0.5cm} \text{ for all } h \neq \het(V)
    \end{align}

  The upshot is that for $h \geq 1$, the functor
\begin{align*}
    V \mapsto \h_h(\Phi_{\leq V})
\end{align*}
is supported on elements of height $h$.
    \item We have
    \begin{align*}
        (\T_n^\U(\F))_{>V} \cong \T_{n-\dim(V)}^\U(\F) = \T_{n-1-\het(V)}^\U(\F).
    \end{align*}
    Lemma \ref{solomontitsU} says that this is Cohen-Macaulay of dimension $(n-3-\het(V))$, which implies that
    \begin{align}
    \label{surjhsupport2}
        \widetilde{\h}_k(|(\T_n^\U(\F))_{>V}|) = 0 \hspace{0.5cm} \text{ for all } k \neq n-3-\het(V).
    \end{align}
    \end{itemize}
We first analyze the bottom row $\E_{k0}^2$ of the $\E^2$ page. Proposition \ref{BDconn} implies that $\BD_{\dim(V)}^\U(\F)$ is connected when $\dim(V) \geq 2$. When $\dim(V)=1$, $\BD_1^\U(\F)$ is a single point and thus also connected. (This is a key place where the determinant restriction is important).
We see that
\begin{align*}
    \E_{k0}^2 = \h_k(\T_n^\U(\F); [V \mapsto \h_0(\Phi_{\leq V})]) = \h_k(\T_n^\U(\F); \underline{\Z}) = \begin{cases}
        {\h}_{n-2}(\cT_n^\U(\F)) & \text{if } k= n-2 \\
        \Z & \text{if } k=0 \text{ and } n \neq 2 \\
        0 & \text{if } k \neq 0, n-2
    \end{cases}
\end{align*}
The last equality uses Lemma \ref{solomontitsU}.

We now analyze the rows for which $h \geq 1$.

Fact \ref{surjhsupport1} implies that for $h \geq 1$, we have
\begin{align*}
     \E_{kh}^2 = \h_k(\T_n^\U(\F); [V \mapsto \h_h(\Phi_{\leq V})]) & = \bigoplus_{\het(V) = h} \widetilde{\h}_{k-1}(|(\T_n^\U(\F)_{>V})|; \h_h(\Phi_{\leq V}))
\end{align*}
Fact \ref{surjhsupport2} now implies that if $k-1 \neq n-3-\het(V) = n-3-h$, or equivalently if $k+h \neq n-2$, then this last group is 0.
Thus we have
\begin{align}
    \label{E2bigrows}
    \E_{kh}^2 = 0 \hspace{0.25cm} \text{ for } h\geq 1, k+h \neq n-2
\end{align}

If $n=2$, then the only nonzero group on the $\E^2$-page is $\E^2_{0,0} = \h_{n-2}(\cT_n^\U(\F))$, and so we get 
\begin{align*}
    \h_{n-2}(\BD_n^\U(\F)) \cong \h_{n-2}(\cT_n^\U(\F)), \text{ } n=2
\end{align*} 
One can check that the edge map of our spectral sequence that induces this isomorphism is $\Phi_*$.

If $n \geq 3$, then the only nonzero groups on the $\E^2$-page are at $\E^2_{0,0}$, and along the $n-2$ diagonal. Note that there can be no nonzero differentials between these groups -- indeed, the only instance where there can be a non-zero differential between the $k+h=n-2$ and $k+h=0$ diagonals is if $n=3$, but even then any nontrivial differential cannot survive past the $\E^1$-page.

Thus, in this case we get a surjection induced by the edge map in our spectral sequence
\begin{align*}
    \Phi_* : \h_{n-2}(\BD_n^\U(\F)) \surj \E^2_{n-2,0} = \h_{n-2}(\cT_n^\U(\F))
\end{align*}
      as desired.
\end{proof}

As a corollary we get the following:

\begin{corollary}
    \label{cor:CPBykAnalogue}
    Suppose $\F$ is a field and $\U \subset \F^\times$ a multiplicative subgroup with $-1 \in \U$. Then for all $n\geq 1$, the map
    \begin{align*}
        \C_{n-1}(\BD_n^\U(\F)) \to \widetilde{\h}_{n-2}(\cT_n^\U(\F))
    \end{align*}
    that maps a basis element corresponding to an $(n-1)$-dimensional simplex of $\B_n^\U(R)$ to the apartment determined by its vertices, is a surjection.
\end{corollary}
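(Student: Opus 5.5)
The plan is to factor the map through the two surjections of Lemma \ref{lem:TUSurj} and identify the composite on chains with the apartment map. For $n=1$ the statement is immediate. $\BD_1^\U(\F)$ is the single vertex $[\vec{e}_1]$, and $\C_0$ is $\Z$. The complex $\cT_1^\U(\F)$ is empty, so $\widetilde{\h}_{-1}(\cT_1^\U(\F))\cong\Z$, generated by the empty apartment. The map sends the generator to the generator, so it is surjective. From now on assume $n\geq 2$.

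First I would note that $\BD_n^\U(\F)$ is $(n-1)$-dimensional and $\BD_n^\U(\F)'$ contains its entire $(n-2)$-skeleton. Indeed, any $k\leq n-1$ vectors span a proper subspace, and every $(n-1)$-simplex spans all of $\F^n$. Hence the relative chain group $\C_{n-1}(\BD_n^\U(\F),\BD_n^\U(\F)')$ equals $\C_{n-1}(\BD_n^\U(\F))$. It has no relative $(n-2)$-chains and no $n$-chains, so
\begin{align*}
\h_{n-1}(\BD_n^\U(\F),\BD_n^\U(\F)') = \C_{n-1}(\BD_n^\U(\F)).
\end{align*}
Under this identification, the boundary map $\del$ of Lemma \ref{lem:TUSurj} sends an oriented top simplex $\{[\vec{v}_1],\dots,[\vec{v}_n]\}$ to its simplicial boundary. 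That boundary is a cycle in $\BD_n^\U(\F)'$, namely a copy of $\del\Delta^{n-1}$.

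Next I would compute $\Phi_*$ on this boundary sphere. Pass to barycentric subdivisions, i.e. to $\mathcal{P}(\BD_n^\U(\F)')$. The sphere $\del\Delta^{n-1}$ subdivides into flags of proper nonempty faces $S_1\subsetneq\dots\subsetneq S_{n-1}$ of $\{1,\dots,n\}$, with signs. Under $\Phi$, the face $S$ goes to the span of $\{\vec{v}_i : i\in S\}$, equipped with its $\U$-orientation. Grouping the flags by the permutation $\sigma$ with $S_j=\{\sigma(1),\dots,\sigma(j)\}$ gives exactly the alternating sum of Theorem \ref{thm:soltit}, with oriented subspaces $\lin{\vec{v}_{\sigma(1)},\dots,\vec{v}_{\sigma(j)}}$. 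So $\Phi_*\circ\del$ sends the basis element to the ($\U$-oriented) apartment class of $\vec{v}_1,\dots,\vec{v}_n$, up to a global sign. The only real care needed here is the bookkeeping of orientations and signs in the barycentric subdivision, which is the standard computation identifying a subdivided simplex boundary with an apartment.

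Finally, composing the two surjections of Lemma \ref{lem:TUSurj} shows that $\C_{n-1}(\BD_n^\U(\F))\to\widetilde{\h}_{n-2}(\cT_n^\U(\F))$ is surjective. The sign ambiguity does not affect surjectivity.
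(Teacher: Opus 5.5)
Your proposal is correct and is essentially the paper's proof: you identify $\C_{n-1}(\BD_n^\U(\F))$ with $\h_{n-1}(\BD_n^\U(\F),\BD_n^\U(\F)')$, using that $\BD_n^\U(\F)$ is $(n-1)$-dimensional and $\BD_n^\U(\F)'$ contains its $(n-2)$-skeleton, and then compose with the two surjections of Lemma~\ref{lem:TUSurj}. Your separate treatment of $n=1$ (a case Lemma~\ref{lem:TUSurj} does not cover) and your barycentric-subdivision check that the composite is the apartment map supply details the paper leaves implicit.
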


\begin{proof}
    We have surjections
    \begin{align*}
        \C_{n-1}(\BD_n^\U(\F)) \surj \C_{n-1}(\BD_n^\U(\F), \BD_n^\U(\F)') \surj \h_{n-1}(\BD_n^\U(\F), \BD_n^\U(\F)')
    \end{align*}
    where the latter map is a surjection since $\BD_n^\U(\F)$ is $(n-1)$-dimensional. Thus the result follows from Lemma \ref{lem:TUSurj}.
\end{proof}

Using a similar argument to the one in Lemma \ref{lem:TUSurj}, we can use the high connectivity of $\BDA_n^\U(\F)$ to obtain a presentation for $\widetilde{\h}_{n-2}(\cT_n^\U(\F))$.

\begin{lemma}
    \label{TUBykovskii}
    Let $\F$ be a field, and let $\U$ be a multiplicative subgroup of $\F^\times$ with $-1 \in \U$. Suppose $\BDA_n^\U(\F)$ is $(n-1)$-connected for all $n \geq 1$. Then for $n \geq 2$, we have isomorphisms
    \begin{align*}
        \h_{n-1}(\BDA_n^\U(\F), \BDA_n^\U(\F)') \xrightarrow[\cong]{\del} \widetilde{\h}_{n-2}(\BDA_n^\U(\F)') \xrightarrow[\cong]{\Psi_*} \widetilde{\h}_{n-2}(\cT^\U_n(\F)),
    \end{align*}
    where $\del$ and $\Psi$ are as follows:
    \begin{itemize}
        \item $\del$ is the boundary map in the long exact sequence of a pair in reduced homology.
        \item $\Psi: \mathcal{P}(\BDA_n^\U(\F)') \to \T^\U_n(\F)$ is the poset map taking a simplex $\{[\vec{v}_0], \dots, [\vec{v}_k]\}$ of $\BDA_n^\U(\F)'$ to the $\F$-span of the $\vec{v}_i$.
    \end{itemize}
\end{lemma}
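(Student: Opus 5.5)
We mirror the proof of Lemma \ref{lem:TUSurj}, using the stronger hypothesis that $\BDA_m^\U(\F)$ is $(m-1)$-connected for every $m$, and we aim for isomorphisms in place of surjections.

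\textbf{The map $\del$.} Consider the long exact sequence of the pair $(\BDA_n^\U(\F), \BDA_n^\U(\F)')$:
\begin{align*}
\widetilde{\h}_{n-1}(\BDA_n^\U(\F)) \to \h_{n-1}(\BDA_n^\U(\F), \BDA_n^\U(\F)') \xrightarrow{\del} \widetilde{\h}_{n-2}(\BDA_n^\U(\F)') \to \widetilde{\h}_{n-2}(\BDA_n^\U(\F)).
\end{align*}
Since $\BDA_n^\U(\F)$ is $(n-1)$-connected, both outer groups vanish, so $\del$ is an isomorphism.

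\textbf{Setting up the spectral sequence for $\Psi_*$.} We apply the map-of-posets spectral sequence (Theorem \ref{posetss}) to $\Psi: \mathcal{P}(\BDA_n^\U(\F)') \to \T_n^\U(\F)$. For $V \in \T_n^\U(\F)$ of dimension $d = \het(V)+1$, we claim that $\Psi_{\le V}$ is the poset of simplices of the copy of $\BDA_d^\U(\F)$ inside $V$. To justify this, observe that any simplex whose span lies in $V$ is a (determinant-$\U$, or augmented) partial $\U$-basis of $\F^n$ contained in $V$. Choose a complement vector $\vec{e}$ with $V \oplus \langle \vec{e}\rangle$ of determinant compatible with the $\U$-orientation, and complete to a basis of $\F^n$. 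With this normalization, the determinant-$\U$ condition for full bases of $V$ is measured by the $\U$-orientation on $V$. Two further points need checking: augmented simplices require their additive core to lie in a common span, and proper-span simplices in $\F^n$ of size $d$ spanning $V$ carry no determinant condition in $\F^n$. The latter point means the correct identification is with $\BDA_d^\U(\F)$ decorated by the orientation, or equivalently with $\BDA_d$ relative to a fixed orientation class. Getting this identification right, so that the fibres really are copies of $\BDA_d^\U(\F)$ rather than some larger complex, is the step I expect to need the most care. It is exactly parallel to the identification $\Phi_{\le V} \cong \BD_{\dim V}^\U(\F)$ used in Lemma \ref{lem:TUSurj}, so I would copy that argument. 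Given this, the hypothesis says $\Psi_{\le V}$ is $(d-1) = \het(V)$-connected. Since $\BDA_d^\U(\F)$ has dimension $d$, the only nonzero homology is $\h_0 = \Z$ together with $\widetilde{\h}_{d}$, that is, $\widetilde{\h}_{\het(V)+1}$.

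\textbf{Computing $\E^2$ and concluding.} The bottom row is $\h_k(\T_n^\U(\F);\Z)$, which by Lemma \ref{solomontitsU} is concentrated in degrees $k = 0$ and $k = n-2$; the degree-$(n-2)$ group is $\widetilde{\h}_{n-2}(\cT_n^\U(\F))$. For $h \ge 1$, the functor $V \mapsto \h_h(\Psi_{\le V})$ is supported on elements of height $h-1$. Lemma \ref{posetheightm} together with the Cohen--Macaulayness of the links $(\T_n^\U)_{>V} \cong \T^\U_{n-h}$ then gives $\E^2_{kh} = 0$ unless $k-1 = n-3-(h-1)$, that is, unless $k+h = n-1$. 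So the nonzero entries in total degrees at most $n-1$ are:
\begin{itemize}
\item the bottom-row groups $\E^2_{0,0}$ and $\E^2_{n-2,0}$;
\item the diagonal $k+h = n-1$ with $h \ge 1$.
\end{itemize}
In particular, total degree $n-2$ consists only of $\E^2_{n-2,0}$. The differentials into $\E_{n-2,0}$ come from $\E_{n-2+r,\,1-r}$, and these vanish for $r \ge 2$. Differentials out of $\E_{n-2,0}$ land in negative $h$, so they vanish too. Hence $\E^\infty_{n-2,0} = \E^2_{n-2,0}$, and the edge map $\Psi_*: \widetilde{\h}_{n-2}(\BDA_n^\U(\F)') \to \widetilde{\h}_{n-2}(\cT_n^\U(\F))$ is an isomorphism. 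For small $n$ (say $n = 2$) one should check reduced versus unreduced homology, handling the $\E_{0,0} = \Z$ term as in Lemma \ref{lem:TUSurj}.
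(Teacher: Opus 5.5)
Your proposal is correct and follows the paper's proof essentially step for step. You use the long exact sequence of the pair for $\del$, then the map-of-posets spectral sequence for $\Psi$ with fibres $\Psi_{\le V}\cong\BDA^\U_{\dim V}(\F)$. As in the paper, the functor $V\mapsto \h_h(\Psi_{\le V})$ is supported in height $h-1$ for $h\ge 1$, so $\E^2_{kh}=0$ for $h\ge 1$ off the line $k+h=n-1$.

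\textbf{A misstated step.} The differentials out of the bottom-row entry are $d^r\colon \E^r_{n-2,0}\to\E^r_{n-2-r,\,r-1}$. These land at height $h=r-1\ge 1$ in total degree $n-3$, not in negative $h$. It is the \emph{incoming} differentials that vanish for the ``below the bottom row'' reason. The outgoing ones vanish because your own $\E^2$ computation shows that entries with $h\ge 1$ are nonzero only on $k+h=n-1$. The term $\E^2_{0,0}=\Z$ (relevant when $n=3$) has $h=0$, so it is never such a target. This is exactly how the paper argues.

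\textbf{The fibre identification you flagged.} No complement vector is needed. $\BDA^\U_n(\F)'$ imposes no determinant condition on proper-span simplices. So $\Psi_{\le (V,[\omega])}$ is simply the complex of (augmented) partial $\U$-bases inside $V$, where simplices spanning all of $V$ must induce the $\U$-orientation $[\omega]$. Any linear isomorphism $V\cong\F^d$ carrying $\omega$ to $\vec{e}_1\wedge\cdots\wedge\vec{e}_d$ identifies this complex with $\BDA^\U_d(\F)$.
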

\begin{proof}
    The long exact sequence in homology for the pair $(\BDA_n^\U(\F), \BDA_n^\U(\F)')$ includes the segment
    \begin{align*}
        \h_{n-1}(\BDA_n^\U(\F)) \to \h_{n-1}(\BDA_n^\U(\F), \BDA_n^\U(\F)') \xrightarrow{\del} \widetilde{\h}_{n-2}(\BDA_n^\U(\F)') \to \widetilde{\h}_{n-2}(\BDA_n^\U(\F))
    \end{align*}
    and so the fact that $\del$ is an isomorphism follows immediately from the fact that $\BDA_n^\U(\F)$ is $(n-1)$-connected.

    To prove that $\Psi_*$ is an isomorphism, we will proceed by studying the map-of-posets spectral sequence (Theorem \ref{posetss}) of the poset map $\Psi: \mathcal{P}(\BDA_n^\U(\F)') \to \T_n^\U(\F)$. This takes the form
\begin{align*}
    \E_{kh}^2 = \h_k(\T_n^\U(\F); [V \mapsto \h_h(\Psi_{\leq V})]) \implies \h_{k+h}(\mathcal{P}(\BDA_n^\U(\F)')
\end{align*}
We will use Lemma \ref{posetheightm} to compute groups on the $\E^2$ page. This will require the following two facts. Consider $V \in \T_n^\U(\F)$.
\begin{itemize}
    \item We have
    \begin{align*}
        \Psi_{\leq V} \cong \BDA_{\dim(V)}^\U(\F) = \BDA_{\het(V)+1}^\U(\F).
    \end{align*}
    Our assumption says that this is $\het(V)$-connected. Since it has dimension atmost $(\het(V)+1)$, we conclude that
    \begin{align}
    \label{hsupport1}
        \widetilde{\h}_h(|\Psi_{\leq V}|) = 0 \hspace{0.5cm} \text{ for all } h \neq \het(V)+1
    \end{align}
Note that the dimension is exactly $(\het(V)+1)$ except in the degenerate case of $\het(V)=0$. In this case, $\BDA_1^\U(\F)$ is a single point.
The upshot is that for $h \geq 1$, the functor
\begin{align*}
    V \mapsto \h_h(\Psi_{\leq V})
\end{align*}
is supported on elements of height $h-1$.
    \item We have
    \begin{align*}
        (\T_n^\U(\F))_{>V} \cong \T_{n-\dim(V)}^\U(\F) = \T_{n-1-\het(V)}^\U(\F).
    \end{align*}
    Lemma \ref{solomontitsU} says that this is Cohen-Macaulay of dimension $(n-3-\het(V))$, which implies that
    \begin{align}
    \label{hsupport2}
        \widetilde{\h}_k(|(\T_n^\U(\F))_{>V}|) = 0 \hspace{0.5cm} \text{ for all } k \neq n-3-\het(V).
    \end{align}
    \end{itemize}
We first analyze the bottom row $\E_{k0}^2$ of the $\E^2$ page. Proposition \ref{BDAconnectivity} implies that $\BDA_{\dim(V)}^\U(\F)$ is connected when $\dim(V) \geq 2$. When $\dim(V)=1$, it is a single point and thus is also connected.
We see that
\begin{align*}
    \E_{k0}^2 = \h_k(\T_n^\U(\F); [V \mapsto \h_0(\Psi_{\leq V})]) = \h_k(\T_n^\U(\F); \underline{\Z}) = \begin{cases}
        {\h}_{n-2}(\cT_n^\U(\F)) & \text{if } k= n-2 \\
        \Z & \text{if } k=0 \text{ and } n \neq 2 \\
        0 & \text{if } k \neq 0, n-2
    \end{cases}
\end{align*}
The last equality uses Lemma \ref{solomontitsU}.

We now analyze the rows for which $h \geq 1$.

Fact \ref{hsupport1} implies that for $h \geq 1$, we have
\begin{align*}
     \E_{kh}^2 = \h_k(\T_n^\U(\F); [V \mapsto \h_h(\Psi_{\leq V})]) & = \bigoplus_{\het(V) = h-1} \widetilde{\h}_{k-1}(|(\T_n^\U(\F)_{>V})|; \h_h(\Psi_{\leq V}))
\end{align*}
Fact \ref{hsupport2} now implies that if $k-1 \neq n-3-\het(V) = n-3-h+1$, or equivalently if $k+h \neq n-1$, then this last group is 0.
Thus we have
\begin{align}
    \label{E2bigrows}
    \E_{kh}^2 = 0 \hspace{0.25cm} \text{ for } h\geq 1, k+h \neq n-1
\end{align}

    The above two calculations together show that the only nonzero entry in the $(n-2)$-diagonal is
    \begin{align*}
        \E_{n-2,0}^2 = \h_{n-2}(\cT_n^\U(\F))
    \end{align*}
    Note that there are no nontrivial differentials going into $\E_{n-2,0}^2$. Furthermore, we also see that the $(n-3)$-diagonal either contains no nonzero entries (if $n \neq 3$), or consists of the single entry $\E_{0,0}^2 = \Z$ (if $n=3$). In either case, there are no nontrivial differentials going out of $\E_{n-2,0}^2$.
    Thus this term survives until $\E^\infty$. Since there are no other nonzero entries in the $(n-2)$-diagonal, and the edge value in our spectral sequence is the image of the map
    \begin{align*}
        \Psi_* : \h_{n-2}(\BDA_n^\U(\F)') \to \h_{n-2}(\cT_n^\U(\F)),
    \end{align*}
we conclude that this map is an isomorphism, as desired.
\end{proof}

As an immediate corollary, we get the following:
\begin{corollary}
\label{TUCoker}
Let $\F$ be a field, and let $\U$ be a multiplicative subgroup of $\F^\times$ with $-1 \in \U$. Suppose $\BDA_n^\U(\F)$ is $(n-1)$-connected for all $n \geq 1$. Then we have an isomorphism
\begin{align*}
    \widetilde{\h}_{n-2}(\cT_n^\U(\F)) \cong \coker(\C_{n}(\BDA_n^\U(\F), \BDA_n^\U(\F)') \to \C_{n-1}(\BDA_n^\U(\F), \BDA_n^\U(\F)'))
\end{align*}
\end{corollary}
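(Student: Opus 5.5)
The plan is to combine the two isomorphisms of Lemma \ref{TUBykovskii} with the standard identification of top relative homology of a pair with a cokernel of relative chains. Write $X = \BDA_n^\U(\F)$ and $X' = \BDA_n^\U(\F)'$. By Lemma \ref{TUBykovskii}, under the hypothesis that $\BDA_m^\U(\F)$ is $(m-1)$-connected for all $m \geq 1$, the composite $\Psi_* \circ \del$ is an isomorphism $\h_{n-1}(X, X') \cong \widetilde{\h}_{n-2}(\cT_n^\U(\F))$ for $n \geq 2$. So it suffices to identify $\h_{n-1}(X,X')$ with the stated cokernel.

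For this, consider the relative simplicial chain complex $\C_\bullet(X,X') = \C_\bullet(X)/\C_\bullet(X')$. Every simplex of $X$ of dimension at most $n-2$ has at most $n-1$ vertices, so its span is a proper subspace of $\F^n$ and it lies in $X'$. Hence $\C_k(X,X') = 0$ for $k \leq n-2$. In particular $\C_{n-2}(X,X') = 0$, so every relative $(n-1)$-chain is a cycle, and $\h_{n-1}(X,X') = \C_{n-1}(X,X')/\im(\C_n(X,X') \to \C_{n-1}(X,X'))$, which is exactly the cokernel in the statement. Also $X$ has dimension $n$ (augmented simplices have $n+1$ vertices), so $\C_n(X,X')$ is the top relevant group and no further terms intervene.

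Combining the two steps gives the isomorphism. The case $n=1$, if intended, is degenerate: $X$ is a point, $X'$ is empty, and both sides should be checked directly under the paper's conventions for $\widetilde{\h}_{-1}$; I expect it to agree since $\widetilde{\h}_{-1}(\emptyset) = \Z$ and the cokernel is $\C_0(X) = \Z$. There is no real obstacle. The only point needing care is verifying that low-dimensional simplices lie in $X'$, so that relative chains vanish below degree $n-1$.
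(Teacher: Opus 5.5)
Your proposal is correct and follows the paper's own proof. The paper likewise notes that $\BDA_n^\U(\F)'$ contains the $(n-2)$-skeleton of $\BDA_n^\U(\F)$, so $\h_{n-1}(\BDA_n^\U(\F),\BDA_n^\U(\F)')$ is the stated cokernel, and then applies the isomorphisms $\Psi_*\circ\del$ of Lemma \ref{TUBykovskii}; your extra check of the degenerate case $n=1$ is harmless.
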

\begin{proof}
    Since $\BDA_n^\U(\F)'$ contains the $(n-2)$-skeleton of $\BDA_n^\U(\F)$, we have 
    \begin{align*}
        \h_{n-1}(\BDA_n^\U(\F), \BDA_n^\U(\F)') = \coker(\C_{n}(\BDA_n^\U(\F), \BDA_n^\U(\F)') \to \C_{n-1}(\BDA_n^\U(\F), \BDA_n^\U(\F)')),
    \end{align*} and so this follows immediately from Lemma \ref{TUBykovskii}.
\end{proof}

We now have the following key lemma. 
\begin{lemma}
\label{BRingPrime}
  Let $R$ be a Euclidean domain, let $p \in R$ be a prime, and let $\F$ be the quotient field $R/(p)$. Let $\mc{U} = R^\times$ and let $\U$ be the image of $\mc{U}$ under the map $R \to \F$. Then for all $n \geq 1$, we have isomorphisms 
  \begin{align*}
      \C_{n-1}(\BDA_n^\U(\F), \BDA_n^\U(\F)') \cong \C_{n-1}(\BD_n^\U(\F)) \cong (\C_{n-1}(\B_n^\mc{U}(R)))_{\Gamma_n(p)}
  \end{align*}
\end{lemma}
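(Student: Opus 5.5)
The plan is to prove the two isomorphisms separately. Both are identifications of free abelian groups on explicit sets of simplices, with coinvariants of a permutation module. Throughout, I will use the convention that $\C_{n-1}$ of a simplicial complex is the free abelian group on its oriented $(n-1)$-simplices. An orientation reversal acts by $-1$.

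For the first isomorphism, note that $\BDA_n^\U(\F)'$ contains every simplex whose span is a proper subspace. So $\C_{n-1}(\BDA_n^\U(\F), \BDA_n^\U(\F)')$ is free on the $(n-1)$-simplices of $\BDA_n^\U(\F)$ that span $\F^n$. An $(n-1)$-simplex has $n$ vertices. If it were additive, its $n$ vertices would span only an $(n-1)$-dimensional space, since the additive core spans a $2$-plane, so it would lie in $\BDA_n^\U(\F)'$. Hence the spanning $(n-1)$-simplices are exactly the standard ones, i.e. the top simplices of $\BD_n^\U(\F)$. Since $\BD_n^\U(\F)$ is $(n-1)$-dimensional, $\C_{n-1}(\BD_n^\U(\F))$ is free on the same set, and the two chain groups agree.

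For the second isomorphism, consider reduction mod $p$ on $\U$-vectors. Send $[\vec{v}]$ with $\vec{v}\in R^n$ primitive (as part of a basis) to $[\bar{\vec{v}}]$. Since $\U$ is the image of $\mc{U}=R^\times$, this is well defined, and it takes $\mc{U}$-bases $\{[\vec{v}_1],\dots,[\vec{v}_n]\}$ of $R^n$ to partial $\U$-bases of $\F^n$ of size $n$. The determinant of a basis of $R^n$ lies in $R^\times$, and its image lies in $\U$. So the map lands in the determinant-$\U$ top simplices, giving a surjective-candidate map $\C_{n-1}(\B_n^{\mc{U}}(R)) \to \C_{n-1}(\BD_n^\U(\F))$. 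It is $\Gamma_n(p)$-invariant because $\Gamma_n(p)$ acts trivially mod $p$. It therefore factors through the coinvariants.

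Since $\C_{n-1}(\B_n^{\mc{U}}(R))$ is a signed permutation module, its coinvariants are free on the $\Gamma_n(p)$-orbits of top simplices, except that orbits where some group element reverses orientation contribute $2$-torsion. So the key steps are as follows.

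\begin{enumerate}
\item \emph{Surjectivity on simplices.} Given $\bar{\vec{v}}_1,\dots,\bar{\vec{v}}_n$ with determinant $\bar u\in\U$, lift $u$ to $u'\in R^\times$. Rescale so that the determinant is $1$, and use that $\SL_n(R)\to\SL_n(\F)$ is surjective, which holds since $R$ is Euclidean and $\SL_n(\F)$ is generated by elementary matrices that lift. This gives a lifted basis.
\item \emph{Orbits correspond to fibers.} Two $\mc{U}$-bases with the same image lie in one $\Gamma_n(p)$-orbit. Choose representatives matching the given reduction, possibly after reordering vertices and rescaling by units whose reductions agree. The two matrices then have determinants that are units congruent mod $p$. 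Adjust one column by a unit $\equiv 1 \pmod p$ so that the determinants coincide, absorbing this into the $\mc{U}$-vector. The transition matrix is then in $\Gamma_n(p)$.
\item \emph{No orientation reversal.} An element of $\Gamma_n(p)$ stabilizing a top simplex setwise must act trivially on its vertex set, since mod $p$ it fixes each $[\bar{\vec{v}}_i]$ and these are distinct. So it induces the identity permutation, and no torsion arises.
\end{enumerate}

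The main obstacle is step 2, specifically handling the determinant. The lifted bases may have determinants differing by a unit congruent to $1$ mod $p$ that is not $1$, and individual vectors are only determined up to $\mc{U}$. I would first normalize one vector's unit so that the determinants agree exactly, and then verify that the transition matrix has determinant $1$ and reduces to the identity. This needs care with the scalars $\mc{U}\to\U$, which need not be injective. I will track the representatives explicitly: pick $\vec{v}_i'$ with $\bar{\vec{v}}'_i=\bar{\vec{v}}_i$ exactly, and then correct $\vec{v}_1'$ by the unit ratio of determinants, which is $\equiv1 \pmod p$.
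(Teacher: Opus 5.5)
Your proposal is correct and follows essentially the same route as the paper: identify the relative chain group with the top simplices of $\BD_n^\U(\F)$ (additive $(n-1)$-simplices span a proper subspace), then use reduction mod $p$, surjectivity of $\SL_n(R)\to\SL_n(\F)$, and a transition matrix in $\Gamma_n(p)$ to match $\Gamma_n(p)$-orbits of top simplices with determinant-$\U$ bases of $\F^n$. Your extra care fills in details the paper leaves implicit. In Step 2 you rescale one column by the unit ratio of determinants, which is $\equiv 1 \pmod p$, so the transition matrix really has determinant $1$; the paper's lifts $u_i'$ only have product $\equiv 1 \pmod p$, not necessarily equal to $1$. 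In Step 3 you rule out orientation-reversing stabilizers, which is what makes the coinvariants free on orbits.
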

\begin{proof}
   The chain group $\C_{n-1}(\BDA_n^\U(\F), \BDA_n^\U(\F)')$ is the free abelian group generated by all the $(n-1)$-dimensional simplices of $\BDA_n^\U(\F)$ whose vertices span $\F^n$, and these are the same as the $(n-1)$-dimensional simplices of $\BD_n^\U(\F)$, so the first isomorphism follows.

For the second isomorphism, note that there is a map 
\begin{align*}
    \C_{n-1}(\B_n^\mc{U}(R)) = \C_{n-1}(\BD_n^\mc{U}(R)) \to \C_{n-1}(\BD_n^\U(\F))
\end{align*} that sends a set of $\mc{U}$-vectors $\{[\vec{v}_1], \dots, [\vec{v}_n]\}$ in $R^n$ to the $\U$-vectors in $\F^n$ determined by the mod $p$ reductions of the $\vec{v}_i$, and that this map is equivariant with respect to the $\Gamma_n(p)$-action on $\C_{n-1}(\B_n^\mc{U}(R))$. Now, note that the chain group $\C_{n-1}(\BD_n^\U(\F))$ is freely generated by collections of $\U$-vectors $\{[\vec{v}_1], \dots, [\vec{v}_n]\}$ in $\F^n$ such that we can pick the representatives $\vec{v}_i$ so that they are the columns a matrix in $\SL_n(\F)$. We only need matrices of determinant 1 (rather than having determinant in $\U$) since we are free to multiply the $\vec{v}_i$ with elements of $\U$ as needed. Similarly, $\C_{n-1}(\BD_n^\mc{U}(R))$ is freely generated by sets of $\mc{U}$-vectors $\{[\vec{v}_1], \dots, [\vec{v}_n]\}$ in $R^n$ such that the $\vec{v}_i$ are the columns of a matrix in $\SL_n(R)$. So surjectivity follows from the classical fact that the map
    \begin{align*}
        \SL_n(R) \longrightarrow \SL_n(\F)
    \end{align*}
    that reduces matrices modulo $p$ is surjective. For injectivity, suppose that there are two sets of $\mc{U}$-vectors $\{ [\vec{V}_1],\dots, [\vec{V}_n]\}$ and $\{ [\vec{W}_1],\dots, [\vec{W}_n]\}$ of $R^n$ that both map to the same set of $\U$-vectors $\{[\vec{v}_1], \dots, [\vec{v}_n]\}$ of $\F^n$. We can choose these vector representatives such that
    \begin{align*}
        & \det(\vec{V}_1, \dots, \vec{V}_n) = 1 = \det(\vec{W}_1, \dots, \vec{W}_n) = 1\\
        & \det(\vec{v}_1, \dots, \vec{v}_n) = 1\\
        &\vec{V}_i \mapsto \vec{v}_i \text{ under the map }R^n\to \F^n\text{ for } 1\leq i \leq n
    \end{align*}
    Let $\vec{w}_i$ be the mod $p$ reductions of the $\vec{W}_i$. Thus
    \begin{align*}
        \det(\vec{w}_1, \dots, \vec{w}_n) = 1
    \end{align*}
    Since $[\vec{v}_i] = [\vec{w}_i]$ for all $i$, we have
    \begin{align*}
        \vec{v}_i = u_i\vec{w}_i
    \end{align*}
    for some $u_1, \dots, u_n \in \U$. The determinant condition implies that
    \begin{align*}
        u_1u_2\dots u_n = 1
    \end{align*}
    Choose units $u_1', \dots, u_n' \in \mc{U}$ that map to $u_1, \dots, u_n$ respectively under the map $R \to \F$. 
    Then note that the matrices
    \begin{align*}
        (\vec{V_1}, \dots ,\vec{V}_n) \text{ and } (u_1'\vec{W}_1, \dots, u_n'\vec{W}_n )
    \end{align*}
    map to the same matrix, namely
    \begin{align*}
       \left(\vec{v_1}, \dots ,\vec{v}_n\right) 
    \end{align*}
    under the mod $p$ reduction map. Thus there exists a matrix $A \in \Gamma_n(p)$ that takes one matrix to the other. Since $[\vec{W}_i] = [u_i'\vec{W}_i]$ for all $i$, we deduce that the sets $\{[\vec{V}_1],\dots, [\vec{V}_n]\}$ and $\{[\vec{W}_1],\dots, [\vec{W}_n]\}$ are in the same coset of $(\C_{n-1}(\B_n^\mc{U}(R)))_{\Gamma_n(p)}$.
\end{proof}

\begin{theorem}
\label{St_U_Iso}
    Let $R$ be a Euclidean domain with field of fractions $K$. Let $p \in R$ be a prime and let $\F$ be the quotient field $R/(p)$. Let $\mc{U} = R^\times$ and let $\U$ be the image of $\mc{U}$ under the map $R \to \F$.
    Consider the map
    \begin{align}
    \label{eqn:lsisogen}
        (\St_n(K))_{\Gamma_n(p)} \to \widetilde{\h}_{n-2}(\cT_n^\U(\F))
    \end{align}
    induced by the natural map $\T_n(K) \cong \T_n(R) \to \T_n^\U(\F)$ that sends a summand $V \subset R^n$ to its mod $p$ reduction equipped with the induced $\U$-orientation on it. This map is always surjective for $n \geq 2$.
     If in addition $\BDA_n^\U(\F)$ is $(n-1)$-connected for all $n \geq 2$, then the map \eqref{eqn:lsisogen} is an isomorphism.
\end{theorem}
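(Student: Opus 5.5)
We will compare two presentations via a commutative square of chain-level maps. Throughout, let $\mc{U}=R^\times$. We use the Church--Putman surjection of Lemma \ref{CPBykovskii}, $\C_{n-1}(\B_n^{\mc{U}}(R)) \surj \St_n(K)$, which sends an $(n-1)$-simplex to its apartment class. This map is $\SL_n(R)$-equivariant, and in particular $\Gamma_n(p)$-equivariant. Since coinvariants are right exact, it induces a surjection
\begin{align*}
(\C_{n-1}(\B_n^{\mc{U}}(R)))_{\Gamma_n(p)} \surj (\St_n(K))_{\Gamma_n(p)}.
\end{align*}
By Lemma \ref{BRingPrime}, the source is $\C_{n-1}(\BD_n^\U(\F))$. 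Next we check that the resulting square commutes. The square consists of this map, the map \eqref{eqn:lsisogen}, and the map $\C_{n-1}(\BD_n^\U(\F)) \to \widetilde{\h}_{n-2}(\cT_n^\U(\F))$ of Corollary \ref{cor:CPBykAnalogue}. To check commutativity, we verify that the map $\T_n(R)\to\T_n^\U(\F)$ sends the apartment of an $R$-basis $\{\vec{V}_i\}$ to the ``$\U$-oriented apartment'' of its reductions $\{\vec{v}_i\}$. Each flag $\lin{\vec{V}_{\sigma(1)},\dots,\vec{V}_{\sigma(k)}}$ reduces to the corresponding span of the $\vec{v}_i$. That span carries the $\U$-orientation $[\vec{v}_{\sigma(1)}\wedge\dots\wedge\vec{v}_{\sigma(k)}]$, which is independent of the unit rescalings of the lifts. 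The apartment class used in Corollary \ref{cor:CPBykAnalogue} must be interpreted with exactly these orientations, and signs match termwise. Surjectivity of \eqref{eqn:lsisogen} then follows immediately: the composite through $\C_{n-1}(\BD_n^\U(\F))$ is surjective by Corollary \ref{cor:CPBykAnalogue}.

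For injectivity under the connectivity hypothesis, we use Corollary \ref{TUCoker}. It identifies $\widetilde{\h}_{n-2}(\cT_n^\U(\F))$ with the cokernel of
\begin{align*}
\del:\C_n(\BDA_n^\U(\F),\BDA_n^\U(\F)')\to\C_{n-1}(\BDA_n^\U(\F),\BDA_n^\U(\F)')\cong \C_{n-1}(\BD_n^\U(\F)),
\end{align*}
and one checks that this identification is compatible with the apartment map. Hence it suffices to show that the surjection $\C_{n-1}(\BD_n^\U(\F))\surj(\St_n(K))_{\Gamma_n(p)}$ kills the image of $\del$. That is, for every $n$-simplex of $\BDA_n^\U(\F)$ spanning $\F^n$, the alternating sum of the apartments of its $(n-1)$-faces spanning $\F^n$ must vanish in $(\St_n(K))_{\Gamma_n(p)}$. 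Such an $n$-simplex has the form $\{[\vec{v}_0],[\vec{v}_1],\dots,[\vec{v}_n]\}$ with $\vec{v}_0=\lambda\vec{v}_1+\nu\vec{v}_2$, where $\lambda,\nu\in\U$ and $\det(\vec{v}_1,\dots,\vec{v}_n)\in\U$. Its faces spanning $\F^n$ are the ones omitting $\vec{v}_0$, $\vec{v}_1$, or $\vec{v}_2$.

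The main step is to lift such a simplex to $R$. First choose $\lambda',\nu'\in\mc{U}$ reducing to $\lambda,\nu$. After rescaling by units we may assume $\det(\vec{v}_1,\dots,\vec{v}_n)=1$. By surjectivity of $\SL_n(R)\to\SL_n(\F)$, we then lift $(\vec{v}_1,\dots,\vec{v}_n)$ to $(\vec{V}_1,\dots,\vec{V}_n)\in\SL_n(R)$ and set $\vec{V}_0=\lambda'\vec{V}_1+\nu'\vec{V}_2$. The three relevant faces then lift to $R$-bases, and their apartment classes in $\St_n(K)$ satisfy the classical three-term relation
\begin{align*}
[\vec{V}_1,\vec{V}_2,\dots]-[\vec{V}_0,\vec{V}_2,\dots]+[\vec{V}_0,\vec{V}_1,\dots]=0,
\end{align*}
since apartments depend only on the lines, and two vectors and their sum give the standard relation in the Tits building. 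Thus the boundary maps to zero already in $\St_n(K)$. We also need that the lifted faces map to the correct generators under the identification of Lemma \ref{BRingPrime}; this holds because that identification is independent of the choice of lift. The isomorphism follows. The main obstacle we anticipate is the bookkeeping of orientations and signs in the commutative square: we must check that the cokernel description of Corollary \ref{TUCoker} and the Church--Putman map use matching conventions, so that both the relation and the apartment classes agree on the nose.
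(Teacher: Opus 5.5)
Your proposal is correct and follows the paper's proof essentially step for step. You pass the Church--Putman surjection of Lemma \ref{CPBykovskii} to $\Gamma_n(p)$-coinvariants, identify its source via Lemma \ref{BRingPrime}, and get surjectivity from the commutative square with Corollary \ref{cor:CPBykAnalogue}; for injectivity you use Corollary \ref{TUCoker}, lifting additive $n$-simplices through $\SL_n(R)\surj\SL_n(\F)$ and invoking the three-term relation in $\St_n(K)$, with your orientation check filling in the detail the paper calls ``not hard to check.''
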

\begin{proof}
    Lemma \ref{CPBykovskii} states that $ \C_{n-1}(\B_n^\mc{U}(R)) \to \St_n(K)$ is surjective, and this implies that the induced map
    \begin{align*}
         (\C_{n-1}(\B_n^\mc{U}(\R)))_{\Gamma_n(p)} \to (\St_n(K))_{\Gamma_n(p)}
    \end{align*}
    is surjective. 

Corollary \ref{cor:CPBykAnalogue} tells us that $\C_{n-1}(\BD_n^\U(\F)) \to \widetilde{\h}_{n-2}(\cT_n^\U(\F))$ is surjective.

It is not hard to check that the composition 
\begin{align*}
    (\C_{n-1}(\B_n^\mc{U}(\R)))_{\Gamma_n(p)} \surj (\St_n(K))_{\Gamma_n(p)} \to \widetilde{\h}_{n-2}(\cT_n^\U(\F))
\end{align*}

is the same as the composition

\begin{align*}
    (\C_{n-1}(\B_n^\mc{U}(\R)))_{\Gamma_n(p)} \cong \C_{n-1}(\BD_n^\U(\F)) \surj \widetilde{\h}_{n-2}(\cT_n^\U(\F)),
\end{align*}

where the isomorphism in the above expression comes from Lemma \ref{BRingPrime}. This implies surjectivity of $(\St_n(K))_{\Gamma_n(p)} \to \widetilde{\h}_{n-2}(\cT_n^\U(\F))$ for all $n \geq 2$.

Now, assuming that $\BDA_n^\U(\F)$ is $(n-1)$-connected for all $n \geq 1$, we turn to showing that this map is an isomorphism.
    
    By tracing the descriptions of the maps $\del$ and $\Psi$ from Lemma \ref{TUBykovskii}, we see that the composition
    \begin{align}\label{comp1}
    \C_{n-1}(\BDA_n^\U(\F), \BDA_n^\U(\F)') 
    \cong (\C_{n-1}(\B_n^\mc{U}(\R)))_{\Gamma_n(p)} \to (\St_n(K))_{\Gamma_n(p)} \to \widetilde{\h}_{n-2}(\cT_n^\U(\F)) 
    \end{align}
    is the same as the composition
    \begin{IEEEeqnarray}{l}
        \C_{n-1}(\BDA_n^\U(\F), \BDA_n^\U(\F)')  \to \coker(\C_n(\BDA_n^\U(\F), \BDA_n^\U(\F)') \to \C_{n-1}(\BDA_n^\U(\F), \BDA_n^\U(\F)')) \nonumber \\
        \hspace{10cm} \cong \widetilde{\h}_{n-2}(\cT_n^\U(\F)),
        \label{comp2}
    \end{IEEEeqnarray}
    where the isomorphism in \eqref{comp2} is due to Corollary \ref{TUCoker}. This (once again) implies surjectivity of $(\St_n(K))_{\Gamma_n(p)} \to \widetilde{\h}_{n-2}(\cT_n^\U(\F))$. 
    
    We need to show that this map is also injective. Note that the equality of the compositions \eqref{comp1} and \eqref{comp2} implies that showing injectivity is equivalent to showing that anything in the kernel of the map
    \begin{align*}
        \C_{n-1}(\BDA_n^\U(\F), \BDA_n^\U(\F)') \to \widetilde{\h}_{n-2}(\cT_n^\U(\F))
    \end{align*}
    is also in the kernel of the map
    \begin{align*}
        \C_{n-1}(\BDA_n^\U(\F), \BDA_n^\U(\F)') \to (\St_n(K))_{\Gamma_n(p)}
    \end{align*}
    By Corollary \ref{TUCoker}, we know that the kernel of 
    \begin{align*}
        \C_{n-1}(\BDA_n^\U(\F), \BDA_n^\U(\F)') \to \widetilde{\h}_{n-2}(\cT_n^\U(\F))
    \end{align*}
    is equal to 
    \begin{align*}
        \im(\C_n(\BDA_n^\U(\F), \BDA_n^\U(\F)') \to \C_{n-1}(\BDA_n^\U(\F), \BDA_n^\U(\F)'))
    \end{align*}
    This image is generated by sums of the form
    \begin{align}\label{FBykSum}
        [[\vec{v}_1], \dots, [\vec{v}_n]] - [[\vec{v}_0], [\vec{v}_2], \dots, [\vec{v}_n]] + [[\vec{v}_0], [\vec{v}_1], [\vec{v}_3], \dots, [\vec{v}_n]]
    \end{align}
    where $[\vec{v}_1], \dots, [\vec{v}_n]$ form a determinant-$\U$ basis of $\F^n$ and $\vec{v}_0 = \lambda\vec{v}_1 + \nu \vec{v}_2$ for some $\lambda, \nu \in \U$. We can pick the representatives so that
    \begin{align*}
        \det(\vec{v}_1, \dots, \vec{v}_n) = 1
    \end{align*}
    and so the surjectivity of
    \begin{align*}
        \SL_n(R) \to \SL_n(\F)
    \end{align*}
    implies that for $1 \leq i \leq n$, we can pick vectors $\vec{V}_i \in R^n$ that map to $\vec{v}_i$ under the mod $p$ reduction map. Let $\vec{V}_0 = \lambda'\vec{V}_1 + \nu'\vec{V}_2$, where $\lambda', \nu' \in \mc{U}$ are preimages of $\lambda, \nu \in \U$.
    The map  $\C_{n-1}(\BDA_n^\U(\F), \BDA_n^\U(\F)') \to (\St_n(K))_{\Gamma_n(p)}$ sends the sum \eqref{FBykSum} to the image in $(\St_n(K))_{\Gamma_n(p)}$ of the following sum of apartments of $\St_n(K)$:
    \begin{align}
        \label{BykSum}
        [[\vec{V}_1], \dots, [\vec{V}_n]] - [[\vec{V}_0], [\vec{V}_2], \dots, [\vec{V}_n]] + [[\vec{V}_0], [\vec{V}_1], [\vec{V}_3], \dots, [\vec{V}_n]]
    \end{align}
    The sum \eqref{BykSum} is a relation among apartments in $\St_n(K)$, i.e. it is 0 in $\St_n(K)$ -- this can easily be verified using the description of apartment classes stated in Theorem \ref{thm:soltit} -- and thus so is its image in $(\St_n(K))_{\Gamma_n(p)}$. This completes the proof.
\end{proof}
\begin{remark}
    Note that for the last step of the proof, we needed to know that the Bykovskii relation \eqref{BykSum} always holds in $\St_n(K)$, but we did not need to know that this type of sum generates \emph{all} the relations between apartments in $\St_n(K)$. The Bykovskii relation does give a full presentation for $\St_n(K)$ in certain cases, namely for $R = \Z$ (see \cite{church2017codimension}), and for $R = \Z[\io]$ or $\Z[\w]$ (see \cite{kupers2022generalized}). Since we don't assume the Bykovskii presentation here, this allows us to compute the top cohomology of $\Gamma_n(p)$ for primes in many other Euclidean rings.
\end{remark}

\begin{proof}[Proof of Theorems \ref{thm:kmpwanswer} and \ref{thm:generallsiso}]
The assertion of surjectivity in Theorem \ref{thm:generallsiso} follows from Theorem \ref{St_U_Iso} and Proposition \ref{Gammaquotient}. For the injectivity claim, note that if $\U = \F^\times$, then we have $\BDA_n^\U(\F) \cong \BA_n^\U(\F)$, which is $(n-1)$-connected by Proposition \ref{BAUconnectivity}. In the other case we have $(n-1)$-connectivity of $\BDA_n^\U(\F)$ by Proposition \ref{primeconditions} as long as $\F$ is finite, in which case Theorem \ref{thm:generallsiso} again follows immediately from Theorem \ref{St_U_Iso} and Proposition \ref{Gammaquotient}.  The finiteness of $\F$ holds because of a general fact: the quotient of any number ring by a non-zero prime ideal is finite. Indeed, say $\mc{O}_K$ is a number ring with a prime ideal $\mathfrak{p}$. Let $0 \neq \alpha \in \mathfrak{p}$. Since $\alpha$ is an algebraic integer, it satisfies a polynomial equation with integer coefficients:
\begin{align*}
    \alpha^m + b_{m-1}\alpha^{m-1} + \dots + b_1\alpha + b_0 = 0
\end{align*}
Thus we have
\begin{align*}
    b_0 = -\alpha(\alpha^{m-1} + b_{m-1}\alpha^{m-2} + \dots + b_1) \in \mathfrak{p}
\end{align*}
Thus $\mathfrak{p}$ contains an integer $b_0 \in \Z$. Since $\mathfrak{p}$ is prime, it must contain a prime factor of $b_0$, say $p$. So $\mc{O}_K/\mathfrak{p}$ is a vector space over the finite field $\F_p$. Now, since $\mc{O}_K$ is a free $\Z$-module of finite rank, it follows that $\mc{O}_K/\mathfrak{p}$ has finite dimension over $\F_p$, and thus must be finite.

    It remains to show that in the specific cases stated in Theorem \ref{thm:kmpwanswer}, that we have $|\F^\times/\U| = 1$ or $2$.
    In any algebraic number ring $R$ that is Euclidean, the number of elements in the quotient $R/(p)$ for some $p \in R$ is precisely the norm of $p$. We will make use of this in all our examples.

    \begin{itemize}
        \item ${R} = {\Z}[\io]$: The norm of an element $a+ b\io \in \Z[\io]$ is $a^2+b^2$. Thus the primes $p = 3$ and $p = 2\io+1$ have norms 9 and 5, respectively, and the quotient field $\F = R/(p)$ in either case has size 9 and 5, respectively.
        Observe that $R^\times = \Z[\io]^\times = \{\pm 1, \pm \io\}$. The image $U$ of $R^\times$ in $\F = R/(p)$ in both cases of $p = 3$ or $2\io +1$ has 4 elements. Thus when $p = 3$, we have $|\F^\times/\U| = \frac{|\F|-1}{|\U|} = \frac{8}{4} = 2$, and when $p = 2\io +1$, we have $|\F^\times/\U| = \frac{|\F|-1}{|\U|} = \frac{4}{4} = 1$.

        \item $R = \Z[\w]$: The norm of an element $a+b\w \in \Z[\w]$ is $a^2-ab+b^2$. Thus, for $p = 4\w+1$ or $4\w+3$, the quotient field $\F = R/(p)$ has size $13$. We have $R^\times = \Z[\w]^\times = \{\pm1, \pm \w, \pm \w^2\}$, and the image $U$ of $R^\times$ in $\F$ in both cases has size 6. Thus in both cases we have $|\F^\times/\U| = \frac{|\F|-1}{|\U|} = \frac{12}{6} = 2$. Similarly, in the cases when $p = 3\w+1$ or $3\w+2$, we have $|\F| = 7$, and hence $|\F^\times /\U| = 1$.

        \item $R = \Z[\sqrt{2}]$: The norm of an element $a+b\sqrt{2} \in \Z[\sqrt{2}]$ is $a^2-2b^2$. Thus $p = 5$ and $p = 3+\sqrt{2}$ have norm 25 and 7, respectively.
        
        The group of units $R^\times$ is $\{\pm 1\} \times \{(1+\sqrt{2})^k | k \in \Z\}$.
        Now, upon writing down the powers of $1+\sqrt{2}$ modulo $p=5$, we find that $(1+\sqrt{2})^3 = 7+5\sqrt{2} \equiv 2 \mod{5}$. Thus we have $(1+\sqrt{2})^6 \equiv 2^2 \equiv -1 \mod{5}$, and so $1+\sqrt{2}$ has order 12 in $\F = \Z[\sqrt{2}]/(5)$. From this and the fact that $(1+\sqrt{2})^6 = -1$ in $\F$, it follows that the image $\U$ of $R^\times = \{\pm1\}\times \{(1+\sqrt{2})^k | k \in \Z\}$ in $\F$ has order 12. Thus in this case we have $|\F^\times/\U| = \frac{|\F|-1}{|\U|} = \frac{24}{12} = 2$.

        Now we do the same thing for $p = 3+\sqrt{2}$. Set $\F= \Z[\sqrt{2}]/(3+\sqrt{2})$. Note that $(3+\sqrt{2})(3-\sqrt{2}) = 7$, so $p$ divides $7$. We have $(1+\sqrt{2})^3 = 7+5\sqrt{2} \equiv -2\sqrt{2} \mod{7}$. Thus $(1+\sqrt{2})^3 \equiv -2\sqrt{2} \mod{p}$. Since $p = 3+\sqrt{-2}$, we further have $-2\sqrt{2} \equiv 6 \mod{p}$. Finally, since $6 \equiv -1 \mod{7}$ and $p$ divides $7$, we end up with $(1+\sqrt{2})^3 \equiv -1 \mod{p}$. It follows that $(1+\sqrt{2})$ has order 6 in the quotient field $\F = \Z[\sqrt{2}]/(3+\sqrt{2})$. Since $\F$ contains exactly 7 elements, we have $|\F^\times /\U| = 1$.

        \item $R = \Z[\zeta_5]$: The Galois groups of $\Q[\zeta_5]$ is cyclic of order 4, consisting of the automorphisms $\zeta_5 \mapsto \zeta_5^k$ for $k = 1,2, 3, 4$. The norm of an element in $\Z[\zeta_5]$ is the product of its images under the action of each automorphism of the Galois group. In particular, the norm of $3 \in \Z[\zeta_5]$ is $3^4 = 81$. Thus $\F = \Z[\zeta_5]/(3)$ has 81 elements.

        Now, the group of units of $\Z[\zeta_5]$ is $\{\pm 1\} \times \{\zeta_5^k | 0 \leq k \leq 4\} \times \{ \left( \frac{1+\sqrt{5}}{2} \right)^k | k \in \Z\}$. $\zeta_5$ has order 5 in the ring $\Z[\zeta_5]$, and thus its image in $\F = \Z[\zeta_5]/(3)$ also has order 5. Upon writing the powers of $\frac{1+\sqrt{5}}{2}$ modulo 3, we see that $\left( \frac{1+\sqrt{5}}{2} \right)^4 = \frac{7+3\sqrt{5}}{2} \equiv \frac{1}{2} \equiv -1 \mod{3}$. Thus the image of $\frac{1+\sqrt{5}}{2}$ in $\F$ has order 8.

        It follows that the image $\U$ of the group of units $\Z[\zeta_5]^\times$ in $\F$ is generated by an element of order 5 and an element of order 8, and thus has order 40. Since $|\F| = 81$, it follows that $|\F^\times /\U| = 2$.
        
    \end{itemize}
\end{proof} 

\subsubsection{Computational Result}

In this section we state a recursive formula for the rank of $\widetilde{\h}_{n-2}(\cT_n^\U(\F))$ when $\F$ is a finite field. Thus in instances when the isomorphism of Theorem \ref{St_U_Iso} holds, this lets us compute the rank of $(\St_n(K))_{\Gamma_n(p)}$.

For $\F$ a field, let $\Gr_k(\F^n)$ denote the set of $k$-dimensional subspaces of $\F^n$.
We state our result as follows. It can be proved using the same proof as \cite[Theorem C]{miller2021top}. They work in the specific case when $\F = \F_p$ for some prime $p \in \Z$, and $\U = \{\pm\}$, but the same proof works more generally. Their proof is inspired by Bestvina's discrete Morse theory proof of the Solomon--Tits theorem in \cite[Proof of Theorem 5.1]{bestvina2008pl}.

\begin{prop}
    Suppose $\F$ is a finite field, and let $\U$ be a subgroup of $\F^\times$. For $n \geq 1$, let $t_n$ be the rank of $\widetilde{\h}_{n-2}(\cT_n^\U(\F))$. We then have $t_1 = 1$ and 
    \begin{align*}
        t_n = \left((|\F^\times/\U|-1) + (|\F^\times/\U|)|\F|^{n-1}\right)t_{n-1} + |\F/\U|(|\F^\times/\U|-1)\sum_{k=1}^{n-2}|\F|^k |\Gr_k(\F^{n-1})|t_kt_{n-k-1}
    \end{align*}
    for $n \geq 2$.
\end{prop}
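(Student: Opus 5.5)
The plan is to follow the discrete Morse theory argument of \cite[Theorem C]{miller2021top}, which adapts Bestvina's proof of the Solomon--Tits theorem \cite[Proof of Theorem 5.1]{bestvina2008pl}. Write $g = |\F^\times/\U|$, which is the number of $\U$-orientations on any nonzero subspace. By Lemma \ref{solomontitsU}, $\cT_n^\U(\F)$ is Cohen--Macaulay of dimension $n-2$, so it is homotopy equivalent to a wedge of $(n-2)$-spheres. Hence $t_n$ is the number of spheres, and it suffices to build $\cT_n^\U(\F)$ from a contractible subcomplex by attaching vertices one at a time. Each attachment should have descending link homotopy equivalent to a wedge of $(n-3)$-spheres whose number we can count. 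Attaching a vertex whose descending link is a wedge of $N$ spheres of dimension $n-3$ adds exactly $N$ spheres of dimension $n-2$, so $t_n$ is the sum of these counts.

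\textbf{The contractible starting piece.} Fix the line $L = \langle \vec{e}_n\rangle$ together with one chosen $\U$-orientation $\omega_0$ on it. I would take $Y_0$ to be the full subcomplex on those oriented subspaces $(V,[\omega])$ such that either $V \subsetneq V+L \neq \F^n$, or $V \supseteq L$ with an orientation compatible with a fixed rule. The rule I would try first is: choose a complement $H = \F^{n-1}$ of $L$, and use the orientations induced from $H\cap V$ and $\omega_0$. Then $Y_0$ should be a cone point, or deformation retract via $V \mapsto V+L$ onto the star of $(L,\omega_0)$, and hence contractible. The key point is that the map $V \mapsto V+L$ only becomes well defined on oriented objects once orientations are pinned down. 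So I expect the correct $Y_0$ to be exactly the set of oriented subspaces on which this pinning is possible.

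\textbf{Attaching the remaining vertices and counting.} The remaining vertices fall into three types.
\begin{enumerate}
\item The $g-1$ other orientations of $L$. The descending link of each is $\cT^\U(\F^n/L) \cong \cT_{n-1}^\U(\F)$, giving the term $(g-1)t_{n-1}$.
\item The oriented hyperplanes $H'$ with $L \not\subset H'$. There are $|\F|^{n-1}$ such hyperplanes, each with $g$ orientations. For each, the descending link is $\cT^\U(H') \cong \cT_{n-1}^\U(\F)$, giving the term $g|\F|^{n-1}t_{n-1}$.
\item Oriented subspaces $V \ni L$ of dimension $k+1$, for $1\le k\le n-2$, whose orientation is not the pinned one. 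There are $g-1$ bad orientation classes, twisted by the $|\F/\U|$ choices of how $\omega$ differs from the induced orientation along a transverse direction. I would count these $V$ via $|\F|^k|\Gr_k(\F^{n-1})|$, which is the number of $(k+1)$-subspaces containing $L$ paired with a complementary datum. The descending link of each such $V$ should be a join $\cT^\U(V/L)*\cT^\U(\F^n/V) \simeq \cT_k^\U * \cT_{n-k-1}^\U$. Its top homology has rank $t_kt_{n-k-1}$ and sits in degree $(k-2)+(n-k-3)+1 = n-4+1$, one below $n-2$, as required.
\end{enumerate}

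The vertices must be attached in an order so that each descending link is exactly as claimed. I would order them by type (1), then (3) by increasing dimension, then (2), and verify fullness of the intermediate subcomplexes so that the discrete Morse lemma applies.

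\textbf{The main obstacle.} I expect the hard part to be the bookkeeping of orientations in type (3). Specifically, the difficulty is producing the factor $|\F/\U|(g-1)|\F|^k$ exactly, and checking that the descending link really is the full join rather than something smaller. The plan here is to mimic \cite{miller2021top} line by line with $\{\pm1\}$ replaced by $\U$. As a consistency check, I would compare the resulting $t_n$ against the reduced Euler characteristic of $\cT_n^\U(\F)$, computed by directly counting oriented flags.
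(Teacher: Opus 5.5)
Your overall strategy, Bestvina-style discrete Morse theory on $\cT_n^\U(\F)$ as in \cite[Theorem C]{miller2021top} (which is all the paper invokes), is the right one. However, the Morse data you write down does not yield the stated recursion. Put $g=|\F^\times/\U|$ and $q=|\F|$.

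\textbf{Where the proposal fails.} There are three concrete problems.

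First, your type (3) vertices, namely oriented $V\supsetneq L$ with $\dim V=k+1$ and non-pinned orientation, number $(g-1)|\Gr_k(\F^{n-1})|$. A vertex carries no ``complementary datum'', so the factor $gq^k$ cannot come from the vertex count.

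Second, the link you claim, $\cT^\U(V/L)*\cT^\U(\F^n/V)$, has dimension $(k-2)+(n-k-3)+1=n-4$, not $n-3$. So it is not even in the right degree to add $(n-2)$-spheres.

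Third, with your order (type (1) first, then type (3) by increasing dimension), the descending links are different from what you state. Above $L$ or $V$ only the pinned orientations are present, so that half of the link is the \emph{unoriented} building $\cT(\F^n/V)$, while below $V$ all of $\cT^\U(V)$ is present. Carried out honestly, this gives a correct but different recursion, with terms $(g-1)q^{\binom{n-1}{2}}$ and $(g-1)|\Gr_k(\F^{n-1})|\,q^{\binom{n-k-1}{2}}t_{k+1}$. You would then still owe a combinatorial identity to reach the stated formula.

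\textbf{How to repair it.} Reverse the order.
\begin{itemize}
\item Keep $Y_0$. The rule ``induced from $H\cap V$ and $\omega_0$'' is not well defined, since $H\cap V$ has no preferred $\U$-orientation. But any choice $\pi(V)$ of one orientation for each $V\supseteq L$ works: $r(W,\omega)=(W+L,\pi(W+L))$ is a poset map with $x\le r(x)$ retracting $Y_0$ onto a cone.
\item Attach type (2) at any time, type (3) by \emph{decreasing} dimension, and type (1) last.
\item The link of a type (3) vertex $(V,\omega)$ is then $D(V)*\cT^\U(\F^n/V)$. Here $D(V)\subset\cT^\U(V)$ is the full subcomplex on the pinned subspaces containing $L$ together with all oriented subspaces meeting $L$ trivially.
\item Bestvina's argument inside $V$ shows that $D(V)$ minus the $gq^k$ oriented hyperplanes $W$ of $V$ complementary to $L$ is contractible. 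Each such $W$ is attached along $\cT^\U(W)$, so $D(V)\simeq\bigvee^{gq^kt_k}S^{k-1}$.
\item Hence the link is a wedge of $gq^kt_kt_{n-k-1}$ spheres of dimension $n-3$. The factor $gq^k$ lives in the homology of the link, not in the vertex count.
\item The type (1) vertices now have link all of $\cT^\U(\F^n/L)$, contributing $(g-1)t_{n-1}$.
\end{itemize}
An equivalent route: start from the full star of $(L,\omega_0)$ and attach the oriented $W$ with $W\cap L=0$ by increasing dimension, then the other orientations of $L$, then the complementary hyperplanes. Each such $W$ has link $\cT^\U(W)*\{g\text{ points}\}*\cT^\U(\F^n/(W+L))$.

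Finally, the coefficient printed as $|\F/\U|$ must be read as $g$. Your proposed Euler characteristic check at $n=3$ forces this.
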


Note that Proposition \ref{prop:Urank} now immediately follows using this result and Proposition \ref{Gammaquotient}.

\bibliographystyle{amsalpha}
\bibliography{Bibliography}
\quad \\ 
 
\end{document}